\documentclass[12pt,a4paper]{amsart}
\usepackage{mathtools,amsthm,amssymb}
\usepackage{color}
\usepackage[top=30truemm,bottom=30truemm,left=25truemm,right=25truemm]{geometry}
\usepackage{tikz}
\usetikzlibrary{positioning}
\usepackage{mathdots}
\usepackage{ytableau}
\usepackage{hyperref}

\newtheorem{thm}{Theorem}[subsection]
\newtheorem{lem}[thm]{Lemma}
\newtheorem{prop}[thm]{Proposition}
\newtheorem{cor}[thm]{Corollary}
\newtheorem{assum}[thm]{Assumption}

\theoremstyle{definition}
\newtheorem{defi}[thm]{Definition}
\newtheorem{rem}[thm]{Remark}

\newtheorem{ex}[thm]{Example}

\author{Hideya Watanabe}
\address{(H. Watanabe) College of Science, Rikkyo University, 3-34-1, Nishi-Ikebukuro, Toshima-ku, Tokyo, 171-8501, Japan}
\email{watanabehideya@gmail.com}
\date{\today}

\title[KN tableaux, GT patterns, and quantum symmetric pairs]{Kashiwara--Nakashima tableaux, Gelfand--Tsetlin patterns, and quantum symmetric pairs}
\subjclass[2020]{Primary 17B10; Secondary 05E10, 17B37}
\keywords{Kashiwara--Nakashima tableau, Gelfand--Tsetlin pattern, quantum symmetric pair, special orthogonal Lie algebra, crystal}

\begin{document}
\maketitle

\begin{abstract}
  Kashiwara--Nakashima tableaux and Gelfand--Tsetlin patterns of orthogonal type are famous as combinatorial models of the finite-dimensional irreducible representations of the special orthogonal Lie algebras \(\mathfrak{so}_N\).
  The former is constructed based on the representation theory of quantum groups, especially the theory of crystals, while the latter based on the branching rule for \((\mathfrak{so}_N,\mathfrak{so}_{N-1})\).
  In the present paper, we construct a natural bijection between them by means of the representation theory of quantum symmetric pairs corresponding to \((\mathfrak{so}_N,\mathfrak{so}_{N-1})\).
\end{abstract}

\section{Introduction}
\subsection{Gelfand--Tsetlin bases and patterns}
Let \(\mathfrak{gl}_N\) (\(N \in \mathbb{Z}_{\geq 1}\)) denote the \emph{general linear Lie algebra} over the field \(\mathbb{C}\) of complex numbers.
Every finite-dimensional representation of \(\mathfrak{gl}_N\) is completely reducible, and the finite-dimensional irreducible representations are classified by their \emph{highest weights}.
The set \(X^+_{\mathfrak{gl}_N}\) of highest weights (or dominant weights) consists of \(N\)-tuples \((\lambda_1,\dots,\lambda_N) \in \mathbb{C}^N\) such that
\[
  \lambda_i-\lambda_{i+1} \in \mathbb{Z}_{\geq 0} \quad \text{ for all } i \in \{ 1,\dots,N-1 \}.
\]
Let \(V^{\mathfrak{gl}_N}(\lambda)\) denote the simple \(\mathfrak{gl}_N\)-module corresponding to \(\lambda \in X^+_{\mathfrak{gl}_N}\).

As a set, \(\mathfrak{gl}_N\) is identical to the set of \(N \times N\) matrices.
Hence, there exists a natural embedding
\[
  \mathfrak{gl}_N \mapsto \mathfrak{gl}_{N+1};\ A \mapsto
  \begin{pmatrix}
    A & \mathbf{o} \\
    {}^{\mathsf{t}} \mathbf{o} & 0
  \end{pmatrix}
\]
of Lie algebras, where \({}^{\mathsf{t}} \cdot\) denotes the transpose.
Every \(\mathfrak{gl}_{N+1}\)-module can be regarded as a \(\mathfrak{gl}_N\)-module via this embedding.
Especially, the \(\mathfrak{gl}_N\)-module \(V^{\mathfrak{gl}_{N+1}}(\lambda)\) is known to be semisimple with multiplicity-free, and the simple component \(V^{\mathfrak{gl}_N}(\mu)\) (\(\mu \in X^+_{\mathfrak{gl}_N}\)) occurs if and only if the highest weights \(\lambda,\mu\) satisfy the following \emph{interlacing condition}:
\begin{itemize}
  \item \(\lambda_1 \geq \mu_1 \geq \lambda_2 \geq \mu_2 \geq \cdots \geq \lambda_N \geq \mu_N \geq \lambda_{N+1}\),
  \item \(\lambda_i-\mu_i, \mu_i-\lambda_{i+1} \in \mathbb{Z}_{\geq 0}\) for all \(i \in \{ 1,\dots,N \}\).
\end{itemize}

Based on this branching rule, Gelfand and Tsetlin \cite{GeTs50} constructed a basis for each \(V^{\mathfrak{gl}_N}(\lambda)\), called the \emph{Gelfand--Tsetlin basis}.
Each Gelfand--Tsetlin basis is parametrized by a sequence of highest weights whose successive pairs satisfy the interlacing condition.
Such a sequence is called a \emph{Gelfand--Tsetlin pattern} of shape \(\lambda\).
Let \(\mathsf{GTP}^{\mathfrak{gl}_N}(\lambda)\) denote the set of Gelfand--Tsetlin patterns of shape \(\lambda\).

The story above has an \(\mathfrak{so}_N\)-analogue.
Let \(\mathfrak{so}_N\) (\(N \geq 2\)) denote the \emph{special orthogonal Lie algebra}.
It is often defined to be the Lie subalgebra of \(\mathfrak{gl}_N\) consisting of matrices \(A\) such that \({}^{\mathsf{at}} A = -A\), where
\[
  {}^{\mathsf{at}} (a_{i,j})_{1 \leq i,j \leq N} := (a_{N-j+1,N-i+1})_{1 \leq i,j \leq N}
\]
denotes the \emph{anti-transpose}, i.e., transpose with respect to the anti-diagonal.

Set \(n := \lfloor \frac{N}{2} \rfloor\).
The set \(X^+_{\mathfrak{so}_N}\) of highest weights are described as
\[
  \begin{cases}
    \{ (\lambda_1,\dots,\lambda_n) \in \mathbb{Z}^n \sqcup (\frac{1}{2} + \mathbb{Z})^n \mid \lambda_1 \geq \cdots \geq \lambda_{n-1} \geq |\lambda_n| \} & \text{ if } N = 2n, \\
    \{ (\lambda_1,\dots,\lambda_n) \in \mathbb{Z}^n \sqcup (\frac{1}{2} + \mathbb{Z})^n \mid \lambda_1 \geq \cdots \geq \lambda_n \geq 0 \} & \text{ if } N = 2n+1.
  \end{cases}
\]
Let \(V^{\mathfrak{so}_N}(\lambda)\) denote the simple \(\mathfrak{so}_N\)-module corresponding to \(\lambda \in X^+_{\mathfrak{so}_N}\).

There exists an embedding
\[
  \phi_N \colon \mathfrak{so}_N \to \mathfrak{so}_{N+1}
\]
of Lie algebras defined by
\begin{align*}
  &\phi_{2n}
  \begin{pmatrix}
    A & B \\
    C & -{}^{\mathsf{at}}A
  \end{pmatrix}
  =
  \begin{pmatrix}
    A & \mathbf{o} & B \\
    {}^{\mathsf{at}}\mathbf{o} & 0 & {}^{\mathsf{at}}\mathbf{o} \\
    C & \mathbf{o} & -{}^{\mathsf{at}}A
  \end{pmatrix}
  , \\
  &\phi_{2n+1}
  \begin{pmatrix}
    A & \mathbf{u} & B \\
    {}^{\mathsf{at}}\mathbf{v} & 0 & -{}^{\mathsf{at}}\mathbf{u} \\
    C & -\mathbf{v} & -{}^{\mathsf{at}}A
  \end{pmatrix}
  =
  \begin{pmatrix}
    A & \frac{\mathbf{u}}{\sqrt{2}} & \frac{\mathbf{u}}{\sqrt{2}} & B \\
    \frac{{}^{\mathsf{at}}\mathbf{v}}{\sqrt{2}} & 0 & 0 & -\frac{{}^{\mathsf{at}}\mathbf{u}}{\sqrt{2}} \\
    \frac{{}^{\mathsf{at}}\mathbf{v}}{\sqrt{2}} & 0 & 0 & -\frac{{}^{\mathsf{at}}\mathbf{u}}{\sqrt{2}} \\
    C & -\frac{\mathbf{v}}{\sqrt{2}} & -\frac{\mathbf{v}}{\sqrt{2}} & -{}^{\mathsf{at}}A
  \end{pmatrix}.
\end{align*}

The \(\mathfrak{so}_N\)-module \(V^{\mathfrak{so}_{N+1}}(\lambda)\) defined by the homomorphism \(\phi_N\) is known to be semisimple with multiplicity-free, and the simple factor \(V^{\mathfrak{so}_N}(\mu)\) (\(\mu \in X^+_{\mathfrak{so}_N}\)) occurs if and only if the highest weights \(\lambda,\mu\) satisfy the following \emph{interlacing condition}:
\begin{itemize}
  \item \(\mathsf{s}(\lambda) = \mathsf{s}(\mu)\), where
  \[
    \mathsf{s}(\lambda) :=
    \begin{cases}
      0 & \text{ if } \lambda \in \mathbb{Z}^n, \\
      \frac{1}{2} & \text{ if } \lambda \in (\frac{1}{2} + \mathbb{Z})^n,
    \end{cases}
  \]
  \item \(
  \begin{cases}
    \lambda_1 \geq \mu_1 \geq \lambda_2 \geq \mu_2 \geq \cdots \geq \lambda_n \geq |\mu_n| & \text{ if } N = 2n, \\
    \lambda_1 \geq \mu_1 \geq \lambda_2 \geq \mu_2 \geq \cdots \geq \lambda_n \geq \mu_n \geq |\lambda_{n+1}| & \text{ if } N = 2n+1.
  \end{cases}
  \)
\end{itemize}

Based on this branching rule, Gelfand and Tsetlin \cite{GeTs50b} constructed a basis for each \(V^{\mathfrak{so}_N}(\lambda)\).
As the \(\mathfrak{gl}\) case, the bases are parametrized by \emph{Gelfand--Tsetlin patterns}, sequences of highest weights whose successive pairs satisfy the interlacing condition for \(\mathfrak{so}\).
Let \(\mathsf{GTP}^{\mathfrak{so}_N}(\lambda)\) denote the set of Gelfand--Tsetlin patterns for \(V^{\mathfrak{so}_N}(\lambda)\).

For our purpose, it is convenient to rewrite the interlacing condition for \(\mathfrak{so}\) in terms of \emph{partitions} (of natural numbers).
Given a highest weight \(\lambda \in X^+_{\mathfrak{so}_N}\), set
\[
  \mathsf{par}(\lambda) := (\lceil \lambda_1 \rceil,\dots,\lceil \lambda_{n-1} \rceil, \lceil |\lambda_n| \rceil).
\]
It is a partition.
Then, two highest weights \(\lambda,\mu\) satisfy the interlacing condition if and only if \(\mathsf{s}(\lambda) = \mathsf{s}(\mu)\) and \(\mathsf{par}(\mu) \overset{\textsf{hor}}{\subseteq} \mathsf{par}(\lambda)\), meaning that the skew partition \(\mathsf{par}(\lambda)/\mathsf{par}(\mu)\) is a \emph{horizontal strip} (\S\ref{ssect_part}).
Hence, we have
\[
  V^{\mathfrak{so}_{N+1}}(\lambda)|_{\mathfrak{so}_N} \simeq \bigoplus_{\substack{\mu \in X^+_{\mathfrak{so}_N, \mathsf{s}(\lambda)} \\ \mathsf{par}(\mu) \overset{\textsf{hor}}{\subseteq} \mathsf{par}(\lambda)}} V^{\mathfrak{so}_N}(\mu),
\]
where
\[
  X^+_{\mathfrak{so}_N, s} := \{ \mu \in X^+_{\mathfrak{so}_N} \mid \mathsf{s}(\mu) = s \} \quad \text{ for each } s \in \left\{ 0,\frac{1}{2} \right\}.
\]

\subsection{Semistandard tableaux}
Let \(\lambda = (\lambda_1,\dots,\lambda_N) \in X^+_{\mathfrak{gl}_N}\) be such that \(\lambda_N \in \mathbb{Z}_{\geq 0}\).
Then, it is a partition.
The irreducible representation \(V^{\mathfrak{gl}_N}(\lambda)\) has another basis that is parametrized by the set \(\mathsf{SST}_N(\lambda)\) of \emph{semistandard tableaux} of shape \(\lambda\) with entries in \(\{ 1,\dots,N \}\); see \cite[Notation]{Ful97} for the precise definition (semistandard tableaux here are simply called tableaux there).

Since both \(\mathsf{SST}_N(\lambda)\) and \(\mathsf{GTP}^{\mathfrak{gl}_N}(\lambda)\) parametrize a basis of the same representation, they must be in a one-to-one correspondence.
Such a bijection is well-known and quite simple.
The Gelfand--Tsetlin pattern corresponding to a tableau \(T\) is
\[
  (\mathsf{sh}(T), \mathsf{sh}(T|_{N-1}), \dots, \mathsf{sh}(T|_1)),
\]
where \(\mathsf{sh}(S)\) denotes the shape of a tableau \(S\), and \(T|_{k}\) denotes the tableau obtained from \(T\) by removing the boxes whose entry is greater than \(k\).
For example, the tableau
\[
  \begin{ytableau}
    1 & 1 & 2 & 3 & 3 \\
    2 & 3 & 5 \\
    4 & 5
  \end{ytableau}
\]
corresponds to the pattern
\[
  \begin{tikzpicture}
    \node at (0,0) {\(5\) \(3\) \(2\) \(0\) \(0\)};
    \node at (0.00625,-0.5) {\(5\) \(2\) \(1\) \(0\)};
    \node at (0.00625*2,-0.5*2) {\(5\) \(2\) \(0\)};
    \node at (0.00625*3,-0.5*3) {\(3\) \(1\)};
    \node at (0.00625*4,-0.5*4) {\(2\)};
  \end{tikzpicture}.
\]
Here, we aligned the highest weights from top to bottom.

The operations \(\cdot|_k\) are quite natural in the theory of quantum groups (a.k.a. quantized enveloping algebras), especially crystals.
They amount to regard \(V^{\mathfrak{gl}_N}(\lambda)\) the \(\mathfrak{gl}_N\)-module as a \(\mathfrak{gl}_k\)-module via the chain
\[
  \mathfrak{gl}_k \hookrightarrow \mathfrak{gl}_{k+1} \hookrightarrow \cdots \hookrightarrow \mathfrak{gl}_N
\]
of Lie algebra homomorphisms.

\subsection{Kashiwara--Nakashima tableaux}
An \(\mathfrak{so}\) analogue of semistandard tableaux is \emph{Kashiwara--Nakashima tableaux}, introduced in \cite{KaNa94}, where a symplectic Lie algebra analogue is also defined.

Let \(\mathbf{U} = \mathbf{U}(\mathfrak{so}_N)\) denote the quantum group (a.k.a. quantized enveloping algebra) of \(\mathfrak{so}_N\).
For each \(\lambda \in X^+_{\mathfrak{so}_N}\), let \(V_q(\lambda) = V^{\mathfrak{so}_N}_q(\lambda)\) denote the simple \(\mathbf{U}\)-module of highest weight \(\lambda\).
Its crystal basis can be realized as the set \(\mathsf{KNT}^{\mathfrak{so}_N}(\lambda)\) of Kashiwara--Nakashima tableaux of shape \(\lambda\).

Since \(V_q(\lambda)\) tends to \(V^{\mathfrak{so}_N}(\lambda)\) as \(q\) tends to \(1\), the two sets \(\mathsf{KNT}^{\mathfrak{so}_N}(\lambda)\) and \(\mathsf{GTP}^{\mathfrak{so}_N}(\lambda)\) must be in a one-to-one correspondence.
However, unlike the \(\mathfrak{gl}\) case, no natural bijection is known.
The main obstacle is that there is no natural embedding \(\mathbf{U}(\mathfrak{so}_N) \to \mathbf{U}(\mathfrak{so}_{N+1})\) corresponding to \(\phi_N \colon \mathfrak{so}_N \to \mathfrak{so}_{N+1}\).

\subsection{Quantum symmetric pairs}
Kolb and Stephens \cite{KoSt24} pointed out that (variants of) the embeddings \(\phi_N \colon \mathfrak{so}_N \to \mathfrak{so}_{N+1}\) can be quantized to embeddings involving quantum symmetric pairs of type \(B\mathrm{II}\) or \(D\mathrm{II}\).
A \emph{quantum symmetric pair} is a pair consisting of a quantum group \(\mathbf{U}\) and its certain coideal subalgebra \(\mathbf{U}^\imath\), called an \(\imath\)quantum group; see \cite{Kol14} for a general theory, and \cite{Wan22} for a survey.
For type \(B\mathrm{II}_n\), the quantum group is \(\mathbf{U}(\mathfrak{so}_{2n+1})\), and the \(\imath\)quantum group is a quantum deformation \(\mathbf{U}^\imath_{B\mathrm{II}_n}\) of \(\mathfrak{so}_{2n}\).
Also, \(\mathbf{U}^\imath_{B\mathrm{II}_n}\) has a subalgebra isomorphic to \(\mathbf{U}(\mathfrak{so}_{2n-1})\).
For type \(D\mathrm{II}_n\), the quantum group is \(\mathbf{U}(\mathfrak{so}_{2n})\), and the \(\imath\)quantum group is a quantum deformation \(\mathbf{U}^\imath_{D\mathrm{II}_n}\) of \(\mathfrak{so}_{2n-1}\).
Also, \(\mathbf{U}^\imath_{D\mathrm{II}_n}\) has a subalgebra isomorphic to \(\mathbf{U}(\mathfrak{so}_{2n-2})\).
Therefore, there are two chains of algebras:
\begin{align}
  &\mathbf{U}^\imath_{B\mathrm{II}_1} \hookrightarrow \mathbf{U}(\mathfrak{so}_3) \hookrightarrow \cdots \mathbf{U}^\imath_{B\mathrm{II}_n} \hookrightarrow \mathbf{U}(\mathfrak{so}_{2n+1}) \hookrightarrow \cdots, \label{eq_chain_BII}\\
  &\mathbf{U}(\mathfrak{so}_2) \hookrightarrow \mathbf{U}^\imath_{D\mathrm{II}_2} \hookrightarrow \cdots \mathbf{U}(\mathfrak{so}_{2n-2}) \hookrightarrow \mathbf{U}^\imath_{D\mathrm{II}_n} \hookrightarrow \mathbf{U}(\mathfrak{so}_{2n}) \hookrightarrow \cdots.\label{eq_chain_DII}
\end{align}

In \cite{KoSt24}, Kolb and Stephens studied finite-dimensional representation theory of \(\mathbf{U}^\imath_{D\mathrm{II}_n}\), and proved that it is almost the same as \(\mathfrak{so}_{2n-1}\).
Namely, for each \(X^+_{\mathfrak{so}_{2n-1}}\), there exists a unique simple \(\mathbf{U}^\imath\)-module \(V^\imath(\nu)\) such that \(V^\imath(\nu)\) tends to \(V^{\mathfrak{so}_{2n-1}}(\nu)\) as \(q\) tends to \(1\), and
\[
  V_q(\lambda)|_{\mathbf{U}^\imath_{D\mathrm{II}_n}} \simeq \bigoplus_{\substack{\nu \in X^+_{\mathfrak{so}_{2n-1}, \mathsf{s}(\lambda)} \\ \mathsf{par}(\nu) \overset{\textsf{hor}}{\subseteq} \mathsf{par}(\lambda)}} V^\imath(\nu), \quad V^\imath(\nu)|_{\mathbf{U}(\mathfrak{so}_{2n-2})} \simeq \bigoplus_{\substack{\mu \in X^+_{\mathfrak{so}_{2n-2}, \mathsf{s}(\nu)} \\ \mathsf{par}(\mu) \overset{\textsf{hor}}{\subseteq} \mathsf{par}(\nu)}} V_q(\mu).
\]

Similar results for type \(B\mathrm{II}_n\) have not been published yet.
However, at the conference ``Quantum Symmetric Pairs, Hecke Algebras, and Representations: Exploring Spherical Functions (Q-SPHERE 2026)'', held at the Radboud University, Nijmegen, Netherlands in June 2026, the author attended a talk by Xinyang Liu, who announced her results on the finite-dimensional representation theory of \(\mathbf{U}^\imath_{B\mathrm{II}_n}\).

\subsection{Results}\label{ssect_results}
In the present paper, we construct a bijection between \(\mathsf{KNT}^{D_n}(\lambda) := \mathsf{KNT}^{\mathfrak{so}_{2n}}(\lambda)\) and \(\mathsf{GTP}^{\mathfrak{so}_{2n}}(\lambda)\) for each \(\lambda \in X^+_{2n,0}\) based on the chain \eqref{eq_chain_DII} of algebras.
It is a byproduct of a study of the representation theory of \(\mathbf{U}^{\imath}_{D\mathrm{II}_n}\).
Recall that a Kashiwara--Nakashima tableau \(T\) is a sequence \((\mathbf{a}^1,\dots,\mathbf{a}^m)\) of columns.
We first define the notion of columns of type \(D\mathrm{II}_n\) (Definition \ref{def_col_DII}).

\begin{defi}[Kashiwara--Nakashima tableaux of type \(D\mathrm{II}_n\) (Definition \ref{def_knt_DII})]
  A Kashiwara--Nakashima tableau \(T = (\mathbf{a}^1,\dots,\mathbf{a}^m)\) is said to be of \emph{type \(D\mathrm{II}_n\)} if its first column \(\mathbf{a}^1\) is of type \(D\mathrm{II}_n\).
\end{defi}

The Kashiwara--Nakashima tableaux of type \(D\mathrm{II}_n\) parametrize bases of simple \(\mathbf{U}^\imath\)-modules.

\begin{thm}[Basis of \(V^\imath(\nu)\) (Theorem \ref{thm_alm_orthn_basis_Vinu_DII})]
  Let \(\nu \in X^+_{2n-1,0}\).
  Then, the simple \(\mathbf{U}^\imath_{D\mathrm{II}_n}\)-module \(V^\imath(\nu)\) has a basis of the form
  \[
    \{ b^\imath_T \mid T \in \mathsf{KNT}^{D\mathrm{II}_n}(\nu) \}.
  \]
\end{thm}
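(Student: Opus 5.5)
Realize $V^\imath(\nu)$ inside a suitable $V_q(\lambda)$ and cut out the basis from the $\imath$canonical basis of $V_q(\lambda)$ using a triangularity argument. Concretely, fix $\nu \in X^+_{2n-1,0}$ and choose $\lambda \in X^+_{2n,0}$ with $\mathsf{par}(\nu) \overset{\textsf{hor}}{\subseteq} \mathsf{par}(\lambda)$ and $\lambda_n = 0$ (extend $\mathsf{par}(\nu)$ by a zero last part), so that by the Kolb--Stephens branching rule $V^\imath(\nu)$ occurs exactly once in $V_q(\lambda)|_{\mathbf{U}^\imath_{D\mathrm{II}_n}}$.

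On $V_q(\lambda)$ I would use the $\imath$canonical basis $\{ b^\imath_T \mid T \in \mathsf{KNT}^{D_n}(\lambda)\}$ (Bao--Wang), which is characterized by $\psi^\imath$-invariance and $b^\imath_T \in b_T + q^{-1}\mathcal{L}$. I also need the $\imath$-analogue of the highest-weight filtration: for each $\nu'$, consider the $\mathbf{U}^\imath$-submodule $W_{\geq \nu'}$ spanned by isotypic components of highest weight at least $\nu'$ in a suitable dominance order on $\mathsf{par}$. The key claim is that $W_{\geq\nu}$ and $W_{>\nu}$ are spanned by subsets of the $\imath$canonical basis. This would follow from an $\imath$-version of Lusztig's based-module theory: the $\imath$canonical basis is compatible with the filtration by $\imath$-cells, as in the work of Bao--Wang and the author on based $\mathbf{U}^\imath$-modules. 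Then $V^\imath(\nu) \simeq W_{\geq\nu}/W_{>\nu}$, and the images of the $b^\imath_T$ lying in $W_{\geq\nu}\setminus W_{>\nu}$ form a basis.

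The combinatorial step is to show that the set $\{T : b^\imath_T \in W_{\geq\nu}\setminus W_{>\nu}\}$ is in natural bijection with $\mathsf{KNT}^{D\mathrm{II}_n}(\nu)$. Here I would use the explicit crystal-limit description. At $q=\infty$, the action of the generators $B_i$ of $\mathbf{U}^\imath$ on $\mathcal{L}/q^{-1}\mathcal{L}$ gives an $\imath$crystal structure on $\mathsf{KNT}^{D_n}(\lambda)$. Its "$\imath$highest weight" elements, and the strings generated from them, should be governed by the first column. The type $D\mathrm{II}_n$ condition on $\mathbf{a}^1$ (Definition \ref{def_col_DII}) is designed precisely so that removing or adjusting the first column yields a tableau of shape $\nu$.

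To avoid overcounting, I would check cardinalities. By the branching rule, $\dim V^\imath(\nu) = \dim V^{\mathfrak{so}_{2n-1}}(\nu) = |\mathsf{GTP}^{\mathfrak{so}_{2n-1}}(\nu)|$. A direct count of columns should show that $|\mathsf{KNT}^{D\mathrm{II}_n}(\nu)|$ equals this number, for instance by induction via \eqref{eq_chain_DII} and the $\mathfrak{so}_{2n-2}$-branching.

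The main obstacle is the compatibility of the $\imath$canonical basis with the $\mathbf{U}^\imath$-isotypic filtration, since $\mathbf{U}^\imath$ is not triangular. The first thing I would try is to prove that each $W_{\geq\nu'}$ is $\psi^\imath$-stable and that its intersection with $\mathcal{L}$ has a crystal basis. Uniqueness of $\psi^\imath$-invariant lifts would then force $W_{\geq\nu'}$ to be spanned by $\imath$canonical basis elements. Failing that, I would instead construct $b^\imath_T$ directly on $V^\imath(\nu)$, normalized by an $\imath$highest weight vector. I would then prove almost orthonormality with respect to the induced contravariant form, which the theorem's label suggests, and deduce linear independence from that.
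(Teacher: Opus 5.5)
Your proposal has a genuine gap, and it sits exactly at the step you call the main obstacle. That step is in fact the entire content of the theorem.

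With your choice $\lambda=\nu$ (padded by $0$), every $\nu'$ occurring in $V_q(\lambda)|_{\mathbf{U}^\imath}$ satisfies $\nu'_i\le\nu_i$. So $W_{>\nu}=0$ and $W_{\geq\nu}$ is just the unique copy of $V^\imath(\nu)$; the filtration adds nothing. What must be shown is that this copy reduces at $q=\infty$ to the span of $\{b_T+q^{-1}\mathcal{L}\mid T\in\mathsf{KNT}^{D\mathrm{II}_n}(\nu)\}$.

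Your route to this has several problems:
\begin{itemize}
\item Showing that $W_{\geq\nu}$ is $\psi^\imath$-stable and that $W_{\geq\nu}\cap\mathcal{L}$ ``has a crystal basis'' presupposes precisely the statement to be proved.
\item The uniqueness-of-lifts step would also need a $\mathbb{Z}[q,q^{-1}]$-form of $W_{\geq\nu}$ on which $\psi^\imath$ is unitriangular, and you have no way to produce one.
\item Uniqueness needs coefficients in $q^{-1}\mathbb{Z}[q^{-1}]$, not $q^{-1}\mathbf{A}_\infty$: for instance $1/(q+q^{-1})$ is bar-invariant and lies in $q^{-1}\mathbf{A}_\infty$.
\item Compatibility of $\imath$canonical bases with $\mathbf{U}^\imath$-isotypic decompositions is not a result you can quote for type $D\mathrm{II}$, and there is no established $\imath$crystal theory for this non-quasi-split type.
\item ``The $\imath$highest weight elements should be governed by the first column'' gives no reason why the $D\mathrm{II}_n$ condition on $\mathbf{a}^1$ is the right one.
\item The fallback (normalize by an $\imath$highest weight vector, then prove almost orthonormality) names no actual construction.
\end{itemize}

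The missing idea is the mechanism that makes the first column decisive; the paper uses no $\imath$canonical bases at all. Because $\mathbf{U}^\imath$ is a right coideal, one can split off the first column, $V_q(\nu)\to V_q(\gamma_l)\otimes V_q(\nu-\gamma_l)$ with $l=\ell(\mathsf{par}(\nu))$. The $\mathbf{U}^\imath$-decomposition $V_q(\gamma_l)\simeq V^\imath(\gamma_l)\oplus V^\imath(\gamma_{l-1})$ of the first factor then becomes usable. It is made explicit by the connecting homomorphism $\delta_{l-1}$, which is left multiplication in $\mathsf{C}$ by the $\mathbf{U}^\imath$-invariant vector $v^\imath_0=v_1+(-q)^{-n+1}v_{\overline{1}}$. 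Its limit at $q=\infty$ (Proposition~\ref{prop_conn_hom_D}) is the combinatorial connecting map, and this is where the $D\mathrm{II}_n$ column condition comes from.

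The paper then builds a $\mathbf{U}^\imath$-endomorphism $g$ of $V_q(\nu)$: project to $V^\imath(\gamma_{l-1})$, return via $\delta_{l-1}$, and multiply back. This $g$ factors through $V^\imath(\gamma_{l-1})\otimes V_q(\nu-\gamma_l)$, which contains no copy of $V^\imath(\nu)$, so $g$ kills $V^\imath(\nu)$. At the same time, $g(b_T)\equiv_\infty 0$ if $T$ is of type $D\mathrm{II}_n$ and $g(b_T)\equiv_\infty b_T$ otherwise.

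This is combined with $\dim V^\imath(\nu)=|\mathsf{KNT}^{D\mathrm{II}_n}(\nu)|$, which follows from Theorem~\ref{thm_red_map_DII_D} and the $\mathbf{U}_\bullet$-branching rule. That count is the one you only sketch, and read literally the displayed statement needs nothing more than it. Together these force $\ker g=V^\imath(\nu)$, with $\operatorname{Im} g$ its complement and $\{g(b_T)\}$ a basis of $\operatorname{Im} g$. The orthogonal complement of $\operatorname{Im} g$ under the contragredient form then has a basis $b'_T\equiv_\infty b_T$, $T\in\mathsf{KNT}^{D\mathrm{II}_n}(\nu)$ (Lemma~\ref{lem_compatible_basis_DII}).
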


One can transform each Kashiwara--Nakashima tableau of type \(D\mathrm{II}_n\) into of type \(D_{n-1}\) by removing the boxes whose entry is \(\overline{1}\) and then shifting the remaining entries by \(1\).
For example,
\[
  \begin{ytableau}
    2 & 2 \\
    *(lightgray) 3 \\
    *(lightgray) 1
  \end{ytableau}
  \mapsto
  \begin{ytableau}
    1 & 1 \\
    *(lightgray) 2
  \end{ytableau}.
\]
Here, we represent a box with entry \(\overline{a}\) by a shaded box with entry \(a\).

\begin{thm}[Reduction from \(D\mathrm{II}_n\) to \(D_{n-1}\) (Theorem \ref{thm_red_map_DII_D})]
  Let \(\nu \in X^+_{\mathfrak{so}_{2n-1},0}\).
  The assignment \(T \mapsto T \downarrow^{D\mathrm{II}_n}_{D_{n-1}}\) gives rise to a bijection
  \[
    \mathsf{KNT}^{D\mathrm{II}_n}(\nu) \to \bigsqcup_{\substack{\mu \in X^+_{2n-2,0} \\ \mathsf{par}(\mu) \overset{\textsf{hor}}{\subseteq} \mathsf{par}(\nu)}} \mathsf{KNT}^{D_{n-1}}(\mu).
  \]
\end{thm}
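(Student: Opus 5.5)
We prove the theorem combinatorially, column by column, using the explicit description of columns of type \(D\mathrm{II}_n\) (Definition \ref{def_col_DII}) and the Kashiwara--Nakashima semistandardness conditions for type \(D\). Write \(\nu\in X^+_{2n-1,0}\); a tableau \(T\in\mathsf{KNT}^{D\mathrm{II}_n}(\nu)\) has shape \(\mathsf{par}(\nu)\), viewed as a \(D_n\)-shape whose last part is forced by the type \(D\mathrm{II}_n\) condition on the first column. The map \(T\mapsto T\downarrow^{D\mathrm{II}_n}_{D_{n-1}}\) deletes all boxes with entry \(\overline{1}\) and replaces each remaining entry \(a\) (resp. \(\overline{a}\)) with \(a-1\) (resp. \(\overline{a-1}\)). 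This can be done because no unbarred \(1\) may survive: I expect the type \(D\mathrm{II}_n\) condition to forbid \(1\) in the first column, hence in every column.

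\emph{Step 1: well-definedness.} In a KN tableau of type \(D\) the entries increase down columns and weakly increase along rows in the order \(1<\dots<n,\overline n<\dots<\overline 1\). So the \(\overline 1\)'s sit at the bottom of columns, form a horizontal strip, and are removed from the end of each column. After removal and shifting, every column is admissible for \(D_{n-1}\). Here one must check the KN column conditions (the condition on pairs \(a,\overline a\) with \(p+(N-q+1)\le a\)) and the row conditions (the ``configuration'' conditions between adjacent columns, in terms of the split forms \(l\mathbf a, r\mathbf a\)). The shifted alphabet is simply the \(D_{n-1}\) alphabet, so the inequality conditions transfer once we know that removing \(\overline 1\) and the index shift do not break them. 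The main subtlety is the \(a=1\) pairs: since \(1\) never appears, removing \(\overline 1\) does not interact with split forms. The resulting shape \(\mathsf{par}(\mu)\) is obtained from \(\mathsf{par}(\nu)\) by removing a horizontal strip, and \(\mathsf{s}(\mu)=0\) automatically. Finally, the sign of \(\mu_{n-1}\) is read off from the \(n/\overline n\) entries of the first column, so \(\mu\in X^+_{2n-2,0}\) with \(\mathsf{par}(\mu)\overset{\textsf{hor}}{\subseteq}\mathsf{par}(\nu)\).

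\emph{Step 2: inverse map.} Given \(\mu\) with \(\mathsf{par}(\mu)\overset{\textsf{hor}}{\subseteq}\mathsf{par}(\nu)\) and \(S\in\mathsf{KNT}^{D_{n-1}}(\mu)\), shift all entries up by one and append \(\overline 1\) to the boxes of the strip \(\mathsf{par}(\nu)/\mathsf{par}(\mu)\). Since \(\mathsf{par}(\nu)\) has at most \(n-1\) rows and \(\mathsf{par}(\mu)\) at most \(n-1\), a column gains at most one box. We then verify that the result is a KN tableau of type \(D_n\) whose first column is of type \(D\mathrm{II}_n\). The horizontal strip condition is exactly what guarantees that each column receives at most one \(\overline 1\) and that rows remain weakly increasing, since \(\overline 1\) is maximal. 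The two maps are mutually inverse by construction, which gives the bijection.

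\emph{Main obstacle.} The hardest step will be checking the KN adjacency (configuration) conditions for columns containing \(\overline 1\) after reinsertion. The split \(l\mathbf a, r\mathbf a\) of a column with a \(\overline 1\) at the bottom must be compared with that of the shifted column without it. My first attempt is a lemma: for a column \(\mathbf a\) with no \(1\), appending \(\overline 1\) changes neither its split nor its admissibility, and \(r(\mathbf a\,\overline 1)=r(\mathbf a)\,\overline 1\), \(l(\mathbf a\,\overline 1)=l(\mathbf a)\,\overline 1\). This reduces the row conditions to those of the \(D_{n-1}\) tableau plus the trivial comparison of \(\overline 1\) with \(\overline 1\).

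Step 2 also needs a check that when the strip \(\mathsf{par}(\nu)/\mathsf{par}(\mu)\) has no box in the first column, the first column still has the type \(D\mathrm{II}_n\) form required by Definition \ref{def_col_DII}, so the constructed tableau lies in \(\mathsf{KNT}^{D\mathrm{II}_n}(\nu)\). I expect this to be the \(\mathsf{par}\)-level translation of the condition that \(\nu\) has \(n-1\) parts with the last part handled by \(|\mu_{n-1}|\).
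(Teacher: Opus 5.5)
Your overall architecture is the paper's: reduce column by column, invert by shifting up and placing \(\overline 1\) on the strip \(\mathsf{par}(\nu)/\mathsf{par}(\mu)\), then compare adjacency conditions. However, Step 1 has a genuine gap, and it is not where you placed the main obstacle.

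\textbf{The missing condition.} You never use the third condition of Definition \ref{def_col_DII}, strict shallowness \(|(\mathbf a_{>1})_{\le b}|<b\) for \(b\in\mathsf O(\mathbf a)\), not even for the first column. For a column with \(\overline 1\notin\mathbf a\), the reduction lowers every occupied \(b\) to \(b-1\) while \(|\mathbf a_{\le b}|\) stays the same. So \(D_{n-1}\)-shallowness of the reduced column is \emph{exactly} this strict inequality, and a \(1\)-free \(D_n\)-column need not satisfy it. For instance, for \(n\ge 3\) the column with entries \(2,\overline 2\) is a KN column of type \(D_n\) containing neither \(1\) nor \(\overline 1\). It reduces to \(1,\overline 1\), which is not a KN column of type \(D_{n-1}\). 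So the ``transfer of the inequality conditions'' you assert in Step 1 fails.

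In split-form terms, the split of that column uses \(t_1=1\). This also makes your proposed lemma false as stated: \(2,\overline 2\) is admissible, but \(2,\overline 2,\overline 1\) is not, since \(|\mathbf a_{\le 2}|=3>2\). The lemma holds exactly when the split of \(\mathbf a\) avoids the letter \(1\), which (for columns without \(1,\overline 1\)) is equivalent to strict shallowness. That hypothesis is automatic for the shifted columns \(\mathbf b[+1]\) in Step 2, so your inverse direction is fine. It is, however, precisely what is at stake in Step 1.

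\textbf{Propagation to the other columns.} The definition of \(\mathsf{KNT}^{D\mathrm{II}_n}\) only constrains the first column. You must therefore show that strict shallowness propagates along \(\triangleleft\) to \(\mathbf a^2,\dots,\mathbf a^m\). Your observation ``\(1\notin\mathbf a^1\), hence \(1\) is in no column'' gives only the exclusion of \(1\).

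This propagation is the paper's Proposition \ref{prop_adj_col_DII}\,(1), which feeds Corollary \ref{cor_col_tab_DII}. In the notation there (left column \(\mathbf a\), right column \(\mathbf b\), with \(b_s=b\) and \(b_t=\overline b\)), the argument runs as follows.
\begin{itemize}
\item Suppose \(|\mathbf b_{\le b}|=b\) for some occupied \(b\).
\item Take the least \(a\in\{2,\dots,b\}\) with \(a_r=a\) for some \(r\le s\) and \(b_u=\overline a\) for some \(u\ge t\).
\item Minimality gives \((r-1)+(l-u)\le a-2\).
\item The \((a,b)\)-configuration condition gives \((r-1)+(l-u)>a-2\), a contradiction.
\end{itemize}

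In your split-form setting there is a short substitute. Since \(1\notin\mathbf a^1\), the top entry of \(r\mathbf a^1\) is \(\succ 1\). The row condition \(r\mathbf a^1\preceq l\mathbf a^2\) then forces \(1\notin l\mathbf a^2\), i.e.\ the split of \(\mathbf a^2\) avoids \(1\); induct along the columns.

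Without one of these arguments, Step 1 does not show that \(T\downarrow^{D\mathrm{II}_n}_{D_{n-1}}\) consists of KN columns of type \(D_{n-1}\). Your reduction of the row conditions to those of the \(D_{n-1}\) tableau (the analogue of Proposition \ref{prop_adj_col_DII}\,(2)) rests on that.
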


To transform a Kashiwara--Nakashima tableau of type \(D_n\) into type \(D\mathrm{II}_n\) is more involved (although it is quite natural from the perspective of representation theory, and similar transformation has appeared in the combinatorial representation theory of quantum symmetric pairs of type \(A\mathrm{II}\) \cite{Wat23f}).
Let \(T = (\mathbf{a}^1,\dots,\mathbf{a}^m)\) be a Kashiwara--Nakashima tableau of type \(D_n\).
If its first column \(\mathbf{a}^1\) is of type \(D\mathrm{II}_n\), then there is nothing to do; the tableau \(T\) is already of type \(D\mathrm{II}_n\).
Otherwise, transform \(\mathbf{a}^1\) into a column \(\mathbf{a}^1 \downarrow^{D_n}_{D\mathrm{II}_n}\) of type \(D\mathrm{II}_n\) (Definition \ref{def_red_col_D_DII}).
Then, construct a new tableau \(\mathbf{a}^1 \downarrow^{D_n}_{D\mathrm{II}_n} * \mathbf{a}^2 * \cdots * \mathbf{a}^m\) according to Lecouvey's Robinson--Schensted correspondence \cite{Lec03}.
Repeating this procedure sufficiently many times, we will eventually obtain a tableau \(T \downarrow^{D_n}_{D\mathrm{II}_n}\) of type \(D\mathrm{II}_n\).

\begin{thm}[Reduction from \(D_n\) to \(D\mathrm{II}_{n}\) (Theorem \ref{thm_red_KNT_D_DII})]
  Let \(\lambda \in X^+_{\mathfrak{so}_{2n},0}\).
  The assignment \(T \mapsto T \downarrow^{D_n}_{D\mathrm{II}_n}\) gives rise to a bijection
  \[
    \mathsf{KNT}^{D_n}(\lambda) \to \bigsqcup_{\substack{\nu \in X^+_{2n-1,0} \\ \mathsf{par}(\nu) \overset{\textsf{hor}}{\subseteq} \mathsf{par}(\lambda)}} \mathsf{KNT}^{D\mathrm{II}_n}(\nu).
  \]
\end{thm}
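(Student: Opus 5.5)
The bijection is to be proved in two parts: first that \(T \downarrow^{D_n}_{D\mathrm{II}_n}\) is well defined and lands in the stated union, and then that the map is bijective. The bijectivity will come from a cardinality count together with an explicit inverse, or at least injectivity.

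\textbf{Step 1: the procedure terminates and preserves the shape data.} For a column \(\mathbf{a}\) of type \(D_n\) that is not of type \(D\mathrm{II}_n\), the reduced column \(\mathbf{a}\downarrow^{D_n}_{D\mathrm{II}_n}\) of Definition \ref{def_red_col_D_DII} should be strictly shorter, or else strictly smaller in some statistic such as the number of letters \(1,\overline{1}\). Lecouvey's insertion \(\mathbf{a}^1\downarrow * \mathbf{a}^2 * \cdots * \mathbf{a}^m\) is a crystal isomorphism onto a single Kashiwara--Nakashima tableau, so the new shape is well defined. I would show that the total number of boxes, or a suitable weighted count, strictly decreases at each non-trivial step. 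This gives termination.

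Next I would show that after each step the shape \(\mathsf{par}\) of the current tableau satisfies the horizontal-strip condition relative to \(\mathsf{par}(\lambda)\). The reduction of one column deletes one box from a column, and Lecouvey insertion of a shortened first column into the rest should remove exactly one box from the bottom of some column of the shape. A Pieri-type argument for the orthogonal RS correspondence should then show that the boxes removed over all steps lie in distinct columns. This yields \(\mathsf{par}(\nu) \overset{\textsf{hor}}{\subseteq} \mathsf{par}(\lambda)\). Since each step only shortens columns, it preserves \(\mathsf{s}=0\).

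\textbf{Step 2: representation-theoretic interpretation.} I would lift the combinatorics to the module \(V_q(\lambda)\). By the Kolb--Stephens branching, \(V_q(\lambda)|_{\mathbf{U}^\imath}\simeq\bigoplus_\nu V^\imath(\nu)\) over exactly the index set in the statement. By Theorem \ref{thm_alm_orthn_basis_Vinu_DII}, each \(V^\imath(\nu)\) has a basis \(b^\imath_{T'}\) with \(T'\in\mathsf{KNT}^{D\mathrm{II}_n}(\nu)\). Comparing dimensions gives
\[
  |\mathsf{KNT}^{D_n}(\lambda)| = \sum_\nu |\mathsf{KNT}^{D\mathrm{II}_n}(\nu)|.
\]
This identity can also be obtained combinatorially by composing with Theorem \ref{thm_red_map_DII_D} and the classical GT count. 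It therefore suffices to prove injectivity, or equivalently surjectivity.

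\textbf{Step 3: injectivity.} This is the main obstacle. My plan is to construct an inverse step by step. The column reduction \(\mathbf{a}\mapsto \mathbf{a}\downarrow^{D_n}_{D\mathrm{II}_n}\) should be invertible once the removed entry is recorded. Lecouvey's RS correspondence is invertible given the position of the box that disappears from the shape, via reverse bumping. Since the removed boxes form a horizontal strip, reading them right to left should recover the history uniquely, as in the recovery of recording data in the \(\mathfrak{gl}\) Pieri rule and in the type \(A\mathrm{II}\) analogue \cite{Wat23f}.

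If the reverse insertion proves too delicate, I would fall back on a different argument. I would show instead that \(b^\imath\)-type elements indexed by \(T\downarrow^{D_n}_{D\mathrm{II}_n}\) agree, modulo \(q^{-1}\) in the crystal lattice, with the crystal basis elements \(b_T\) of \(V_q(\lambda)\). Distinct \(T\) give distinct crystal elements, and this would force injectivity.
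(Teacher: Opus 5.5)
Your termination argument and the cardinality count in Step 2 are fine. The count is exactly the one the paper uses, via Theorem \ref{thm_branching_rule_DII} and Theorem \ref{thm_red_map_DII_D}. So everything hinges on injectivity, and there Step 3 has a genuine gap.

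First, the column reduction loses nothing. By Theorem \ref{thm_red_D_DII_col} it is a bijection once the length and sign of \(\mathbf{a}^1\) are fixed, so \(T \mapsto \mathbf{a}^1\downarrow^{D_n}_{D\mathrm{II}_n} * \mathbf{a}^2 * \cdots * \mathbf{a}^m\) is already injective. What is forgotten is the Lecouvey recording datum \(\mathsf{Q}\). This is an oscillating tableau, equivalently a whole connected component of \(\mathcal{W}^{D_n}\), and reverse bumping needs all of it, not just the position of a removed box.

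The first-column reduction is not a \(D_n\)-crystal morphism; it does not even preserve weights. Hence the image of \(\mathsf{KNT}^{D_n}(\lambda)\) may a priori meet several components of the same shape, and nothing in your sketch shows that the sequence of shapes singles one out. Moreover, in type \(D\) the shape of \(\mathsf{P}(w)\) can have fewer than \(|w|\) boxes; for instance the word \((\overline{1},1)\) is a highest weight element of weight \(0\), so it forms a trivial component. Thus even your Step 1 claims, that each step removes exactly one box and that the removed boxes form a horizontal strip by a Pieri-type argument, are unsupported. The idea that reading the strip right to left recovers the history is a type \(A\) heuristic, and proving it would essentially be the theorem itself.

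Your fallback is circular. A \(\mathbf{U}^\imath\)-homomorphism with \(b_T \mapsto b^\imath_{T\downarrow^{D_n}_{D\mathrm{II}_n}}\) modulo \(q^{-1}\) does not stop two distinct \(b_T\) from collapsing to the same crystal element. You would need to know the map is an isomorphism, which is equivalent to the statement (cf.\ Theorem \ref{thm_red_hom_D_DII}).

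The missing idea is to carry the recording data along rather than invert it, and let multiplicity-freeness do the work.
\begin{itemize}
  \item Put \(\widetilde{\mathsf{suc}}(T) := \mathsf{LRS}(\mathbf{a}^1\downarrow^{D_n}_{D\mathrm{II}_n} * \mathbf{a}^2 * \cdots * \mathbf{a}^m) = (\mathsf{suc}(T), \mathsf{Q}^{\mathsf{suc}}(T))\), which is injective by the remark above.
  \item Iterate it \(k\) times and lift it to a \(\mathbf{U}^\imath\)-homomorphism \(V_q(\lambda) \to \bigoplus_\nu V^\imath(\nu) \otimes \mathsf{Span}_{\mathbb{Q}(q)} \mathsf{OT}^k\), using the successor homomorphisms, the lift of \(\mathsf{LRS}\), and the maps \(p_\nu\) of Theorem \ref{thm_alm_orthn_basis_Vinu_DII}. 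It sends \(b_T\) to \(b^\imath_{T\downarrow^{D_n}_{D\mathrm{II}_n}} \otimes \mathbf{Q}^{\mathsf{suc}}(T)\) modulo \(q^{-1}\).
  \item If \(T \neq S\) had the same reduction, of shape \(\xi\), then \(\mathbf{Q}^{\mathsf{suc}}(T) \neq \mathbf{Q}^{\mathsf{suc}}(S)\). So \(V^\imath(\xi)\) would occur at least twice in the image, contradicting the multiplicity-free branching of Theorem \ref{thm_branching_rule_DII}.
\end{itemize}

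The same lift, taken before projecting onto the allowed \(\nu\), also settles the codomain without any Pieri argument. The component \(V_q(\lambda) \to V^\imath(\nu)\) with \(\nu = \mathsf{sh}(T\downarrow^{D_n}_{D\mathrm{II}_n})\) is a \(\mathbf{U}^\imath\)-homomorphism that is nonzero on \(b_T\) at \(q = \infty\). Hence \(V^\imath(\nu)\) occurs in \(V_q(\lambda)|_{\mathbf{U}^\imath}\), which forces \(\mathsf{par}(\nu) \overset{\textsf{hor}}{\subseteq} \mathsf{par}(\lambda)\).
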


Combining the two bijections above, we finally obtain bijections between Kashiwara--Nakashima tableaux and Gelfand--Tsetlin patterns.
Let \(\lambda \in X^+_{2n,0}\) and \(T \in \mathsf{KNT}^{D_n}(\lambda)\).
Define tableaux \(T \downarrow^{D_n}_{D_{k-1}}\) for \(k \in \{ 2,\dots,n \}\), and  \(T \downarrow^{D_n}_{D\mathrm{II}_k}\) for \(k \in \{ 2,\dots,n-1 \}\) inductively by
\[
  T \downarrow^{D_n}_{D_{k-1}} := (T \downarrow^{D_n}_{D\mathrm{II}_{k}}) \downarrow^{D\mathrm{II}_{k}}_{D_{k-1}}, \quad T \downarrow^{D_n}_{D\mathrm{II}_k} := (T \downarrow^{D_n}_{D_{k}}) \downarrow^{D_{k}}_{D\mathrm{II}_{k}}.
\]
Similarly, for each \(\nu \in X^+_{2n-1,0}\) and \(T \in \mathsf{KNT}^{D\mathrm{II}_n}(\nu)\) define
\[
  T \downarrow^{D\mathrm{II}_n}_{D\mathrm{II}_{k}} := (T \downarrow^{D\mathrm{II}_n}_{D_{k}}) \downarrow^{D_{k}}_{D\mathrm{II}_{k}}, \quad T \downarrow^{D\mathrm{II}_n}_{D_{k-1}} := (T \downarrow^{D\mathrm{II}_n}_{D\mathrm{II}_{k}}) \downarrow^{D\mathrm{II}_{k}}_{D_{k-1}}.
\]

\begin{thm}[Bijections between tableaux and patterns (Corollary \ref{cor_bij_D})]
  \hfill
  \begin{enumerate}
    \item Let \(\lambda \in X^+_{2n,0}\).
    The assignment
    \[
      T \mapsto (\mathsf{sh}(T), \mathsf{sh}(T \downarrow^{D_n}_{D\mathrm{II}_n}), \mathsf{sh}(T \downarrow^{D_n}_{D_{n-1}}),\dots,\mathsf{sh}(T \downarrow^{D_n}_{D_1}))
    \]
    gives rise to a bijection
    \[
      \mathsf{KNT}^{D_n}(\lambda) \to \mathsf{GTP}^{\mathfrak{so}_{2n}}(\lambda).
    \]
    \item Let \(\nu \in X^+_{2n-1,0}\).
    The assignment
    \[
      T \mapsto (\mathsf{sh}(T), \mathsf{sh}(T \downarrow^{D\mathrm{II}_n}_{D_{n-1}}), \mathsf{sh}(T \downarrow^{D\mathrm{II}_n}_{D\mathrm{II}_{n-1}}),\dots,\mathsf{sh}(T \downarrow^{D\mathrm{II}_n}_{D_1}))
    \]
    gives rise to a bijection
    \[
      \mathsf{KNT}^{D\mathrm{II}_n}(\nu) \to \mathsf{GTP}^{\mathfrak{so}_{2n-1}}(\nu).
    \]
  \end{enumerate}
\end{thm}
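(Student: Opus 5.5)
The plan is to obtain both bijections by composing the two reduction bijections, Theorem \ref{thm_red_map_DII_D} and Theorem \ref{thm_red_KNT_D_DII}, and inducting on \(n\). A Gelfand--Tsetlin pattern of shape \(\lambda \in X^+_{2n,0}\) is a sequence whose first term is \(\lambda\), whose second term is some \(\nu \in X^+_{2n-1,0}\) with \(\mathsf{par}(\nu) \overset{\textsf{hor}}{\subseteq} \mathsf{par}(\lambda)\), and whose remaining terms form a pattern of shape \(\nu\). This gives the recursive decomposition
\[
  \mathsf{GTP}^{\mathfrak{so}_{2n}}(\lambda) = \bigsqcup_{\substack{\nu \in X^+_{2n-1,0} \\ \mathsf{par}(\nu) \overset{\textsf{hor}}{\subseteq} \mathsf{par}(\lambda)}} \{\lambda\} \times \mathsf{GTP}^{\mathfrak{so}_{2n-1}}(\nu).
\]
Similarly, \(\mathsf{GTP}^{\mathfrak{so}_{2n-1}}(\nu)\) decomposes as a disjoint union over \(\mu \in X^+_{2n-2,0}\) with \(\mathsf{par}(\mu) \overset{\textsf{hor}}{\subseteq} \mathsf{par}(\nu)\) of \(\{\nu\} \times \mathsf{GTP}^{\mathfrak{so}_{2n-2}}(\mu)\). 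One point needs checking: all weights in a pattern must have \(\mathsf{s}=0\), because the interlacing condition preserves \(\mathsf{s}\).

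I would prove (1) for \(n\) and (2) for \(n\) together by induction on \(n\). The base case is \(\mathsf{KNT}^{D_1}\) against \(\mathsf{GTP}^{\mathfrak{so}_2}\): both are singletons, namely the unique tableau of shape \(\mu\) and the pattern \((\mu)\). This needs a convention for \(D_1\), i.e. \(\mathfrak{so}_2\), which I would fix as in the paper's definitions.

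For the inductive step of (2), take \(T \in \mathsf{KNT}^{D\mathrm{II}_n}(\nu)\). By Theorem \ref{thm_red_map_DII_D}, \(T \mapsto T' := T \downarrow^{D\mathrm{II}_n}_{D_{n-1}}\) is a bijection onto \(\bigsqcup_\mu \mathsf{KNT}^{D_{n-1}}(\mu)\) with \(\mathsf{par}(\mu) \overset{\textsf{hor}}{\subseteq} \mathsf{par}(\nu)\). I then need a compatibility of the iterated reductions: \(T \downarrow^{D\mathrm{II}_n}_{D\mathrm{II}_{n-1}} = T' \downarrow^{D_{n-1}}_{D\mathrm{II}_{n-1}}\), and in general \(T \downarrow^{D\mathrm{II}_n}_{X} = T' \downarrow^{D_{n-1}}_{X}\) for every lower \(X\). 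This is immediate from the inductive definitions, which are built precisely by composing one-step reductions, and a short induction on \(k\) downward confirms it. Hence the map in (2) sends \(T\) to \((\nu, \Phi_{n-1}(T'))\), where \(\Phi_{n-1}\) is the map of (1) for \(n-1\). Since \(T \mapsto T'\) is a bijection onto the disjoint union and \(\Phi_{n-1}\) is a bijection by the induction hypothesis, the composite is a bijection onto \(\bigsqcup_\mu \{\nu\} \times \mathsf{GTP}^{\mathfrak{so}_{2n-2}}(\mu) = \mathsf{GTP}^{\mathfrak{so}_{2n-1}}(\nu)\).

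Part (1) for \(n\) follows the same way from Theorem \ref{thm_red_KNT_D_DII} and part (2) for \(n\). Since (1) at \(n\) uses (2) at \(n\), and (2) at \(n\) uses (1) at \(n-1\), the induction is well-founded.

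I expect the main obstacle to be bookkeeping rather than mathematics. First, one must verify that \(\mathsf{sh}\) of the reduced tableau is exactly the index \(\nu\) (respectively \(\mu\)) of the component it lands in, which is what the disjoint-union statements of the reduction theorems assert. Second, one must verify that the definitions of \(\downarrow\) are consistent at the endpoints, namely \(T \downarrow^{D_n}_{D_n} = T\) and the \(k=n\) case of the \(D\mathrm{II}\) chain. Third, the weights in the domains must match: \(X^+_{2n-1,0}\) on the tableau side against the set of \(\nu\) in the pattern side, including the sign of \(\mu_n\) in type \(D\), which is encoded by \(\mathsf{par}\) together with \(\mathsf{sh}\). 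I would settle these by unwinding the definitions before running the induction.
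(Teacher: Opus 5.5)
Your proposal is correct and takes the paper's approach. The paper's proof is the one line that the corollary is immediate from Definition~\ref{def_GTP} and Theorems~\ref{thm_red_map_DII_D} and \ref{thm_red_KNT_D_DII}. Your interleaved induction on \(n\) unpacks exactly that line: it uses the recursive decomposition of \(\mathsf{GTP}\) and the compatibility \(T \downarrow^{D\mathrm{II}_n}_{X} = (T \downarrow^{D\mathrm{II}_n}_{D_{n-1}}) \downarrow^{D_{n-1}}_{X}\), which follows from the inductive definitions.
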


For example, when \(n = 3\), we have
\[
  T :=
  \begin{ytableau}
    1 & 3 \\
    3 \\
    *(lightgray) 3
  \end{ytableau}
  \xrightarrow{\cdot \downarrow^{D_3}_{D\mathrm{II}_3}}
  \begin{ytableau}
    3 & 3 \\
    *(lightgray) 3
  \end{ytableau}
  \xrightarrow{\cdot \downarrow^{D\mathrm{II}_3}_{D_2}}
  \begin{ytableau}
    2 & 2 \\
    *(lightgray) 2
  \end{ytableau}
  \xrightarrow{\cdot \downarrow^{D_2}_{D\mathrm{II}_2}}
  \begin{ytableau}
    2
  \end{ytableau}
  \xrightarrow{\cdot \downarrow^{D\mathrm{II}_2}_{D_1}}
  \begin{ytableau}
    1
  \end{ytableau}.
\]
Hence, the Gelfand--Tsetlin pattern corresponding to \(T\) is
\[
  \begin{tikzpicture}
    \node[anchor=west] at (0,0) {\(2\) \(1\) \(-1\)};
    \node[anchor=west] at (0.2,-0.5) {\(2\) \(1\)};
    \node[anchor=west] at (0.2*2,-0.5*2) {\(2\) \(-1\)};
    \node[anchor=west] at (0.2*3,-0.5*3) {\(1\)};
    \node[anchor=west] at (0.2*4,-0.5*4) {\(1\)};
  \end{tikzpicture}.
\]

Our strategy is invalid for \(\lambda \in X^+_{\mathfrak{so}_{2n},\frac{1}{2}}\) by the following reason.
From the viewpoint of representation theory of quantum groups, Kashiwara--Nakashima tableaux live in tensor products of modules corresponding to columns.
The key to our proof is to reduce the leftmost column; \(\mathbf{a}^1 \mapsto \mathbf{a}^1 \downarrow^{D_n}_{D\mathrm{II}_n}\).
Since the \(\imath\)quantum group is a right coideal of the quantum group, we cannot reduce the other columns.
For \(\lambda \in X^+_{\mathfrak{so}_{2n},\frac{1}{2}}\), the \(\mathbf{U}\)-module corresponding to the first column is already simple as a \(\mathbf{U}^\imath_{D\mathrm{II}_n}\)-module.
Hence, we cannot reduce the first column.

If one modifies the Kashiwara--Nakashima tableaux in a way such that the spin columns lie in the rightmost, then our main theorem would be extended to the whole of \(X^+_{\mathfrak{so}_{2n}}\).

Finally, let us mention the type \(B\) case.
All the arguments for type \(D\) in the present paper have type \(B\) analogues.
The only problem is that we do not know if each simple \(\mathfrak{so}_{2n}\)-module has a unique quantum deformation as a classical weight \(\mathbf{U}^\imath_{B\mathrm{II}_n}\)-module (in the sense of \cite{Wat21b}).
In the present paper, we construct bijections under the assumption that this is true.

\subsection{Organization}
This paper is organized as follows.
\S\ref{sect_prelim} is a preliminary part.
We introduce basic notations concerning the special orthogonal Lie algebras, Gelfand--Tsetlin patterns of orthogonal type, and crystals.
In \S\ref{sect_KNT_D}, we recall the definition of Kashiwara--Nakashima tableaux of type \(D\) and Lecouvey's Robinson--Schensted correspondence.
\S\ref{sect_manipulation_D} is devoted to combinatorial arguments involving Kashiwara--Nakashima tableaux of type \(D\) and \(D\mathrm{II}\).
Our main results are presented and partially proved there.
We complete the proof with the aid of representation theory of quantum symmetric pairs of type \(D\mathrm{II}\) in \S\S\ref{sect_qg_D}--\ref{sect_qsp_DII}.
In \S\ref{sect_B}, we briefly describe the results for type \(B\).

\subsection*{Acknowledgements}
The author would like to dedicate this paper to his grandmother Shizue Fukumoto, who passed away while he was working on this project, with deep gratitude and respect for her character.
This work was supported by JSPS KAKENHI Grant Number JP24K16903.

\section{Preliminaries}\label{sect_prelim}
In this section, we introduce basic notations concerning the special orthogonal Lie algebras, Gelfand--Tsetlin patterns of orthogonal type, and crystals.
We refer the reader to \cite{Mol06b} for Gelfand--Tsetlin patterns and to \cite{BuSc17} for crystals.

\subsection{Special orthogonal Lie algebras}\label{ssect_so}
Let \(N \in \mathbb{Z}_{\geq 2}\), and \(\mathfrak{gl}_N = \mathsf{Mat}_{N}(\mathbb{C})\) denote the \emph{general linear Lie algebra} over the field of complex numbers.
For each \(A = (a_{i,j})_{1 \leq i,j \leq N} \in \mathfrak{gl}_N\), let \({}^{\mathsf{at}}A := (a_{N-j+1,N-i+1})_{1 \leq i,j \leq N}\) denote the anti-transpose of \(A\); the matrix obtained by flipping \(A\) with respect to the anti-diagonal.

Let \(\mathfrak{so}_N\) denote the special orthogonal Lie algebra:
\[
  \mathfrak{so}_N := \{ A \in \mathfrak{gl}_N \mid {}^{\mathsf{at}}A = -A \}.
\]
It is spanned by
\[
  \{ Z_{i,j} := E_{i,j} - {}^{\mathsf{at}}E_{i,j} \mid 1 \leq i,j \leq N \},
\]
where \(E_{i,j}\)'s denote the matrix units.

Set \(n := \lfloor \frac{N}{2} \rfloor\).
Then, there exists an injective Lie algebra homomorphism
\begin{align}\label{eq_so_embd}
  \phi_N \colon \mathfrak{so}_N \to \mathfrak{so}_{N+1}
\end{align}
such that
\[
  \phi_N(Z_{i,j}) =
  \begin{cases}
    Z_{i,j} & \text{ if } 1 \leq i,j \leq n, \\
    Z_{i,j+1} & \text{ if } 1 \leq i,N-j+1 \leq n, \\
    Z_{i+1,j} & \text{ if } 1 \leq N-i+1,j \leq n, \\
    \frac{1}{\sqrt{2}}(Z_{i,n+1} + Z_{i,n+2}) & \text{ if } N = 2n+1,\ 1 \leq i \leq n, \text{ and } j = n+1, \\
    \frac{1}{\sqrt{2}}(Z_{n+1,j} + Z_{n+2,j}) & \text{ if } N = 2n+1,\ 1 \leq j \leq n, \text{ and } i = n+1.
  \end{cases}
\]
In terms of block decomposition, we have
\begin{align*}
  &\phi_{2n}
  \begin{pmatrix}
    A & B \\
    C & -{}^{\mathsf{at}}A
  \end{pmatrix}
  =
  \begin{pmatrix}
    A & \mathbf{o} & B \\
    {}^{\mathsf{at}}\mathbf{o} & 0 & {}^{\mathsf{at}}\mathbf{o} \\
    C & \mathbf{o} & -{}^{\mathsf{at}}A
  \end{pmatrix}
  , \\
  &\phi_{2n+1}
  \begin{pmatrix}
    A & \mathbf{u} & B \\
    {}^{\mathsf{at}}\mathbf{v} & 0 & -{}^{\mathsf{at}}\mathbf{u} \\
    C & -\mathbf{v} & -{}^{\mathsf{at}}A
  \end{pmatrix}
  =
  \begin{pmatrix}
    A & \frac{\mathbf{u}}{\sqrt{2}} & \frac{\mathbf{u}}{\sqrt{2}} & B \\
    \frac{{}^{\mathsf{at}}\mathbf{v}}{\sqrt{2}} & 0 & 0 & -\frac{{}^{\mathsf{at}}\mathbf{u}}{\sqrt{2}} \\
    \frac{{}^{\mathsf{at}}\mathbf{v}}{\sqrt{2}} & 0 & 0 & -\frac{{}^{\mathsf{at}}\mathbf{u}}{\sqrt{2}} \\
    C & -\frac{\mathbf{v}}{\sqrt{2}} & -\frac{\mathbf{v}}{\sqrt{2}} & -{}^{\mathsf{at}}A
  \end{pmatrix}.
\end{align*}

For each \(i \in \{ 1,\dots,n \}\), set \(d_i := Z_{i,i}\), and
\[
  \mathfrak{h}_N := \mathsf{Span}_{\mathbb{C}} \{ d_1,\dots,d_n \}.
\]

When \(N = 2\), we have \(\mathfrak{so}_2 = \mathfrak{h}_2 = \mathbb{C} d_1\) and \(\phi_2(d_1) = d_1\).

When \(N \geq 3\), for each \(i \in \{ 1,\dots,n \}\), set
\begin{align*}
  &e_i :=
  \begin{cases}
    Z_{i,i+1} & \text{ if } i < n, \\
    Z_{n-1,n+1} & \text{ if } i = n \text{ and } N = 2n, \\
    \sqrt{2}Z_{n,n+1} & \text{ if } i = n \text{ and } N = 2n+1,
  \end{cases} \\
  &f_i :=
  \begin{cases}
    Z_{i+1,i} & \text{ if } i < n, \\
    Z_{n+1,n-1} & \text{ if } i = n \text{ and } N = 2n, \\
    \sqrt{2}Z_{n+1,n} & \text{ if } i = n \text{ and } N = 2n+1,
  \end{cases} \\
  &h_i :=
  \begin{cases}
    d_i-d_{i+1} & \text{ if } i < n, \\
    d_{n-1} + d_{n} & \text{ if } i = n \text{ and } N = 2n, \\
    2d_n & \text{ if } i = n \text{ and } N = 2n+1.
  \end{cases}
\end{align*}
Then, the triples \(e_i,f_i,h_i\) form a system of Chevalley generators.
For each \(i \in \{ 1,\dots,n \}\), we have
\begin{align*}
  &\phi_N(d_i) = d_i, \\
  &\phi_N(e_i)
  =
  \begin{cases}
    e_i & \text{ if } 1 \leq i < n, \\
    Z_{n-1,n+2} & \text{ if } i = n \text{ and } N = 2n, \\
    e_n + e_{n+1} & \text{ if } i = n \text{ and } N = 2n+1,
  \end{cases} \\
  &\phi_N(f_i)
  =
  \begin{cases}
    f_i & \text{ if } 1 \leq i < n, \\
    Z_{n+2,n-1} & \text{ if } i = n \text{ and } N = 2n, \\
    f_n + f_{n+1} & \text{ if } i = n \text{ and } N = 2n+1,
  \end{cases} \\
  &\phi_N(h_i)
  =
  \begin{cases}
    h_i & \text{ if } 1 \leq i < n, \\
    h_{n-1} + h_n & \text{ if } i = n \text{ and } N = 2n, \\
    h_n + h_{n+1} & \text{ if } i = n \text{ and } N = 2n+1.
  \end{cases}
\end{align*}

Let us go back to the general case; \(N \geq 2\).
Let \(\{ \epsilon_1,\dots,\epsilon_n \} \subset \mathfrak{h}_N^*\) denote the dual basis of \(\{ d_1,\dots,d_n \}\):
\[
  \langle d_i, \epsilon_j \rangle = \delta_{i,j} \quad \text{ for all } i,j \in \{ 1,\dots,n \}.
\]
We often identify an element \(\lambda \in \mathfrak{h}_N^*\) with the \(n\)-tuple \((\langle d_1, \lambda \rangle,\dots,\langle d_n, \lambda \rangle) \in \mathbb{C}^n\).
Then, the set \(X_N\) of integral weights is identified with \(\mathbb{Z}^n \sqcup (\frac{1}{2} + \mathbb{Z})^n\), and the set \(X^+_N\) of dominant weights is described as follows:
\[
  X^+_N =
  \begin{cases}
    \{ (\lambda_1,\dots,\lambda_n) \in X_N \mid \lambda_1 \geq \cdots \geq \lambda_{n-1} \geq |\lambda_n| \} & \text{ if } N = 2n, \\
    \{ (\lambda_1,\dots,\lambda_n) \in X_N \mid \lambda_1 \geq \cdots \geq \lambda_n \geq 0 \} & \text{ if } N = 2n+1.
  \end{cases}
\]

For each \(\lambda \in X^+_{N}\), we define its \emph{sign} \(\mathsf{e}(\lambda)\) and \emph{spin} \(\mathsf{s}(\lambda)\) by
\[
  \mathsf{e}(\lambda) :=
  \begin{cases}
    + & \text{ if } N = 2n \text{ and } \lambda_n > 0, \\
    - & \text{ if } N = 2n \text{ and } \lambda_n < 0, \\
    0 & \text{ otherwise},
  \end{cases}
  \quad
  \mathsf{s}(\lambda) :=
  \begin{cases}
    0 & \text{ if } \lambda \in \mathbb{Z}^n, \\
    \frac{1}{2} & \text{ if } \lambda \in \left(\frac{1}{2} + \mathbb{Z}\right)^n.
  \end{cases}
\]
Set
\[
  X^+_{N,\pm} := \{ \lambda \in X^+_N \mid \mathsf{e}(\lambda) = \pm \},
  \quad
  X^+_{N,s} := \{ \lambda \in X^+_N \mid \mathsf{s}(\lambda) = s \} \ \text{ for each } s \in \left\{ 0, \frac{1}{2} \right\}.
\]

For each \(\lambda \in X^+_N\), let \(V(\lambda) = V_N(\lambda)\) denote the finite-dimensional simple \(\mathfrak{so}_N\)-module of highest weight \(\lambda\).

\subsection{Gelfand--Tsetlin patterns}
Let \(N \in \mathbb{Z}_{\geq 2}\).
Set \(n := \lfloor \frac{N}{2} \rfloor\).

\begin{defi}[Interlacing condition]\label{def_interlace_cond}
  We say that \(\lambda \in X^+_{N+1}\) and \(\mu \in X^+_N\) satisfy the \emph{interlacing condition} if the following hold:
  \begin{itemize}
    \item \(\mathsf{s}(\lambda) = \mathsf{s}(\mu)\),
    \item \(
    \begin{cases}
      \lambda_1 \geq \mu_1 \geq \lambda_2 \geq \mu_2 \geq \cdots \geq \lambda_n \geq |\mu_n| & \text{ if } N = 2n, \\
      \lambda_1 \geq \mu_1 \geq \lambda_2 \geq \mu_2 \geq \cdots \geq \lambda_n \geq \mu_n \geq |\lambda_{n+1}| & \text{ if } N = 2n+1.
    \end{cases}
    \)
  \end{itemize}
\end{defi}

For each \(\lambda \in X^+_{N+1}\), the \(\mathfrak{so}_{N+1}\)-module \(V_{N+1}(\lambda)\) can be regarded as an \(\mathfrak{so}_N\)-module via the embedding \(\phi_N\) \eqref{eq_so_embd}.
It is known that this \(\mathfrak{so}_N\)-module is semisimple and multiplicity-free.
Moreover, the irreducible component of type \(V_N(\mu)\) occurs if and only if \(\lambda\) and \(\mu\) satisfy the interlacing condition:
\begin{align}\label{eq_branching_so}
  V_{N+1}(\lambda)|_{\mathfrak{so}_N} \simeq \bigoplus_{\substack{\mu \in X^+_N \\ \text{\(\lambda,\mu\) satisfy the interlacing condition}}} V_N(\mu).
\end{align}

Based on this branching rule for various \(N\), Gelfand and Tsetlin \cite{GeTs50b} constructed a basis of \(V_{N}(\lambda)\), called the \emph{Gelfand--Tsetlin basis}.
It is parametrized by arrays of integers or half integers, called the \emph{Gelfand--Tsetlin patterns}.

\begin{defi}[Gelfand--Tsetlin patterns {\cite{GeTs50b}, \cite[\S4.3]{Mol06b}}]\label{def_GTP}
  Let \(\lambda \in X^+_N\).
  A \emph{Gelfand--Tsetlin pattern of type \(\mathfrak{so}_N\) and shape \(\lambda\)} is an array \((\lambda^k_i)_{\substack{2 \leq k \leq N \\ 1 \leq i \leq \lfloor \frac{k}{2} \rfloor}}\) of integers or half integers satisfying the following:
  \begin{description}
    \item[Entry] \(\lambda^k := (\lambda^k_1,\dots,\lambda^k_{\lfloor \frac{k}{2} \rfloor}) \in X^+_k\) for all \(k \in \{ 2,\dots,N \}\),
    \item[Initial Condtion] \(\lambda^N = \lambda\)
    \item[Interlacing Condtion] \(\lambda^{k+1}\) and \(\lambda^k\) satisfy the interlacing condition for all \(k \in \{ 2,\dots,N-1 \}\) (Figure \ref{fig_interlacing_cond}).
  \end{description}
  Let
  \[
    \mathsf{GTP}_N(\lambda)
  \]
  denote the set of Gelfand--Tsetlin patterns of shape \(\lambda\).
\end{defi}

\begin{figure}[h]
  \begin{center}
    \begin{tikzpicture}[auto=left]
      \node at (0,0) {\(\lambda^{2l+1}_1\)};
      \node at (2,0) {\(\lambda^{2l+1}_2\)};
      \node at (4,0) {\(\cdots\)};
      \node at (6,0) {\(\cdots\)};
      \node at (8,0) {\(\lambda^{2l+1}_l\)};
      \node at (1,-1) {\(\lambda^{2l}_1\)};
      \node at (3,-1) {\(\lambda^{2l}_2\)};
      \node at (5,-1) {\(\cdots\)};
      \node at (7,-1) {\(\lambda^{2l}_{l-1}\)};
      \node at (9,-1) {\(|\lambda^{2l}_l|\)};
      \node at (2,-2) {\(\lambda^{2l-1}_1\)};
      \node at (4,-2) {\(\cdots\)};
      \node at (6,-2) {\(\cdots\)};
      \node at (8,-2) {\(\lambda^{2l-1}_{l-1}\)};

      \draw[white] (1,-1) to node[sloped, midway, anchor=center, black] {\(\geq\)} ++(-1,1);
      \draw[white] (3,-1) to node[sloped, midway, anchor=center, black] {\(\geq\)} ++(-1,1);
      \draw[white] (7,-1) to node[sloped, midway, anchor=center, black] {\(\geq\)} ++(-1,1);
      \draw[white] (9,-1) to node[sloped, midway, anchor=center, black] {\(\geq\)} ++(-1,1);
      \draw[white] (2,-2) to node[sloped, midway, anchor=center, black] {\(\geq\)} ++(-1,1);
      \draw[white] (4,-2) to node[sloped, midway, anchor=center, black] {\(\geq\)} ++(-1,1);
      \draw[white] (8,-2) to node[sloped, midway, anchor=center, black] {\(\geq\)} ++(-1,1);

      \draw[white] (2,0) to node[sloped, midway, anchor=center, black] {\(\geq\)} ++(-1,-1);
      \draw[white] (4,0) to node[sloped, midway, anchor=center, black] {\(\geq\)} ++(-1,-1);
      \draw[white] (8,0) to node[sloped, midway, anchor=center, black] {\(\geq\)} ++(-1,-1);
      \draw[white] (3,-1) to node[sloped, midway, anchor=center, black] {\(\geq\)} ++(-1,-1);
      \draw[white] (7,-1) to node[sloped, midway, anchor=center, black] {\(\geq\)} ++(-1,-1);
      \draw[white] (9,-1) to node[sloped, midway, anchor=center, black] {\(\geq\)} ++(-1,-1);
    \end{tikzpicture}
  \end{center}
  \caption{Interlacing condition}\label{fig_interlacing_cond}
\end{figure}

\subsection{Partitions}\label{ssect_part}
We refer the reader to \cite{Ful97} for basics of partitions.
A \emph{partition} is a finite sequence of nonnegative integers.
Given a partition \(\lambda = (\lambda_1,\dots,\lambda_n)\), we use the following notations:
\begin{itemize}
  \item \(\ell(\lambda) := \#\{ i \mid \lambda_i \neq 0 \}\): the length of \(\lambda\),
  \item \(|\lambda| := \lambda_1+\dots+\lambda_n\): the size of \(\lambda\),
  \item \(\mathsf{D}(\lambda) := \{ (i,j) \in \mathbb{Z}_{\geq 1}^2 \mid 1 \leq i \leq n,\ 1 \leq j \leq \lambda_i  \}\): the Young diagram of \(\lambda\),
  \item \(\lambda'\): the transpose of \(\lambda\), i.e., the partition whose Young diagram is
  \[
    \{ (j,i) \mid (i,j) \in \mathsf{D}(\lambda) \}.
  \]
\end{itemize}

Given two partitions \(\lambda,\mu\), we write \(\mu \subseteq \lambda\) to mean that \(\mathsf{D}(\mu) \subseteq \mathsf{D}(\lambda)\).
Also, we write \(\mu \overset{\textsf{hor}}{\subseteq} \lambda\) to mean that \(\mu \subseteq \lambda\) and the skew partition \(\lambda/\mu\) forms a \emph{horizontal strip}, i.e.,
\[
  \#\{ i \mid (i,j) \in D(\lambda) \setminus D(\mu) \} \leq 1 \quad \text{ for all } j.
\]
We have \(\mu \overset{\textsf{hor}}{\subseteq} \lambda\) if and only if
\[
  \lambda_1 \geq \mu_1 \geq \lambda_2 \geq \mu_2 \geq \cdots.
\]

\begin{ex}[Partitions]
  \ytableausetup{smalltableaux}
  Set \(\lambda := (5,4,3,3,1)\) and \(\mu := (4,4,3,1)\).
  Then, we have
  \begin{itemize}
    \item \(\ell(\lambda) = 5\),
    \item \(|\lambda| = 16\),
    \item \(\lambda' = (5,4,4,2,1)\),
    \item \(\mu \overset{\textsf{hor}}{\subseteq} \lambda\):
    \begin{align*}
      D(\lambda) = \ydiagram{5,4,3,3,1},\quad D(\mu) = \ydiagram{4,4,3,1},\quad D(\lambda) \setminus D(\mu) = \ydiagram{4+1,4+0,3+0,1+2,0+1}
    \end{align*}
  \end{itemize}
\end{ex}

Let \(N \in \mathbb{Z}_{\geq 2}\), and set \(n := \lfloor \frac{N}{2} \rfloor\).
For each \(\lambda \in X^+_{N}\), define a partition \(\mathsf{par}(\lambda)\) by
\[
  \mathsf{par}(\lambda) := (\lceil \lambda_1 \rceil,\dots,\lceil \lambda_{n-1} \rceil,\lceil |\lambda_n| \rceil).
\]
Two elements \(\lambda \in X^+_{N+1}\) and \(\mu \in X^+_{N}\) satisfy the interlacing condition (Definition \ref{def_interlace_cond}) if and only if
\[
  \mathsf{s}(\lambda) = \mathsf{s}(\mu) \text{ and } \mathsf{par}(\mu) \overset{\textsf{hor}}{\subseteq} \mathsf{par}(\lambda).
\]
Therefore, the branching rule \eqref{eq_branching_so} can be rewritten as follows:
\begin{align}\label{eq_branching_so_partition}
  V_{N+1}(\lambda)|_{\mathfrak{so}_N} \simeq \bigoplus_{\substack{\mu \in X^+_{N, \mathsf{s}(\lambda)} \\ \mathsf{par}(\mu) \overset{\textsf{hor}}{\subseteq} \mathsf{par}(\lambda)}} V_N(\mu).
\end{align}

\subsection{Crystals}\label{ssect_crystal}
Let \(N \in \mathbb{Z}_{\geq 2}\), and set
\begin{itemize}
  \item \(n := \lfloor \frac{N}{2} \rfloor\),
  \item \(I :=
  \begin{cases}
    \{ 1,\dots,n \} & \text{ if } N \geq 3, \\
    \emptyset & \text{ if } N = 2.
  \end{cases}
  \)
\end{itemize}
For each \(i \in I\), define \(\alpha_i \in \mathfrak{h}_N^*\) by
\[
  \alpha_i :=
  \begin{cases}
    \epsilon_i-\epsilon_{i+1} & \text{ if } i \in I \setminus \{ n \}, \\
    \epsilon_{n-1}+\epsilon_n & \text{ if } N = 2n \geq 4 \text{ and } i = n, \\
    \epsilon_n & \text{ if } N = 2n+1 \text{ and } i = n.
  \end{cases}
\]

A \emph{crystal of type \(\mathfrak{so}_N\)}, or an \emph{\(\mathfrak{so}_N\)-crystal}, is a set \(\mathcal{B}\) equipped with maps
\begin{itemize}
  \item \(\widetilde{E}_i, \widetilde{F}_i \colon \mathcal{B} \to \mathcal{B} \sqcup \{ 0 \}\) (\(i \in I\)),
  \item \(\mathsf{wt} \colon \mathcal{B} \to X\),
\end{itemize}
where \(0\) denotes a formal symbol, satisfying the following:
\begin{itemize}
  \item \(\mathsf{wt}(\widetilde{E}_i b) := \mathsf{wt}(b) + \alpha_i\) if \(\widetilde{E}_i b \neq 0\), for all \(i \in I\), \(b \in \mathcal{B}\),
  \item \(\widetilde{E}_i b_1 = b_2\) if and only if \(b_1 = \widetilde{F}_i b_2\) for all \(i \in I\), \(b_1,b_2 \in \mathcal{B}\).
\end{itemize}
An \(\mathfrak{so}_N\)-crystal is also called a crystal of \emph{type \(D_n\)} or a \emph{\(D_n\)-crystal} if \(N = 2n\), and a crystal of \emph{type \(B_n\)} or a \(B_n\)-crystal if \(N = 2n+1\).

A \emph{morphism of crystals} \(\mathcal{B}_1 \to \mathcal{B}_2\) is a map \(\mathcal{B}_1 \sqcup \{ 0 \} \to \mathcal{B}_2 \sqcup \{ 0 \}\) commuting with the maps \(\widetilde{E}_i, \widetilde{F}_i\) and preserving the map \(\mathsf{wt}\).
A morphism is said to be an \emph{isomorphism} if it is bijective. 

\begin{rem}[Crystals and morphisms]
  Our crystals and morphisms are called \emph{seminormal crystals} and \emph{strict morphisms}, respectively in \cite{BuSc17}.
\end{rem}

\begin{defi}[Subcrystals]
  A subset of a crystal is said to be a \emph{subcrystal} if the inclusion map is a crystal morphism.
\end{defi}

Let \(\mathcal{A}^{D_n} := \{ 1,\dots,n,\overline{n},\dots,\overline{1} \}\) denote the poset defined by
\[
  1 \prec \cdots \prec n, \overline{n} \prec \cdots \prec \overline{1};
\]
note that the letters \(n\) and \(\overline{n}\) are incomparable.
It has the following \(D_n\)-crystal structure:
\begin{align*}
  &\mathsf{wt}(a) :=
  \begin{cases}
    \epsilon_a & \text{ if } a \preceq n, \\
    -\epsilon_b & \text{ if } a = \overline{b} \text{ for some } b \preceq n,
  \end{cases} \\
  &\begin{tikzpicture}[auto=left, xscale=2, yscale=1.5]
    \node (1) at (1,0) {\(1\)};
    \node (dots_t) at (2,0) {\(\cdots\)};
    \node (n-1) at (3,0) {\(n-1\)};
    \node (n) at (4,0) {\(n\)};
    \node (nb) at (3,-1) {\(\overline{n}\)};
    \node (n-1b) at (4,-1) {\(\overline{n-1}\)};
    \node (dots_b) at (5,-1) {\(\cdots\)};
    \node (1b) at (6,-1) {\(\overline{1}\)};
    \draw[->] (1) to node[font=\scriptsize] {\(1\)} (dots_t);
    \draw[->] (dots_t) to node[font=\scriptsize] {\(n-2\)} (n-1);
    \draw[->] (n-1) to node[font=\scriptsize] {\(n-1\)} (n);
    \draw[->, left] (n-1) to node[font=\scriptsize] {\(n\)} (nb);
    \draw[->] (n) to node[font=\scriptsize] {\(n\)} (n-1b);
    \draw[->, below] (nb) to node[font=\scriptsize] {\(n-1\)} (n-1b);
    \draw[->, below] (n-1b) to node[font=\scriptsize] {\(n-2\)} (dots_b);
    \draw[->, below] (dots_b) to node[font=\scriptsize] {\(1\)} (1b);
  \end{tikzpicture}
\end{align*}
Here, as usual, we represent the maps \(\widetilde{E}_i, \widetilde{F}_i\) by the crystal graph (\cite[\S 2.2]{BuSc17}).
Also, let \(\mathcal{A}^{B_n} := \{ 1,\dots,n,0,\overline{n},\dots,\overline{1} \}\) denote the totally ordered set defined by
\[
  1 \prec \cdots \prec n \prec 0 \prec \overline{n} \prec \cdots \prec \overline{1}.
\]
It has the following \(B_n\)-crystal structure:
\begin{align*}
  &\mathsf{wt}(a) :=
  \begin{cases}
    \epsilon_a & \text{ if } a \preceq n, \\
    0 & \text{ if } a = 0, \\
    -\epsilon_b & \text{ if } a = \overline{b} \text{ for some } b \preceq n,
  \end{cases} \\
  &\begin{tikzpicture}[auto=left, xscale=2, yscale=1.5]
    \node (1) at (1,0) {\(1\)};
    \node (dots_t) at (2,0) {\(\cdots\)};
    \node (n) at (3,0) {\(n\)};
    \node (0) at (3,-1) {\(0\)};
    \node (nb) at (3,-2) {\(\overline{n}\)};
    \node (dots_b) at (4,-2) {\(\cdots\)};
    \node (1b) at (5,-2) {\(\overline{1}\)};
    \draw[->] (1) to node[font=\scriptsize] {\(1\)} (dots_t);
    \draw[->] (dots_t) to node[font=\scriptsize] {\(n-1\)} (n);
    \draw[->] (n) to node[font=\scriptsize] {\(n\)} (0);
    \draw[->] (0) to node[font=\scriptsize] {\(n\)} (nb);
    \draw[->, below] (nb) to node[font=\scriptsize] {\(n-1\)} (dots_b);
    \draw[->, below] (dots_b) to node[font=\scriptsize] {\(1\)} (1b);
  \end{tikzpicture}
\end{align*}

Given two crystals \(\mathcal{B}_1, \mathcal{B}_2\), its tensor product \(\mathcal{B}_1 \otimes \mathcal{B}_2 := \{ b_1 \otimes b_2 \mid b_1 \in \mathcal{B}_1,\ b_2 \in \mathcal{B}_2 \}\) is equipped with the following crystal structure:
\begin{align*}
  &\widetilde{E}_i(b_1 \otimes b_2) :=
  \begin{cases}
    \widetilde{E}_i(b_1) \otimes b_2 & \text{ if } \varepsilon_i(b_1) > \varphi_i(b_2), \\
    b_1 \otimes \widetilde{E}_i(b_2) & \text{ if } \varepsilon_i(b_1) \leq \varphi_i(b_2),
  \end{cases} \\
  &\widetilde{F}_i(b_1 \otimes b_2) :=
  \begin{cases}
    b_1 \otimes \widetilde{F}_i(b_2) & \text{ if } \varepsilon_i(b_1) < \varphi_i(b_2), \\
    \widetilde{F}_i(b_1) \otimes b_2 & \text{ if } \varepsilon_i(b_1) \geq \varphi_i(b_2),
  \end{cases} \\
  &\mathsf{wt}(b_1 \otimes b_2) := \mathsf{wt}(b_1) + \mathsf{wt}(b_2),
\end{align*}
where
\[
  \varepsilon_i(b) := \max \{ k \in \mathbb{Z}_{\geq 0} \mid \widetilde{E}_i^k b \neq 0 \}, \quad \varphi_i(b) := \max \{ k \in \mathbb{Z}_{\geq 0} \mid \widetilde{F}_i^k b \neq 0 \}.
\]

\begin{rem}[Tensor product of crystals]
  Our tensor product of crystals is the same as \cite{BuSc17}, but opposite to \cite{KaNa94} and \cite{Lec03}.
\end{rem}

Let \(\mathcal{W}^{D_n} := \bigsqcup_{\leq 0} \mathcal{W}^{D_n}_l\) denote the free monoid of words on \(\mathcal{A}^{D_n}\), where \(\mathcal{W}^{D_n}_l := (\mathcal{A}^{D_n})^l\).
Define \(\mathcal{W}^{B_n}\) similarly.
They are equipped with a \(D_n\)-crystal and \(B_n\)-crystal structure, respectively by identifying \(\mathbf{a} = (a_l,\dots,a_1)\) with \(a_l \otimes \cdots \otimes a_1\).
The multiplication (concatenation) is denoted by \(*\):
\[
  (a_l,\dots,a_1) * (b_m,\dots,b_1) := (a_l,\dots,a_1,b_m,\dots,b_1).
\]
We often identify a letter \(a\) and the word \((a)\) of length \(1\).
In particular, we understand that
\[
  (a_l,\dots,a_1) * a = (a_l,\dots,a_1,a), \quad a*(a_l,\dots,a_1) = (a,a_l,\dots,a_1).
\]

When \(N \geq 4\), an \(\mathfrak{so}_N\)-crystal \(\mathcal{B}\) has the following natural structure of \(\mathfrak{so}_{N-2}\)-crystal:
\begin{align*}
  &\widetilde{E}'_j b := \widetilde{E}_{j+1} b, \quad \widetilde{F}'_j b := \widetilde{F}_{j+1} b, \\
  &\langle d'_j, \mathsf{wt}'(b) \rangle := \langle d_{j+1}, \mathsf{wt}(b)  \rangle.
\end{align*}
Here, we put the prime symbol on the structure maps for the crystal of type \(\mathfrak{so}_{N-2}\) in order to distinguish them from those for \(\mathfrak{so}_N\).
However, there is no natural structure of \(\mathfrak{so}_{N-1}\)-crystal on \(\mathcal{B}\).

\section{Kashiwara--Nakashima tableaux of type \(D\)}\label{sect_KNT_D}
In this section, we recall the notion of \emph{Kashiwara--Nakashima tableaux of type \(D_n\)}, which is a crystal of type \(D_n\) modelling the finite-dimensional simple \(\mathfrak{so}_{2n}\)-modules.
It was first defined in \cite{KaNa94}, but we employ slightly different description for our purpose.

\subsection{Columns}
Since a Kashiwara--Nakashima tableau is a finite sequence of Kashiwara--Nakashima columns, we first introduce the columns and related notions.

Let \(n \in \mathbb{Z}_{\geq 1}\), and recall from \S \ref{ssect_crystal} the \(D_n\)-crystals \(\mathcal{A}^{D_n}\) and \(\mathcal{W}^{D_n}\).

\begin{defi}[Vacant numbers and occupied numbers]\label{def_vac_occ}
  Let \(\mathbf{a} \in \mathcal{W}^{D_n}\) and \(a,b \in \{ 1,\dots,n \}\).
  \begin{enumerate}
    \item We say that \(a\) is \emph{vacant} in \(\mathbf{a}\) if neither \(a\) nor \(\overline{a}\) occurs in \(\mathbf{a}\):
    \[
      a \notin \mathbf{a} \text{ and } \overline{a} \notin \mathbf{a}.
    \]
    Let \(\mathsf{V}(\mathbf{a})\) denote the set of vacant numbers in \(\mathbf{a}\).
    \item We say that \(b\) is \emph{occupied} in \(\mathbf{a}\) if both \(b\) and \(\overline{b}\) occur in \(\mathbf{a}\):
    \[
      b \in \mathbf{a} \text{ and } \overline{b} \in \mathbf{a}.
    \]
    Let \(\mathsf{O}(\mathbf{a})\) denote the set of occupied numbers in \(\mathbf{a}\).
  \end{enumerate}
\end{defi}

\begin{defi}[Quasi-increasing words]
  A word \(\mathbf{a} = (a_l,\dots,a_1) \in \mathcal{W}^{D_n}\) is said to be \emph{quasi-increasing} (from right to left) if either \(a_r \prec a_{r+1}\) or \((a_r,a_{r+1}) \in \{ (n,\overline{n}),\ (\overline{n}, n) \}\) for all \(r \in \{ 1,\dots,l-1 \}\).
\end{defi}

A quasi-increasing word \(\mathbf{a} = (a_l,\dots,a_1)\) is usually represented by a one-column shaped tableau
\[
  \ytableausetup{boxsize=normal}
  \begin{ytableau}
    a_1 \\
    \vdots \\
    a_l
  \end{ytableau}.
\]
In this paper, we often describe a quasi-increasing word \(\mathbf{a}\) by two rows of beads by putting one bead on the \(a\)-th position (from the left) in the upper row if \(a \in \mathbf{a}\), and by putting one bead on the \(a\)-th position in the lower row if \(\overline{a} \in \mathbf{a}\).
For example, when \(n = 6\), the quasi-increasing word \((\overline{2},\overline{4},\overline{5},5,3,2)\) is described as Figure \ref{fig_quasi_inc_word}.
\begin{figure}[h]
  \begin{center}
    \begin{tikzpicture}
      \node at (1,0) {\(1\)};
      \node at (2,0) {\(2\)};
      \node at (3,0) {\(3\)};
      \node at (4,0) {\(4\)};
      \node at (5,0) {\(5\)};
      \node at (6,0) {\(6\)};
      \node[draw, circle] at (1,-0.75) {};
      \node[draw, circle, fill] at (2,-0.75) {};
      \node[draw, circle, fill] at (3,-0.75) {};
      \node[draw, circle] at (4,-0.75) {};
      \node[draw, circle, fill] at (5,-0.75) {};
      \node[draw, circle] at (6,-0.75) {};
      \node[draw, circle] at (6,-1.5) {};
      \node[draw, circle, fill] at (5,-1.5) {};
      \node[draw, circle, fill] at (4,-1.5) {};
      \node[draw, circle] at (3,-1.5) {};
      \node[draw, circle, fill] at (2,-1.5) {};
      \node[draw, circle] at (1,-1.5) {};
    \end{tikzpicture}
  \end{center}
  \caption{A quasi-increasing word}\label{fig_quasi_inc_word}
\end{figure}
Note that when \(n \in \mathsf{O}(\mathbf{a})\), we cannot describe \(\mathbf{a}\) correctly in this way since letters \(n\) and \(\overline{n}\) may alternate.

For each \(a \in \mathcal{A}^{D_n}\), set
\[
  |a| := \begin{cases}
    a & \text{ if } a \preceq n, \\
    b & \text{ if } a = \overline{b} \text{ for some } b \preceq n.
  \end{cases}
\]
Given a word \(\mathbf{a} = (a_l,\dots,a_1) \in \mathcal{W}^{D_n}\), we use the following notations:
\begin{itemize}
  \item \(|\mathbf{a}| := l\): the length of \(\mathbf{a}\),
  \item \(\mathbf{a}_{< a}\): the subword of \(\mathbf{a}\) consisting of \(b \in \mathcal{A}^{D_n}\) such that \(|b| < a\); the subwords \(\mathbf{a}_{\leq a}\), \(\mathbf{a}_{> a}\), \(\mathbf{a}_{\geq a}\) are defined similarly,
  \item \(\mathbf{a}^{\prec a}\): the subword of \(\mathbf{a}\) consisting of \(b \in \mathcal{A}^{D_n}\) such that \(b \prec a\); the subwords \(\mathbf{a}^{\preceq a}\), \(\mathbf{a}^{\succ a}\), \(\mathbf{a}^{\succeq a}\) are defined similarly,
  \item \(\mathbf{a}_n\): the subword of \(\mathbf{a}\) consisting of \(n, \overline{n}\).
\end{itemize}

\begin{ex}[Notations]
  Set \(n := 6\) and \(\mathbf{a} := (\overline{2},\overline{4},\overline{5},6,\overline{6},6,\overline{6},5,3,2)\).
  Then, we have
  \begin{itemize}
    \item \(|\mathbf{a}| = 10\),
    \item \(\mathbf{a}_{< 5} = (\overline{2},\overline{4},3,2)\),
    \item \(\mathbf{a}^{\prec 5} = (3,2)\),
    \item \(\mathbf{a}_n = (6,\overline{6},6,\overline{6})\).
  \end{itemize}
\end{ex}

\begin{defi}[Deep numbers and shallow numbers]\label{def_deep_shallow}
  Let \(\mathbf{a} \in \mathcal{W}^{D_n}\) and \(b \in \{ 1,\dots,n \}\).
  We say that \(b\) is \emph{deep} in \(\mathbf{a}\) if
  \[
    b \in \mathsf{O}(\mathbf{a}) \text{ and } |\mathbf{a}_{\leq b}| > b.
  \]
  Otherwise, we say that \(b\) is \emph{shallow} in \(\mathbf{a}\).
\end{defi}

\begin{ex}[Deep numbers and shallow numbers]
  In the quasi-increasing word of Figure \ref{fig_quasi_inc_word}, the number \(5\) is deep, and the others are shallow.
\end{ex}

\begin{defi}[Signs of quasi-increasing words]
  Let \(\mathbf{a} = (a_l,\dots,a_1) \in \mathcal{W}^{D_n}\) be a quasi-increasing word.
  \begin{enumerate}
    \item The \emph{sign} of \(\mathbf{a}\) is \(\mathsf{e}(\mathbf{a}) \in \{ +,-,0 \}\) defined by
    \[
      \mathsf{e}(\mathbf{a}) :=
      \begin{cases}
        + & \text{ if either \(a_r = n\) for some \(r \equiv n \pmod{2}\)}, \\
        & \text{ or \(a_r = \overline{n}\) for some \(r \not\equiv n \pmod{2}\)}, \\
        - & \text{ if either \(a_r = n\) for some \(r \not\equiv n \pmod{2}\)}, \\
        & \text{ or \(a_r = \overline{n}\) for some \(r \equiv n \pmod{2}\)}, \\
        0 & \text{ if } n \in \mathsf{V}(\mathbf{a}).
      \end{cases}
    \]
    \item The \emph{anti-sign} of \(\mathbf{a}\) is \(\overline{\mathsf{e}}(\mathbf{a}) \in \{ +,-,0 \}\) defined by
    \[
      \overline{\mathsf{e}}(\mathbf{a}) :=
      \begin{cases}
        + & \text{ if } \mathsf{e}(\mathbf{a}) = -, \\
        - & \text{ if } \mathsf{e}(\mathbf{a}) = +, \\
        0 & \text{ if } \mathsf{e}(\mathbf{a}) = 0.
      \end{cases}
    \]
  \end{enumerate}
\end{defi}

\begin{defi}[Kashiwara--Nakashima columns {\cite[(6.3.2)]{KaNa94}}]
  A word \(\mathbf{a} \in \mathcal{W}^{D_n}\) is said to be a \emph{Kashiwara--Nakashima column} if the following conditions are satisfied:
  \begin{description}
    \item[Length] \(|\mathbf{a}| \leq n\),
    \item[Quasi-increasing] \(\mathbf{a}\) is quasi-increasing,
    \item[Shallowness] every number is shallow in \(\mathbf{a}\), i.e.,
    \[
      |\mathbf{a}_{\leq b}| \leq b \quad \text{ for all } b \in \mathsf{O}(\mathbf{a}).
    \]
  \end{description}
  Let
  \[
    \mathsf{Col}^{D_n}_{l}
  \]
  denote the set of Kashiwara--Nakashima columns of type \(D_n\) and length \(l\), and set
  \[
    \mathsf{Col}^{D_n} := \bigsqcup_{l=0}^n \mathsf{Col}^{D_n}_{l}.
  \]
  Also for each \(e \in \{ +,-,0 \}\), set
  \[
    \mathsf{Col}^{D_n}_{l,e} := \{ \mathbf{a} \in \mathsf{Col}^{D_n}_{l} \mid \mathsf{e}(\mathbf{a}) = e \}.
  \]
\end{defi}

\begin{ex}[Kashiwara--Nakashima columns]
  Let \(n \geq 3\).
  The quasi-increasing words of length at most \(3\) that are not Kashiwara--Nakashima columns are the following:
  \begin{align*}
    \begin{ytableau}
      1 \\
      *(lightgray) 1
    \end{ytableau},\ 
    \begin{ytableau}
      1 \\
      a \\
      *(lightgray) 1
    \end{ytableau},\ 
    \begin{ytableau}
      1 \\
      2 \\
      *(lightgray) 2
    \end{ytableau},\ 
    \begin{ytableau}
      2 \\
      *(lightgray) 2 \\
      *(lightgray) 1
    \end{ytableau},
  \end{align*}
  for \(1 \prec a \prec \overline{1}\).
  Here, we represent a box with entry \(\overline{a}\) for some \(a \preceq n\) by a shaded box with entry \(a\).
\end{ex}

\begin{rem}[Crystal structure of columns]
  The columns \(\mathsf{Col}^{D_n}_{l}\) and \(\mathsf{Col}^{D_n}_{n,\pm}\) are subcrystals of \(\mathcal{W}^{D_n}\); see \cite[Propositions 6.3.2, 6.3.4]{KaNa94}.
\end{rem}

\begin{defi}[Spin columns {\cite[(6.4.6)]{KaNa94}}]
  A column \(\mathbf{a} = (a_n,\dots,a_1) \in \mathsf{Col}^{D_n}_{n}\) of length \(n\) is said to be a \emph{spin column} if
  \[
    \{ |a_n|,\dots,|a_1| \} = \{ 1,\dots,n \}.
  \]
  Let
  \[
    \mathsf{Col}^{D_n}_{\mathsf{spin}}
  \]
  denote the set of spin columns, and set
  \[
    \mathsf{Col}^{D_n}_{\mathsf{spin}, \pm} := \mathsf{Col}^{D_n}_{\mathsf{spin}} \cap \mathsf{Col}^{D_n}_{n, \pm}.
  \]
\end{defi}

\begin{rem}[Crystal structure of spin columns]
  The spin columns \(\mathsf{Col}^{D_n}_{\mathsf{spin},\pm}\) have structures of crystal of type \(D_n\), but they are not subcrystals of \(\mathcal{W}^{D_n}\); see \cite[\S 6.4]{KaNa94}.
\end{rem}

\subsection{Tableaux}
\begin{defi}[Adjacent condition {\cite[\S\S 6.5--6.6]{KaNa94}}]
  A pair of two Kashiwara--Nakashima columns \(\mathbf{a} = (a_k,\dots,a_1)\) and \(\mathbf{b} = (b_l,\dots,b_1)\) are said to satisfy the \emph{adjacent condition} if the following hold:
  \begin{description}
    \item[Length] \(|\mathbf{a}| \geq |\mathbf{b}| > 0\),
    \item[Weakly Increasing] \(a_r \preceq b_r\) for all \(1 \leq r \leq |\mathbf{b}|\),
    \item[\((a,b)\)-Configurations] \((s-r) + (u-t) < b-a\) for all \(1 \preceq a \preceq b \prec n\) and \(1 \leq r \leq s < t \leq u \leq |\mathbf{b}|\) satisfying one of the following:
    \begin{itemize}
      \item \(a_r = a\), \(b_s = b\), \(b_t = \overline{b}\), and \(b_u = \overline{a}\),
      \item \(a_r = a\), \(a_s = b\), \(a_t = \overline{b}\), and \(b_u = \overline{a}\),
    \end{itemize}
    \item[\((a,n)\)-Configurations] \(u-r-1 < n-a\) for all \(1 \preceq a \prec n\) and \(1 \leq r < s < u \leq |\mathbf{b}|\) satisfying one of the following:
    \begin{itemize}
      \item \(a_r = a\), \(b_s = n\), \(b_{s+1} = \overline{n}\), and \(b_u = \overline{a}\),
      \item \(a_r = a\), \(b_s = \overline{n}\), \(b_{s+1} = n\), and \(b_u = \overline{a}\),
      \item \(a_r = a\), \(a_s = n\), \(a_{s+1} = \overline{n}\), and \(b_u = \overline{a}\),
      \item \(a_r = a\), \(a_s = \overline{n}\), \(a_{s+1} = n\), and \(b_u = \overline{a}\),
    \end{itemize}
    \item[\((n,n)\)-Configurations] there are no \(1 \leq s < t \leq |\mathbf{b}|\) satisfying the one of the following:
    \begin{itemize}
      \item \(a_s = n\) and \(b_t = n\),
      \item \(a_s = n\) and \(b_t = \overline{n}\),
      \item \(a_s = \overline{n}\) and \(b_t = n\),
      \item \(a_s = \overline{n}\) and \(b_t = \overline{n}\),
    \end{itemize}
    \item[\(a\)-Odd-configurations] \(u-r < n-a\) for all \(1 \preceq a \prec n\) and \(1 \leq r \leq s < t \leq u \leq |\mathbf{b}|\) such that \(t-s+1\) is odd and they satisfy one of the following:
    \begin{itemize}
      \item \(a_r = a\), \(b_s = n\), \(a_t = \overline{n}\), and \(b_u = \overline{a}\),
      \item \(a_r = a\), \(b_s = \overline{n}\), \(a_t = n\), and \(b_u = \overline{a}\),
    \end{itemize}
    \item[\(a\)-Even-configurations] \(u-r < n-a\) for all \(1 \preceq a \prec n\) and \(1 \leq r \leq s < t \leq u \leq |\mathbf{b}|\) such that \(t-s+1\) is even and they satisfy one of the following:
    \begin{itemize}
      \item \(a_r = a\), \(b_s = n\), \(a_t = n\), and \(b_u = \overline{a}\),
      \item \(a_r = a\), \(b_s = \overline{n}\), \(a_t = \overline{n}\), and \(b_u = \overline{a}\),
    \end{itemize}
  \end{description}
  We write \(\mathbf{a} \triangleleft \mathbf{b}\) to mean that \((\mathbf{a}, \mathbf{b})\) satisfy the adjacent condition.
\end{defi}

\begin{defi}[Kashiwara--Nakashima tableaux {\cite[\S 6.7]{KaNa94}}]
  Let \(\lambda \in X^+_{2n}\).
  A \emph{Kashiwara--Nakashima tableau} of type \(D_n\) and shape \(\lambda\) is a finite sequence \(T = (\mathbf{a}^1,\dots,\mathbf{a}^m)\) of Kashiwara--Nakashima columns satisfying the following:
  \begin{itemize}
    \item \(\mathbf{a}^2,\dots,\mathbf{a}^m \in \mathsf{Col}^{D_n}\),
    \item \(\mathbf{a}^1 \in \mathsf{Col}^{D_n}_{\mathsf{spin}}\) if \(\mathsf{s}(\lambda) = \frac{1}{2}\),
    \item \(\mathbf{a}^1 \triangleleft \cdots \triangleleft \mathbf{a}^m\).
    \item \((|\mathbf{a}^1|,\dots,|\mathbf{a}^m|) = \mathsf{par}(\lambda)'\),
    \item \(\mathsf{e}(\mathbf{a}^r) = \mathsf{e}(\lambda)\) for all \(r \in \{ 1,\dots,m \}\) such that \(|\mathbf{a}^r| = n\).
  \end{itemize}
  see \S \ref{ssect_part} for notations concerning partitions.
  Let
  \[
    \mathsf{KNT}^{D_n}(\lambda)
  \]
  denote the set of Kashiwara--Nakashima tableaux of type \(D_n\) and shape \(\lambda\), and set
  \[
    \mathsf{KNT}^{D_n}_{0} := \bigsqcup_{\lambda \in X^+_{2n,0}} \mathsf{KNT}^{D_n}(\lambda).
  \]
  For each \(T \in \mathsf{KNT}^{D_n}(\lambda)\), set
  \[
    \mathsf{sh}(T) := \lambda,
  \]
  and call it the \emph{shape} of \(T\).
\end{defi}

\begin{rem}[Crystal structure of tableaux]\label{rem_crystal_knt_D}
  The tableaux \(\mathsf{KNT}^{D_n}(\lambda)\) is a subcrystal of a tensor product of columns; a tableau \(T = (\mathbf{a}^1,\dots,\mathbf{a}^m)\) is identified with \(\mathbf{a}^1 \otimes \dots \otimes \mathbf{a}^m\).
\end{rem}

\subsection{Column-insertion}\label{ssect_col_ins}
Each connected component of the crystal \(\mathcal{W}^{D_n}\) is isomorphic to some \(\mathsf{KNT}^{D_n}(\lambda)\) with \(\lambda \in X^+_{2n,0}\) (\cite[\S 2.2]{Lec03}).
For each \(\mathbf{a} \in \mathcal{W}^{D_n}\), let \(\mathsf{P}(\mathbf{a})\) denote the corresponding tableau, and \(\mathsf{Q}(\mathbf{a})\) the connected component containing \(\mathbf{a}\).
The connected components of \(\mathcal{W}^{D_n}\) are parametrized by \emph{oscillating tableaux} (\cite[Definition 3.4.1]{Lec03}).
Hence, let \(\mathsf{OT}(\lambda)\) denote the set of connected components of \(\mathcal{W}^{D_n}\) isomorphic to \(\mathsf{KNT}^{D_n}(\lambda)\).
Then, the assignment \(\mathbf{a} \mapsto (\mathsf{P}(\mathbf{a}), \mathsf{Q}(\mathbf{a}))\) gives rise to an isomorphism
\[
  \mathsf{LRS} \colon \mathcal{W}^{D_n} \to \bigsqcup_{\lambda \in X^+_{2n,0}} \mathsf{KNT}^{D_n}(\lambda) \times \mathsf{OT}(\lambda)
\]
of crystals; the crystal structure of the right hand-side is given by only the first factors.
We call this isomorphism the \emph{Lecouvey--Robinson--Schensted correspondence}.
An algorithm calculating \(\mathsf{LRS}(\mathbf{a})\) can be found in \cite{Lec03}.

For each \(\lambda \in X^+_{2n,0}\), define a map
\[
  \mathsf{w}_\mathsf{col} \colon \mathsf{KNT}^{D_n}(\lambda) \to \mathcal{W}^{D_n};\ T = (\mathbf{a}^1,\dots,\mathbf{a}^m) \mapsto \mathbf{a}^1 * \cdots * \mathbf{a}^m.
\]
Clearly, it is an injective crystal morphism.

Consider the composite
\[
  * \colon \mathsf{KNT}^{D_n}_{0} \otimes \mathsf{KNT}^{D_n}_{0} \xrightarrow{\mathsf{w}_\mathsf{col} \otimes \mathsf{w}_\mathsf{col}} \mathcal{W}^{D_n} \otimes \mathcal{W}^{D_n} \xrightarrow{*} \mathcal{W}^{D_n} \xrightarrow{\mathsf{P}} \mathsf{KNT}^{D_n}_{0}
\]
of crystal morphisms.
In this way, we can transform two tableaux \((T,S)\) into a new one \(T*S\) while preserving the crystal structure.

\section{Manipulation of tableaux of type \(D\)}\label{sect_manipulation_D}
In this section, we introduce the notion of \emph{Kashiwara--Nakashima tableaux of type \(D\mathrm{II}_n\)} and various manipulations.
They will be related to representation theory of quantum symmetric pairs of type \(D\mathrm{II}_n\) in \S\ref{sect_qsp_DII}.

\subsection{Columns}
Since a Kashiwara--Nakashima tableau is a sequence of columns, we first focus on columns.

\begin{prop}[\(1\) is not occupied]\label{prop_non_occ_1_kn_col_d}
  Let \(\mathbf{a} \in \mathsf{Col}^{D_n}\).
  Then, we have
  \[
    1 \notin \mathsf{O}(\mathbf{a}).
  \]
\end{prop}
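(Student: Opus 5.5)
We argue by contradiction, using only the \textbf{Shallowness} condition in the definition of Kashiwara--Nakashima columns. Suppose \(1 \in \mathsf{O}(\mathbf{a})\), so both letters \(1\) and \(\overline{1}\) occur in \(\mathbf{a}\). The plan is to show that number \(1\) is then deep, which the definition forbids.

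First, compute \(|\mathbf{a}_{\leq 1}|\). By definition, \(\mathbf{a}_{\leq 1}\) is the subword of letters \(b\) with \(|b| \leq 1\), that is, the occurrences of \(1\) and \(\overline{1}\). Since both letters occur, \(|\mathbf{a}_{\leq 1}| \geq 2 > 1\). (Quasi-increasingness in fact forces each to occur exactly once, but only the lower bound is needed.)

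The Shallowness condition requires \(|\mathbf{a}_{\leq b}| \leq b\) for every \(b \in \mathsf{O}(\mathbf{a})\). Taking \(b = 1\) gives \(|\mathbf{a}_{\leq 1}| \leq 1\), contradicting the bound above. Hence \(1 \notin \mathsf{O}(\mathbf{a})\).

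One edge case needs attention. When \(n = 1\), the letters \(1 = n\) and \(\overline{1} = \overline{n}\) are incomparable, but the argument never uses the ordering, so it still applies unchanged.
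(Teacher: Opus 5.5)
Your proposal is correct and follows the same approach as the paper: if \(1 \in \mathsf{O}(\mathbf{a})\), then \(|\mathbf{a}_{\leq 1}| > 1\), which violates the Shallowness condition at \(b = 1\). Using the lower bound \(|\mathbf{a}_{\leq 1}| \geq 2\) instead of the paper's equality \(|\mathbf{a}_{\leq 1}| = 2\) is, if anything, slightly more careful. In particular, it sidesteps the \(n = 1\) case, where \(1 = n\) and \(\overline{1} = \overline{n}\) may alternate in a quasi-increasing word.
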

\begin{proof}
  Let us prove the contraposition.
  If \(1 \in \mathsf{O}(\mathbf{a})\), then we have
  \[
    |\mathbf{a}_{\leq 1}| = 2 > 1.
  \]
  This implies that \(\mathbf{a} \notin \mathsf{Col}^{D_n}\).
  Thus, we complete the proof.
\end{proof}

\begin{defi}[Kashiwara--Nakashima columns of type \(D\mathrm{II}\)]\label{def_col_DII}
  A column \(\mathbf{a} \in \mathsf{Col}^{D_n}\) is said to be of \emph{type \(D\mathrm{II}_n\)} if it satisfies the following:
  \begin{description}
    \item[Length] \(|\mathbf{a}| < n\),
    \item[Exclusion of \(1\)] \(1 \notin \mathbf{a}\),
    \item[Strict Shallowness] \(|(\mathbf{a}_{> 1})_{\leq b}| < b\) for all \(b \in \mathsf{O}(\mathbf{a})\).
  \end{description}

  The set of Kashiwara--Nakashima columns of type \(D\mathrm{II}_n\) is denoted by
  \[
    \mathsf{Col}^{D\mathrm{II}_n}.
  \]
  For each \(l \in \mathbb{Z}_{\geq 0}\), set
  \[
    \mathsf{Col}^{D\mathrm{II}_n}_{l} := \mathsf{Col}^{D\mathrm{II}_n} \cap \mathsf{Col}^{D_n}_{l}.
  \]
\end{defi}

\begin{ex}[Kashiwara--Nakashima columns of type \(D\mathrm{II}\)]
  Let \(n \geq 4\).
  The Kashiwara--Nakashima columns of type \(D_n\) of length at most \(3\) that are not of type \(D\mathrm{II}_n\) are the following:
  \begin{align*}
    \begin{ytableau}
      1
    \end{ytableau},\ 
    \begin{ytableau}
      1 \\
      a
    \end{ytableau},\ 
    \begin{ytableau}
      2 \\
      *(lightgray) 2
    \end{ytableau},\ 
    \begin{ytableau}
      1 \\
      b \\
      c
    \end{ytableau},\ 
    \begin{ytableau}
      2 \\
      d \\
      *(lightgray) 2
    \end{ytableau},
  \end{align*}
  for \(1 \prec a \prec \overline{1}\), \(1 \prec b \not\succeq c \prec \overline{1}\), \(2 \prec d \prec \overline{2}\).
\end{ex}

\begin{prop}[Entry \(\overline{1}\)]\label{prop_entry_1b_DII}
  Let \(\mathbf{a} \in \mathsf{Col}^{D_n}\).
  If \(|\mathbf{a}| < n\) and \(\overline{1} \in \mathbf{a}\), then \(\mathbf{a} \in \mathsf{Col}^{D\mathrm{II}_n}\).
\end{prop}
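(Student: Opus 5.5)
We check the three conditions of Definition \ref{def_col_DII} in turn. The length condition \(|\mathbf{a}| < n\) is assumed. For the exclusion of \(1\), we use Proposition \ref{prop_non_occ_1_kn_col_d}: since \(\overline{1} \in \mathbf{a}\), if also \(1 \in \mathbf{a}\) then \(1 \in \mathsf{O}(\mathbf{a})\), which that proposition forbids. Hence \(1 \notin \mathbf{a}\).

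The substantive step is strict shallowness. Fix \(b \in \mathsf{O}(\mathbf{a})\). The shallowness condition for \(\mathbf{a} \in \mathsf{Col}^{D_n}\) gives \(|\mathbf{a}_{\leq b}| \leq b\). Since \(b \geq 1\) and \(|\overline{1}| = 1 \leq b\), the letter \(\overline{1}\) belongs to \(\mathbf{a}_{\leq b}\) but not to \((\mathbf{a}_{>1})_{\leq b}\). Moreover \(\mathbf{a}\) is quasi-increasing, so the only letters \(c\) with \(|c| = 1\) are \(1\) and \(\overline{1}\), each occurring at most once. We have already shown \(1 \notin \mathbf{a}\), so \(\mathbf{a}_{\leq b}\) is exactly \((\mathbf{a}_{>1})_{\leq b}\) together with the single letter \(\overline{1}\). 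Therefore
\[
  |(\mathbf{a}_{>1})_{\leq b}| = |\mathbf{a}_{\leq b}| - 1 \leq b - 1 < b,
\]
which is the required strict inequality. (If \(b = n\) we note that \(n\) and \(\overline{n}\) may alternate, but this does not affect the count of letters with absolute value \(1\).)

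The only point needing care is this bookkeeping. A quasi-increasing word has no repeated letters other than possibly alternating \(n,\overline{n}\). So removing the letters of absolute value \(1\) lowers the count by exactly one, and \(\overline{1}\) itself is present by hypothesis. There is no real obstacle beyond this.
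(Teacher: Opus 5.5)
Your proof is correct and follows essentially the same route as the paper's: you rule out \(1\) by Proposition \ref{prop_non_occ_1_kn_col_d}, then get \(|(\mathbf{a}_{>1})_{\leq b}| = |\mathbf{a}_{\leq b}| - 1 \leq b-1 < b\) from shallowness. Your extra bookkeeping, that quasi-increasing words do not repeat letters of absolute value \(1\), just spells out the equality the paper takes for granted.
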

\begin{proof}
  By Proposition \ref{prop_non_occ_1_kn_col_d}, the assumption \(\overline{1} \in \mathbf{a}\) implies that
  \[
    1 \notin \mathbf{a}.
  \]
  For each \(b \in \mathsf{O}(\mathbf{a})\), we have
  \[
    |(\mathbf{a}_{> 1})_{\leq b}| = |\mathbf{a}_{\leq b}|-1 \leq b-1 < b;
  \]
  the first inequality follows from the assumption that \(\mathbf{a} \in \mathsf{Col}^{D_n}\).
  Hence, the assertion follows.
\end{proof}

\subsection{Connecting Maps}
The aim of this subsection is to define the connecting maps (Definition \ref{def_conn_map_DII}) and to prove its bijectivity (Theorem \ref{thm_bij_conn_map_DII}).
The connecting maps relate columns of different length.
A representation theoretic interpretation will be given in \S\ref{ssect_conn_hom_D}.

For a quasi-increasing word \(\mathbf{a}\), we use the following notations:
\begin{itemize}
  \item \(\mathbf{a} \Leftarrow a\): the unique quasi-increasing word of length \(|\mathbf{a}|+2\) obtained from \(\mathbf{a}\) by inserting \(a\) and \(\overline{a}\) in suitable positions (if \(a \in \mathsf{V}(\mathbf{a}) \setminus \{ n \}\)),
  \item \(\mathbf{a} \Leftarrow_{\pm} n\): the unique quasi-increasing word of length \(|\mathbf{a}|+2\) and sign \(\pm\) obtained from \(\mathbf{a}\) by inserting one \(n\) and one \(\overline{n}\) in suitable positions (if possible),
  \item \(\mathbf{a} \Rightarrow b\): the subword of \(\mathbf{a}\) consisting of letters other than \(b,\overline{b}\) (if \(b \in \mathsf{O}(\mathbf{a}) \setminus \{ n \}\)),
  \item \(\mathbf{a} \Rightarrow n\): the subword of \(\mathbf{a}\) obtained from \(\mathbf{a}\) by removing the first two letters (read from right to left) in \(\mathbf{a}_n\) (if \(n \in \mathsf{O}(\mathbf{a})\)),
\end{itemize}
The transformation \(\mathbf{a} \mapsto \mathbf{a} \Leftarrow_\pm n\) is possible for example when \(n \in \mathsf{V}(\mathbf{a})\) or \(\mathsf{e}(\mathbf{a}) = \pm\).

\begin{defi}[Weakly deep numbers]
  Let \(\mathbf{a} \in \mathcal{W}^{D_n}\) be a quasi-increasing word.
  We say that \(b \in \mathsf{O}(\mathbf{a})\) is \emph{weakly deep} if
  \[
    |\mathbf{a}_{\leq b}| \geq b.
  \]
  Let
  \[
    \mathsf{WD}(\mathbf{a})
  \]
  denote the set of weakly deep numbers in \(\mathbf{a}\).
  When \(\mathsf{WD}(\mathbf{a}) \neq \emptyset\), the \emph{least weakly deep number} is denoted by \(\mathsf{lwd}(\mathbf{a})\):
  \[
    \mathsf{lwd}(\mathbf{a}) := \min \mathsf{WD}(\mathbf{a}).
  \]
\end{defi}

\begin{lem}[Existence of weakly deep numbers]\label{lem_exist_wd_D}
  Let \(\mathbf{a} \in \mathsf{Col}^{D_n} \setminus \mathsf{Col}^{D\mathrm{II}_n}\).
  If \(1 \in \mathsf{V}(\mathbf{a})\), then
  \[
    \mathsf{WD}(\mathbf{a}) \neq \emptyset.
  \]
\end{lem}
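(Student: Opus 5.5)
Since \(\mathbf{a} \in \mathsf{Col}^{D_n}\) but \(\mathbf{a} \notin \mathsf{Col}^{D\mathrm{II}_n}\), at least one of the three conditions of Definition \ref{def_col_DII} fails. The hypothesis \(1 \in \mathsf{V}(\mathbf{a})\) gives \(1 \notin \mathbf{a}\), so the condition ``Exclusion of \(1\)'' holds. Hence either \(|\mathbf{a}| \geq n\) or strict shallowness fails, and I will treat these two cases separately. In both cases I use the identity \(\mathbf{a}_{>1} = \mathbf{a}\), which holds because neither \(1\) nor \(\overline{1}\) occurs in \(\mathbf{a}\). Thus \(|(\mathbf{a}_{>1})_{\leq b}| = |\mathbf{a}_{\leq b}|\) for every \(b\).

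\emph{Strict shallowness fails.} Then there is some \(b \in \mathsf{O}(\mathbf{a})\) with \(|(\mathbf{a}_{>1})_{\leq b}| \geq b\). By the identity above this reads \(|\mathbf{a}_{\leq b}| \geq b\). Since \(b\) is occupied, \(b \in \mathsf{WD}(\mathbf{a})\), and we are done.

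\emph{\(|\mathbf{a}| \geq n\).} Because \(\mathbf{a} \in \mathsf{Col}^{D_n}\), the length condition gives \(|\mathbf{a}| = n\). The entries of \(\mathbf{a}\) have absolute values in \(\{2,\dots,n\}\), a set with only \(n-1\) elements. I intend to show that \(n\) itself is weakly deep, using a counting argument over the number \(b=n\). First, \(|\mathbf{a}_{\leq n}| = |\mathbf{a}| = n \geq n\), so the size inequality holds automatically. It remains to check that \(n \in \mathsf{O}(\mathbf{a})\).

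\emph{Showing \(n \in \mathsf{O}(\mathbf{a})\).} This is the main step. By pigeonhole, \(n\) letters with absolute values in an \((n-1)\)-element set force some absolute value \(c\) to occur at least twice. I will argue that such a repeated value must be \(c=n\). Quasi-increasingness means that for \(c<n\) the letters \(c\) and \(\overline{c}\) each appear at most once, since consecutive letters strictly increase except for the \(n,\overline{n}\) alternations. So a repeated \(c<n\) would mean \(c\) is occupied.

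If some \(c<n\) is occupied, shallowness gives \(|\mathbf{a}_{\leq c}| \leq c\). But \(|\mathbf{a}_{\leq c}| \geq c\) directly says \(c \in \mathsf{WD}(\mathbf{a})\), so I will try to force this inequality. The number of letters with absolute value greater than \(c\) is at most the count of non-\(n\) values in \((c,n)\), each appearing at most twice, plus the number of \(n\)-letters. This bound is subtle because of those \(n\)-letters, so the cleaner route is to pick the largest repeated value.

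Concretely, if \(n\) is occupied we are done immediately, since \(|\mathbf{a}_{\leq n}| = n\). Otherwise \(\mathbf{a}_n\) has length at most \(1\). Then let \(c\) be the largest occupied number, which exists by pigeonhole. Each value in \((c,n]\) is used at most once, so \(|\mathbf{a}_{>c}| \leq n-c\). This gives \(|\mathbf{a}_{\leq c}| \geq n-(n-c) = c\), so \(c \in \mathsf{WD}(\mathbf{a})\).

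I expect this last step to be the only delicate point: the possible alternating \(n,\overline{n}\) strings in \(\mathbf{a}_n\). Splitting on whether \(n \in \mathsf{O}(\mathbf{a})\) handles it.
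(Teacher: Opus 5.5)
Your proof is correct. Its strict-shallowness case is exactly the paper's entire argument: since $1$ is vacant, $\mathbf{a}_{>1}=\mathbf{a}$, so an occupied $b$ violating strict shallowness satisfies $|\mathbf{a}_{\leq b}|\geq b$ and is weakly deep.

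The paper stops there. It passes directly from $\mathbf{a}\notin\mathsf{Col}^{D\mathrm{II}_n}$ to a failure of strict shallowness, and never considers the possibility that only the Length condition fails. Your separate treatment of $|\mathbf{a}|=n$ fills that gap, and the case is genuinely needed. Proposition \ref{prop_rm_lwd_DII}, and through it the inverse of $\delta_{n-1,\pm}$ in Theorem \ref{thm_bij_conn_map_DII}, applies this lemma to columns of length $n$.

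Your pigeonhole argument shows that when $1$ is vacant and $|\mathbf{a}|=n$, strict shallowness fails anyway. Either $n$ is occupied and $|\mathbf{a}_{\leq n}|=n$, or the largest occupied $c<n$ has $|\mathbf{a}_{>c}|\leq n-c$. So the paper's conclusion stands, but yours is the complete proof. The vacancy of $1$ really matters here: for $n=2$ the column $(\overline{1},2)$ violates only the Length condition and has no occupied numbers.

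One correction to the exposition. The ``main step'' you announce, showing $n\in\mathsf{O}(\mathbf{a})$, is false in general. For $n=3$, the column $(\overline{2},3,2)$ has length $3$, $1$ vacant, and $3$ not occupied; here $\mathsf{WD}(\mathbf{a})=\{2\}$. What actually proves the claim is the case split in your last paragraph, so state that split directly and drop the detour.

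Also add the one-line reason that $|\mathbf{a}_n|\leq 1$ when $n\notin\mathsf{O}(\mathbf{a})$. The letters of $\mathbf{a}_n$ form a consecutive block of $\mathbf{a}$. By quasi-increasingness, neighbours in that block must be $(n,\overline{n})$ or $(\overline{n},n)$. Hence $|\mathbf{a}_n|\geq 2$ forces both $n$ and $\overline{n}$ to occur.
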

\begin{proof}
  By the assumption that \(1 \in \mathsf{V}(\mathbf{a})\), we have \(\mathbf{a}_{> 1} = \mathbf{a}\).
  Since \(\mathbf{a} \notin \mathsf{Col}^{D\mathrm{II}_n}\), there exists \(b \in \mathbf{O}(\mathbf{a})\) such that \(|\mathbf{a}_{\leq b}| = |(\mathbf{a}_{> 1})_{\leq b}| \geq b\).
  This implies that \(b \in \mathsf{WD}(\mathbf{a})\).
  Hence, the assertion follows.
\end{proof}

\begin{prop}[Inserting \(1\)]\label{prop_ins_1_DII}
  Let \(l \in \{ 0,\dots,n-1 \}\) and \(\mathbf{a} \in \mathsf{Col}^{D_n}_{l}\).
  If \(1 \in \mathsf{V}(\mathbf{a})\), then the following hold\textup{:}
  \begin{enumerate}
    \item\label{item_nonDII_ins_1_DII} If \(\mathbf{a} \notin \mathsf{Col}^{D\mathrm{II}_n}_{l}\), then \(\mathbf{a}*1 \notin \mathsf{Col}^{D_n}_{l+1}\),
    \item\label{item_DII_ins_1_DII} If \(\mathbf{a} \in \mathsf{Col}^{D\mathrm{II}_n}_{l}\), then \(\mathbf{a}*1 \in \mathsf{Col}^{D_n}_{l+1, \overline{\mathsf{e}}(\mathbf{a})} \setminus \mathsf{Col}^{D\mathrm{II}_n}_{l+1}\),
    \item\label{item_inv_ins_1_DII} \((\mathbf{a}*1)_{> 1} = \mathbf{a}\).
  \end{enumerate}
\end{prop}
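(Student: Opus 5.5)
The plan is to unwind the definitions directly. The key observation is that appending \(1\) on the right of \(\mathbf{a}=(a_l,\dots,a_1)\) produces \(\mathbf{a}*1=(a_l,\dots,a_1,1)\). In the column picture this places \(1\) on top and shifts every old index \(r\) to \(r+1\).

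First I will check that \(\mathbf{a}*1\) is quasi-increasing of length \(l+1\leq n\). Since \(1\in\mathsf{V}(\mathbf{a})\), every letter of \(\mathbf{a}\) is \(\succ 1\), so the new adjacent pair at the bottom satisfies \(1\prec a_1\), and the other pairs are unchanged. Item (\ref{item_inv_ins_1_DII}) is then immediate: the only letter \(b\) of \(\mathbf{a}*1\) with \(|b|\leq 1\) is the inserted \(1\), because \(1\) is vacant in \(\mathbf{a}\); deleting it returns \(\mathbf{a}\).

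The core step is to compare shallowness of \(\mathbf{a}*1\) with strict shallowness of \(\mathbf{a}\). By Proposition \ref{prop_non_occ_1_kn_col_d}, or simply because \(\overline{1}\notin\mathbf{a}*1\), we have \(\mathsf{O}(\mathbf{a}*1)=\mathsf{O}(\mathbf{a})\). For each \(b\) in this set, \(|(\mathbf{a}*1)_{\leq b}|=|\mathbf{a}_{\leq b}|+1\). Moreover \(\mathbf{a}_{>1}=\mathbf{a}\), since \(1\) is vacant. Hence the condition \(|(\mathbf{a}*1)_{\leq b}|\leq b\) for all \(b\in\mathsf{O}(\mathbf{a}*1)\) holds if and only if \(|(\mathbf{a}_{>1})_{\leq b}|<b\) for all \(b\in\mathsf{O}(\mathbf{a})\). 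In other words, \(\mathbf{a}*1\in\mathsf{Col}^{D_n}_{l+1}\) exactly when \(\mathbf{a}\) satisfies the Strict Shallowness condition.

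For (\ref{item_nonDII_ins_1_DII}), suppose \(\mathbf{a}\notin\mathsf{Col}^{D\mathrm{II}_n}\). The Length condition holds because \(l<n\), and the Exclusion of \(1\) holds because \(1\) is vacant. So Strict Shallowness must fail, and by the equivalence above \(\mathbf{a}*1\notin\mathsf{Col}^{D_n}_{l+1}\).

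For (\ref{item_DII_ins_1_DII}), the equivalence gives \(\mathbf{a}*1\in\mathsf{Col}^{D_n}_{l+1}\). This column contains \(1\), so it violates the Exclusion of \(1\) and is not of type \(D\mathrm{II}_n\).

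It remains to compute the sign, which is the only mildly delicate point. If \(n\in\mathsf{V}(\mathbf{a})\), then also \(n\in\mathsf{V}(\mathbf{a}*1)\), and both signs are \(0=\overline{\mathsf{e}}(\mathbf{a})\). Otherwise, a letter \(n\) or \(\overline{n}\) at position \(r\) in \(\mathbf{a}\) sits at position \(r+1\) in \(\mathbf{a}*1\). The parity test \(r\equiv n \pmod 2\) in the definition of the sign is therefore reversed, so \(\mathsf{e}(\mathbf{a}*1)=\overline{\mathsf{e}}(\mathbf{a})\).

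I expect no real obstacle. The only things to watch are the indexing convention, namely that \(*1\) puts \(1\) at index \(1\), and the sign's dependence on positions.
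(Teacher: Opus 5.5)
Your proof is correct and is essentially the paper's argument. Both rest on \(\mathsf{O}(\mathbf{a}*1)=\mathsf{O}(\mathbf{a})\) and \(|(\mathbf{a}*1)_{\leq b}|=|\mathbf{a}_{\leq b}|+1\); for item (1) the paper cites Lemma~\ref{lem_exist_wd_D} to get a weakly deep \(b\), while you derive the failure of Strict Shallowness inline, which is that lemma's proof.

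One caveat, shared with the paper: your sign step \(n\in\mathsf{V}(\mathbf{a})\Rightarrow n\in\mathsf{V}(\mathbf{a}*1)\) tacitly needs \(n\geq 2\). For \(n=1\) the inserted letter is \(n\) itself, and the sign claim in item (2) fails: \(\mathsf{e}((1))=+\) while \(\overline{\mathsf{e}}\) of the empty column is \(0\).
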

\begin{proof}
  Clearly, we have \(\mathsf{O}(\mathbf{a}*1) = \mathsf{O}(\mathbf{a})\) and
  \[
    |(\mathbf{a}*1)_{\leq b}| = |\mathbf{a}_{\leq b}|+1 \quad \text{ for all } b \in \mathsf{O}(\mathbf{a}).
  \]
  \begin{enumerate}
    \item By Lemma \ref{lem_exist_wd_D}, there exists \(b \in \mathsf{WD}(\mathbf{a})\).
    Then, we have \(|(\mathbf{a}*1)_{\leq b}| \geq b+1\).
    This implies that \(\mathbf{a}*1 \notin \mathsf{Col}^{D_n}\).
    \item By the assumption, we have \(|(\mathbf{a}*1)_{\leq b}| < b+1\) for all \(b \in \mathsf{O}(\mathbf{a}*1)\).
    This implies that \(\mathbf{a}*1 \in \mathsf{Col}^{D_n}_{l+1}\).
    It is clear that \(\mathsf{e}(\mathbf{a}*1) = \overline{\mathsf{e}}(\mathbf{a})\).
    Also, we have \(\mathbf{a}*1 \notin \mathsf{Col}^{D\mathrm{II}_n}\) because \(1 \in \mathbf{a}*1\).
    \item The assertion is obvious.
  \end{enumerate}
\end{proof}

\begin{prop}[Inserting \(\overline{1}\)]\label{prop_ins_1b_DII}
  Let \(\mathbf{a} \in \mathsf{Col}^{D_n}_{n-1}\).
  If \(1 \in \mathsf{V}(\mathbf{a})\), then the following hold\textup{:}
  \begin{enumerate}
    \item\label{item_nonDII_ins_1b_DII} If \(\mathbf{a} \notin \mathsf{Col}^{D\mathrm{II}_n}_{n-1}\), then \(\overline{1}*\mathbf{a} \notin \mathsf{Col}^{D_n}_{n}\),
    \item\label{item_DII_ins_1b_DII} If \(\mathbf{a} \in \mathsf{Col}^{D\mathrm{II}_n}_{n-1}\), then \(\overline{1}*\mathbf{a} \in \mathsf{Col}^{D_n}_{n, \mathsf{e}(\mathbf{a})}\),
    \item\label{item_inv_ins_1b_DII} \((\overline{1}*\mathbf{a})_{> 1} = \mathbf{a}\).
  \end{enumerate}
\end{prop}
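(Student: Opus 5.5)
The plan mirrors the proof of Proposition \ref{prop_ins_1_DII}. Write \(\mathbf{a} = (a_{n-1},\dots,a_1)\). Since \(1 \in \mathsf{V}(\mathbf{a})\), every letter of \(\mathbf{a}\) satisfies \(a_r \prec \overline{1}\). Hence \(\overline{1}*\mathbf{a} = (\overline{1},a_{n-1},\dots,a_1)\) is quasi-increasing, and its length is exactly \(n\). Inserting \(\overline{1}\) does not create a new occupied number, because \(1 \notin \mathbf{a}\). So \(\mathsf{O}(\overline{1}*\mathbf{a}) = \mathsf{O}(\mathbf{a})\), and for every \(b \in \mathsf{O}(\mathbf{a})\) (necessarily \(b \geq 2\)) we have
\[
  |(\overline{1}*\mathbf{a})_{\leq b}| = |\mathbf{a}_{\leq b}| + 1 .
\]
Item (\ref{item_inv_ins_1b_DII}) is then immediate: removing the letters with absolute value \(1\) from \(\overline{1}*\mathbf{a}\) deletes only the new top letter \(\overline{1}\), and \(\mathbf{a}_{>1} = \mathbf{a}\).

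For (\ref{item_nonDII_ins_1b_DII}), the length condition \(|\mathbf{a}| = n-1 < n\) holds, and \(1 \notin \mathbf{a}\). So the failure of \(\mathbf{a} \in \mathsf{Col}^{D\mathrm{II}_n}\) must come from the strict shallowness condition, exactly as in Lemma \ref{lem_exist_wd_D}. Thus there is \(b \in \mathsf{WD}(\mathbf{a})\), that is, \(|\mathbf{a}_{\leq b}| \geq b\). The displayed identity then gives \(|(\overline{1}*\mathbf{a})_{\leq b}| \geq b+1 > b\) with \(b\) occupied. This violates shallowness, so \(\overline{1}*\mathbf{a} \notin \mathsf{Col}^{D_n}_n\).

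For (\ref{item_DII_ins_1b_DII}), strict shallowness of \(\mathbf{a}\) gives \(|\mathbf{a}_{\leq b}| \leq b-1\) for every \(b \in \mathsf{O}(\mathbf{a})\). Hence \(|(\overline{1}*\mathbf{a})_{\leq b}| \leq b\), so \(\overline{1}*\mathbf{a}\) is a Kashiwara--Nakashima column of length \(n\).

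The only genuinely new point, compared with Proposition \ref{prop_ins_1_DII}, is the sign. Prepending \(\overline{1}\) at position \(n\) (reading right to left) does not change the positions \(r\) of the letters \(n,\overline{n}\) among \(a_1,\dots,a_{n-1}\). In Proposition \ref{prop_ins_1_DII}, by contrast, appending \(1\) at the bottom shifted every index by one and so flipped the sign. If \(n\) or \(\overline{n}\) occurs in \(\mathbf{a}\), the parity conditions defining \(\mathsf{e}\) are therefore evaluated at the same \(r\) with the same \(n\), and \(\mathsf{e}(\overline{1}*\mathbf{a}) = \mathsf{e}(\mathbf{a})\).

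The expected obstacle is the degenerate case \(n \in \mathsf{V}(\mathbf{a})\). Then \(\mathsf{e}(\mathbf{a}) = 0\), while a length-\(n\) column with \(n\) vacant would have \(\mathsf{e} = 0\), and \(\mathsf{Col}^{D_n}_{n,0}\) should be shown to match this. I would handle it by counting. A column of length \(n\) with \(1,n\) both vacant uses only the \(n-2\) absolute values \(2,\dots,n-1\). By pigeonhole, at least two of these are occupied. Let \(b\) be the largest occupied one. Every letter then has absolute value at most \(n-1\), so \(|(\overline{1}*\mathbf{a})_{\leq b}|\) counts all \(n\) letters except those with absolute value in \((b,n-1]\). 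There are at most \(n-1-b\) of these, since none is occupied. Hence \(|(\overline{1}*\mathbf{a})_{\leq b}| \geq b+1\), a contradiction with shallowness.

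So in case (\ref{item_DII_ins_1b_DII}) the letter \(n\) is not vacant, the sign is \(\pm\), and it equals \(\mathsf{e}(\mathbf{a})\). The same count also confirms that the sign statement is consistent in all cases.
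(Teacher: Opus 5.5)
Your proof is correct and is exactly the argument the paper intends when it says to proceed as in Proposition \ref{prop_ins_1_DII}: the identity \(|(\overline{1}*\mathbf{a})_{\leq b}| = |\mathbf{a}_{\leq b}|+1\) on \(\mathsf{O}(\overline{1}*\mathbf{a}) = \mathsf{O}(\mathbf{a})\) settles items (1) and (2), item (3) is immediate, and prepending \(\overline{1}\) at position \(n\) leaves the positions of \(n,\overline{n}\) (hence the sign) unchanged. Your last paragraph is not needed, because if \(n \in \mathsf{V}(\mathbf{a})\) both signs are \(0\) and the claim is trivial; its pigeonhole count should also read that the \(n-1\) letters of \(\mathbf{a}\) use only the \(n-2\) values \(2,\dots,n-1\), which forces at least one (not two) occupied number --- still enough to show this case cannot occur under item (2).
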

\begin{proof}
  One can prove the assertion in a similar way to Proposition \ref{prop_ins_1_DII}.
\end{proof}

\begin{lem}[Vacant numbers]\label{lem_vac_num_DII}
  Let \(\mathbf{a} \in \mathsf{Col}^{D_n}\).
  Assume that \(\overline{1} \in \mathbf{a}\), and set \(\mathbf{b} := \mathbf{a}_{> 1}\).
  For each \(a \in \mathsf{V}(\mathbf{a})\), we have
  \[
    |\mathbf{b}_{< a}| \leq a-2.
  \]
\end{lem}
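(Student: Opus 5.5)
The plan is to count the letters of \(\mathbf{b}_{<a}\) according to their absolute values, and to use the shallowness condition of \(\mathbf{a}\in\mathsf{Col}^{D_n}\) at one well-chosen occupied number.

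First, the setup. Since \(\overline{1}\in\mathbf{a}\), the number \(1\) is not vacant, so \(a\ge 2\). Moreover, by Proposition \ref{prop_non_occ_1_kn_col_d}, \(1\notin\mathbf{a}\), so \(\mathbf{a}_{\le 1}=(\overline{1})\). Hence, for every \(c\ge 1\),
\[
  |\mathbf{b}_{\le c}|=|\mathbf{a}_{\le c}|-1 .
\]
The letters of \(\mathbf{b}_{<a}\) have absolute values in \(\{2,\dots,a-1\}\). A quasi-increasing word contains each letter at most once, except possibly \(n,\overline{n}\). Since \(a\le n\), these have absolute value \(n\ge a\) and do not appear in \(\mathbf{b}_{<a}\). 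So each value \(d\in\{2,\dots,a-1\}\) contributes \(2\) letters if \(d\in\mathsf{O}(\mathbf{a})\), and at most \(1\) letter otherwise.

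Case 1: no occupied number lies in \(\{2,\dots,a-1\}\). Then every value contributes at most one letter, so \(|\mathbf{b}_{<a}|\le a-2\) directly.

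Case 2: some occupied number lies in \(\{2,\dots,a-1\}\). Let \(c\) be the largest such number. The shallowness condition in the definition of a Kashiwara--Nakashima column gives \(|\mathbf{a}_{\le c}|\le c\), hence \(|\mathbf{b}_{\le c}|\le c-1\). The values \(d\) with \(c<d<a\) are not occupied, by the maximality of \(c\). Each of them therefore contributes at most one letter, giving at most \(a-1-c\) letters in total. Adding the two counts,
\[
  |\mathbf{b}_{<a}|=|\mathbf{b}_{\le c}|+\#\{\text{letters of }\mathbf{b}\text{ with } c<|x|<a\}\le (c-1)+(a-1-c)=a-2 .
\]

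The only delicate point is the claim that letters with \(|x|=n\) never enter the count, which is where multiplicities could exceed \(2\). This holds because \(a\le n\) forces \(|x|<a\le n\) for every letter \(x\) of \(\mathbf{b}_{<a}\). The hypothesis that \(a\) is vacant is not needed for the count itself; it only guarantees \(a\ge 2\), and the same bound in fact holds for every \(a\ge 2\).
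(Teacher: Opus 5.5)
Your proof is correct and is essentially the paper's argument: the paper also takes the largest occupied number \(b\) below \(a\), bounds the number of letters with absolute value strictly between \(b\) and \(a\) by \(a-b-1\), and applies shallowness at \(b\), using the extra letter \(\overline{1}\) in \(\mathbf{a}\). The only difference is that the paper argues by contradiction (assuming \(|\mathbf{b}_{<a}|>a-2\), which forces such an occupied number to exist), whereas you argue directly by splitting into the cases with and without an occupied number in \(\{2,\dots,a-1\}\).
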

\begin{proof}
  Assume contrary; there exists \(a \in \mathsf{V}(\mathbf{a})\) such that \(|\mathbf{b}_{< a}| > a-2 = \#\{ 2,\dots,a-1 \}\).
  Then, we have \(\mathsf{O}(\mathbf{b}_{< a}) \neq \emptyset\).
  Set
  \[
    b := \max \mathsf{O}(\mathbf{b}_{< a}).
  \]
  \begin{center}
    \begin{tikzpicture}[xscale=2.5]
      \node (1) at (0,0) {\(1\)};
      \node (b) at (1.5,0) {\(b\)};
      \node (a) at (3,0) {\(a\)};

      \node[draw, circle] [below=0.5cm of 1] {};
      \node[draw, circle, fill] [below=1.5cm of 1] {};
      \node[draw, circle] [below=0.5cm of a] {};
      \node[draw, circle] [below=1.5cm of a] {};
      \node[draw, circle, fill] [below=0.5cm of b] {};
      \node[draw, circle, fill] [below=1.5cm of b] {};

      \draw (0.2,-0.3) rectangle (3-0.2,-2.5);
      \node[anchor=west] at (0.2,-0.5) {\(> a-2\) beads};
      \draw (1.5+0.2,-0.5) rectangle (3-0.3,-2.3);
      \node[anchor=west] at (1.7,-0.7) {\(\leq a-b-1\)};
      \draw (0.3,-0.7) rectangle (1.5+0.2,-2.3);
      \node[anchor=west] at (0.3,-0.9) {\(\geq b\)};
    \end{tikzpicture}
  \end{center}
  By the maximality of \(b\), it holds that
  \[
    |(\mathbf{b}_{< a})_{> b}| \leq a-b-1.
  \]
  Hence,
  \[
    |\mathbf{b}_{\leq b}| = |\mathbf{b}_{< a}| - |(\mathbf{b}_{< a})_{> b}| \geq (a-1)-(a-b-1) = b.
  \]
  Therefore
  \[
    |\mathbf{a}_{\leq b}| = |\mathbf{b}_{\leq b}|+1 > b.
  \]
  This contradicts that \(\mathbf{a} \in \mathsf{Col}^{D_n}\).
  Thus, we complete the proof.
\end{proof}

\begin{defi}[Properly vacant numbers]
  Let \(\mathbf{a} \in \mathsf{Col}^{D_n}\).
  Assume that \(\overline{1} \in \mathbf{a}\), and set
  \[
    \mathbf{b} := \mathbf{a}_{> 1}.
  \]
  We say that \(a \in \mathsf{V}(\mathbf{a})\) is \emph{proper} if
  \[
    |\mathbf{b}_{< a}| = a-2.
  \]
  Let
  \[
    \mathsf{PV}(\mathbf{a})
  \]
  denote the set of properly vacant numbers in \(\mathbf{a}\).
  When \(\mathsf{PV}(\mathbf{a}) \neq \emptyset\), the \emph{greatest properly vacant number} is denoted by \(\mathsf{gpv}(\mathbf{a})\):
  \[
    \mathsf{gpv}(\mathbf{a}) := \max \mathsf{PV}(\mathbf{a}).
  \]
\end{defi}

\begin{lem}[Existence of properly vacant numbers]\label{lem_exsit_pv_D}
  Let \(\mathbf{a} \in \mathsf{Col}^{D_n}\).
  If \(|\mathbf{a}| < n\) and \(\overline{1} \in \mathbf{a}\), then, we have
  \[
    \mathsf{PV}(\mathbf{a}) \neq \emptyset.
  \]
\end{lem}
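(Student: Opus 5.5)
The plan is a counting argument on the bead picture of \(\mathbf{b} := \mathbf{a}_{>1}\). Lemma \ref{lem_vac_num_DII} already gives \(|\mathbf{b}_{<a}| \leq a-2\) for every \(a \in \mathsf{V}(\mathbf{a})\), so it is enough to produce one vacant number \(a\) with \(|\mathbf{b}_{<a}| \geq a-2\).

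First we record the basic facts. By Proposition \ref{prop_non_occ_1_kn_col_d}, the hypothesis \(\overline{1} \in \mathbf{a}\) forces \(1 \notin \mathbf{a}\). Hence \(|\mathbf{b}| = |\mathbf{a}|-1 \leq n-2\), and every letter of \(\mathbf{b}\) has absolute value in \(\{2,\dots,n\}\). Next we show that \(\mathsf{V}(\mathbf{a}) \neq \emptyset\). The \(n-1\) numbers \(2,\dots,n\) cannot all be non-vacant, since that would require at least \(n-1\) letters in \(\mathbf{b}\), while \(|\mathbf{b}| \leq n-2\). Note that \(1\) itself is never vacant, because \(\overline{1} \in \mathbf{a}\).

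The key step is to take \(a := \max \mathsf{V}(\mathbf{a})\) and show that it is proper. Every number \(c\) with \(a < c \leq n\) is non-vacant, so it contributes at least one letter to \(\mathbf{b}_{>a}\); therefore \(|\mathbf{b}_{>a}| \geq n-a\). This alone gives \(|\mathbf{b}_{<a}| = |\mathbf{b}| - |\mathbf{b}_{>a}| \leq (n-2)-(n-a) = a-2\), which is the wrong direction. So we must bound \(|\mathbf{b}_{>a}|\) from above instead, and this is where the argument needs more care.

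My first attempt is a different choice of \(a\). Consider the function \(g(c) := |\mathbf{b}_{<c}| - (c-2)\) for \(c \in \{2,\dots,n+1\}\). We have \(g(2) = 0\) and \(g(n+1) = |\mathbf{b}| - (n-1) < 0\). Passing from \(c\) to \(c+1\) changes \(g\) by \((\text{number of letters of absolute value } c) - 1\), which is \(-1\), \(0\), or \(+1\) according as \(c\) is vacant, single, or occupied. Let \(c_0\) be the largest \(c \leq n\) with \(g(c) \geq 0\); it exists because \(g(2) = 0\). Then \(g(c_0+1) < 0 \leq g(c_0)\), so the step at \(c_0\) is a decrease, which means \(c_0\) is vacant and \(g(c_0) = 0\). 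In other words \(|\mathbf{b}_{<c_0}| = c_0-2\), so \(c_0 \in \mathsf{PV}(\mathbf{a})\).

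This discrete intermediate value argument gives properness directly, so Lemma \ref{lem_vac_num_DII} is only needed as a consistency check. The case \(c_0 = 2\) is allowed: there \(\mathbf{b}_{<2}\) is empty, so the required equality \(|\mathbf{b}_{<2}| = 0\) holds, and \(2\) is vacant. The main points to verify carefully are the step rule for \(g\), in particular that an occupied \(n\) contributes exactly two letters (it does, because \(\mathbf{a}\) is a column and \(\mathbf{a}_n\) then has length two), and the endpoint inequality \(g(n+1) < 0\), which is exactly where the hypothesis \(|\mathbf{a}| < n\) enters.
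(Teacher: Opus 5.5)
Your final argument is correct, but it follows a different route from the paper's.

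The paper takes \(a := \min \mathsf{V}(\mathbf{a})\). Minimality forces each of \(2,\dots,a-1\) to be non-vacant, which gives \(|\mathbf{b}_{<a}| \geq a-2\). The reverse inequality is quoted from Lemma \ref{lem_vac_num_DII}, and that lemma is where the shallowness of \(\mathbf{a}\) is used.

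Your intermediate-value argument for \(g(c) = |\mathbf{b}_{<c}|-(c-2)\) lands directly on a vacant number with \(g = 0\). So it needs neither Lemma \ref{lem_vac_num_DII} nor shallowness. It does not even need Proposition \ref{prop_non_occ_1_kn_col_d}, since \(|\mathbf{b}| \leq |\mathbf{a}|-1\) already gives \(g(n+1)<0\). The same argument therefore produces such a number for any word of length less than \(n\) containing \(\overline{1}\).

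Your argument also pins down the witness. Every \(a \in \mathsf{PV}(\mathbf{a})\) satisfies \(g(a)=0\) and \(a \leq n\), so your \(c_0\) is exactly \(\mathsf{gpv}(\mathbf{a})\). The paper's proof instead shows that \(\min\mathsf{V}(\mathbf{a})\) is always proper. These can differ: for \(n=6\) and \(\mathbf{a} = (\overline{1},\overline{3},3)\) you get \(\min\mathsf{V}(\mathbf{a}) = 2\) but \(\mathsf{gpv}(\mathbf{a}) = 4\). The paper's version is shorter only because Lemma \ref{lem_vac_num_DII} is needed anyway in Propositions \ref{prop_ins_pv_DII} and \ref{prop_conn_hom_D}.

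Two things to fix in the write-up.

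First, the paragraph with \(a := \max\mathsf{V}(\mathbf{a})\) is a dead end and should be deleted. For \(n=5\) and \(\mathbf{a} = (\overline{1},5)\), the largest vacant number is \(4\), and it is not proper. Also delete the separate proof that \(\mathsf{V}(\mathbf{a}) \neq \emptyset\), since your argument gives it for free.

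Second, your claim that each increment is \(-1\), \(0\) or \(+1\) is false. You justify it by saying \(\mathbf{a}_n\) has length two when \(n\) is occupied, but \(n\) and \(\overline{n}\) may alternate in a column. For example, \((\overline{1},5,\overline{5},5) \in \mathsf{Col}^{D_5}_{4}\), and there the increment at \(c=5\) is \(+2\). Nothing breaks, because you only use the following. Let \(m_c\) be the number of letters of absolute value \(c\); then each increment equals \(m_c-1 \geq -1\), with equality exactly when \(c\) is vacant. State the step rule that way.
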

\begin{proof}
  Since \(|\mathbf{a}| < n\), we have \(\mathsf{V}(\mathbf{a}) \neq \emptyset\).
  Set
  \[
    a := \min \mathsf{V}(\mathbf{a}).
  \]
  Let us show that \(a \in \mathsf{PV}(\mathbf{a})\).
  Since \(\overline{1} \in \mathbf{a}\), we have \(a > 1\).
  Set
  \[
    \mathbf{b} := \mathbf{a}_{> 1}.
  \]
  The minimality of \(a\) implies that
  \[
    |\mathbf{b}_{< a}| \geq a-2.
  \]
  By Lemma \ref{lem_vac_num_DII}, this inequality must be equality.
  Hence, the assertion follows.
\end{proof}

\begin{prop}[Inserting properly vacant numbers]\label{prop_ins_pv_DII}
  Let \(l \in \{ 0,\dots,n-1 \}\) and \(\mathbf{a} \in \mathsf{Col}^{D_n}_{l}\).
  Assume that \(\overline{1} \in \mathbf{a}\), and set
  \[
    \mathbf{b} := \mathbf{a}_{> 1}.
  \]
  Let \(a \in \mathsf{PV}(\mathbf{a})\)\textup{;} see Lemma \textup{\ref{lem_exsit_pv_D}}.
  Then, the following hold\textup{:}
  \begin{enumerate}
    \item\label{item_ins_pv_DII} If \(a < \mathsf{gpv}(\mathbf{a})\), then \(\mathbf{b} \Leftarrow a \notin \mathsf{Col}^{D_n}_{l+1}\).
    \item\label{item_ins_gpv_DII} If \(a = \mathsf{gpv}(\mathbf{a}) < n\), then
    \begin{itemize}
      \item \(\mathbf{b} \Leftarrow a \in \mathsf{Col}^{D_n}_{l+1, \overline{\mathsf{e}}(\mathbf{a})} \setminus \mathsf{Col}^{D\mathrm{II}_n}_{l+1}\),
      \item \(a = \mathsf{lwd}(\mathbf{b} \Leftarrow a)\),
      \item \(\overline{1}*((\mathbf{b} \Leftarrow a) \Rightarrow a) = \mathbf{a}\).
    \end{itemize}
    \item\label{item_ins_gpv_max_len_DII} If \(a = \mathsf{gpv}(\mathbf{a}) = n\), then
    \begin{itemize}
      \item \(l = n-1\)
      \item \(\mathbf{b} \Leftarrow_\pm n \in \mathsf{Col}^{D_n}_{n,\pm}\),
      \item \(n = \mathsf{lwd}(\mathbf{b} \Leftarrow_\pm n)\),
      \item \(\overline{1}*((\mathbf{b} \Leftarrow_\pm n) \Rightarrow n) = \mathbf{a}\)
    \end{itemize}
  \end{enumerate}
\end{prop}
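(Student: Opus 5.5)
The plan is to compute \(|\mathbf{c}_{\leq b'}|\) for \(\mathbf{c} := \mathbf{b} \Leftarrow a\) (or \(\mathbf{c} := \mathbf{b} \Leftarrow_\pm n\)) directly from the counts for \(\mathbf{a}\), and then compare with the shallowness condition. Throughout, \(\overline{1} \in \mathbf{a}\) forces \(1 \notin \mathbf{a}\) by Proposition \ref{prop_non_occ_1_kn_col_d}. So \(\mathbf{b} = \mathbf{a}_{>1}\) is just \(\mathbf{a}\) with its top letter \(\overline{1}\) removed, \(|\mathbf{b}| = l-1\), and \(\mathsf{O}(\mathbf{b}) = \mathsf{O}(\mathbf{a})\). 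Consequently \(\overline{1}*\mathbf{b} = \mathbf{a}\), because \(\overline{1}\) is the \(\prec\)-largest letter.

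Since \(a\) is vacant in \(\mathbf{a}\), the word \(\mathbf{c}\) is quasi-increasing with \(\mathsf{O}(\mathbf{c}) = \mathsf{O}(\mathbf{a}) \sqcup \{ a \}\). The basic counts are:
\begin{itemize}
  \item \(|\mathbf{c}_{\leq a}| = |\mathbf{b}_{<a}| + 2 = a\), using properness of \(a\);
  \item for \(b' \in \mathsf{O}(\mathbf{a})\) with \(b' < a\), \(|\mathbf{c}_{\leq b'}| = |\mathbf{a}_{\leq b'}| - 1 \leq b'-1\);
  \item for \(b' \in \mathsf{O}(\mathbf{a})\) with \(b' > a\), \(|\mathbf{c}_{\leq b'}| = |\mathbf{b}_{\leq b'}| + 2\).
\end{itemize}
Hence \(a\) is weakly deep but not deep in \(\mathbf{c}\), and no occupied number below \(a\) is weakly deep. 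Everything therefore hinges on the occupied numbers \(b' > a\). For these the column condition for \(\mathbf{a}\) only gives \(|\mathbf{b}_{\leq b'}| \leq b'-1\), and \(\mathbf{c}\) fails to be a column exactly when equality holds for some such \(b'\).

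For \eqref{item_ins_pv_DII}, I would take a properly vacant \(a' > a\). Because \(a\) is vacant, the letters of \(\mathbf{b}\) with \(|\cdot|\) in \((a,a')\) number \((a'-2)-(a-2) = a'-a\). There are only \(a'-a-1\) integers in \((a,a')\), so some number in \((a,a')\) is occupied. Let \(b'\) be the largest such; each integer in \((b',a')\) carries at most one letter, so
\[
  |\mathbf{b}_{\leq b'}| \geq (a'-2) - (a'-1-b') = b'-1,
\]
and therefore \(|\mathbf{c}_{\leq b'}| \geq b'+1\). This is the same counting device as in Lemma \ref{lem_vac_num_DII}, and it shows \(\mathbf{c} \notin \mathsf{Col}^{D_n}\).

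For \eqref{item_ins_gpv_DII}, the step I expect to be the main obstacle is the converse: showing that maximality of \(a\) rules out \(|\mathbf{b}_{\leq b'}| = b'-1\) for every occupied \(b' > a\). I would argue by contradiction. Suppose equality holds for some such \(b'\), and let \(a''\) be the least vacant number exceeding \(b'\).
\begin{itemize}
  \item If \(a''\) exists, every integer in \((b',a'')\) carries at least one letter. Then \(|\mathbf{b}_{<a''}| \geq a''-2\), and Lemma \ref{lem_vac_num_DII} forces equality, so \(a'' > a\) is properly vacant, contradicting \(a = \mathsf{gpv}(\mathbf{a})\).
  \item If \(a''\) does not exist, then \(|\mathbf{b}| \geq (b'-1) + (n-b') = n-1\), so \(l \geq n\), contradicting \(l \leq n-1\).
\end{itemize}
Hence \(|\mathbf{c}_{\leq b'}| \leq b'\) for all occupied \(b'\). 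Since also \(|\mathbf{c}| = l+1 \leq n\), we get \(\mathbf{c} \in \mathsf{Col}^{D_n}_{l+1}\).

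The remaining claims of \eqref{item_ins_gpv_DII} follow from the counts above.
\begin{itemize}
  \item \(\mathbf{c} \notin \mathsf{Col}^{D\mathrm{II}_n}\): since \(1 \notin \mathbf{c}\) we have \(\mathbf{c}_{>1} = \mathbf{c}\), and \(|\mathbf{c}_{\leq a}| = a\) violates strict shallowness.
  \item \(a = \mathsf{lwd}(\mathbf{c})\): this is immediate from the bound \(|\mathbf{c}_{\leq b'}| \leq b'-1\) for occupied \(b' < a\).
  \item \(\overline{1}*(\mathbf{c} \Rightarrow a) = \mathbf{a}\): we have \(\mathbf{c} \Rightarrow a = \mathbf{b}\), and \(\overline{1}*\mathbf{b} = \mathbf{a}\).
  \item Sign: if an entry \(n\) or \(\overline{n}\) sits at position \(r\) in \(\mathbf{a}\), it stays at \(r\) in \(\mathbf{b}\), since only the top letter \(\overline{1}\) is removed. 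In \(\mathbf{c}\) it moves to \(r+1\), since \(a\) is inserted below it while \(\overline{a}\), with \(a < n\), is inserted above it. This parity flip gives \(\mathsf{e}(\mathbf{c}) = \overline{\mathsf{e}}(\mathbf{a})\). When \(n \in \mathsf{V}(\mathbf{a})\), both signs are \(0\).
\end{itemize}

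For \eqref{item_ins_gpv_max_len_DII}, properness of \(n\) means \(|\mathbf{b}| = |\mathbf{b}_{<n}| = n-2\), so \(l = n-1\). Since \(n\) is vacant, \(\mathbf{b} \Leftarrow_\pm n\) exists for either sign and has length \(n\). The shallowness checks are exactly those above: occupied numbers below \(n\) satisfy \(|\cdot| \leq b'-1\), and \(|(\mathbf{b} \Leftarrow_\pm n)_{\leq n}| = n\). This gives both \(\mathbf{b} \Leftarrow_\pm n \in \mathsf{Col}^{D_n}_{n,\pm}\) and \(n = \mathsf{lwd}(\mathbf{b} \Leftarrow_\pm n)\). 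Finally, removing the two inserted letters \(n, \overline{n}\) recovers \(\mathbf{b}\), and \(\overline{1}*\mathbf{b} = \mathbf{a}\).
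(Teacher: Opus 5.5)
Your proposal is correct and follows essentially the same route as the paper. It uses the same pigeonhole argument on the largest occupied number between \(a\) and a larger properly vacant number for (1); for (2), the same case computation of \(|(\mathbf{b} \Leftarrow a)_{\leq b'}|\) followed by the least vacant number above a hypothetical bad \(b'\), which contradicts the maximality of \(\mathsf{gpv}(\mathbf{a})\) (your case split on whether that vacant number exists is just the paper's use of \(l \leq n-1\)); for (3), the same observation \(|\mathbf{b}| = n-2\); and your parity argument for \(\mathsf{e}(\mathbf{b} \Leftarrow a) = \overline{\mathsf{e}}(\mathbf{a})\) fills in a step the paper calls obvious.
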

\begin{proof}
  \hfill
  \begin{enumerate}
    \item Set \(a' := \mathsf{gpv}(\mathbf{a})\).
    Then, we have
    \[
      |(\mathbf{b}_{< a'})_{> a}| = |\mathbf{b}_{< a'}|-|\mathbf{b}_{< a}| = (a'-2)-(a-2) = a'-a > \#\{ a+1,\dots,a'-1 \}.
    \]
    This implies that \(\mathsf{O}((\mathbf{b}_{< a'})_{> a}) \neq \emptyset\).
    Set
    \[
      b := \max \mathsf{O}((\mathbf{b}_{< a'})_{> a}).
    \]
    \begin{center}
      \begin{tikzpicture}[xscale=2]
        \begin{scope}
          \node (1) at (0,0) {\(1\)};
          \node (a) at (1,0) {\(a\)};
          \node (b) at (2,0) {\(b\)};
          \node (ad) at (4,0) {\(a'\)};

          \node[draw, circle] [below=0.6cm of a] {};
          \node[draw, circle] [below=1.6cm of a] {};
          \node[draw, circle, fill] [below=0.6cm of b] {};
          \node[draw, circle, fill] [below=1.6cm of b] {};
          \node[draw, circle] [below=0.6cm of ad] {};
          \node[draw, circle] [below=1.6cm of ad] {};

          \draw (0.2,-0.3) rectangle (4-0.2,-2.5);
          \node[anchor=west] at (0.2,-0.5) {\(a'-2\)};
          \draw (0.2,-0.7) rectangle (1-0.2,-2.5);
          \node[anchor=west] at (0.2,-0.9) {\(a-2\)};
          \draw (1+0.2,-0.7) rectangle (4-0.2,-2.5);
          \node[anchor=west] at (1+0.2,-0.9) {\(a'-a\)};
          \draw (2+0.2,-0.9) rectangle (4-0.2,-2.5);
          \node[anchor=west] at (2+0.2,-1.1) {\(\leq a'-b-1\)};
        \end{scope}
      \end{tikzpicture}
    \end{center}
    By the maximality of \(b\), we have
    \[
      |(\mathbf{b}_{< a'})_{> b}| \leq a'-b-1.
    \]
    This implies that
    \[
      |\mathbf{b}_{\leq b}| = |\mathbf{b}_{< a'}| - |(\mathbf{b}_{< a'})_{> b}| \geq (a'-2)-(a'-b-1) = b-1.
    \]
    Since \(a < b\), we obtain
    \[
      |(\mathbf{b} \Leftarrow a)_{\leq b}| = |\mathbf{b}_{\leq b}|+2 > b.
    \]
    Hence, the assertion follows.
    \item We have \(\mathsf{O}(\mathbf{b} \Leftarrow a) = \mathsf{O}(\mathbf{a}) \sqcup \{ a \}\), and
    \begin{align}\label{eq_estimate_b_ins_a}
      |(\mathbf{b} \Leftarrow a)_{\leq b}| =
      \begin{cases}
        |\mathbf{a}_{\leq b}|-1 & \text{ if } b < a, \\
        a & \text{ if } b = a, \\
        |\mathbf{a}_{\leq b}|+1 & \text{ if } b > a,
      \end{cases}
    \end{align}
    for all \(b \in \mathsf{O}(\mathbf{b} \Leftarrow a)\).

    In order to show that \(\mathbf{b} \Leftarrow a \in \mathsf{Col}^{D_n}\), assume contrary.
    Then, there exists \(b \in \mathsf{O}(\mathbf{b} \Leftarrow a)\) such that \(|(\mathbf{b} \Leftarrow a)_{\leq b}| > b\).
    By equation \eqref{eq_estimate_b_ins_a}, this can happen only if \(b > a\) and \(|\mathbf{b}_{\leq b}| = b\).
    Hence, we have
    \[
      |\mathbf{b}_{> b}| = |\mathbf{b}|-|\mathbf{b}_{\leq b}| = (l-1)-(b-1) = l-b < n-b = \#\{ b+1,\dots,n \}.
    \]
    This implies that \(\mathsf{V}(\mathbf{b}) \cap \{ b+1,\dots,n \} \neq \emptyset\).
    Set
    \[
      a' := \min (\mathsf{V}(\mathbf{b}) \cap \{ b+1,\dots,n \})
    \]
    \begin{center}
      \begin{tikzpicture}[xscale=2]
        \node (1) at (0,0) {\(1\)};
        \node (a) at (1,0) {\(a\)};
        \node (b) at (2,0) {\(b\)};
        \node (ad) at (4,0) {\(a'\)};
        \node (n) at (4.7,0) {\(n\)};

        \node[draw, circle] [below=0.6cm of a] {};
        \node[draw, circle] [below=1.6cm of a] {};
        \node[draw, circle, fill] [below=0.6cm of b] {};
        \node[draw, circle, fill] [below=1.6cm of b] {};
        \node[draw, circle] [below=0.6cm of ad] {};
        \node[draw, circle] [below=1.6cm of ad] {};

        \draw (0.2,-0.3) rectangle (2+0.2,-2.5);
        \node[anchor=west] at (0.2,-0.5) {\(b-1\)};
        \draw (2+0.2,-0.3) rectangle (5-0.2,-2.5);
        \node[anchor=west] at (2+0.2,-0.5) {\(l-b\)};
        \draw (2+0.2,-0.7) rectangle (4-0.2,-2.5);
        \node[anchor=west] at (2+0.2,-0.9) {\(\geq a'-b-1\)};
      \end{tikzpicture}
    \end{center}
    By the minimality of \(a'\), we have
    \[
      |(\mathbf{b}_{< a'})_{> b}| \geq a'-b-1.
    \]
    Hence,
    \[
      |\mathbf{b}_{< a'}| = |(\mathbf{b}_{< a'})_{> b}|+|\mathbf{b}_{\leq b}| \geq (a'-b-1)+(b-1) = a'-2.
    \]
    This, together with Lemma \ref{lem_vac_num_DII}, implies that
    \[
      |\mathbf{b}_{< a'}| = a'-2.
    \]
    Therefore, we obtain
    \[
      a' \in \mathsf{PV}(\mathbf{a}).
    \]
    However, this contradicts the maximality of \(a\).
    Thus, we have proved that
    \[
      \mathbf{b} \Leftarrow a \in \mathsf{Col}^{D_n}_{l+1}.
    \]

    It is obvious that
    \[
      \mathsf{e}(\mathbf{b} \Leftarrow a) = \overline{\mathsf{e}}(\mathbf{a}).
    \]
    Also, we have \(\mathbf{b} \Leftarrow a \notin \mathsf{Col}^{D\mathrm{II}_n}\) because
    \[
      |(\mathbf{b} \Leftarrow a)_{\leq a}| = a.
    \]

    Let us show that \(a = \mathsf{lwd}(\mathbf{b} \Leftarrow a)\).
    Assume contrary that there exists \(b \in \mathsf{WD}(\mathbf{b} \Leftarrow a)\) such that \(b < a\).
    Then, we have \(b \in \mathsf{O}(\mathbf{a})\) and
    \[
      |\mathbf{a}_{\leq b}| = |(\mathbf{b} \Leftarrow a)_{\leq b}| + 1 = b+1.
    \]
    This contradicts that \(\mathbf{a} \in \mathsf{Col}^{D_n}\).

    Now, the remaining assertion is clear.
    \item Noting that the equality \(\mathsf{gpv}(\mathbf{a}) = n\) holds only if \(|\mathbf{b}| = n-2\), one can prove the assertion in a similar way to assertion \eqref{item_ins_gpv_DII}.
  \end{enumerate}
\end{proof}

\begin{prop}[Removing \(1\)]\label{prop_rm_1_DII}
  Let \(l \in \{ 0,\dots,n \}\) and \(\mathbf{a} \in \mathsf{Col}^{D_n}_{l}\).
  If \(1 \in \mathbf{a}\), then the following hold\textup{:}
  \begin{enumerate}
    \item\label{item_rm_1_DII} \(\mathbf{a}_{> 1} \in \mathsf{Col}^{D\mathrm{II}_n}_{l-1, \overline{\mathsf{e}}(\mathbf{a})}\),
    \item\label{item_inv_rm_1_DII} \((\mathbf{a}_{> 1})*1 = \mathbf{a}\).
  \end{enumerate}
\end{prop}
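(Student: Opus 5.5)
Set \(\mathbf{b} := \mathbf{a}_{> 1}\).
Since \(\mathbf{a}\) is quasi-increasing and \(1\) is the minimal letter of \(\mathcal{A}^{D_n}\), the letter \(1\) must be the rightmost entry \(a_1\) of \(\mathbf{a}\).
By Proposition \ref{prop_non_occ_1_kn_col_d}, \(\overline{1} \notin \mathbf{a}\).
Hence \(\mathbf{b}\) is obtained from \(\mathbf{a}\) simply by deleting \(a_1 = 1\), and \(\mathbf{b} * 1 = \mathbf{a}\) is immediate.
This proves \eqref{item_inv_rm_1_DII}, and we are reduced to checking that \(\mathbf{b}\) satisfies each defining condition of \(\mathsf{Col}^{D\mathrm{II}_n}_{l-1}\), together with the sign.

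We verify the conditions in order.
\textbf{Length:} \(|\mathbf{b}| = l-1 \leq n-1 < n\).
\textbf{Quasi-increasing:} a subword of a quasi-increasing word obtained by removing its last letter is again quasi-increasing.
\textbf{Exclusion of \(1\):} \(1 \notin \mathbf{b}\) by construction.
\textbf{Shallowness and strict shallowness:} we have \(\mathsf{O}(\mathbf{b}) = \mathsf{O}(\mathbf{a})\), since \(1 \notin \mathsf{O}(\mathbf{a})\).
For every \(b \in \mathsf{O}(\mathbf{a})\) we have \(b \geq 2\), so the letter \(1\) is counted in \(\mathbf{a}_{\leq b}\), giving
\[
  |\mathbf{b}_{\leq b}| = |\mathbf{a}_{\leq b}| - 1 \leq b-1 < b.
\]
Because \(\mathbf{b}_{>1} = \mathbf{b}\), this is exactly the strict shallowness condition.
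It also implies the ordinary shallowness needed for \(\mathbf{b} \in \mathsf{Col}^{D_n}\).

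For the sign, observe that if \(n \in \mathsf{V}(\mathbf{a})\), then \(n \in \mathsf{V}(\mathbf{b})\) and both signs are \(0\).
Otherwise, removing \(a_1\) shifts the position index of every remaining letter down by one.
Thus a letter \(n\) (or \(\overline{n}\)) that sat at position \(r\) in \(\mathbf{a}\) sits at position \(r-1\) in \(\mathbf{b}\), and the parity of the position relative to \(n\) flips.
By the definition of \(\mathsf{e}\), this gives \(\mathsf{e}(\mathbf{b}) = \overline{\mathsf{e}}(\mathbf{a})\).
The only step needing care is checking that this parity flip is consistent when \(n\) and \(\overline{n}\) alternate.
It is consistent because the definition of \(\mathsf{e}\) is well defined on quasi-increasing words (and is so for \(\mathbf{a}\)), and the shift affects all occurrences uniformly.
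This proves \eqref{item_rm_1_DII}.

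The whole argument is the mirror image of Proposition \ref{prop_ins_1_DII}\eqref{item_DII_ins_1_DII}.
There is no real obstacle; the key input is Proposition \ref{prop_non_occ_1_kn_col_d}, which guarantees that removing \(1\) does not change the set of occupied numbers.
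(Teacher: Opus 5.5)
Your proof is correct and is exactly the routine verification the paper has in mind when it calls these assertions obvious. You delete the rightmost letter \(a_1 = 1\), using Proposition~\ref{prop_non_occ_1_kn_col_d} to rule out \(\overline{1}\), check the column and \(D\mathrm{II}_n\) conditions via \(|\mathbf{b}_{\leq b}| = |\mathbf{a}_{\leq b}| - 1\), and read off the sign flip from the index shift. Your argument, like the statement itself, tacitly needs \(n \geq 2\), so that \(1\) is the minimum of \(\mathcal{A}^{D_n}\) and removing it does not remove an occurrence of \(n\); for \(n = 1\) the sign claim already fails for \(\mathbf{a} = (1)\).
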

\begin{proof}
  The assertions are obvious.
\end{proof}

\begin{prop}[Removing \(\overline{1}\)]\label{prop_rm_1b_DII}
  Let \(\mathbf{a} \in \mathsf{Col}^{D_n}_{n}\).
  If \(\overline{1} \in \mathbf{a}\), then the following hold\textup{:}
  \begin{enumerate}
    \item\label{item_rm_1b_DII} \(\mathbf{a}_{> 1} \in \mathsf{Col}^{D\mathrm{II}_n}_{n-1, \mathsf{e}(\mathbf{a})}\),
    \item\label{item_inv_rm_1b_DII} \(\overline{1} * (\mathbf{a}_{> 1}) = \mathbf{a}\).
  \end{enumerate}
\end{prop}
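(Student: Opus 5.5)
Set \(\mathbf{b} := \mathbf{a}_{> 1}\), the word obtained from \(\mathbf{a}\) by deleting its single letter \(\overline{1}\). Since \(\overline{1}\) is the largest letter in the order \(\prec\) and \(\mathbf{a}\) is quasi-increasing, this \(\overline{1}\) must be the leftmost letter \(a_n\). Hence \(\mathbf{a} = \overline{1} * \mathbf{b}\), which is assertion \eqref{item_inv_rm_1b_DII}. For \eqref{item_rm_1b_DII}, I would verify the defining conditions of \(\mathsf{Col}^{D_n}\) and then of \(\mathsf{Col}^{D\mathrm{II}_n}\) for \(\mathbf{b}\).

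The column conditions are straightforward.
\begin{itemize}
\item Length: \(|\mathbf{b}| = n-1\), which is \(\leq n\) and also \(< n\), so the length requirement for type \(D\mathrm{II}_n\) holds too.
\item Quasi-increasing: this is inherited, because \(\mathbf{b}\) is \(\mathbf{a}\) with its extremal letter removed.
\item Exclusion of \(1\): Proposition \ref{prop_non_occ_1_kn_col_d} gives \(1 \notin \mathsf{O}(\mathbf{a})\). Since \(\overline{1} \in \mathbf{a}\), this forces \(1 \notin \mathbf{a}\), and so \(1 \notin \mathbf{b}\).
\end{itemize}

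The shallowness conditions both reduce to one computation. Since \(1 \notin \mathbf{b}\), we have \(\mathsf{O}(\mathbf{b}) = \mathsf{O}(\mathbf{a})\) and \(\mathbf{b}_{>1} = \mathbf{b}\). For every \(b \in \mathsf{O}(\mathbf{a})\) we get \(|\mathbf{b}_{\leq b}| = |\mathbf{a}_{\leq b}| - 1 \leq b-1 < b\), using the shallowness of \(\mathbf{a}\). This single inequality gives both the shallowness of \(\mathbf{b}\) and the strict shallowness \(|(\mathbf{b}_{>1})_{\leq b}| < b\). This is the same computation as in Proposition \ref{prop_entry_1b_DII}; indeed, once \(\mathbf{b} \in \mathsf{Col}^{D_n}_{n-1}\) is known, one can simply cite Proposition \ref{prop_entry_1b_DII}.

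The sign is the only step needing care. Write \(\mathbf{a} = (a_n,\dots,a_1)\) with \(a_n = \overline{1}\). Then \(\mathbf{b} = (a_{n-1},\dots,a_1)\), so every letter keeps its index \(r\) counted from the right. Since \(\mathsf{e}\) is defined through the parity of the position \(r\) at which \(n\) or \(\overline{n}\) occurs, it is unchanged: \(\mathsf{e}(\mathbf{b}) = \mathsf{e}(\mathbf{a})\). This contrasts with Proposition \ref{prop_rm_1_DII}, where deleting the letter \(1\) at position \(1\) shifts all indices by one and flips the sign.

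The only subtle point is when \(n \in \mathsf{O}(\mathbf{a})\), where the letters \(n,\overline{n}\) may alternate. The sign is still read off consistently, because positions are preserved exactly. The vacant case \(\mathsf{e} = 0\) also carries over, since \(\mathsf{V}(\mathbf{b}) \ni n\) exactly when \(\mathsf{V}(\mathbf{a}) \ni n\).
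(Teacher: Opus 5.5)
Your proof is correct and is the routine verification the paper has in mind; the paper simply declares both assertions obvious, and your shallowness estimate is the same one used in Proposition~\ref{prop_entry_1b_DII}, together with the observation that deleting the top letter $a_n = \overline{1}$ leaves the positions of the remaining letters, and hence the sign, unchanged. The only tacit hypothesis is $n \geq 2$, which you use in the step $n \in \mathsf{V}(\mathbf{b})$ if and only if $n \in \mathsf{V}(\mathbf{a})$; this is harmless, since the sign claim in the statement itself fails for $n = 1$ (take $\mathbf{a} = (\overline{1})$, whose sign is $-$, while $\mathbf{a}_{>1}$ is empty with sign $0$).
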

\begin{proof}
  The assertions are obvious.
\end{proof}

\begin{prop}[Removing the least weakly deep number]\label{prop_rm_lwd_DII}
  Let \(l \in \{ 0,\dots,n \}\) and \(\mathbf{a} \in \mathsf{Col}^{D_n}_{l} \setminus \mathsf{Col}^{D\mathrm{II}_n}_{l}\).
  Assume that \(1 \in \mathsf{V}(\mathbf{a})\), and set \(b := \mathsf{lwd}(\mathbf{a})\), \(\mathbf{b} := \mathbf{a} \Rightarrow b\)\textup{;} see Lemma \textup{\ref{lem_exist_wd_D}}.
  Then, the following hold\textup{:}
  \begin{enumerate}
    \item\label{item_rm_lwd_DII} \(\overline{1}*\mathbf{b} \in \mathsf{Col}^{D\mathrm{II}_n}_{l-1}\),
    \item\label{item_gpv_rm_lwd_DII} \(b = \mathsf{gpv}(\overline{1}*\mathbf{b})\),
    \item\label{item_inv_rm_lwd_DII} \((\overline{1}*\mathbf{b})_{> 1} \Leftarrow b = \mathbf{a}\) if \(b < n\),
    \item\label{item_inv_max_len_rm_lwd_DII} \((\overline{1}*\mathbf{b})_{> 1} \Leftarrow_{\mathsf{e}(\mathbf{a})} n = \mathbf{a}\) if \(b = n\).
  \end{enumerate}
\end{prop}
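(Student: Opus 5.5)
The proof is direct counting with the bead picture, mirroring the proof of Proposition \ref{prop_ins_pv_DII}\eqref{item_ins_gpv_DII} run backwards. Set \(\mathbf{c} := \overline{1}*\mathbf{b}\). Since \(1 \in \mathsf{V}(\mathbf{a})\), we have \(\mathbf{a} = \mathbf{a}_{>1}\) and \(\mathbf{c}_{>1} = \mathbf{b}\). Moreover \(\mathsf{O}(\mathbf{b}) = \mathsf{O}(\mathbf{a}) \setminus \{b\}\); for \(b = n\) one uses that \(n\) occurs at least twice in \(\mathbf{a}_n\). The basic bookkeeping identity is
\[
  |\mathbf{b}_{\leq c}| =
  \begin{cases}
    |\mathbf{a}_{\leq c}| & \text{ if } c < b, \\
    |\mathbf{a}_{\leq c}|-2 & \text{ if } c > b.
  \end{cases}
\]
Combined with \(\mathbf{a} \in \mathsf{Col}^{D_n}\), this gives \(|\mathbf{a}_{\leq b}| = b\) exactly, since \(b\) is weakly deep but shallow.

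For \eqref{item_rm_lwd_DII}, first check that \(\mathbf{c}\) is quasi-increasing and has length \(l-1 < n\); the latter holds because \(l \leq n\). Next check the KN shallowness of \(\mathbf{c}\). Take \(c \in \mathsf{O}(\mathbf{c}) = \mathsf{O}(\mathbf{b})\).
\begin{itemize}
  \item If \(c < b\), minimality of \(b\) in \(\mathsf{WD}(\mathbf{a})\) gives \(|\mathbf{a}_{\leq c}| \leq c-1\). Hence \(|\mathbf{c}_{\leq c}| = |\mathbf{b}_{\leq c}|+1 \leq c\), and also \(|(\mathbf{c}_{>1})_{\leq c}| \leq c-1 < c\), which is strict shallowness.
  \item If \(c > b\), then \(|\mathbf{b}_{\leq c}| = |\mathbf{a}_{\leq c}|-2 \leq c-2\), so both inequalities hold with room to spare.
\end{itemize}
The remaining \(D\mathrm{II}\) conditions are immediate: \(1 \notin \mathbf{c}\), the length is less than \(n\), and strict shallowness was just verified. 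Alternatively one can invoke Proposition \ref{prop_entry_1b_DII} once \(\mathbf{c} \in \mathsf{Col}^{D_n}\) is known.

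For \eqref{item_gpv_rm_lwd_DII}, first note that \(b \in \mathsf{V}(\mathbf{c})\). It is proper because \(|\mathbf{b}_{<b}| = |\mathbf{a}_{\leq b}| - 2 = b-2\). It remains to show that no vacant \(a > b\) of \(\mathbf{c}\) is proper. I would argue by contradiction, as in part \eqref{item_ins_pv_DII} of Proposition \ref{prop_ins_pv_DII}.

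Suppose \(a > b\) is properly vacant, so \(|\mathbf{b}_{<a}| = a-2\). Then \(|(\mathbf{b}_{<a})_{>b}| = (a-2)-(b-2) = a-b\), which exceeds \(\#\{b+1,\dots,a-1\}\). So some occupied \(c\) with \(b<c<a\) exists; take the maximal one. Maximality gives \(|(\mathbf{b}_{<a})_{>c}| \leq a-c-1\), hence \(|\mathbf{b}_{\leq c}| \geq c-1\). Then \(|\mathbf{a}_{\leq c}| \geq c+1\), contradicting the shallowness of \(\mathbf{a}\). Note that \(a\) is vacant in \(\mathbf{a}\) as well, because \(a \neq b\).

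Parts \eqref{item_inv_rm_lwd_DII} and \eqref{item_inv_max_len_rm_lwd_DII} are then tautological. Since \((\overline{1}*\mathbf{b})_{>1} = \mathbf{b}\), reinserting the pair \(b,\overline{b}\) recovers \(\mathbf{a}\), because quasi-increasing words are determined by their letters. For \(b=n\), the letters \(n,\overline{n}\) in \(\mathbf{a}_n\) alternate, and the sign pins down the word, so \(\Leftarrow_{\mathsf{e}(\mathbf{a})} n\) reproduces \(\mathbf{a}\).

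The main obstacle I expect is the case \(b = n\). There, \(\mathbf{a} \Rightarrow n\) removes only the first two letters of \(\mathbf{a}_n\), so \(n\) may remain occupied in \(\mathbf{b}\) in principle, and the counts above need care. I would first show that \(n \in \mathsf{WD}(\mathbf{a})\) and shallowness force \(|\mathbf{a}| = n\), and hence that \(\mathbf{a}_n\) has exactly two letters (\(|\mathbf{a}_{\leq n}| = n\), with the remaining numbers filling \(\{2,\dots,n-1\}\)). Once that is done, the generic argument applies verbatim, and the sign bookkeeping gives \eqref{item_inv_max_len_rm_lwd_DII}.
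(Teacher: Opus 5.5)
The gap is exactly at the step you flagged, and it cannot be closed, because assertion (2) is false in that case.

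For $b<n$ your argument is the paper's own. You use the bookkeeping identity for $|\mathbf{b}_{\leq c}|$ together with minimality of $\mathsf{lwd}(\mathbf{a})$ for (1). For (2) you take the maximal occupied number strictly between $b$ and a putative larger properly vacant number.

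The trouble is $b=n$. From $n \in \mathsf{WD}(\mathbf{a})$ you do get $|\mathbf{a}| = n$, but nothing forces $|\mathbf{a}_n| = 2$, since $n$ and $\overline{n}$ may alternate.

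Take $n=3$ and $\mathbf{a} = (3,\overline{3},3)$. It is quasi-increasing of length $3$, with $\mathsf{O}(\mathbf{a}) = \{3\}$ and $|\mathbf{a}_{\leq 3}| = 3$. Hence $\mathbf{a} \in \mathsf{Col}^{D_3}_{3,+} \setminus \mathsf{Col}^{D\mathrm{II}_3}$, $1 \in \mathsf{V}(\mathbf{a})$, and $b = \mathsf{lwd}(\mathbf{a}) = 3$. It is in fact the unique column of weight $\epsilon_3$ in $\mathsf{Col}^{D_3}_{3,+}$, so it cannot be excluded. Then $\mathbf{b} = \mathbf{a} \Rightarrow 3 = (3)$ and $\overline{1}*\mathbf{b} = (\overline{1},3)$. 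The only vacant number of $(\overline{1},3)$ is $2$, and it is proper, so $\mathsf{gpv}(\overline{1}*\mathbf{b}) = 2 \neq b$.

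Similarly, $(4,\overline{4},4,\overline{4})$ in type $D_4$ gives $\overline{1}*\mathbf{b} = (\overline{1},4,\overline{4})$ with $\mathsf{gpv} = 2$. There $4$ even stays occupied in $\mathbf{b}$, so your identity $\mathsf{O}(\mathbf{b}) = \mathsf{O}(\mathbf{a}) \setminus \{b\}$ also fails.

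So (2) is false when $b = n$ and $|\mathbf{a}_n| \geq 3$. The paper's proof has the same blind spot: it simply declares $b \in \mathsf{PV}(\overline{1}*\mathbf{b})$ to be clear.

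What survives:
\begin{enumerate}
\item Assertions (1), (3) and (4) hold as stated. For (1) with $b=n$, use $|\mathbf{b}_{\leq n}| = |\mathbf{b}| = n-2 < n$ for strict shallowness.
\item Assertion (2) holds for $b<n$, and for $b=n$ with $|\mathbf{a}_n| = 2$.
\end{enumerate}
In the remaining case, $n \notin \mathsf{V}(\overline{1}*\mathbf{b})$, so $\mathsf{gpv}(\overline{1}*\mathbf{b}) < n$. The operation $\mathbf{a} \Rightarrow n$ deletes two consecutive positions below the surviving $n$-letters, which therefore shift by $2$; this gives $\mathsf{e}(\overline{1}*\mathbf{b}) = \mathsf{e}(\mathbf{a})$. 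That is precisely the fourth clause in the definition of $\delta_{n-1,\pm}$, namely $\mathbf{a}_{>1} \Leftarrow_\pm n$ when $\mathsf{gpv}(\mathbf{a}) < n$ and $\mathsf{e}(\mathbf{a}) = \pm$.

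Consequently Theorem \ref{thm_bij_conn_map_DII} still goes through, using (4). The proposition itself, however, must be restated with this case split off rather than proved as written.
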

\begin{proof}
  \hfill
  \begin{enumerate}
    \item Clearly, we have the following:
    \begin{itemize}
      \item \(|\overline{1}*\mathbf{b}| = |\mathbf{a}|-1 = l-1 < n\),
      \item \(1 \notin \overline{1}*\mathbf{b}\),
      \item \(\mathsf{O}(\mathbf{b}) = \mathsf{O}(\mathbf{a}) \setminus \{ b \}\).
    \end{itemize}
    For each \(b' \in \mathsf{O}(b)\), we have
    \[
      |((\overline{1}*\mathbf{b})_{> 1})_{\leq b'}| = |\mathbf{b}_{\leq b'}| =
      \begin{cases}
        |\mathbf{a}_{\leq b'}| & \text{ if } b' < b, \\
        |\mathbf{a}_{\leq b'}|-2 & \text{ if } b' > b.
      \end{cases}
    \]
    We have \(|\mathbf{a}_{\leq b'}| \leq b'\) for all \(b'\).
    Also, since \(b = \mathsf{lwd}(\mathbf{a})\), it holds that
    \[
      |\mathbf{a}_{\leq b'}| < b' \quad \text{ if } b' < b.
    \]
    Therefore, we obtain
    \[
      |((\overline{1}*\mathbf{b})_{> 1})_{\leq b'}| < b'.
    \]
    Hence, we complete the proof.
    \item It is clear that \(b \in \mathsf{PV}(\overline{1}*\mathbf{b})\).
    In order to prove the assertion, assume contrary: there exists \(a \in \mathsf{PV}(\overline{1}*\mathbf{b})\) such that \(a > b\).
    Then, we have
    \[
      |(\mathbf{b}_{> b})_{< a}| = |\mathbf{b}_{< a}| - |\mathbf{b}_{< b}| = (a-2)-(b-2) = a-b > \#\{ b+1,\dots,a-1 \}.
    \]
    This implies that
    \[
      \mathsf{O}((\mathbf{b}_{> b})_{< a}) \neq \emptyset.
    \]
    Set
    \[
      b' := \max \mathsf{O}((\mathbf{b}_{> b})_{< a}).
    \]
    \begin{center}
      \begin{tikzpicture}[xscale=2]
        \begin{scope}
          \node (1) at (0,0) {\(1\)};
          \node (a) at (1,0) {\(b\)};
          \node (b) at (2,0) {\(b'\)};
          \node (ad) at (4,0) {\(a\)};

          \node[draw, circle] [below=0.6cm of a] {};
          \node[draw, circle] [below=1.6cm of a] {};
          \node[draw, circle, fill] [below=0.6cm of b] {};
          \node[draw, circle, fill] [below=1.6cm of b] {};
          \node[draw, circle] [below=0.6cm of ad] {};
          \node[draw, circle] [below=1.6cm of ad] {};

          \draw (0.2,-0.3) rectangle (4-0.2,-2.5);
          \node[anchor=west] at (0.2,-0.5) {\(a-2\)};
          \draw (0.2,-0.7) rectangle (1-0.2,-2.5);
          \node[anchor=west] at (0.2,-0.9) {\(b-2\)};
          \draw (1+0.2,-0.7) rectangle (4-0.2,-2.5);
          \node[anchor=west] at (1+0.2,-0.9) {\(a-b\)};
          \draw (2+0.2,-0.9) rectangle (4-0.2,-2.5);
          \node[anchor=west] at (2+0.2,-1.1) {\(\leq a-b'-1\)};
        \end{scope}
      \end{tikzpicture}
    \end{center}
    By the maximality of \(b'\), we have
    \[
      |(\mathbf{b}_{> b'})_{< a}| \leq a-b'-1.
    \]
    Hence,
    \[
      |\mathbf{b}_{\leq b'}| = |\mathbf{b}_{< a}| - |(\mathbf{b}_{> b'})_{< a}| \geq (a-2)-(a-b'-1) = b'-1.
    \]
    Therefore,
    \[
      |\mathbf{a}_{\leq b'}| = |\mathbf{b}_{\leq b'}|+2 \geq b'+1.
    \]
    This contradicts that \(\mathbf{a} \in \mathsf{Col}^{D_n}_{l}\).
    Hence, the assertion follows.
  \end{enumerate}
  The rest of the assertions are obvious.
\end{proof}

\begin{defi}[Connecting maps]\label{def_conn_map_DII}
  \hfill
  \begin{enumerate}
    \item For each \(l \in \{ 0,\dots,n-2 \}\), the \emph{connecting map \(\delta_l\)} is the map
    \[
      \delta_l \colon \mathsf{Col}^{D\mathrm{II}_n}_{l} \to \mathsf{Col}^{D_n}_{l+1} \setminus \mathsf{Col}^{D\mathrm{II}_n}_{l+1}
    \]
    defined by
    \[
      \delta_l(\mathbf{a}) :=
      \begin{cases}
        \mathbf{a}*1 & \text{ if } \overline{1} \notin \mathbf{a}, \\
        \mathbf{a}_{> 1} \Leftarrow \mathsf{gpv}(\mathbf{a}) & \text{ if } \overline{1} \in \mathbf{a}.
      \end{cases}
    \]
    \item The \emph{connecting map \(\delta_{n-1,\pm}\)} is the map
    \[
      \delta_{n-1,\pm} \colon \mathsf{Col}^{D\mathrm{II}_n}_{n-1} \to \mathsf{Col}^{D_n}_{n,\pm}
    \]
    defined by
    \[
      \delta_{n-1,\pm}(\mathbf{a}) := \begin{cases}
        \mathbf{a}*1 & \text{ if } \overline{1} \notin \mathbf{a} \text{ and } e(\mathbf{a}) = \mp, \\
        \overline{1}*\mathbf{a} & \text{ if } \overline{1} \notin \mathbf{a} \text{ and } e(\mathbf{a}) = \pm, \\
        \mathbf{a}_{> 1} \Leftarrow \mathsf{gpv}(\mathbf{a}) & \text{ if } \overline{1} \in \mathbf{a},\ \mathsf{gpv}(\mathbf{a}) < n, \text{ and } e(\mathbf{a}) = \mp, \\
        \mathbf{a}_{> 1} \Leftarrow_\pm n & \text{ if } \overline{1} \in \mathbf{a},\ \mathsf{gpv}(\mathbf{a}) < n, \text{ and } e(\mathbf{a}) = \pm, \\
        \mathbf{a}_{> 1} \Leftarrow_\pm n & \text{ if } \overline{1} \in \mathbf{a} \text{ and } \mathsf{gpv}(\mathbf{a}) = n.
      \end{cases}
    \]
  \end{enumerate}
\end{defi}

\begin{rem}[Well-definedness of connecting maps]
  The connecting maps \(\delta_l\) for \(l \in \{ 0,\dots,n-2 \}\) are well-defined by Propositions \ref{prop_ins_1_DII} \eqref{item_DII_ins_1_DII} and \ref{prop_ins_pv_DII} \eqref{item_ins_gpv_DII}.
  And the connecting map \(\delta_{n,\pm}\) is well-defined by Propositions \ref{prop_ins_1_DII} \eqref{item_DII_ins_1_DII}, \ref{prop_ins_1b_DII} \eqref{item_DII_ins_1b_DII}, and \ref{prop_ins_pv_DII} \eqref{item_ins_gpv_DII}--\eqref{item_ins_gpv_max_len_DII}.
\end{rem}

\begin{thm}[Bijectivity of connecting maps]\label{thm_bij_conn_map_DII}
  The connecting maps are bijective.
\end{thm}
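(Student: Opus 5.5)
The plan is to write down an explicit inverse for each connecting map, built from the ``removing'' propositions, and to check both composites using the inverse statements already recorded in Propositions \ref{prop_ins_1_DII}--\ref{prop_rm_lwd_DII}.

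For \(\delta_l\) with \(l \le n-2\), define \(\gamma_l \colon \mathsf{Col}^{D_n}_{l+1} \setminus \mathsf{Col}^{D\mathrm{II}_n}_{l+1} \to \mathsf{Col}^{D\mathrm{II}_n}_l\) by cases on \(\mathbf{c}\).
\begin{itemize}
\item If \(1 \in \mathbf{c}\), set \(\gamma_l(\mathbf{c}) := \mathbf{c}_{>1}\).
\item Otherwise \(\overline{1} \notin \mathbf{c}\): since \(|\mathbf{c}| < n\), Proposition \ref{prop_entry_1b_DII} would force \(\mathbf{c}\) to be of type \(D\mathrm{II}_n\). So \(1 \in \mathsf{V}(\mathbf{c})\), Lemma \ref{lem_exist_wd_D} applies, and we set \(\gamma_l(\mathbf{c}) := \overline{1} * (\mathbf{c} \Rightarrow \mathsf{lwd}(\mathbf{c}))\).
\end{itemize}
Well-definedness comes from Propositions \ref{prop_rm_1_DII} \eqref{item_rm_1_DII} and \ref{prop_rm_lwd_DII} \eqref{item_rm_lwd_DII}.

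For \(\gamma_l \circ \delta_l = \mathrm{id}\):
\begin{itemize}
\item When \(\overline{1} \notin \mathbf{a}\), we have \(1 \notin \mathbf{a}\) (type \(D\mathrm{II}_n\)), so \(1 \in \mathsf{V}(\mathbf{a})\). Then \(\delta_l(\mathbf{a}) = \mathbf{a}*1\) contains \(1\), and Proposition \ref{prop_ins_1_DII} \eqref{item_inv_ins_1_DII} gives \(\gamma_l(\delta_l(\mathbf{a})) = \mathbf{a}\).
\item When \(\overline{1} \in \mathbf{a}\), the column \(\mathbf{a}_{>1} \Leftarrow \mathsf{gpv}(\mathbf{a})\) contains neither \(1\) nor \(\overline{1}\). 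Its least weakly deep number is \(\mathsf{gpv}(\mathbf{a})\), and removing that number recovers \(\mathbf{a}\); both facts are in Proposition \ref{prop_ins_pv_DII} \eqref{item_ins_gpv_DII}. Here \(\mathsf{gpv}(\mathbf{a}) < n\) because \(l \le n-2\), by the remark that \(\mathsf{gpv} = n\) forces \(|\mathbf{b}| = n-2\).
\end{itemize}
For \(\delta_l \circ \gamma_l = \mathrm{id}\), the two cases are handled by Proposition \ref{prop_rm_1_DII} \eqref{item_inv_rm_1_DII} and by Proposition \ref{prop_rm_lwd_DII} \eqref{item_gpv_rm_lwd_DII}, \eqref{item_inv_rm_lwd_DII}. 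In the latter case, \(\overline{1} * \mathbf{b}\) contains \(\overline{1}\), its \(\mathsf{gpv}\) equals \(\mathsf{lwd}(\mathbf{c})\), and reinserting gives \(\mathbf{c}\). The case \(\mathsf{lwd}(\mathbf{c}) = n\) cannot occur, since it would force \(l+1 = n\).

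For \(\delta_{n-1,\pm}\), the target \(\mathsf{Col}^{D_n}_{n,\pm}\) is partitioned according to the first letter: \(1 \in \mathbf{c}\), \(\overline{1} \in \mathbf{c}\), or \(1 \in \mathsf{V}(\mathbf{c})\). On each piece the inverse is the corresponding removal: \(\mathbf{c} \mapsto \mathbf{c}_{>1}\) from Proposition \ref{prop_rm_1_DII}, \(\mathbf{c} \mapsto \mathbf{c}_{>1}\) from Proposition \ref{prop_rm_1b_DII}, and \(\mathbf{c} \mapsto \overline{1} * (\mathbf{c} \Rightarrow \mathsf{lwd}(\mathbf{c}))\) from Proposition \ref{prop_rm_lwd_DII}. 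In the third case, a length-\(n\) column with \(1\) vacant is automatically not of type \(D\mathrm{II}_n\).

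The five cases of Definition \ref{def_conn_map_DII} then have to be matched with these three pieces, tracking signs.
\begin{itemize}
\item Case 1, \(\mathbf{a}*1\) with \(\mathsf{e}(\mathbf{a}) = \mp\), has sign \(\overline{\mathsf{e}}(\mathbf{a}) = \pm\) and lands in the \(1\)-piece. Proposition \ref{prop_rm_1_DII} \eqref{item_rm_1_DII} shows the removal has anti-sign compatibility, so that piece is hit exactly by these \(\mathbf{a}\).
\item Case 2, \(\overline{1}*\mathbf{a}\) with \(\mathsf{e}(\mathbf{a}) = \pm\), lands in the \(\overline{1}\)-piece. Proposition \ref{prop_rm_1b_DII} \eqref{item_rm_1b_DII} preserves sign, and Proposition \ref{prop_ins_1b_DII} gives the inverse relation.
\item Cases 3--5 land in the vacant-\(1\) piece.
\end{itemize}

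The main obstacle is the sign bookkeeping in the vacant-\(1\) piece. One must show that \(\mathbf{c} \mapsto \overline{1} * (\mathbf{c} \Rightarrow \mathsf{lwd}(\mathbf{c}))\) is inverse to \(\delta_{n-1,\pm}\) restricted to cases 3--5. That requires identifying which case each preimage \(\mathbf{a}\) falls into from \(\mathsf{lwd}(\mathbf{c})\) and \(\mathsf{e}(\mathbf{a})\).
\begin{itemize}
\item If \(\mathsf{lwd}(\mathbf{c}) < n\), then \(\mathsf{gpv}(\mathbf{a}) = \mathsf{lwd}(\mathbf{c}) < n\). In this situation \(n\) is not occupied in \(\mathbf{c}\) near the top, and the sign of \(\mathbf{a}\) must come out as \(\mp\) (case 3). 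I would verify this by the parity count in the definition of \(\mathsf{e}\): deleting the pair \(b, \overline{b}\) and adding \(\overline{1}\) changes the length by \(-1\), which flips the parity of the position of \(n\) or \(\overline{n}\).
\item If \(\mathsf{lwd}(\mathbf{c}) = n\), then either \(\mathsf{gpv}(\mathbf{a}) = n\) (case 5) or \(\mathsf{gpv}(\mathbf{a}) < n\) with \(\mathsf{e}(\mathbf{a}) = \pm\) (case 4). Here \(\mathbf{c} \Rightarrow n\) removes the first two letters of \(\mathbf{c}_n\), and Proposition \ref{prop_rm_lwd_DII} \eqref{item_inv_max_len_rm_lwd_DII} recovers \(\mathbf{c}\) via \(\Leftarrow_{\mathsf{e}(\mathbf{c})} n\).
\end{itemize}
I expect to need a short separate check that case 4 really outputs a column with \(\mathsf{lwd} = n\), which Proposition \ref{prop_ins_pv_DII} only covers when \(\mathsf{gpv} = n\). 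I would try to prove it by the same bead-counting argument as in Proposition \ref{prop_ins_pv_DII} \eqref{item_ins_gpv_DII}, using that \(n\) has just been inserted.
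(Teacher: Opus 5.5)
Your proposal is correct and follows the paper's proof. The paper likewise inverts \(\delta_l\) by \(\mathbf{c}\mapsto\mathbf{c}_{>1}\) or \(\mathbf{c}\mapsto\overline{1}*(\mathbf{c}\Rightarrow\mathsf{lwd}(\mathbf{c}))\) according to whether \(1\in\mathbf{c}\), and inverts \(\delta_{n-1,\pm}\) by the same two formulas according to whether \(1\in\mathsf{V}(\mathbf{c})\).

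The extra checks you flag are details the paper leaves implicit, and they go through:
\begin{itemize}
\item Case 4 outputs a column with \(\mathsf{lwd}=n\) directly from the strict shallowness of \(\mathbf{a}\).
\item When \(\mathsf{lwd}(\mathbf{c})=n\), the preimage is in case 5 exactly when \(|\mathbf{c}_n|=2\). Otherwise it is in case 4, because deleting the two rightmost letters of the alternating block \(\mathbf{c}_n\) preserves the sign.
\item In the \(\mathsf{lwd}(\mathbf{c})<n\) branch the sign flips because \(b\) lies to the right of the \(n\)-letters in the word and so shifts their indices by one. It is not because of the total length change.
\end{itemize}
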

\begin{proof}
  Let us construct the inverses of connecting maps.
  For each \(l \in \{ 0,\dots,n-2 \}\), define the map
  \[
    \delta_l' \colon \mathsf{Col}^{D_n}_{l+1} \setminus \mathsf{Col}^{D\mathrm{II}_n}_{l+1} \to \mathsf{Col}^{D\mathrm{II}_n}_{l}
  \]
  by
  \[
    \delta_l'(\mathbf{a}) :=
    \begin{cases}
      \mathbf{a}_{> 1} & \text{ if } 1 \in \mathbf{a}, \\
      \overline{1}*(\mathbf{a} \Rightarrow \mathsf{lwd}(\mathbf{a})) & \text{ if } 1 \notin \mathbf{a}.
    \end{cases}
  \]
  This is well-defined and the inverse of \(\delta_l\) by Propositions \ref{prop_rm_1_DII}, \ref{prop_rm_lwd_DII}, \ref{prop_ins_1_DII}, and \ref{prop_ins_pv_DII}.

  Also, define the map
  \[
    \delta_{n-1,\pm}' \colon \mathsf{Col}^{D_n}_{n,\pm} \to \mathsf{Col}^{D\mathrm{II}_n}_{n-1}
  \]
  by
  \[
    \delta_{n-1,\pm}'(\mathbf{a}) :=
    \begin{cases}
      \mathbf{a}_{> 1} & \text{ if } 1 \notin \mathsf{V}(\mathbf{a}), \\
      \overline{1}*(\mathbf{a} \Rightarrow \mathsf{lwd}(\mathbf{a})) & \text{ if } 1 \in \mathsf{V}(\mathbf{a}).
    \end{cases}
  \]
  This is well-defined and the inverse of \(\delta_{n,\pm}\) by Propositions \ref{prop_rm_1_DII}, \ref{prop_rm_lwd_DII}, \ref{prop_ins_1_DII}, \ref{prop_ins_1b_DII}, and \ref{prop_ins_pv_DII}.
\end{proof}

\subsection{Reductions}
In this section, we transform columns of type \(D\mathrm{II}_n\) into \(D_{n-1}\), and type \(D_n\) into \(D\mathrm{II}_n\).
Given a word \(\mathbf{a} \in \mathcal{W}^{D_n}\), we use the following notations:
\begin{itemize}
  \item \(\mathbf{a}[+1]\): the word obtained by replacing the letters \(1,\dots,n-1,\overline{n-1},\dots,\overline{1}\) with \(2,\dots,n,\overline{n},\dots,\overline{2}\), respectively (if \(n,\overline{n} \notin \mathbf{a}\)).
  \item \(\mathbf{a}[-1]\): the word obtained by replacing the letters \(2,\dots,n,\overline{n},\dots,\overline{2}\) with \(1,\dots,n-1,\overline{n-1},\dots,\overline{1}\), respectively (if \(1,\overline{1} \notin \mathbf{a}\)).
\end{itemize}

\begin{defi}[Reduction of columns from \(D\mathrm{II}_n\) to \(D_{n-1}\)]
  Let \(\mathbf{a} \in \mathsf{Col}^{D\mathrm{II}_n}\).
  The \emph{reduction of \(\mathbf{a}\) from \(D\mathrm{II}_n\) to \(D_{n-1}\)} is the word \(\mathbf{a} \downarrow^{D\mathrm{II}_n}_{D_{n-1}} \in \mathcal{W}^{D_{n-1}}\) defined by
  \[
    \mathbf{a} \downarrow^{D\mathrm{II}_n}_{D_{n-1}} := \mathbf{a}_{> 1}[-1].
  \]
\end{defi}

\begin{thm}[Reduction of columns from \(D\mathrm{II}_n\) to \(D_{n-1}\)]\label{thm_red_DII_D_col}
  Let \(l \in \{ 0,\dots,n-1 \}\).
  The assignment \(\mathbf{a} \mapsto \mathbf{a} \downarrow^{D\mathrm{II}_n}_{D_{n-1}}\) gives rise to a bijection
  \[
    \cdot \downarrow^{D\mathrm{II}_n}_{D_{n-1}} \colon \mathsf{Col}^{D\mathrm{II}_n}_{l} \to \mathsf{Col}^{D_{n-1}}_{l} \sqcup \mathsf{Col}^{D_{n-1}}_{l-1}.
  \]
\end{thm}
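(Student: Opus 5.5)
The plan is to show directly that the reduction map is well defined into the stated target, and then to write down an explicit inverse. The key observation is that every \(\mathbf{a} \in \mathsf{Col}^{D\mathrm{II}_n}_l\) satisfies \(1 \notin \mathbf{a}\) by the exclusion condition, so \(\mathbf{a}_{>1}\) is either \(\mathbf{a}\) itself (when \(\overline{1} \notin \mathbf{a}\)) or \(\mathbf{a}\) with its last letter \(\overline{1}\) deleted. Since \(\overline{1}\) is the \(\prec\)-maximal letter, it can only occur as \(a_l\). Hence \(\mathbf{a}_{>1}[-1]\) has length \(l\) or \(l-1\), and these two cases correspond to the two pieces of the disjoint union. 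The shift \([-1]\) is defined because \(1,\overline{1} \notin \mathbf{a}_{>1}\).

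\textbf{Well-definedness.} Set \(\mathbf{c} := \mathbf{a}_{>1}[-1]\). I check the three conditions defining \(\mathsf{Col}^{D_{n-1}}\).
\begin{enumerate}
  \item Length: \(|\mathbf{c}| \leq l \leq n-1\).
  \item Quasi-increasing: the shift \([-1]\) is an order isomorphism from \(\{2,\dots,n,\overline{n},\dots,\overline{2}\}\) onto \(\mathcal{A}^{D_{n-1}}\). It sends the incomparable pair \(n,\overline{n}\) to the incomparable pair \(n-1,\overline{n-1}\). Deleting the final \(\overline{1}\) does not affect this condition, so \(\mathbf{c}\) is quasi-increasing.
  \item Shallowness: we have \(\mathsf{O}(\mathbf{c}) = \{ b-1 \mid b \in \mathsf{O}(\mathbf{a}) \}\), because \(1 \notin \mathsf{O}(\mathbf{a})\) by Proposition \ref{prop_non_occ_1_kn_col_d}. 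For \(b \in \mathsf{O}(\mathbf{a})\), strict shallowness gives \(|\mathbf{c}_{\leq b-1}| = |(\mathbf{a}_{>1})_{\leq b}| < b\), that is, \(|\mathbf{c}_{\leq b-1}| \leq b-1\). This is exactly the shallowness of \(\mathbf{c}\).
\end{enumerate}

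\textbf{Inverse map.} For \(\mathbf{c} \in \mathsf{Col}^{D_{n-1}}_l\), put \(\mathbf{a} := \mathbf{c}[+1]\). For \(\mathbf{c} \in \mathsf{Col}^{D_{n-1}}_{l-1}\) (only relevant when \(l \geq 1\)), put \(\mathbf{a} := \overline{1} * \mathbf{c}[+1]\). In both cases I verify the following.
\begin{enumerate}
  \item \(\mathbf{a}\) is quasi-increasing, since \(\overline{1}\) is \(\prec\)-maximal and is placed at the top end.
  \item \(|\mathbf{a}| = l < n\), and \(1 \notin \mathbf{a}\).
  \item \(\mathsf{O}(\mathbf{a}) = \{ b'+1 \mid b' \in \mathsf{O}(\mathbf{c}) \}\), since \(\overline{1}\) appears without \(1\).
  \item For \(b = b'+1\) we have \(|(\mathbf{a}_{>1})_{\leq b}| = |\mathbf{c}_{\leq b'}| \leq b' < b\), which is strict shallowness.
  \item Also \(|\mathbf{a}_{\leq b}| \leq |\mathbf{c}_{\leq b'}| + 1 \leq b\), which is ordinary shallowness, so \(\mathbf{a} \in \mathsf{Col}^{D_n}\) and hence \(\mathbf{a} \in \mathsf{Col}^{D\mathrm{II}_n}_l\).
\end{enumerate}

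The two assignments are visibly mutually inverse, since \((\overline{1} * \mathbf{c}[+1])_{>1} = \mathbf{c}[+1]\) and \([\pm 1]\) are inverse relabellings. This gives the bijection.

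The only point needing care, and thus the mild obstacle, is the bookkeeping around the letters \(n,\overline{n}\) and the edge cases. One must confirm that the quasi-increasing condition and occupancy of \(n\) transfer correctly under the shift to \(n-1\) in \(D_{n-1}\). For \(l=0\), \(\mathsf{Col}^{D_{n-1}}_{-1}\) is understood to be empty. For \(l = n-1\), the target column of \(D_{n-1}\) may have maximal length \(n-1\); no sign condition on columns is imposed in \(\mathsf{Col}^{D_{n-1}}_{n-1}\), so nothing extra needs checking there.
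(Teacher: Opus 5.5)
Your proof is correct and takes the same route as the paper. You check that \(\mathbf{a}_{>1}[-1]\) lands in \(\mathsf{Col}^{D_{n-1}}_{l}\) or \(\mathsf{Col}^{D_{n-1}}_{l-1}\) according to whether \(\overline{1} \in \mathbf{a}\), and exhibit the inverse \(\mathbf{b} \mapsto \mathbf{b}[+1]\) or \(\overline{1}*(\mathbf{b}[+1])\); you simply write out the verifications (the shift as an order isomorphism, and the transfer of occupancy, shallowness and strict shallowness) that the paper dismisses with ``by the definitions of columns.''
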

\begin{proof}
  Let \(\mathbf{a} \in \mathsf{Col}^{D\mathrm{II}_n}_{l}\).
  By the definitions of columns, we see that
  \[
    \mathbf{a} \downarrow^{D\mathrm{II}_n}_{D_{n-1}} \in
    \begin{cases}
      \mathsf{Col}^{D_{n-1}}_{l} & \text{ if } \overline{1} \notin \mathbf{a}, \\
      \mathsf{Col}^{D_{n-1}}_{l-1} & \text{ if } \overline{1} \in \mathbf{a}.
    \end{cases}
  \]
  Consider the map
  \begin{align}\label{eq_inv_red_DII_D}
    \mathsf{Col}^{D_{n-1}}_{l} \sqcup \mathsf{Col}^{D_{n-1}}_{l-1} \to \mathcal{W}^{D_n};\ \mathbf{b} \mapsto
    \begin{cases}
      \mathbf{b}[+1] & \text{ if } |\mathbf{b}| = l, \\
      \overline{1}*(\mathbf{b}[+1]) & \text{ if } |\mathbf{b}| = l-1.
    \end{cases}
  \end{align}
  Then, its image lies in \(\mathsf{Col}^{D\mathrm{II}_n}_{l}\).
  Also, this map is the inverse of \(\cdot \downarrow^{D\mathrm{II}_n}_{D_{n-1}}\).
  Hence, the assertion follows.
\end{proof}

\begin{cor}[\(D_{n-1}\)-crytsal structure of \(\mathsf{Col}^{D\mathrm{II}_n}\)]\label{cor_crystal_col_DII}
  Let \(l \in \{ 0,\dots,n-1 \}\).
  The subset \(\mathsf{Col}^{D\mathrm{II}_n}_{l} \subseteq \mathsf{Col}^{D_n}_{l}\) forms a subcrystal of type \(D_{n-1}\)\textup{;} see \textup{\S\ref{ssect_crystal}} for the \(D_{n-1}\)-crystal structure of \(\mathsf{Col}^{D_n}_{l}\).
  Moreover, the reduction map \(\cdot \downarrow^{D\mathrm{II}_n}_{D_{n-1}}\) is an isomorphism of crystals.
\end{cor}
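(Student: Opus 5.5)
The plan is to show that \(\mathsf{Col}^{D\mathrm{II}_n}_{l}\) is stable under the operators \(\widetilde{E}_j,\widetilde{F}_j\) for \(j \in \{2,\dots,n\}\) (the \(D_{n-1}\)-structure of \S\ref{ssect_crystal}, with \(\widetilde{E}'_{j-1} = \widetilde{E}_j\)). Then we show that the bijection of Theorem \ref{thm_red_DII_D_col} intertwines these operators and the weights. Here the target \(\mathsf{Col}^{D_{n-1}}_{l} \sqcup \mathsf{Col}^{D_{n-1}}_{l-1}\) carries its \(D_{n-1}\)-crystal structure as a subcrystal of \(\mathcal{W}^{D_{n-1}}\), by the remark on the crystal structure of columns.

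The key observation is about how \(\widetilde{E}_j,\widetilde{F}_j\) for \(j \geq 2\) act on a word. Since the word crystal is a tensor product of letters, the signature rule for index \(j\) only involves letters \(a\) with \(\varepsilon_j(a)+\varphi_j(a)>0\). For \(j \geq 2\), the letters \(1\) and \(\overline{1}\) have \(\varepsilon_j=\varphi_j=0\) (see the crystal graph of \(\mathcal{A}^{D_n}\)). Hence, for any word \(\mathbf{a}\), \(\widetilde{F}_j\) acts on the subword \(\mathbf{a}_{>1}\) exactly as on \(\mathbf{a}\), leaving the positions of \(1,\overline{1}\) untouched. The same holds for \(\widetilde{E}_j\). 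Moreover, the relabelling \(\mathbf{b}\mapsto\mathbf{b}[-1]\) sends the \(D_n\)-crystal graph restricted to letters \(2,\dots,\overline{2}\) with arrows \(j\geq 2\) onto the \(D_{n-1}\)-crystal graph of \(\mathcal{A}^{D_{n-1}}\) with arrows \(j-1\), compatibly with \(\mathsf{wt}'\). Therefore, for \(\mathbf{a}=\mathbf{a}_{>1}\) or \(\mathbf{a}=\overline{1}*\mathbf{a}_{>1}\), we get
\[
(\widetilde{F}_j\mathbf{a})\downarrow^{D\mathrm{II}_n}_{D_{n-1}} = \widetilde{F}'_{j-1}(\mathbf{a}\downarrow^{D\mathrm{II}_n}_{D_{n-1}}),
\]
as elements of \(\mathcal{W}^{D_{n-1}}\sqcup\{0\}\), and similarly for \(\widetilde{E}_j\). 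Here we use that reattaching \(\overline{1}\) at the left (via the inverse map \eqref{eq_inv_red_DII_D}) commutes with these operators.

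From this, closure follows. Let \(\mathbf{a}\in\mathsf{Col}^{D\mathrm{II}_n}_{l}\) with \(\widetilde{F}_j\mathbf{a}\neq 0\). Then \(\mathbf{b}:=\mathbf{a}\downarrow^{D\mathrm{II}_n}_{D_{n-1}}\) lies in \(\mathsf{Col}^{D_{n-1}}_{l}\) or \(\mathsf{Col}^{D_{n-1}}_{l-1}\), and \(\widetilde{F}'_{j-1}\mathbf{b}\) is nonzero and lies in the same set, since these are subcrystals of \(\mathcal{W}^{D_{n-1}}\). Applying the inverse map \eqref{eq_inv_red_DII_D} gives an element of \(\mathsf{Col}^{D\mathrm{II}_n}_{l}\), which by the displayed identity equals \(\widetilde{F}_j\mathbf{a}\). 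Thus \(\mathsf{Col}^{D\mathrm{II}_n}_{l}\) is a subcrystal of type \(D_{n-1}\). The reduction map is a bijection by Theorem \ref{thm_red_DII_D_col}, and by the identity above it commutes with \(\widetilde{E},\widetilde{F}\). It preserves weights because \(\langle d'_{j},\mathsf{wt}'(\mathbf{a})\rangle=\langle d_{j+1},\mathsf{wt}(\mathbf{a})\rangle\) counts only letters \(j+1,\overline{j+1}\), which become \(j,\overline{j}\) after the shift. So it is a crystal isomorphism.

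The main point needing care is the case \(j=n\) near the top: the letters \(n,\overline{n}\) may alternate, and the subcrystal claim for \(\mathcal{W}^{D_{n-1}}\) columns relies on KN's results. This is handled by citing those results rather than recomputing. One must also check that the edge \(n-1\to\overline{n}\) labelled \(n\) shifts correctly to the edge \(n-2\to\overline{n-1}\) labelled \(n-1\) in \(D_{n-1}\); this holds for \(n-1\geq 2\). The degenerate case \(n-1=1\), where \(D_1\) has no operators, only requires weight preservation.
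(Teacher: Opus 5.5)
Your proof is correct and follows essentially the same route as the paper. The paper simply asserts that the inverse map \eqref{eq_inv_red_DII_D} is clearly a morphism of \(D_{n-1}\)-crystals and then invokes the bijection of Theorem \ref{thm_red_DII_D_col}; your argument supplies the justification for that ``clearly''. That justification is that for \(n\geq 3\) the letters \(1,\overline{1}\) carry no \(j\)-arrows for \(j\geq 2\), and that the shift \([-1]\) identifies the remaining letters with \(\mathcal{A}^{D_{n-1}}\), compatibly with \(\mathsf{wt}'\). You also correctly set aside the degenerate case \(n=2\).
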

\begin{proof}
  Clearly, the map \eqref{eq_inv_red_DII_D} is a morphism of crystals of type \(D_{n-1}\).
  Hence, the assertion follows from Theorem \ref{thm_red_DII_D_col}.
\end{proof}

\begin{defi}[Reduction of columns from \(D_n\) to \(D\mathrm{II}_n\)]\label{def_red_col_D_DII}
  Let \(\mathbf{a} \in \mathsf{Col}^{D_n}_{l}\).
  The \emph{reduction of \(\mathbf{a}\) from \(D_n\) to \(D\mathrm{II}_n\)} is the word \(\mathbf{a} \downarrow^{D_n}_{D\mathrm{II}_n} \in \mathcal{W}^{D_n}\) defined by
  \[
    \mathbf{a} \downarrow^{D_n}_{D\mathrm{II}_n} :=
    \begin{cases}
      \mathbf{a} & \text{ if } \mathbf{a} \in \mathsf{Col}^{D\mathrm{II}_n}_{l}, \\
      \mathbf{a}_{> 1} & \text{ if } \mathbf{a} \notin \mathsf{Col}^{D\mathrm{II}_n}_{l} \text{ and } 1 \notin \mathsf{V}(\mathbf{a}), \\
      \overline{1}*(\mathbf{a} \Rightarrow \mathsf{lwd}(\mathbf{a})) & \text{ if } \mathbf{a} \notin \mathsf{Col}^{D\mathrm{II}_n}_{l} \text{ and } 1 \in \mathsf{V}(\mathbf{a}).
    \end{cases}
  \]
\end{defi}

\begin{thm}[Reduction of columns from \(D_n\) to \(D\mathrm{II}_n\)]\label{thm_red_D_DII_col}
  \hfill
  \begin{enumerate}
    \item Let \(l \in \{ 0,\dots,n-1 \}\).
    Then, the assignment \(\mathbf{a} \mapsto \mathbf{a} \downarrow^{D_n}_{D\mathrm{II}_n}\) gives rise to a bijection
    \[
      \mathsf{Col}^{D_n}_{l} \to \mathsf{Col}^{D\mathrm{II}_n}_{l} \sqcup \mathsf{Col}^{D\mathrm{II}_n}_{l-1}.
    \]
    \item The assignment \(\mathbf{a} \mapsto \mathbf{a} \downarrow^{D_n}_{D\mathrm{II}_n}\) gives rise to a bijection
    \[
      \mathsf{Col}^{D_n}_{n,\pm} \to \mathsf{Col}^{D\mathrm{II}_n}_{n-1}.
    \]
  \end{enumerate}
\end{thm}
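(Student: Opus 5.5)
The reduction map is built from the inverse connecting maps of Theorem \ref{thm_bij_conn_map_DII}, so I would obtain both statements by splitting the domain along the type \(D\mathrm{II}_n\) condition and applying that theorem to each piece.

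For (1), fix \(l \in \{0,\dots,n-1\}\) and decompose
\[
  \mathsf{Col}^{D_n}_{l} = \mathsf{Col}^{D\mathrm{II}_n}_{l} \sqcup (\mathsf{Col}^{D_n}_{l} \setminus \mathsf{Col}^{D\mathrm{II}_n}_{l}).
\]
On the first piece, \(\cdot\downarrow^{D_n}_{D\mathrm{II}_n}\) is the identity by Definition \ref{def_red_col_D_DII}, so it maps bijectively onto \(\mathsf{Col}^{D\mathrm{II}_n}_{l}\).

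On the second piece, with \(l \geq 1\), the reduction agrees with \(\delta'_{l-1}\) from the proof of Theorem \ref{thm_bij_conn_map_DII}. I would check the case split explicitly. A column \(\mathbf{a}\) not of type \(D\mathrm{II}_n\) with \(|\mathbf{a}|<n\) cannot contain \(\overline{1}\), by Proposition \ref{prop_entry_1b_DII}. Hence the condition ``\(1 \notin \mathsf{V}(\mathbf{a})\)'' is equivalent to ``\(1 \in \mathbf{a}\)''. In that case the reduction gives \(\mathbf{a}_{>1}\), and otherwise it gives \(\overline{1}*(\mathbf{a}\Rightarrow\mathsf{lwd}(\mathbf{a}))\), exactly as \(\delta'_{l-1}\) does. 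Since \(\delta'_{l-1}\) is the inverse of the bijection \(\delta_{l-1}\), it is a bijection
\[
  \mathsf{Col}^{D_n}_{l}\setminus\mathsf{Col}^{D\mathrm{II}_n}_{l} \to \mathsf{Col}^{D\mathrm{II}_n}_{l-1}.
\]
For \(l=0\) the second piece is empty: the empty column has no occupied numbers, so it satisfies all three conditions of Definition \ref{def_col_DII}. The same holds for the target, since \(\mathsf{Col}^{D\mathrm{II}_n}_{-1}=\emptyset\). The two images \(\mathsf{Col}^{D\mathrm{II}_n}_{l}\) and \(\mathsf{Col}^{D\mathrm{II}_n}_{l-1}\) consist of columns of different lengths, so they are disjoint and the two bijections glue to the desired bijection.

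For (2), every column of length \(n\) violates the Length condition of Definition \ref{def_col_DII}, so \(\mathsf{Col}^{D_n}_{n,\pm}\cap\mathsf{Col}^{D\mathrm{II}_n}=\emptyset\). The reduction is therefore given by the second or third clause of Definition \ref{def_red_col_D_DII}, and this is literally \(\delta'_{n-1,\pm}\). By Theorem \ref{thm_bij_conn_map_DII}, \(\delta'_{n-1,\pm}\) is the inverse of the bijection \(\delta_{n-1,\pm}\) onto \(\mathsf{Col}^{D\mathrm{II}_n}_{n-1}\), which proves (2).

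The main obstacle is the bookkeeping needed to match the case conditions of Definition \ref{def_red_col_D_DII} with those of the inverse connecting maps, in particular the implication \(1\notin\mathsf{V}(\mathbf{a}) \Rightarrow 1\in\mathbf{a}\) for short columns. The substantive combinatorics is already contained in Theorem \ref{thm_bij_conn_map_DII}.
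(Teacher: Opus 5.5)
Your proposal is correct and follows the paper's proof. The paper likewise identifies \(\mathbf{a} \downarrow^{D_n}_{D\mathrm{II}_n}\) with \(\mathbf{a}\) on \(\mathsf{Col}^{D\mathrm{II}_n}_{l}\), with \(\delta_{l-1}^{-1}(\mathbf{a})\) on the complement when \(l<n\), and with \(\delta_{n-1,\mathsf{e}(\mathbf{a})}^{-1}(\mathbf{a})\) when \(l=n\), then invokes Theorem \ref{thm_bij_conn_map_DII}; your matching of case conditions via Proposition \ref{prop_entry_1b_DII} and your \(l=0\) check spell out what the paper leaves implicit.
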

\begin{proof}
  For each \(l \in \{ 0,\dots,n \}\) and \(\mathbf{a} \in \mathsf{Col}^{D_n}_{l}\), we have
  \[
    \mathbf{a} \downarrow^{D_n}_{D\mathrm{II}_n} =
    \begin{cases}
      \mathbf{a} & \text{ if } \mathbf{a} \in \mathsf{Col}^{D\mathrm{II}_n}_{l} \\
      \delta_{l-1}^{-1}(\mathbf{a}) & \text{ if } \mathbf{a} \notin \mathsf{Col}^{D\mathrm{II}_n}_{l} \text{ and } l < n, \\
      \delta_{n-1,\mathbf{e}(\mathbf{a})}^{-1}(\mathbf{a}) & \text{ if } l = n.
    \end{cases}
  \]
  Hence, the assertion follows from Theorem \ref{thm_bij_conn_map_DII}.
\end{proof}

\subsection{Tableaux}
\begin{defi}[Kashiwara--Nakashima tableaux of type \(D\mathrm{II}\)]\label{def_knt_DII}
  A Kashiwara--Nakashima tableau \(T = (\mathbf{a}^1,\dots,\mathbf{a}^m)\) of type \(D_n\) and shape \(\lambda \in X^+_{2n,0}\) is said to be of \emph{type \(D\mathrm{II}_n\)} if its first column \(\mathbf{a}^1\) is of type \(D\mathrm{II}_n\).
  The set of Kashiwara--Nakashima tableaux of type \(D\mathrm{II}_n\) and shape \(\lambda\) is denoted by
  \[
    \mathsf{KNT}^{D\mathrm{II}_n}(\lambda).
  \]
  Also, set
  \[
    \mathsf{KNT}^{D\mathrm{II}_n}_{0} := \bigsqcup_{\lambda \in X^+_{2n,0}} \mathsf{KNT}^{D_n}(\lambda).
  \]
\end{defi}

\begin{rem}[Shape of tableaux of type \(D\mathrm{II}\)]
  Let \(\lambda \in X^+_{2n,0}\) be such that \(\mathsf{KNT}^{D\mathrm{II}_n}(\lambda) \neq \emptyset\).
  Since every Kashiwara--Nakashima columns of type \(D\mathrm{II}_n\) has length less than \(n\), so does \(\mathsf{par}(\lambda)\).
  Hence, we can regard \(\lambda\) as an element of \(X^+_{2n-1,0}\).
\end{rem}

Let us first transform a tableau of type \(D\mathrm{II}_n\) into type \(D_{n-1}\).

\begin{prop}[Adjacent columns]\label{prop_adj_col_DII}
  Let \(\mathbf{a} = (a_k,\dots,a_1), \mathbf{b} = (b_l,\dots,b_1) \in \mathsf{Col}^{D_n}\).
  \begin{enumerate}
    \item\label{item_propagate_adj_col_DII} If \(\mathbf{a} \triangleleft \mathbf{b}\) and \(\mathbf{a} \in \mathsf{Col}^{D\mathrm{II}_n}\), then \(\mathbf{b} \in \mathsf{Col}^{D\mathrm{II}_n}\).
    \item\label{item_equivalence_adj_col_DII} If \(\mathbf{a}, \mathbf{b} \in \mathsf{Col}^{D\mathrm{II}_n}\), then we have \(\mathbf{a} \triangleleft \mathbf{b}\) if and only if \((\mathbf{a} \downarrow^{D\mathrm{II}_n}_{D_{n-1}}) \triangleleft (\mathbf{b} \downarrow^{D\mathrm{II}_n}_{D_{n-1}})\).
  \end{enumerate}
\end{prop}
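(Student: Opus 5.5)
Both parts follow from unwinding the adjacent condition and comparing entry positions.

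For \eqref{item_propagate_adj_col_DII}, I check the three defining conditions of \(\mathsf{Col}^{D\mathrm{II}_n}\) for \(\mathbf{b}\). The length condition holds since \(|\mathbf{b}| \leq |\mathbf{a}| < n\). For the exclusion of \(1\): since \(1 \notin \mathbf{a}\), we have \(a_1 \succ 1\), and the weak increase \(a_1 \preceq b_1\) gives \(b_1 \neq 1\). As \(\mathbf{b}\) is quasi-increasing, \(1\) could only occur as \(b_1\), so \(1 \notin \mathbf{b}\).

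Strict shallowness is the real content. Let \(c \in \mathsf{O}(\mathbf{b})\), and suppose \(|\mathbf{b}_{\leq c}| \geq c\) (when \(1 \notin \mathbf{b}\) we have \(\mathbf{b}_{>1}=\mathbf{b}\)). I would produce a forbidden configuration.

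First, since \(b_r \succeq a_r \succ 1\) for all \(r \leq |\mathbf{b}|\), the number of letters of \(\mathbf{b}\) in \(\{2,\dots,c\}\) is at most the number of letters of \(\mathbf{a}\) in \(\{2,\dots,c\}\). Since \(\mathbf{b}\) contains both \(c\) and \(\overline{c}\), some letters of \(\mathbf{b}\) in \(\{2,\dots,c\}\) plus the barred letters \(\overline{c},\dots\) occupy positions forcing \(\mathbf{a}\) to have many small entries.

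The intended argument counts positions. Let \(s\) be the position of \(c\) in \(\mathbf{b}\), \(t\) that of \(\overline{c}\), and \(r\) the smallest index. The \((a,b)\)-configuration inequality \((s-r)+(u-t) < b-a\), with suitable choices of \(a\), then yields \(|\mathbf{b}_{\leq c}| < c\).

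Concretely, I would choose \(a := \min \mathsf{V}\)-type letter, or simply \(a=a_1\ge 2\). The barred part of \(\mathbf{b}_{\le c}\) near the bottom must end in some \(\overline{a'}\) with \(a'\) small, and \(\mathbf{a}\) contains \(a'\) in the top rows. When \(\overline{a'}\) is absent, the shallowness of \(\mathbf{a}\) (strict, since \(\mathbf{a}\) is of type \(D\mathrm{II}_n\)) bounds the count directly. I expect this combinatorial case analysis, in particular treating \(c=n\) via the \((a,n)\)-configurations, to be the main obstacle. If a direct attack is messy, I would instead argue through the crystal interpretation: \(\mathsf{Col}^{D\mathrm{II}_n}\) is the complement of the image of connecting maps, and the adjacency condition is equivalent to \(\mathbf{a}\otimes\mathbf{b}\) lying in the highest-component, which is preserved by \(\widetilde{E}_1\)-free considerations.

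For \eqref{item_equivalence_adj_col_DII}, write \(\mathbf{a}' := \mathbf{a}_{>1}[-1]\) and \(\mathbf{b}' := \mathbf{b}_{>1}[-1]\). Entries \(1\) are absent from both columns, so the only difference from the original columns is the possible \(\overline{1}\) at the top (the last letter \(a_k\), resp. \(b_l\)), together with the shift of all labels by \(1\) and of \(n\) to \(n-1\).

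I then go through each clause of the adjacent condition:

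\begin{itemize}
  \item \textbf{Weak increase and \((n,n)\)-conditions.} These involve only relative order and positions among the common entries, and are invariant under the shift.
  \item \textbf{Configurations with \(a \geq 2\).} Here \(b-a\) and \(n-a\) are preserved under the shift. Positions are preserved except when a \(\overline{1}\) was deleted, and deleted \(\overline{1}\)'s sit at the top, above every index used in a configuration.
  \item \textbf{Configurations with \(a=1\).} These cannot occur, since \(1 \notin \mathbf{a}\).
  \item \textbf{The length clause.} If \(\overline{1}\in\mathbf{b}\) but \(\overline{1}\notin\mathbf{a}\), weak increase forces \(b_l=\overline{1}\succeq a_l\), and then \(|\mathbf{a}'| \geq |\mathbf{b}'|\) still holds. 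Conversely, if \(\overline{1} \in \mathbf{a}\) then \(a_k=\overline{1}\) and \(k>l\) is required, by weak increase and because \(\mathbf{b}\) has no repeated \(\overline{1}\). I would verify this case carefully, since it is where the shortened lengths could break the equivalence.
\end{itemize}
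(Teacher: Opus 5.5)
Your part (1) has a genuine gap at its only nontrivial step, strict shallowness of \(\mathbf{b}\). The set-up is already off. \(\mathbf{b}_{>1}\) also deletes \(\overline{1}\), so \(1\notin\mathbf{b}\) does not give \(\mathbf{b}_{>1}=\mathbf{b}\). When \(\overline{1}\in\mathbf{b}\), your contradiction hypothesis \(|\mathbf{b}_{\leq c}|\geq c\) can hold without any contradiction: for \(n\geq 4\), \(\mathbf{a}=(\overline{1},\overline{3},2)\) is of type \(D\mathrm{II}_n\), \(\mathbf{a}\triangleleft\mathbf{b}=(\overline{1},\overline{3},3)\), and \(|\mathbf{b}_{\leq 3}|=3\). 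That case must be split off and settled as in Proposition~\ref{prop_entry_1b_DII}: deleting \(\overline{1}\) lowers the count by one, so ordinary shallowness suffices. In the remaining case \(\overline{1}\notin\mathbf{b}\), you correctly name the \((a,b)\)-configuration inequality as the tool, but you never give the argument, and your suggested routes do not work:
\begin{itemize}
\item Taking \(a=a_1\) or a ``minimal vacant'' letter does not yield a configuration, since \(\overline{a_1}\) need not occur in \(\mathbf{b}\).
\item The strict shallowness of \(\mathbf{a}\) plays no role. Besides the adjacency conditions, only \(1\notin\mathbf{a}\) and \(|\mathbf{b}|\leq|\mathbf{a}|<n\) are used.
\item The case \(c=n\) that you expect to be the main obstacle is vacuous: under your hypothesis \(c\leq|\mathbf{b}_{\leq c}|\leq|\mathbf{b}|<n\), so no \((a,n)\)- or parity configurations are needed.
\item The crystal fallback is not an argument as stated.
\end{itemize}

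The missing idea is a pigeonhole count with a minimal choice, feeding a single \((a,b)\)-configuration. Write \(b_s=c\) and \(b_t=\overline{c}\) with \(s<t\); then \(s+(l-t+1)=|\mathbf{b}_{\leq c}|=c\). The letters \(a_1\prec\cdots\prec a_s\) lie in \(\{2,\dots,c\}\), since \(1\notin\mathbf{a}\) and \(a_s\preceq b_s\). The letters \(b_t,\dots,b_l\) lie in \(\{\overline{c},\dots,\overline{2}\}\), since \(\overline{1}\notin\mathbf{b}\). These are \(c\) letters with absolute values in a set of size \(c-1\), so some \(a\) satisfies \(a_r=a\) with \(r\leq s\) and \(b_u=\overline{a}\) with \(u\geq t\). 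Take \(a\) minimal. Then \(\{a_1,\dots,a_{r-1}\}\) and \(\{|b_{u+1}|,\dots,|b_l|\}\) are disjoint subsets of \(\{2,\dots,a-1\}\), so \((r-1)+(l-u)\leq a-2\). But the \((a,c)\)-configuration \(a_r=a\), \(b_s=c\), \(b_t=\overline{c}\), \(b_u=\overline{a}\) forces \((s-r)+(u-t)<c-a\), i.e.\ \((r-1)+(l-u)=c-2-((s-r)+(u-t))>a-2\), a contradiction.

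For part (2), your clause-by-clause comparison is at the level of the paper. The paper proves only that the reduced first column is at least as long as the reduced second one: if \(k=l\), \(\overline{1}\in\mathbf{a}\) and \(\overline{1}\notin\mathbf{b}\), then \(\overline{1}=a_k\preceq b_k\prec\overline{1}\). It leaves the rest as straightforward. Two corrections to your version of this step: the right reason is \(\overline{1}\notin\mathbf{b}\), not ``no repeated \(\overline{1}\)''; and \(k>l\) is not forced when both columns contain \(\overline{1}\), as the example above shows. Note also that the length clause cannot be recovered in the reverse direction. The columns \(\mathbf{a}=(2)\) and \(\mathbf{b}=(\overline{1},2)\) reduce to \((1)\triangleleft(1)\) although \(|\mathbf{a}|<|\mathbf{b}|\). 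So that implication needs \(|\mathbf{a}|\geq|\mathbf{b}|\) as an extra hypothesis. This hypothesis is available where the implication is used, namely Theorem~\ref{thm_red_map_DII_D}, where the column lengths are prescribed.
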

\begin{proof}
  \hfill
  \begin{enumerate}
    \item The following can be immediately deduced:
    \begin{itemize}
      \item \(l \leq k < n\),
      \item \(1 \notin \mathbf{b}\) since \(1 \prec a_1 \preceq b_1\).
    \end{itemize}
    Hence, we only need to show that
    \[
      |(\mathbf{b}_{> 1})_{\leq b}| < b \quad \text{ for all } b \in \mathsf{O}(\mathbf{b}).
    \]
    This claim is clear when \(\overline{1} \in \mathbf{b}\); see Proposition \ref{prop_entry_1b_DII}.
    In the rest of the proof, we assume that \(\overline{1} \notin \mathbf{b}\).
    For the proof of our claim, assume contrary.
    Then, there exists \(b \in \mathsf{O}(\mathbf{b})\) such that
    \[
      |\mathbf{b}_{\leq b}| = b;
    \]
    note that we have \(\mathbf{b}_{> 1} = \mathbf{b}\) and \(|\mathbf{b}_{\leq b'}| \leq b'\) for all \(b' \in \mathsf{O}(\mathbf{b})\).
    Since \(|\mathbf{b}| = l < n\), it follows that
    \[
      b = |\mathbf{b}_{\leq b}| \leq |\mathbf{b}| < n.
    \]
    Hence, there exist \(1 \leq s < t \leq l\) such that
    \[
      b_s = b, \quad b_t = \overline{b}.
    \]
    Since we have
    \begin{itemize}
      \item \(1 \prec a_1 \prec \cdots \prec a_s \preceq b_s = b\),
      \item \(\overline{b} = b_t \prec \cdots \prec b_l \prec \overline{1}\),
      \item \(s + (l-t+1) = |\mathbf{b}_{\leq b}| = b\),
    \end{itemize}
    there exist \(1 \leq r \leq s\), \(t \leq u \leq l\), and \(a \in \{ 2,\dots,b \}\) such that
    \[
      a_r = a, \quad b_u = \overline{a}.
    \]
    Let \(a\) be the minimum among such numbers.
    By the minimality of \(a\), we must have
    \[
      (r-1) + (l-u) = |\mathbf{a}^{\prec a}| + |\mathbf{b}^{\succ \overline{a}}| \leq a-2.
    \]
    On the other hand, by the adjacent condition, we also have
    \begin{align*}
      (r-1) + (l-u)
      &= (s+(l-t+1))-((s-r)+(u-t))-2 \\
      &> b-(b-a)-2 \\
      &= a-2.
    \end{align*}
    Thus, we obtain a contradiction, and complete the proof.
    \item Set \(\mathbf{a}' := \mathbf{a} \downarrow^{D\mathrm{II}_n}_{D_{n-1}}\) and \(\mathbf{b}' := \mathbf{b} \downarrow^{D\mathrm{II}_n}_{D_{n-1}}\), and write
    \[
      \mathbf{a}' = (a'_{k'},\dots,a'_1), \quad \mathbf{b}' = (b'_{l'},\dots,b'_1).
    \]
    First, let us show that if \(\mathbf{a} \triangleleft \mathbf{b}\) then \(k' \geq l'\).
    Assume contrary.
    Then, we must have
    \[
      k = l,\ \overline{1} \in \mathbf{a}, \text{ and } \overline{1} \notin \mathbf{b}.
    \]
    In this case, it holds that
    \[
      \overline{1} = a_k \preceq b_k = b_l \prec \overline{1}.
    \]
    This is a contradiction.
    Hence, we have proved that \(k' \geq l'\).

    The rest of the assertion can be verified straightforwardly.
    Thus, we complete the proof.
  \end{enumerate}
\end{proof}

\begin{cor}[Columns of tableaux of type \(D\mathrm{II}\)]\label{cor_col_tab_DII}
  Let \(T = (\mathbf{a}^1,\dots,\mathbf{a}^m) \in \mathsf{KNT}^{D\mathrm{II}_n}_0\).
  Then, the columns \(\mathbf{a}^1,\dots,\mathbf{a}^m\) are of type \(D\mathrm{II}_n\).
\end{cor}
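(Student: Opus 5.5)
The plan is a straightforward induction along the sequence of columns, using Proposition \ref{prop_adj_col_DII} \eqref{item_propagate_adj_col_DII} as the inductive step. By Definition \ref{def_knt_DII}, the tableau \(T = (\mathbf{a}^1,\dots,\mathbf{a}^m) \in \mathsf{KNT}^{D\mathrm{II}_n}_0\) has its first column \(\mathbf{a}^1\) of type \(D\mathrm{II}_n\). This is the base case.

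For the inductive step, suppose \(\mathbf{a}^r \in \mathsf{Col}^{D\mathrm{II}_n}\) for some \(r < m\). By the definition of Kashiwara--Nakashima tableaux, \(T\) satisfies \(\mathbf{a}^1 \triangleleft \cdots \triangleleft \mathbf{a}^m\), so in particular \(\mathbf{a}^r \triangleleft \mathbf{a}^{r+1}\). Both are Kashiwara--Nakashima columns of type \(D_n\), since \(\mathsf{s}(\lambda) = 0\) and so no spin column occurs. Proposition \ref{prop_adj_col_DII} \eqref{item_propagate_adj_col_DII} then applies directly and gives \(\mathbf{a}^{r+1} \in \mathsf{Col}^{D\mathrm{II}_n}\). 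Induction on \(r\) yields the claim for every column.

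There is no genuine obstacle here; the substantive work (the length bound, the exclusion of \(1\), and strict shallowness propagating through the \((a,b)\)-configuration condition) was already carried out in Proposition \ref{prop_adj_col_DII}. The only point to double-check is that the hypotheses of that proposition are met, namely that all columns lie in \(\mathsf{Col}^{D_n}\). This holds because \(T\) has shape in \(X^+_{2n,0}\).
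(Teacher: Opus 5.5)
Your proposal is correct and follows the paper's approach: the paper's proof is the single sentence that the corollary is immediate from Proposition~\ref{prop_adj_col_DII}~\eqref{item_propagate_adj_col_DII}. Your induction along \(\mathbf{a}^1 \triangleleft \mathbf{a}^2 \triangleleft \cdots \triangleleft \mathbf{a}^m\) is just that one-line argument written out. (The appeal to \(\mathsf{s}(\lambda)=0\) is not needed: \(\mathbf{a}^1 \in \mathsf{Col}^{D\mathrm{II}_n} \subseteq \mathsf{Col}^{D_n}\) holds by assumption, and \(\mathbf{a}^2,\dots,\mathbf{a}^m \in \mathsf{Col}^{D_n}\) holds by the definition of Kashiwara--Nakashima tableaux.)
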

\begin{proof}
  The assertion is immediate from Proposition \ref{prop_adj_col_DII} \eqref{item_propagate_adj_col_DII}.
\end{proof}

\begin{defi}[Reduction from \(D\mathrm{II}_n\) to \(D_{n-1}\)]
  Let \(T = (\mathbf{a}^1,\dots,\mathbf{a}^m) \in \mathsf{KNT}^{D\mathrm{II}_n}_0\).
  The \emph{reduction of \(T\) from \(D\mathrm{II}_n\) to \(D_{n-1}\)} is the sequence \(T \downarrow^{D\mathrm{II}_n}_{D_{n-1}}\) of columns of type \(D_{n-1}\) defined by
  \[
    T \downarrow^{D\mathrm{II}_n}_{D_{n-1}} := (\mathbf{a}^1 \downarrow^{D\mathrm{II}_n}_{D_{n-1}}, \dots, \mathbf{a}^m \downarrow^{D\mathrm{II}_n}_{D_{n-1}}).
  \]
  This is well-defined by Corollary \ref{cor_col_tab_DII}.
\end{defi}

\begin{thm}[Reduction from \(D\mathrm{II}_n\) to \(D_{n-1}\)]\label{thm_red_map_DII_D}
  Let \(\nu \in X^+_{2n-1,0}\).
  The assignment \(T \mapsto T \downarrow^{D\mathrm{II}_n}_{D_{n-1}}\) gives rise to a bijection
  \[
    \cdot \downarrow^{D\mathrm{II}_n}_{D_{n-1}} \colon \mathsf{KNT}^{D\mathrm{II}_n}(\nu) \to \bigsqcup_{\substack{\mu \in X^+_{2n-2,0} \\ \mathsf{par}(\mu) \overset{\textsf{hor}}{\subseteq} \mathsf{par}(\nu)}} \mathsf{KNT}^{D_{n-1}}(\mu).
  \]
\end{thm}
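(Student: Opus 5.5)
We prove the statement column by column, using Theorem \ref{thm_red_DII_D_col}, Corollary \ref{cor_col_tab_DII} and Proposition \ref{prop_adj_col_DII}.

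\textbf{Well-definedness.} Let \(T = (\mathbf{a}^1,\dots,\mathbf{a}^m) \in \mathsf{KNT}^{D\mathrm{II}_n}(\nu)\). By Corollary \ref{cor_col_tab_DII} every \(\mathbf{a}^r\) is of type \(D\mathrm{II}_n\). By Theorem \ref{thm_red_DII_D_col} each \(\mathbf{a}^r \downarrow^{D\mathrm{II}_n}_{D_{n-1}}\) is then a Kashiwara--Nakashima column of type \(D_{n-1}\), of length \(|\mathbf{a}^r|\) or \(|\mathbf{a}^r|-1\) according as \(\overline{1} \notin \mathbf{a}^r\) or \(\overline{1} \in \mathbf{a}^r\). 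Proposition \ref{prop_adj_col_DII} \eqref{item_equivalence_adj_col_DII} gives the adjacency conditions \(\mathbf{a}^r \downarrow \triangleleft \mathbf{a}^{r+1} \downarrow\). One caveat: a reduced column may become empty. Since the lengths are weakly decreasing (this is part of the adjacency proof), the empty columns form a final segment, and we simply discard them.

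\textbf{Shape and sign conditions.}
\begin{itemize}
  \item \emph{Shape.} Columns containing \(\overline{1}\) have \(\overline{1}\) as their last (bottom) entry. The weakly increasing condition \(a_r \preceq b_r\) along rows therefore shows that the boxes containing \(\overline{1}\) occupy, in each row, a final segment. Hence they form a horizontal strip at the bottom of each column, with at most one \(\overline{1}\) per column. Removing one box from each column containing \(\overline{1}\) produces a shape \(\mu\) with \(\mathsf{par}(\mu) \overset{\textsf{hor}}{\subseteq} \mathsf{par}(\nu)\).
  \item \emph{Removed boxes are corners.} For a removed box to be a corner, we need: if \(\overline{1} \in \mathbf{a}^{r+1}\), then \(\overline{1} \in \mathbf{a}^r\) whenever \(|\mathbf{a}^r| = |\mathbf{a}^{r+1}|\). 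This holds because \(\overline{1}\) is maximal in the order. The complementary configuration is excluded exactly as in the proof of Proposition \ref{prop_adj_col_DII} \eqref{item_equivalence_adj_col_DII}.
  \item \emph{Sign.} Columns of length \(n-1\) in the reduced tableau are exactly the columns of length \(n-1\) without \(\overline{1}\). We need their signs \(\mathsf{e}\) (computed in \(D_{n-1}\)) to agree, so that they define \(\mathsf{e}(\mu)\). The shift \([-1]\) preserves parity conventions appropriately. We then check, via the \((n,n)\)-configuration conditions, that two adjacent full-length columns have the same sign; this is the classical fact for KN tableaux applied in \(D_{n-1}\), read off from \(\mathbf{a}^r \downarrow \triangleleft \mathbf{a}^{r+1} \downarrow\).
\end{itemize}

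\textbf{Bijectivity.} We build the inverse explicitly. Given \(\mu\) with \(\mathsf{par}(\mu) \overset{\textsf{hor}}{\subseteq} \mathsf{par}(\nu)\) and \(S = (\mathbf{b}^1,\dots,\mathbf{b}^{m'}) \in \mathsf{KNT}^{D_{n-1}}(\mu)\), we apply the map \eqref{eq_inv_red_DII_D} columnwise. Column \(r\) becomes \(\overline{1} * (\mathbf{b}^r[+1])\) if the \(r\)-th column of \(\mathsf{par}(\nu)'\) exceeds that of \(\mathsf{par}(\mu)'\), and \(\mathbf{b}^r[+1]\) otherwise; for \(r > m'\) we use the one-box column \(\overline{1}\). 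The resulting columns lie in \(\mathsf{Col}^{D\mathrm{II}_n}\) by Theorem \ref{thm_red_DII_D_col}, and adjacency follows from the converse direction of Proposition \ref{prop_adj_col_DII} \eqref{item_equivalence_adj_col_DII}. For the trailing \(\overline{1}\) columns, adjacency is checked directly: any column \(\mathbf{c}\) of type \(D\mathrm{II}_n\) satisfies \(\mathbf{c} \triangleleft \overline{1}\) unless \(\mathbf{c}\) contains \(1\), which is excluded. Both composites are the identity, since they are columnwise inverses by Theorem \ref{thm_red_DII_D_col} and the data of which columns contain \(\overline{1}\) is recorded by \(\nu/\mu\).

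\textbf{Main obstacle.} The hardest step is the shape bookkeeping: showing that the \(\overline{1}\)-bearing columns are exactly those prescribed by a horizontal strip, and that the inverse construction respects adjacency at the boundary between columns with and without \(\overline{1}\). Proposition \ref{prop_adj_col_DII} \eqref{item_equivalence_adj_col_DII} is stated for columns of type \(D\mathrm{II}_n\) and should cover this. I would first verify carefully that its length argument also rules out \(k = l\) with \(\overline{1} \in \mathbf{b}\) but \(\overline{1} \notin \mathbf{a}\). That case is actually allowed, since the removed boxes must lie at column bottoms in a horizontal-strip pattern, so I would check that it is consistent with \(\mathsf{par}(\mu)\) being a partition.
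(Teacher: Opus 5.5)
Your proof is essentially the paper's: columnwise reduction via Theorem \ref{thm_red_DII_D_col}, adjacency via Proposition \ref{prop_adj_col_DII} \eqref{item_equivalence_adj_col_DII}, the horizontal strip from each column losing at most its bottom box \(\overline{1}\), and the columnwise inverse \eqref{eq_inv_red_DII_D}; you also handle points the paper leaves implicit, namely reduced columns that become empty, the trailing one-box columns \(\overline{1}\), and the signs of full-length columns. Two small fixes: first, your ``corners'' bullet states the implication backwards---for adjacent columns of equal length, weak increase gives \(\overline{1} \in \mathbf{a}^r \Rightarrow \overline{1} \in \mathbf{a}^{r+1}\), which your previous bullet already proves and is all a horizontal strip needs, whereas the converse fails (for \(n=3\), \(((2),(\overline{1}))\) is a tableau of type \(D\mathrm{II}_3\)), as your last paragraph suspects; second, sign agreement of adjacent full-length columns is not forced by the \((n,n)\)-configurations alone (in \(D_2\) the opposite-sign columns \((2,1)\) and \((\overline{1},2)\) pass weak increase and the \((n,n)\)-test and are excluded only by the \(a\)-even configuration), so that check must use the \(a\)-odd/\(a\)-even configurations.
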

\begin{proof}
  Let \(T = (\mathbf{a}^1,\dots,\mathbf{a}^m) \in \mathsf{KNT}^{D\mathrm{II}_n}(\nu)\).
  By Proposition \ref{prop_adj_col_DII} \eqref{item_equivalence_adj_col_DII}, we see that \(T \downarrow^{D\mathrm{II}_n}_{D_{n-1}} \in \mathsf{KNT}^{D_{n-1}}_{0}\).
  Since \(|\mathbf{a}^r|-|\mathbf{a}^r \downarrow^{D\mathrm{II}_n}_{D_{n-1}}| \in \{ 0,1 \}\) for all \(r \in \{ 1,\dots,m \}\), we have
  \[
    \mathsf{par}(\mathsf{sh}(T \downarrow^{D\mathrm{II}_n}_{D_{n-1}})) \overset{\textsf{hor}}{\subseteq} \mathsf{par}(\nu).
  \]

  Conversely, let \(\mu \in X^+_{2n-2,0}\) such that \(\mathsf{par}(\mu) \overset{\textsf{hor}}{\subseteq} \mathsf{par}(\nu)\), and \(S = (\mathbf{b}^1,\dots,\mathbf{b}^m) \in \mathsf{KNT}^{D_{n-1}}(\mu)\).
  By Theorem \ref{thm_red_DII_D_col}, there exist unique columns \(\mathbf{a}^1,\dots,\mathbf{a}^m\) of type \(D\mathrm{II}_n\) such that
  \[
    \mathbf{a}^r \downarrow^{D\mathrm{II}_n}_{D_{n-1}} = \mathbf{b}^r \quad \text{ for all } r \in \{ 1,\dots,m \}
  \]
  and
  \[
    (|\mathbf{a}^1|,\dots,|\mathbf{a}^m|) = \mathsf{par}(\mu).
  \]
  Again, Proposition \ref{prop_adj_col_DII} \eqref{item_equivalence_adj_col_DII} implies that \((\mathbf{a}^1,\dots,\mathbf{a}^m) \in \mathsf{KNT}^{D_n}(\mu)\).
  Thus, we complete the proof.
\end{proof}

\begin{cor}[\(D_{n-1}\)-crystal structure of \(\mathsf{KNT}^{D\mathrm{II}_n}(\nu)\)]
  The subset \(\mathsf{KNT}^{D\mathrm{II}_n}(\nu) \subseteq \mathsf{KNT}^{D_n}(\nu)\) is a subcrystal of type \(D_{n-1}\).
  Moreover, the reduction map \(\cdot \downarrow^{D\mathrm{II}_n}_{D_{n-1}}\) is an isomorphism of crystals.
\end{cor}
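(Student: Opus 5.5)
The plan has two steps: first show that \(\mathsf{KNT}^{D\mathrm{II}_n}(\nu)\) is closed under the \(D_{n-1}\)-crystal operators inside \(\mathsf{KNT}^{D_n}(\nu)\), then transport the column-level isomorphism of Corollary \ref{cor_crystal_col_DII} to tableaux.

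Recall from \S\ref{ssect_crystal} that the \(D_{n-1}\)-crystal structure on \(\mathsf{KNT}^{D_n}(\nu)\) uses only the operators \(\widetilde{E}_{j+1},\widetilde{F}_{j+1}\) for \(j \in \{1,\dots,n-1\}\) and the shifted weight \(\mathsf{wt}'\). By Remark \ref{rem_crystal_knt_D}, a tableau \(T=(\mathbf{a}^1,\dots,\mathbf{a}^m)\) is identified with \(\mathbf{a}^1\otimes\cdots\otimes\mathbf{a}^m\). Hence \(\widetilde{E}_{j+1}\) and \(\widetilde{F}_{j+1}\) act on \(T\) by the tensor product rule: they either give \(0\) or apply the corresponding operator to exactly one column \(\mathbf{a}^r\).

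\textbf{Step 1: subcrystal.} By Corollary \ref{cor_col_tab_DII}, every column of \(T\in\mathsf{KNT}^{D\mathrm{II}_n}(\nu)\) is of type \(D\mathrm{II}_n\). By Corollary \ref{cor_crystal_col_DII}, each \(\mathsf{Col}^{D\mathrm{II}_n}_l\) is closed under \(\widetilde{E}_{j+1},\widetilde{F}_{j+1}\), up to producing \(0\). So if \(\widetilde{F}_{j+1}T\neq 0\), the altered column is still of type \(D\mathrm{II}_n\). In particular the first column stays of type \(D\mathrm{II}_n\), and since \(\mathsf{KNT}^{D_n}(\nu)\) is itself closed under these operators, \(\widetilde{F}_{j+1}T\in\mathsf{KNT}^{D\mathrm{II}_n}(\nu)\). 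The same holds for \(\widetilde{E}_{j+1}\). Thus the inclusion \(\mathsf{KNT}^{D\mathrm{II}_n}(\nu)\hookrightarrow\mathsf{KNT}^{D_n}(\nu)\) commutes with the operators and preserves weights, i.e.\ it is a morphism of \(D_{n-1}\)-crystals.

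\textbf{Step 2: isomorphism.} The reduction \(\cdot\downarrow^{D\mathrm{II}_n}_{D_{n-1}}\) on tableaux is the columnwise application of the column reduction. The column reduction is an isomorphism of \(D_{n-1}\)-crystals \(\mathsf{Col}^{D\mathrm{II}_n}_l\to\mathsf{Col}^{D_{n-1}}_l\sqcup\mathsf{Col}^{D_{n-1}}_{l-1}\) by Corollary \ref{cor_crystal_col_DII}. It therefore preserves the functions \(\varepsilon_{j}\) and \(\varphi_{j}\), where the \(\varepsilon_{j+1},\varphi_{j+1}\) on the source correspond to \(\varepsilon_j,\varphi_j\) on the target. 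The tensor product rule is expressed entirely in terms of \(\varepsilon_i,\varphi_i\) and the action on the individual factors. Consequently the tensor product of these column isomorphisms commutes with the crystal operators, and weights add columnwise. So \(T\mapsto T\downarrow^{D\mathrm{II}_n}_{D_{n-1}}\) is a crystal morphism from \(\mathsf{KNT}^{D\mathrm{II}_n}(\nu)\) to the disjoint union of the crystals \(\mathsf{KNT}^{D_{n-1}}(\mu)\), each of which is a subcrystal of a tensor product of columns by Remark \ref{rem_crystal_knt_D}. It is bijective by Theorem \ref{thm_red_map_DII_D}, hence an isomorphism.

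The only point needing care, and the main obstacle, is the bookkeeping in Step 2 that the column reduction identifies \(\varepsilon_{j+1},\varphi_{j+1}\) with \(\varepsilon_j,\varphi_j\) and the weight \(\mathsf{wt}'\) with the \(D_{n-1}\)-weight. This includes the column \(\overline{1}*(\mathbf{b}[+1])\), where the entry \(\overline{1}\) contributes nothing to \(\mathsf{wt}'\) and is never moved by \(\widetilde{E}_{j+1},\widetilde{F}_{j+1}\) for \(j\geq 1\). Once this is granted from Corollary \ref{cor_crystal_col_DII}, the argument is formal.
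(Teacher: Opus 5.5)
Your proposal is correct and follows the paper's route. The paper's proof just cites Theorem~\ref{thm_red_map_DII_D} for bijectivity and Corollary~\ref{cor_crystal_col_DII} for the columnwise \(D_{n-1}\)-crystal isomorphism. Your Steps~1 and~2 unpack exactly that citation: closure via Corollary~\ref{cor_col_tab_DII} and the tensor product rule, then carrying the column isomorphisms through the tensor product of columns.
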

\begin{proof}
  The assertions are immediate from Theorem \ref{thm_red_map_DII_D} and Corollary \ref{cor_crystal_col_DII}.
\end{proof}

Next, we will transform a tableau of type \(D_n\) into type \(D\mathrm{II}_n\).

\begin{defi}[Successor]\label{def_suc_map_DII}
  Let \(T = (\mathbf{a}^1,\dots,\mathbf{a}^m) \in \mathsf{KNT}^{D_n}_{0}\).
  The \emph{successor} of \(T\) is the tableau \(\mathsf{suc}(T)\) defined by
  \[
    \mathsf{suc}(T) := \mathsf{P}(\mathbf{a}^1 \downarrow^{D_n}_{D\mathrm{II}_n} * \mathbf{a}^2 * \cdots * \mathbf{a}^m).
  \]
  see \S\ref{ssect_col_ins} for notations.
\end{defi}

\begin{prop}[Successor]\label{prop_suc_map_DII}
  Let \(T \in \mathsf{KNT}^{D_n}_{0}\).
  \begin{enumerate}
    \item \(\mathsf{suc}(T) = T\) if and only if \(T \in \mathsf{KNT}^{D\mathrm{II}_n}_{0}\).
    \item There exists \(k \in \mathbb{Z}_{\geq 0}\) such that \(\mathsf{suc}^{k+1}(T) = \mathsf{suc}^k(T)\).
  \end{enumerate}
\end{prop}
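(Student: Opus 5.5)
The plan is to handle the two assertions separately. For (1) I would use only the facts that \(\mathsf{P}\) preserves weights and fixes column reading words of tableaux. For (2) I would use a size monovariant that strictly decreases at every non-trivial step.

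\emph{Assertion (1), the ``if'' direction.} Suppose \(T=(\mathbf{a}^1,\dots,\mathbf{a}^m)\in\mathsf{KNT}^{D\mathrm{II}_n}_0\). Then \(\mathbf{a}^1\in\mathsf{Col}^{D\mathrm{II}_n}\), so Definition \ref{def_red_col_D_DII} gives \(\mathbf{a}^1\downarrow^{D_n}_{D\mathrm{II}_n}=\mathbf{a}^1\). Hence \(\mathsf{suc}(T)=\mathsf{P}(\mathsf{w}_\mathsf{col}(T))\). I would then invoke the standard property of Lecouvey's correspondence: \(\mathsf{w}_\mathsf{col}\) is an injective crystal morphism \(\mathsf{KNT}^{D_n}(\lambda)\to\mathcal{W}^{D_n}\) compatible with \(\mathsf{P}\), so \(\mathsf{P}(\mathsf{w}_\mathsf{col}(T))=T\) (\cite{Lec03}). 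This gives \(\mathsf{suc}(T)=T\).

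\emph{Assertion (1), the ``only if'' direction.} Suppose \(\mathbf{a}^1\notin\mathsf{Col}^{D\mathrm{II}_n}\). The key observation is that \(\mathsf{P}\) preserves weight, since \(\mathsf{LRS}\) is a crystal isomorphism. So it suffices to show \(\mathsf{wt}(\mathbf{a}^1\downarrow^{D_n}_{D\mathrm{II}_n})\neq\mathsf{wt}(\mathbf{a}^1)\). I would check this in the cases of Definition \ref{def_red_col_D_DII}:
\begin{itemize}
\item if \(1\in\mathbf{a}^1\), then \(\mathbf{a}^1_{>1}\) deletes the letter \(1\), and the weight changes by \(-\epsilon_1\);
\item if \(\overline{1}\in\mathbf{a}^1\), then \(\mathbf{a}^1_{>1}\) deletes \(\overline{1}\), and the weight changes by \(+\epsilon_1\);
\item if \(1\in\mathsf{V}(\mathbf{a}^1)\), then \(\overline{1}*(\mathbf{a}^1\Rightarrow\mathsf{lwd}(\mathbf{a}^1))\) removes a pair \(b,\overline{b}\) of total weight \(0\) and adds \(\overline{1}\), so the weight changes by \(-\epsilon_1\).
\end{itemize}
In every case the weight changes, hence \(\mathsf{wt}(\mathsf{suc}(T))\neq\mathsf{wt}(T)\) and \(\mathsf{suc}(T)\neq T\).

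\emph{Assertion (2).} The monovariant is the size \(|\mathsf{par}(\mathsf{sh}(T))|\), i.e. the number of boxes. In each non-DII case above the word \(\mathbf{a}^1\downarrow^{D_n}_{D\mathrm{II}_n}\) has length \(|\mathbf{a}^1|-1\). Thus the word \(\mathbf{w}=\mathbf{a}^1\downarrow^{D_n}_{D\mathrm{II}_n}*\mathbf{a}^2*\cdots*\mathbf{a}^m\) has length \(|\mathsf{par}(\mathsf{sh}(T))|-1\).

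I then need the inequality \(|\mathsf{par}(\mathsf{sh}(\mathsf{P}(\mathbf{w})))|\le|\mathbf{w}|\). I would deduce it from the fact that the connected component of \(\mathbf{w}\) in \(\mathcal{W}^{D_n}_l=(\mathcal{A}^{D_n})^{\otimes l}\) has a highest weight \(\lambda\) that is a sum of \(l\) weights of letters. A highest weight of a component of the \(l\)-th tensor power of the vector representation is a partition-type weight with \(|\mathsf{par}(\lambda)|\le l\). Concretely, the highest weight element is a word of weight \(\lambda\), each letter contributes \(\pm\epsilon_i\) or nothing to \(\langle d_i,\lambda\rangle\), and \(\sum_i|\lambda_i|\le l\).

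Consequently \(|\mathsf{par}(\mathsf{sh}(\mathsf{suc}(T)))|<|\mathsf{par}(\mathsf{sh}(T))|\) whenever \(T\notin\mathsf{KNT}^{D\mathrm{II}_n}_0\), and otherwise \(\mathsf{suc}(T)=T\) by (1). A nonnegative integer cannot decrease forever, so some \(\mathsf{suc}^k(T)\) lies in \(\mathsf{KNT}^{D\mathrm{II}_n}_0\), and then \(\mathsf{suc}^{k+1}(T)=\mathsf{suc}^k(T)\).

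The main point needing care is the identity \(\mathsf{P}\circ\mathsf{w}_\mathsf{col}=\mathrm{id}\) used in the first step of (1). It should be cited from \cite{Lec03}, where column words of Kashiwara--Nakashima tableaux are shown to lie in the component whose \(\mathsf{P}\)-symbol is the tableau itself.
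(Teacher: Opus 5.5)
Your proof is correct and follows the paper's route. Part (1) rests on the fact that \(\mathbf{a}^1\downarrow^{D_n}_{D\mathrm{II}_n}=\mathbf{a}^1\) exactly when \(\mathbf{a}^1\in\mathsf{Col}^{D\mathrm{II}_n}\), and part (2) is the paper's induction on \(|\mathsf{par}(\mathsf{sh}(T))|\) using that the first column loses one letter. You also make explicit what the paper leaves as ``clear'': that \(\mathsf{P}\circ\mathsf{w}_\mathsf{col}=\mathrm{id}\), that the weight shifts by \(\pm\epsilon_1\) in the non-\(D\mathrm{II}\) case (which settles the ``only if'' direction), and that \(|\mathsf{par}(\mathsf{sh}(\mathsf{P}(\mathbf{w})))|\le|\mathbf{w}|\); these justifications are sound.
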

\begin{proof}
  We have \(\mathbf{a}^1 \downarrow^{D_n}_{D\mathrm{II}_n} = \mathbf{a}^1\) if and only if \(\mathbf{a}^1 \in \mathsf{Col}^{D\mathrm{II}_n}\).
  Hence, the first assertion is clear.

  Also, we have
  \[
    |\mathbf{a}^1 \downarrow^{D_n}_{D\mathrm{II}_n}| =
    \begin{cases}
      |\mathbf{a}^1| & \text{ if } \mathbf{a}_1 \in \mathsf{Col}^{D\mathrm{II}_n}, \\
      |\mathbf{a}^1|-1 & \text{ if } \mathbf{a}_1 \notin \mathsf{Col}^{D\mathrm{II}_n}.
    \end{cases}
  \]
  Therefore, the second assertion can be proved by induction on \(|\mathsf{par}(\mathsf{sh}(T))|\).
\end{proof}

\begin{defi}[Reduction from \(D_n\) to \(D\mathrm{II}_n\)]\label{def_red_tab_D_DII}
  Let \(T \in \mathsf{KNT}^{D_n}_{0}\).
  The \emph{reduction of \(T\) from \(D_n\) to \(D\mathrm{II}_n\)} is the Kashiwara--Nakashima tableau \(T \downarrow^{D_n}_{D\mathrm{II}_n}\) of type \(D\mathrm{II}_n\) obtained from \(T\) by applying the successor map iteratively until the result becomes type \(D\mathrm{II}_n\); see Proposition \ref{prop_suc_map_DII}.
\end{defi}

\begin{thm}[Reduction of tableaux from \(D_n\) to \(D\mathrm{II}_n\)]\label{thm_red_KNT_D_DII}
  Let \(\lambda \in X^+_{2n,0}\).
  The assignment \(T \mapsto T \downarrow^{D_n}_{D\mathrm{II}_n}\) gives rise to a bijection
  \[
    \mathsf{KNT}^{D_n}(\lambda) \to \bigsqcup_{\substack{\nu \in X^+_{2n-1,0} \\ \mathsf{par}(\nu) \overset{\textsf{hor}}{\subseteq} \mathsf{par}(\lambda)}} \mathsf{KNT}^{D\mathrm{II}_n}(\nu).
  \]
\end{thm}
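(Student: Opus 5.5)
The plan is to split the statement into a combinatorial part and a representation-theoretic part. The combinatorial part shows that the map is well defined and lands in the claimed disjoint union. The representation-theoretic part shows that it is a bijection, by comparing it with the restriction of \(V_q(\lambda)\) to \(\mathbf{U}^\imath_{D\mathrm{II}_n}\).

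\emph{Step 1 (well-definedness and shapes).} By Proposition \ref{prop_suc_map_DII}, \(T \downarrow^{D_n}_{D\mathrm{II}_n}\) is a well-defined element of \(\mathsf{KNT}^{D\mathrm{II}_n}_0\). For the shape, I would look at a single successor step. By Theorem \ref{thm_red_D_DII_col}, the first column \(\mathbf{a}^1\) is replaced by a column of length \(|\mathbf{a}^1|\) or \(|\mathbf{a}^1|-1\). Then \(\mathsf{P}\) is applied to \(\mathbf{a}^1 \downarrow^{D_n}_{D\mathrm{II}_n} * \mathbf{a}^2 * \cdots * \mathbf{a}^m\). Since \(\mathsf{P}\) is a crystal morphism and \(\mathbf{a}^2 * \cdots * \mathbf{a}^m\) is the column word of a tableau, the new shape is a highest weight occurring in \(\mathsf{Col}^{D_n}_{|\mathbf{a}^1|-1} \otimes \mathsf{KNT}^{D_n}(\lambda^{\flat})\), where \(\lambda^\flat\) is \(\lambda\) with its first column removed. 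I would prove, by a Pieri-type analysis of Lecouvey's column insertion, that every reachable shape is obtained from \(\mathsf{par}(\lambda)\) by deleting at most one box in each column, with at most one deletion per step. Chaining the steps then gives \(\mathsf{par}(\nu) \overset{\textsf{hor}}{\subseteq} \mathsf{par}(\lambda)\), and Corollary \ref{cor_col_tab_DII} puts the result in \(\mathsf{KNT}^{D\mathrm{II}_n}(\nu)\) with \(\nu \in X^+_{2n-1,0}\). If the direct combinatorial argument is too messy, I would instead read off the shape from the module-theoretic interpretation in Step 2.

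\emph{Step 2 (representation-theoretic interpretation).} I would identify \(T=(\mathbf{a}^1,\dots,\mathbf{a}^m)\) with a crystal basis element of \(V_q(\varpi_{|\mathbf{a}^1|}) \otimes V_q(\lambda^\flat)\). Since \(\mathbf{U}^\imath\) is a right coideal, this tensor product is a \(\mathbf{U}^\imath\)-module of the form (\(\mathbf{U}^\imath\)-module)\(\otimes\)(\(\mathbf{U}\)-module). The connecting maps \(\delta_l, \delta_{n-1,\pm}\) of Theorem \ref{thm_bij_conn_map_DII} should be the combinatorial shadows of \(\mathbf{U}^\imath\)-homomorphisms that realize the splitting of the column module \(V_q(\varpi_l)|_{\mathbf{U}^\imath}\) into its two pieces. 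Concretely, I would show that the basis \(\{b^\imath_T\}\) of \(V^\imath(\nu)\) (Theorem \ref{thm_alm_orthn_basis_Vinu_DII}) is characterized by a leading term: \(b^\imath_T\) has leading term the image of \(T\) under the successor step, modulo \(q^{-1}\) times a lattice. It then follows that the successor map respects the \(\mathbf{U}^\imath\)-isotypic filtration, and in particular that \(T \mapsto T\downarrow^{D_n}_{D\mathrm{II}_n}\) is injective.

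\emph{Step 3 (counting).} Given injectivity, bijectivity follows from cardinalities. By the Kashiwara--Nakashima theory, \(\#\mathsf{KNT}^{D_n}(\lambda)=\dim V(\lambda)\). By the branching rule \eqref{eq_branching_so_partition}, this equals \(\sum_\nu \dim V_{2n-1}(\nu)\). By Theorem \ref{thm_alm_orthn_basis_Vinu_DII} together with Kolb--Stephens, the right-hand side equals \(\sum_\nu \#\mathsf{KNT}^{D\mathrm{II}_n}(\nu)\), the sum running over \(\nu \in X^+_{2n-1,0}\) with \(\mathsf{par}(\nu)\overset{\textsf{hor}}{\subseteq}\mathsf{par}(\lambda)\). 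As an independent check, \(\#\mathsf{KNT}^{D\mathrm{II}_n}(\nu)\) is also computable via Theorem \ref{thm_red_map_DII_D} and the branching rule for \((\mathfrak{so}_{2n-1},\mathfrak{so}_{2n-2})\).

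\emph{Main obstacle.} Injectivity in Step 2 is the main obstacle. The map \(\mathsf{P}\) forgets the recording datum \(\mathsf{Q}\), so \(T\) cannot be recovered from a single successor step by combinatorics alone. To handle this, I would first establish the leading-term characterization of \(b^\imath_T\) by downward induction on the number of successor steps. The inductive step is where the connecting homomorphisms, rather than their combinatorial shadows, do the real work.
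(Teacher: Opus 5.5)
\textbf{Verdict: there is a genuine gap, and it is exactly the injectivity step you flag as the main obstacle.} Your overall architecture does match the paper's. You get equal cardinalities from the branching rule, and your Step 3 is fine. You get injectivity through a $\mathbf{U}^\imath$-equivariant lift of the successor, assembled from ${}^{\gamma_l}\bigvee{}^{\lambda-\gamma_l}$, the column reduction of Theorem \ref{thm_red_col_D_DII} (built from the connecting homomorphisms), and the right coideal property. But the remedy you propose for injectivity does not close it.

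Grant your leading-term statement in its natural form. Writing $T\downarrow := T\downarrow^{D_n}_{D\mathrm{II}_n}$, this says the reduction homomorphism sends $b_T$ to an element $\equiv_\infty b^\imath_{T\downarrow}$, which is Proposition \ref{prop_red_hom_D_DII_infty}. Injectivity of $T\mapsto T\downarrow$ still does not follow. ``Respecting the isotypic filtration'' is automatic for any $\mathbf{U}^\imath$-homomorphism. Moreover, a linear map $f$ with $f(b_T)\equiv_\infty b^\imath_{\phi(T)}$ can in general be an isomorphism over $\mathbb{Q}(q)$ while $\phi$ is not injective; it then maps the $\mathbf{A}_\infty$-lattice onto a proper sublattice. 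So nothing in your Step 2 excludes $T\neq S$ with $T\downarrow=S\downarrow$, and a downward induction on the number of successor steps supplies no mechanism for excluding it.

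Your per-step Pieri claim in Step 1 is also unproven, and it is unnecessary. Suppose $T\downarrow$ had a shape $\nu'$ with $\mathsf{par}(\nu')\not\subseteq\mathsf{par}(\lambda)$ as a horizontal strip. Then $p_{\nu'}\circ\mathsf{suc}^k$ would be a nonzero $\mathbf{U}^\imath$-homomorphism $V_q(\lambda)\to V^\imath(\nu')$, which Theorem \ref{thm_branching_rule_DII} forbids.

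The missing idea is to \emph{retain} the recording datum $\mathsf{Q}$, whose loss you correctly identify. Then let the \emph{multiplicity-freeness} of $V_q(\lambda)|_{\mathbf{U}^\imath}$, which you use only for counting, force injectivity. This is how the paper argues.

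First, set $\widetilde{\mathsf{suc}}(T):=\mathsf{LRS}(\mathbf{a}^1\downarrow^{D_n}_{D\mathrm{II}_n}*\mathbf{a}^2*\cdots*\mathbf{a}^m)=(\mathsf{suc}(T),\mathsf{Q}^{\mathsf{suc}}(T))$. This map is injective for three reasons:
\begin{enumerate}
  \item $\mathsf{LRS}$ is bijective;
  \item the lengths of $\mathbf{a}^2,\dots,\mathbf{a}^m$ are fixed by $\lambda$;
  \item the column reduction is injective (Theorem \ref{thm_red_D_DII_col}).
\end{enumerate}
Iterating, $T\mapsto(T\downarrow,\mathbf{Q}^{\mathsf{suc}}(T))$ is injective.

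Next, $\mathsf{LRS}$ lifts to a $\mathbf{U}$-module isomorphism $\mathsf{T}\to\bigoplus_\mu V_q(\mu)\otimes\mathsf{Span}_{\mathbb{Q}(q)}\mathsf{OT}(\mu)$. Hence the iterated map lifts to a $\mathbf{U}^\imath$-homomorphism $V_q(\lambda)\to\bigoplus_\nu V^\imath(\nu)\otimes\mathsf{Span}_{\mathbb{Q}(q)}\mathsf{OT}^k$ sending $b_T$ to an element $\equiv_\infty b^\imath_{T\downarrow}\otimes\mathbf{Q}^{\mathsf{suc}}(T)$.

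Now suppose $T\neq S$ had $T\downarrow=S\downarrow=U$ of shape $\xi$. Then $\mathbf{Q}^{\mathsf{suc}}(T)\neq\mathbf{Q}^{\mathsf{suc}}(S)$. Project onto $V^\imath(\xi)\otimes\mathsf{Span}_{\mathbb{Q}(q)}\{\mathbf{Q}^{\mathsf{suc}}(T),\mathbf{Q}^{\mathsf{suc}}(S)\}$. This yields a homomorphic image of $V_q(\lambda)|_{\mathbf{U}^\imath}$ containing elements $\equiv_\infty b^\imath_U\otimes\mathbf{Q}^{\mathsf{suc}}(T)$ and $\equiv_\infty b^\imath_U\otimes\mathbf{Q}^{\mathsf{suc}}(S)$. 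This image cannot be a single copy $V^\imath(\xi)\otimes w$ with $w=\alpha\mathbf{Q}^{\mathsf{suc}}(T)+\beta\mathbf{Q}^{\mathsf{suc}}(S)$. Comparing coefficients of $b^\imath_U$ in the two leading terms would put both $\beta/\alpha$ and $\alpha/\beta$ in $q^{-1}\mathbf{A}_\infty$, which is impossible.

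So $V^\imath(\xi)$ occurs at least twice in $V_q(\lambda)|_{\mathbf{U}^\imath}$, contradicting the multiplicity-one branching rule. This gives injectivity, and combined with your counting it gives the bijection.
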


The proof of Theorem \ref{thm_red_KNT_D_DII} will be given in \S\ref{ssect_red_D_DII} with the aid of representation theory of quantum symmetric pairs of type \(D\mathrm{II}\).

In the remainder of this section, we will assume that Theorem \ref{thm_red_KNT_D_DII} holds, and prove the last main result of this paper.
Let \(T \in \mathsf{KNT}^{D_n}_0\).
Define tableaux \(T \downarrow^{D_n}_{D_{k-1}}\) for \(k \in \{ 2,\dots,n \}\), and  \(T \downarrow^{D_n}_{D\mathrm{II}_k}\) for \(k \in \{ 2,\dots,n-1 \}\) inductively by
\[
  T \downarrow^{D_n}_{D_{k-1}} := (T \downarrow^{D_n}_{D\mathrm{II}_{k}}) \downarrow^{D\mathrm{II}_{k}}_{D_{k-1}}, \quad T \downarrow^{D_n}_{D\mathrm{II}_k} := (T \downarrow^{D_n}_{D_{k}}) \downarrow^{D_{k}}_{D\mathrm{II}_{k}}.
\]
Similarly, for each \(T \in \mathsf{KNT}^{D\mathrm{II}_n}_0\) define
\[
  T \downarrow^{D\mathrm{II}_n}_{D\mathrm{II}_{k}} := (T \downarrow^{D\mathrm{II}_n}_{D_{k}}) \downarrow^{D_{k}}_{D\mathrm{II}_{k}}, \quad T \downarrow^{D\mathrm{II}_n}_{D_{k-1}} := (T \downarrow^{D\mathrm{II}_n}_{D\mathrm{II}_{k}}) \downarrow^{D\mathrm{II}_{k}}_{D_{k-1}}.
\]
See \S\ref{ssect_results} for an example.

\begin{cor}[Bijections between tableaux and patterns]\label{cor_bij_D}
  \hfill
  \begin{enumerate}
    \item Let \(\lambda \in X^+_{2n,0}\).
    The assignment
    \[
      T \mapsto (\mathsf{sh}(T), \mathsf{sh}(T \downarrow^{D_n}_{D\mathrm{II}_n}), \mathsf{sh}(T \downarrow^{D_n}_{D_{n-1}}),\dots,\mathsf{sh}(T \downarrow^{D_n}_{D_1}))
    \]
    gives rise to a bijection
    \[
      \mathsf{KNT}^{D_n}(\lambda) \to \mathsf{GTP}_{2n}(\lambda).
    \]
    \item Let \(\nu \in X^+_{2n-1,0}\).
    The assignment
    \[
      T \mapsto (\mathsf{sh}(T), \mathsf{sh}(T \downarrow^{D\mathrm{II}_n}_{D_{n-1}}), \mathsf{sh}(T \downarrow^{D\mathrm{II}_n}_{D\mathrm{II}_{n-1}}),\dots,\mathsf{sh}(T \downarrow^{D\mathrm{II}_n}_{D_1}))
    \]
    gives rise to a bijection
    \[
      \mathsf{KNT}^{D\mathrm{II}_n}(\nu) \to \mathsf{GTP}_{2n-1}(\nu).
    \]
  \end{enumerate}
\end{cor}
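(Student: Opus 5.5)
The proof will be a simultaneous induction on \(n\), with both assertions proved together. It composes the bijections of Theorems \ref{thm_red_KNT_D_DII} and \ref{thm_red_map_DII_D}, and uses the fact that a Gelfand--Tsetlin pattern is determined recursively by its top row and the pattern below it. The key observation is the following. For \(\lambda \in X^+_{2n}\) we have
\[
  \mathsf{GTP}_{2n}(\lambda) = \bigsqcup_{\nu} \{\lambda\} \times \mathsf{GTP}_{2n-1}(\nu),
\]
where \(\nu\) runs over \(X^+_{2n-1,\mathsf{s}(\lambda)}\) with \(\mathsf{par}(\nu) \overset{\textsf{hor}}{\subseteq} \mathsf{par}(\lambda)\). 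This holds by Definition \ref{def_GTP} together with the reformulation of the interlacing condition via \(\mathsf{par}\) in \S\ref{ssect_part}. Likewise, for \(\nu \in X^+_{2n-1}\),
\[
  \mathsf{GTP}_{2n-1}(\nu) = \bigsqcup_{\mu} \{\nu\} \times \mathsf{GTP}_{2n-2}(\mu),
\]
where \(\mu\) runs over \(X^+_{2n-2,\mathsf{s}(\nu)}\) with \(\mathsf{par}(\mu) \overset{\textsf{hor}}{\subseteq} \mathsf{par}(\nu)\). Since we restrict to \(\mathsf{s}=0\), all weights appearing stay in \(X^+_{\cdot,0}\), so the index sets match exactly those in the two theorems.

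First we check that the iterated reductions are compatible with the recursive definitions. By construction, \(T \downarrow^{D_n}_{D_{n-1}} = (T\downarrow^{D_n}_{D\mathrm{II}_n})\downarrow^{D\mathrm{II}_n}_{D_{n-1}}\). More generally, for \(k \le n-1\) we need
\[
  T\downarrow^{D_n}_{D\mathrm{II}_k} = (T\downarrow^{D_n}_{D\mathrm{II}_n})\downarrow^{D\mathrm{II}_n}_{D\mathrm{II}_k}
\]
and the analogous identity for \(D_{k-1}\). Both follow by unwinding the inductive definitions, since each chain passes through \(D\mathrm{II}_n\) and then \(D_{n-1}\). Consequently, the tuple attached to \(T\) in (1) equals \(\mathsf{sh}(T)\) prepended to the tuple attached in (2) to \(S := T\downarrow^{D_n}_{D\mathrm{II}_n}\), regarded in \(\mathsf{KNT}^{D\mathrm{II}_n}(\nu)\) with \(\nu = \mathsf{sh}(S)\) as in the remark on shapes. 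Similarly, the tuple in (2) for \(S\) is \(\mathsf{sh}(S)\) followed by the tuple in (1), at rank \(n-1\), for \(S\downarrow^{D\mathrm{II}_n}_{D_{n-1}}\).

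With this in hand, the induction runs as follows. For (2) at rank \(n\), Theorem \ref{thm_red_map_DII_D} gives a bijection from \(\mathsf{KNT}^{D\mathrm{II}_n}(\nu)\) onto the disjoint union of the \(\mathsf{KNT}^{D_{n-1}}(\mu)\). Statement (1) at rank \(n-1\) identifies each of these with \(\mathsf{GTP}_{2n-2}(\mu)\). Combining with the decomposition of \(\mathsf{GTP}_{2n-1}(\nu)\) gives (2). For (1) at rank \(n\), we combine Theorem \ref{thm_red_KNT_D_DII} with (2) at rank \(n\) and the decomposition of \(\mathsf{GTP}_{2n}(\lambda)\) in the same way. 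Since the shape of the image tableau is the index of the summand, the composite bijection is exactly the stated assignment.

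The base cases are \(n=1\), where \(\mathsf{KNT}^{D_1}\) and \(\mathsf{GTP}_2(\lambda)\) are both singletons, and the step through \(D\mathrm{II}_2 \to D_1\). These need a small check of conventions, namely that the \(\mathfrak{so}_2\) entry is recorded with its sign \(\mathsf{e}\). I expect this bookkeeping, rather than any of the bijections, to be the main obstacle. It requires identifying \(\mathsf{sh}\) of a \(D_{k-1}\)-tableau with an element of \(X^+_{2k-2}\) whose last coordinate has the correct sign, given by the sign of the length-\((k-1)\) columns. It also requires regarding a \(D\mathrm{II}_k\)-shape as an element of \(X^+_{2k-1}\). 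We must make sure these identifications agree with the indexing in Theorems \ref{thm_red_KNT_D_DII} and \ref{thm_red_map_DII_D}. Beyond that, injectivity and surjectivity are inherited from the two theorems.
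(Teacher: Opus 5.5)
Your proposal is correct and follows the paper's own route. The paper's proof simply says the bijections are immediate from Definition \ref{def_GTP} together with Theorems \ref{thm_red_map_DII_D} and \ref{thm_red_KNT_D_DII}; your alternating induction ($(1)$ at rank $n-1$ $\Rightarrow$ $(2)$ at rank $n$ $\Rightarrow$ $(1)$ at rank $n$), using the recursive splitting of $\mathsf{GTP}$ by its top row and the compatibility of the iterated reductions, is exactly that argument written out, and the sign bookkeeping you flag is already built into the indexing of those theorems, with $\mathsf{KNT}^{D_1}(\mu)$ and $\mathsf{GTP}_2(\mu)$ both singletons.
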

\begin{proof}
  The assertions are immediate from Definition \ref{def_GTP} and Theorems \ref{thm_red_map_DII_D} and \ref{thm_red_KNT_D_DII}.
\end{proof}

\section{Quantum groups of type \(D\)}\label{sect_qg_D}
In this section, we briefly recall basics of quantum groups of type \(D\).
Our main reference is \cite{Lus93}.

\subsection{Quantum groups}
Let \(n \in \mathbb{Z}_{\geq 1}\), and consider the special orthogonal Lie algebra \(\mathfrak{so}_{2n}\).
Recall the notations \(d_i\), \(\epsilon_i\), \(I\), \(h_i\), and \(\alpha_i\) from \S\S \ref{ssect_so} and \ref{ssect_crystal}.
Set
\begin{itemize}
  \item \(Y := 
  \begin{cases}
    \mathsf{Span}_{\mathbb{Z}} \{ h_i \mid i \in I \} & \text{ if } n \geq 2, \\
    2\mathbb{Z} d_1 & \text{ if } n = 1,
  \end{cases}
  \)
  \item \(a_{i,j} := \langle h_i, \alpha_j \rangle\) for each \(i,j \in I\).
\end{itemize}
The \emph{quantum group} \(\mathbf{U} = \mathbf{U}(\mathfrak{so}_{2n})\) of \(\mathfrak{so}_{2n}\) is the unital associative \(\mathbb{Q}(q)\)-algebra with generators
\[
  \{ E_i,F_i,K_h \mid i \in I,\ h \in Y \}
\]
subject to the following relations:
\begin{align*}
  &K_0 = 1, \\
  &K_h K_{h'} = K_{h+h'}, \\
  &K_h E_i = q^{\langle h, \alpha_i \rangle} E_i K_h, \\
  &K_h F_i = q^{\langle h, -\alpha_i \rangle} F_i K_h, \\
  &E_i F_j - F_j E_i = \delta_{i,j} \frac{K_i-K_i^{-1}}{q-q^{-1}}, \\
  &\sum_{r+s = 1-a_{i,j}} E_i^{(r)} E_j E_i^{(s)} = 0 \quad \text{ if } i \neq j, \\
  &\sum_{r+s = 1-a_{i,j}} F_i^{(r)} F_j F_i^{(s)} = 0 \quad \text{ if } i \neq j,
\end{align*}
where
\[
  K_i := K_{h_i},\ E_i^{(r)} := \frac{1}{[r]!}E_i^r,\ F_i^{(r)} := \frac{1}{[r]!}F_i^r, \ [r]! := \prod_{a=1}^r [a],\ [r] := \frac{q^r-q^{-r}}{q-q^{-1}}.
\]

The quantum group \(\mathbf{U}\) has a Hopf algebra structure with comultiplication \(\Delta\) defined as follows:
\begin{align*}
  &\Delta(E_i) := E_i \otimes 1 + K_i \otimes E_1, \\
  &\Delta(F_i) := 1 \otimes F_i + F_i \otimes K_i^{-1}, \\
  &\Delta(K_h) := K_h \otimes K_h.
\end{align*}

Let \(\rho\) denote the involutive anti-algebra automorphism on \(\mathbf{U}\) such that
\[
  \rho(E_i) = q^{-1} F_i K_i, \quad \rho(F_i) = q K_i^{-1} E_i, \quad \rho(K_h) = K_h \ \text{ for all } i \in I,\ h \in Y.
\]
A \(\mathbb{Q}(q)\)-valued symmetric bilinear form \((,)\) on a \(\mathbf{U}\)-module \(M\) is said to be contragredient if
\[
  (x m_1, m_2) = (m_1, \rho(x) m_2) \ \text{ for all } x \in \mathbf{U},\ m_1, m_2 \in M.
\]
The finite-dimensional simple \(\mathbf{U}\)-module \(V_q(\lambda) = V_{2n,q}(\lambda)\) of highest weight \(\lambda \in X^+_{2n}\) has a unique (up to nonzero scalar multiple) nondegenerate contragredient symmetric bilinear form.

Set
\[
  \mathbf{A}_\infty := \mathbb{Q}(q) \cap \mathbb{Q}[\![q^{-1}]\!].
\]
A basis \(B\) of a \(\mathbf{U}\)-module \(M\) with a contragredient symmetric bilinear form \((,)\) is said to be \emph{almost orthonormal} if
\[
  (b_1, b_2) \in \delta_{b_1, b_2} + q^{-1}\mathbf{A}_\infty \ \text{ for all } b_1,b_2 \in B.
\]
For each \(m_1, m_2 \in M\), we write \(m_1 \equiv_\infty m_2\) to indicate that
\[
  m_1 - m_2 \in q^{-1} \mathsf{Span}_{\mathbf{A}_\infty} B.
\]

Given two \(\mathbf{U}\)-modules \(M\) and \(N\) with almost orthonormal bases \(B_M\) and \(B_N\) respectively, we see that the tensor product \(B_M \otimes B_N := \{ b_1 \otimes b_2 \mid (b_1,b_2) \in B_M \times B_N \}\) forms an almost orthonormal basis of \(M \otimes N\) with respect to the natural bilinear form.

For each \(\lambda \in X^+_{2n}\), there exists a distinguished basis \(\mathbf{B}(\lambda)\), called the \emph{canonical basis}, of \(V_q(\lambda)\).
It is of the form
\[
  \mathbf{B}(\lambda) = \{ b_T \mid T \in \mathsf{KNT}^{D_n}(\lambda) \}.
\]
The canonical basis forms an almost orthonormal basis with respect to the bilinear form mentioned above.

We say that a basis \(B\) of a \(\mathbf{U}\)-module \(M\) \emph{induces a crystal base} if the pair
\[
  (\mathcal{L} := \mathsf{Span}_{\mathbf{A}_\infty} B,\ \mathcal{B} := \{ b + q^{-1}\mathcal{L} \mid b \in B \})
\]
forms a crystal base of \(M\); see e.g. \cite[Definition 1.1.1]{KaNa94} for the precise definition of crystal bases (note that our \(q\) is \(q^{-1}\) there).
When we are interested in only the crystal \(\mathcal{B}\), we say that \(B\) induces a crystal \(\mathcal{B}\).
For example, the canonical basis \(\mathbf{B}(\lambda)\) induces a crystal \(\mathcal{B}(\lambda)\) isomorphic to \(\mathsf{KNT}^{D_n}(\lambda)\).

For each \(\lambda,\mu \in X^+_{2n,0}\), there exists a \(\mathbf{U}\)-module homomorphism
\[
  {}_{\lambda}\bigwedge{}_{\mu} \colon V_q(\lambda) \otimes V_q(\mu) \to V_q(\lambda+\mu)
\]
such that
\[
  {}_{\lambda}\bigwedge{}_{\mu}(b_T \otimes b_S) \equiv_\infty b_{T*S} \quad \text{ for all } T \in \mathsf{KNT}^{D_n}(\lambda),\ S \in \mathsf{KNT}^{D_n}(\mu);
\]
see \S\ref{ssect_col_ins} for the definition of \(T*S\).
Also, there exists a \(\mathbf{U}\)-module homomorphism
\[
  {}^{\lambda}\bigvee{}^{\mu} \colon V_q(\lambda+\mu) \to V_q(\lambda) \otimes V_q(\mu)
\]
such that
\[
  {}^{\lambda}\bigvee{}^{\mu}(b_U) \equiv_\infty b_T \otimes b_S \quad \text{ for all } U \in \mathsf{KNT}^{D_n}(\lambda+\mu),
\]
where \(T \in \mathsf{KNT}^{D_n}(\lambda)\) and \(S \in \mathsf{KNT}^{D_n}(\mu)\) are unique tableaux such that \(T*S = U\).
In particular, if \(T = (\mathbf{a}^1,\dots,\mathbf{a}^m) \in \mathsf{KNT}^{D_n}\), then we have
\[
  {}_{\lambda}\bigwedge{}_{\mu}(b_{\mathbf{a}^1} \otimes b_{(\mathbf{a}^2,\dots,\mathbf{a}^m)}) \equiv_\infty b_T, \quad {}^{\lambda}\bigvee{}^{\mu}(b_T) \equiv_\infty b_{\mathbf{a}^1} \otimes b_{(\mathbf{a}^2,\dots,\mathbf{a}^m)}
\]
for appropriate \(\lambda,\mu\).

\subsection{Schur type duality}
Let \(n \in \mathbb{Z}_{\geq 1}\), and consider the quantum group \(\mathbf{U} = \mathbf{U}(\mathfrak{so}_{2n})\).
In this section, we recall the Schur type duality between \(\mathbf{U}\) and the \emph{Birman--Murakami--Wenzl algebra} \cite{Hay92}.
It will be used in \S\ref{ssect_col_mod_D} to construct certain \(\mathbf{U}\)-modules.

Let \(V_\natural := \operatorname{\mathsf{Span}}_{\mathbb{Q}(q)}\{ v_a \mid a \in \mathcal{A}^{D_n} \}\) denote the vector representation of \(\mathbf{U}\).
There exists a unique nondegenerate contragredient symmetric bilinear form on \(V_\natural\) such that \((v_1,v_1) = 1\).
The standard basis \(\{ v_a \mid a \in \mathcal{A}^{D_n} \}\) induces a crystal isomorphic to \(\mathcal{A}^{D_n}\).

Let \(\check{R} \in \mathsf{Aut}_{\mathbf{U}}(V_\natural^{\otimes 2})\) denote the R-matrix:
\[
  \check{R}(v_a \otimes v_b) :=
  \begin{cases}
    q v_a \otimes v_a & \text{ if } a = b, \\
    v_b \otimes v_a & \text{ if } b \prec a \neq \overline{b}, \\
    v_b \otimes v_a + (q-q^{-1}) v_a \otimes v_b & \text{ if } a \prec b \neq \overline{a}, \\
    q^{-1} v_b \otimes v_{\overline{b}} - (q-q^{-1}) \sum_{c \prec b} (-q)^{-d(c,a)} v_c \otimes v_{\overline{c}} & \text{ if } a = \overline{b} \succeq \overline{n} \\
    q^{-1} v_{\overline{a}} \otimes v_a + (q-q^{-1}) v_a \otimes v_{\overline{a}} \\
    -(q-q^{-1}) \sum_{c \not\succeq \overline{a}} (-q)^{-d(c,\overline{a})} v_c \otimes v_{\overline{c}} & \text{ if } b = \overline{a} \succeq \overline{n},
  \end{cases}
\]
where
\begin{align*}
  &\overline{a} := b \quad \text{ if } a = \overline{b} \succeq \overline{n}, \\
  &d(a,b) :=
  \begin{cases}
    |b-a| & \text{ if } a,b \preceq n, \\
    |\overline{b}-\overline{a}| & \text{ if } a,b \preceq \overline{n}, \\
    2n-(a+\overline{b}) & \text{ if } a \preceq n \text{ and } b \succeq \overline{n}, \\
    2n-(\overline{a}+b) & \text{ if } b \preceq n \text{ and } a \succeq \overline{n}.
  \end{cases}
\end{align*}

Let \(\mathsf{T} = \bigoplus_{d \geq 0} V_\natural^{\otimes d}\) denote the tensor algebra of \(V_\natural\).
For each \(d \in \mathbb{Z}_{\geq 0}\), let \(\mathsf{T}^d\) denote the \(d\)-th homogeneous subspace \(V_\natural^{\otimes d}\).
For each \(\mathbf{a} = (a_d,\dots,a_1) \in \mathcal{W}^{D_n}\), set
\[
  v_{\mathbf{a}} := v_{a_d} \cdots v_{a_1} \in \mathsf{T}^d.
\]
Then, \(\{ v_\mathbf{a} \mid \mathbf{a} \in \mathcal{W}^{D_n} \}\) forms an almost orthonormal basis of \(\mathsf{T}\) that induces a crystal isomorphic to \(\mathcal{W}^{D_n}\).
Hence, the Lecouvey--Robinson--Schensted correspondence (\S\ref{ssect_col_ins}) can be lifted to a \(\mathbf{U}\)-module isomorphism
\[
  \mathsf{LRS} \colon \mathsf{T} \to \bigoplus_{\lambda \in X^+_{2n,0}} V_q(\lambda) \otimes \mathsf{Span}_{\mathbb{Q}(q)} \mathsf{OT}(\lambda)
\]
such that
\[
  \mathsf{LRS}(v_\mathbf{a}) \equiv_\infty b_{\mathsf{P}(\mathbf{a})} \otimes \mathsf{Q}(\mathbf{a}) \quad \text{ for all } \mathbf{a} \in \mathcal{W}^{D_n}.
\]
Similarly, the maps \(\mathsf{w}_{\mathsf{col}}\) and \(*\) can be lifted to \(\mathbf{U}\)-module homomorphisms
\[
  \mathsf{w}_{\mathsf{col}} \colon V_q(\lambda) \to \mathsf{T}, \quad * \colon V_q(\lambda) \otimes V_q(\mu) \to \bigoplus_{\nu \in X^+_{2n,0}} V_q(\nu)
\]
such that
\[
  \mathsf{w}_{\mathsf{col}}(b_T) \equiv_\infty v_{\mathsf{w}_{\mathsf{col}}(T)},\ (b_T * b_S) \equiv_\infty b_{T*S} \quad \text{ for all } T \in \mathsf{KNT}^{D_n}(\lambda),\ S \in \mathsf{KNT}^{D_n}(\mu).
\]

Let \(d \in \mathbb{Z}_{\geq 2}\), and \(\mathsf{BMW}^{D_n}_d\) denote the \emph{Birman--Murakami--Wenzl algebra} of degree \(d\) with parameters \((q,q^{2n-1})\).
Namely, \(\mathsf{BMW}^{D_n}_d\) is the unital associative \(\mathbb{Q}(q)\)-algebra with generators \(\{ T_i \mid i \in \{ 1,\dots,d-1 \} \}\) subject to the following relations:
\begin{align*}
  &(T_i-q)(T_i+q^{-1})(T_i-q^{-2n+1}), \\
  &T_i T_{i+1} T_i = T_{i+1} T_i T_{i+1} \quad \text{ if } i < d-1, \\
  &T_i T_j = T_j T_i \quad \text{ if } |i-j| > 1, \\
  &E_{i+1} T_i^{\pm 1} E_{i+1} = q^{\pm (2n-1)} E_{i+1} \quad \text{ if } i < d-1, \\
  &E_{i-1} T_i^{\pm 1} E_{i-1} = q^{\pm (2n-1)} E_{i-1} \quad \text{ if } i > 1, \\
  &E_i T_i = q^{-2n+1} E_i = T_i E_i,
\end{align*}
where
\[
  E_i := 1-\frac{T_i-T_i^{-1}}{q-q^{-1}}.
\]

\begin{thm}[Schur type duality {\cite[Theorem 4.3]{Hay92}}]\label{thm_Schur_duality_D}
  Let \(d \in \mathbb{Z}_{\geq 2}\).
  Then, there exists a unique algebra homomorphism
  \[
    \mathsf{BMW}^{D_n}_d \to \mathsf{End}_{\mathbf{U}}(\mathsf{T}^d)
  \]
  which sends each \(T_i\) to \(\check{R}\) acting on the \(i\)-th and \((i+1)\)-th factors.
  This makes \(\mathsf{T}^d\) into a semisimple \((\mathbf{U},\mathsf{BMW}^{D_n}_d)\)-bimodule.
  Moreover, for each simple \(\mathsf{BMW}^{D_n}_d\)-module \(S\), the \(\mathbf{U}\)-module \(\mathsf{Hom}_{\mathsf{BMW}^{D_n}_d}(S,\mathsf{T}^d)\) is isomorphic to either \(V_q(\lambda)\) for some \(\lambda \in X^+_{2n,0}\) such that \(\ell(\mathsf{par}(\lambda)) < n\), or isomorphic to \(V_q(\lambda) \oplus V_q(\mu)\) for some \(\lambda,\mu \in X^+_{2n,0}\) such that \(\ell(\mathsf{par}(\lambda)) = n\), \(\mathsf{par}(\lambda) = \mathsf{par}(\mu)\), and \(\lambda \neq \mu\).
  In each case, the multiplicity of \(V_q(\lambda)\) in \(\mathsf{T}^d\) equals \(\dim S\).
\end{thm}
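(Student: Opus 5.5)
The plan is to follow Hayashi's argument in three stages. First, build the homomorphism by checking the defining relations of \(\mathsf{BMW}^{D_n}_d\) on \(\mathsf{T}^d\). Second, show that the image is the full commutant of the image of \(\mathbf{U}(\mathfrak{so}_{2n})\) composed with the orthogonal twist, by a specialization argument at \(q=1\) that reduces to Brauer's classical theorem for \(\mathrm{O}_{2n}\). Third, read off the decomposition from the double centralizer theorem together with the restriction rule from \(\mathrm{O}_{2n}\) to \(\mathrm{SO}_{2n}\).

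\emph{Step 1: the homomorphism exists.} Since \(\check{R}\) is a \(\mathbf{U}\)-automorphism of \(V_\natural^{\otimes 2}\), each operator \(\check{R}_i\) (acting on factors \(i,i+1\)) lies in \(\mathsf{End}_{\mathbf{U}}(\mathsf{T}^d)\). Far commutativity is immediate, and the braid relation is the quantum Yang--Baxter equation for the universal \(R\)-matrix specialized to \(V_\natural\). For the cubic relation, decompose \(V_\natural^{\otimes 2}=V_q(2\epsilon_1)\oplus V_q(\epsilon_1+\epsilon_2)\oplus V_q(0)\). Then \(\check{R}\) acts by scalars \(q\), \(-q^{-1}\), \(q^{-2n+1}\) on these summands; one can check this on highest weight vectors such as \(v_1\otimes v_1\), \(v_2\otimes v_1-qv_1\otimes v_2\), and the invariant vector \(\sum_c (\pm q)^{\cdots}v_c\otimes v_{\overline{c}}\). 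Hence \(E_1\) is a scalar multiple of the projection onto the trivial summand, composed with the contraction followed by the coevaluation. From this, \(E_iT_i=T_iE_i=q^{-2n+1}E_i\) is clear. The relations \(E_{i\pm1}T_i^{\pm1}E_{i\pm1}=q^{\pm(2n-1)}E_{i\pm1}\) reduce to a computation in \(V_\natural^{\otimes 3}\) using the zigzag identity for the evaluation and coevaluation maps together with the twist value of \(\check{R}\) on \(V_\natural\). Uniqueness is automatic, since the \(T_i\) generate the algebra.

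\emph{Step 2: surjectivity onto the commutant.} I expect this to be the main obstacle. Work over the localization \(\mathbb{Q}[q]_{(q-1)}\) with the \(\mathbf{A}\)-form of \(\mathsf{T}^d\) spanned by the \(v_{\mathbf{a}}\). At \(q=1\), the generator \(T_i\) specializes to the flip and \(E_i\) to the Brauer contraction, so the image specializes to the image of the Brauer algebra \(\mathsf{Br}_d(2n)\). By Brauer's theorem, this image is \(\mathsf{End}_{\mathrm{O}_{2n}}(V^{\otimes d})\). On the other hand, \(\mathsf{T}^d\) is a semisimple \(\mathbf{U}\)-module whose character is the classical one, because \(\mathbf{U}\)-modules of type \(1\) deform classical ones. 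Therefore \(\dim \mathsf{End}_{\mathbf{U}}(\mathsf{T}^d)=\dim\mathsf{End}_{\mathfrak{so}_{2n}}(V^{\otimes d})\), which in general exceeds the Brauer commutant.

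So the correct statement to aim for is this: the image of \(\mathsf{BMW}^{D_n}_d\) is the commutant of \(\mathbf{U}\) together with the quantum analogue of the diagram involution swapping \(v_n\leftrightarrow v_{\overline n}\). This involution commutes with \(\check{R}\) and normalizes \(\mathbf{U}\). The dimension of this commutant is constant under specialization, and the image has dimension at least that of its specialization. Semicontinuity then forces equality, and semisimplicity of the image follows by the same argument, since a nondegenerate trace form specializes well.

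\emph{Step 3: the decomposition.} The double centralizer theorem applied to the semisimple pair (image of \(\mathsf{BMW}\), \(\mathbf{U}\rtimes\langle\sigma\rangle\)) shows that each \(\mathsf{Hom}_{\mathsf{BMW}}(S,\mathsf{T}^d)\) is a simple \(\mathbf{U}\rtimes\langle\sigma\rangle\)-module. Its multiplicity in \(\mathsf{T}^d\) equals \(\dim S\). Restricting to \(\mathbf{U}\), Clifford theory for the index-\(2\) extension gives two cases:
\begin{itemize}
\item the module stays simple, \(V_q(\lambda)\) with \(\sigma\lambda=\lambda\), that is, \(\lambda_n=0\), equivalently \(\ell(\mathsf{par}(\lambda))<n\);
\item the module splits as \(V_q(\lambda)\oplus V_q(\sigma\lambda)\) with \(\lambda_n\neq0\), so that \(\mathsf{par}(\lambda)=\mathsf{par}(\mu)\) and \(\lambda\neq\mu\).
\end{itemize}
Only weights in \(X^+_{2n,0}\) occur, since \(\mathsf{T}^d\) has integral weights.
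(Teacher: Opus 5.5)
The paper gives no argument of its own here; the theorem is quoted from \cite{Hay92}. Your proposal is therefore a self-contained reconstruction by a different, standard route, and its architecture is sound. You build the map from the defining relations. You then identify its image with the commutant of \(\mathbf{U}\) enlarged by the diagram involution \(\sigma\colon v_n\leftrightarrow v_{\overline{n}}\), by specializing to Brauer's first fundamental theorem for \(\mathrm{O}_{2n}\). Finally you apply the double centralizer theorem and Clifford theory for the index-two extension. Your key observation is that \(\mathsf{BMW}^{D_n}_d\) only sees \(\mathrm{O}_{2n}\), so \(\sigma\) must be adjoined; this is exactly what produces the two cases of the statement.

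One soft spot is in Step 2. You assert that \(\dim\mathsf{End}_{\mathbf{U}\rtimes\langle\sigma\rangle}(\mathsf{T}^d)\) is constant under specialization, but the \(\mathbf{U}\)-character does not determine how \(\sigma\) splits a \(\sigma\)-stable isotypic component. You only need the upper bound \(\dim\mathsf{End}_{\mathbf{U}\rtimes\langle\sigma\rangle}(\mathsf{T}^d)\le\dim\mathsf{End}_{\mathrm{O}_{2n}}(V^{\otimes d})\). This follows because \(\mathsf{End}_{\mathbf{U}\rtimes\langle\sigma\rangle}(\mathsf{T}^d)\cap\mathsf{End}_{\mathbf{A}}(\mathsf{T}^d_{\mathbf{A}})\) is a saturated lattice. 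It therefore reduces injectively modulo \(q-1\), landing in operators that commute with \(\mathfrak{so}_{2n}\) and \(\sigma\). Combined with your lower bound from Brauer's theorem and the inclusion of the image in this commutant, the chain of inequalities closes. Semisimplicity of the image is then automatic, since it is the commutant of a semisimple module, so no trace-form argument is needed.

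The genuine problem is the last sentence of the statement: your Step 3 does not prove it, and as written it is false. The double centralizer theorem shows that the simple \(\mathbf{U}\rtimes\langle\sigma\rangle\)-module \(X=\mathsf{Hom}_{\mathsf{BMW}^{D_n}_d}(S,\mathsf{T}^d)\) has multiplicity \(\dim S\). In the split case this equals the \(\mathbf{U}\)-multiplicity of \(V_q(\lambda)\).

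In the \(\sigma\)-stable case \(\ell(\mathsf{par}(\lambda))<n\), however, there are two extensions \(V_q(\lambda)^{\pm}\). Classically these are \(E^{\mu}\) and \(E^{\mu}\otimes\det\), where \(\mu=\mathsf{par}(\lambda)\) and the associate partition \(\mu^*\) has size \(|\mu|+2(n-\ell(\mu))\). Both occur in \(\mathsf{T}^d\) as soon as \(V_q(\lambda)\) does and \(|\mu^*|\le d\). In that situation the \(\mathbf{U}\)-multiplicity of \(V_q(\lambda)\) is \(\dim S+\dim S'\), not \(\dim S\).

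For example, take \(n=2\), \(d=4\), \(\lambda=0\). Using \(\mathfrak{so}_4\cong\mathfrak{sl}_2\oplus\mathfrak{sl}_2\), the trivial module occurs in \(\mathsf{T}^4\) with multiplicity \(2\cdot 2=4\); classically these are the three pairings and the volume form. The two simple modules \(S\) with \(\mathsf{Hom}_{\mathsf{BMW}^{D_2}_4}(S,\mathsf{T}^4)\cong V_q(0)\) have dimensions \(3\) and \(1\).

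So you should end with the corrected claim: the multiplicity of \(V_q(\lambda)\) is the sum of \(\dim S\) over the at most two such \(S\). It equals \(\dim S\) in the split case, or when \(|\mu^*|>d\).

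This does not affect the paper. In Proposition \ref{prop_col_mod_D}, \(S\) is the one-dimensional module with \(T_i\mapsto -q^{-1}\) and \(d=l\le n\), so \(\mu=(1^l)\). For \(l<n\) its associate \((1^{2n-l})\) has size \(2n-l>l\), and \(l=n\) is the split case.

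Two minor points:
\begin{itemize}
\item The statement's ``each simple \(S\)'' should be restricted to those occurring in \(\mathsf{T}^d\).
\item For \(n=2\), the \(-q^{-1}\)-eigenspace of \(\check{R}\) on \(V_\natural^{\otimes 2}\) is \(V_q(\epsilon_1+\epsilon_2)\oplus V_q(\epsilon_1-\epsilon_2)\), not a single simple module. This does not affect the cubic relation in Step 1.
\end{itemize}
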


\subsection{Column modules}\label{ssect_col_mod_D}
Let \(n \in \mathbb{Z}_{\geq 1}\), and consider the quantum group \(\mathbf{U} = \mathbf{U}(\mathfrak{so}_{2n})\).
For each \(l \in \{ 0,\dots,n-1 \}\), define \(\gamma_l \in X^+_{2n,0}\) by
\[
  \gamma_l := \sum_{i=1}^l \epsilon_i = (\overbrace{1,\dots,1}^l,0,\dots,0).
\]
Also, set
\[
  \gamma_{n,\pm} := \gamma_{n-1} \pm \epsilon_n = (1,\dots,1,\pm 1).
\]
We will give a concrete realization of the simple \(\mathbf{U}\)-modules \(V_q(\gamma_l)\) and \(V_q(\gamma_{n,\pm})\) by means of Theorem \ref{thm_Schur_duality_D}.

\begin{defi}[Column modules]
  Let \(\mathfrak{A}\) denote the two-sided ideal of \(\mathsf{T}\) generated by \((\check{R}+q^{-1})(\mathsf{T}^2)\).
  Set
  \[
    \mathsf{C} := \mathsf{T}/\mathfrak{A}.
  \]
  Also, for each \(l \in \mathbb{Z}_{\geq 0}\), set
  \[
    \mathsf{C}_l := \mathsf{T}^l/(\mathfrak{A} \cap \mathsf{T}^l).
  \]
  We call it the \emph{column module of length \(l\)}.
\end{defi}

\begin{rem}[\(\mathbf{U}\)-module structure of \(\mathsf{C}\)]
  Since \(\check{R}\) is a \(\mathbf{U}\)-module homomorphism, the algebra \(\mathsf{C}\) and its subspaces \(\mathsf{C}_l\) are \(\mathbf{U}\)-modules.
\end{rem}

\begin{rem}[\(\mathsf{T}\)-bimodule structure of \(\mathsf{C}\)]\label{rem_bimod_mod_D}
  By the construction, the algebra \(\mathsf{C}\) is a \(\mathsf{T}\)-bimodule.
  This module structure is compatible with the \(\mathbf{U}\)-module structure:
  \[
    x \cdot (v u w) = (x_{(1)} v) \cdot (x_{(2)} u) \cdot (x_{(3)} w) \quad \text{ for all } x \in \mathbf{U},\ v,w \in \mathsf{T},\ u \in \mathsf{C},
  \]
  where we use the Sweedler notation \((\Delta \otimes 1)(\Delta(x)) = x_{(1)} \otimes x_{(2)} \otimes x_{(3)}\).
\end{rem}

For each \(\mathbf{a} \in \mathcal{W}^{D_n}\), let \(u_{\mathbf{a}}\) denote the image of \(v_{\mathbf{a}}\) in \(\mathsf{C}\).

\begin{prop}[Column modules]\label{prop_col_mod_D}
  Let \(l \in \{ 0,\dots,n \}\).
  \begin{enumerate}
    \item\label{item_hw_col_mod_D} If \(l < n\), then we have
    \[
      \mathsf{C}_l \simeq V_q(\gamma_l).
    \]
    Also, \(\mathsf{C}_n\) is decomposed as
    \[
      \mathsf{C}_n = \mathsf{C}_{n,+} \oplus \mathsf{C}_{n,-}, \quad \mathsf{C}_{n,\pm} \simeq V_q(\gamma_{n,\pm}).
    \]
    \item The set \(\{ u_{\mathbf{a}} \mid \mathbf{a} \in \mathsf{Col}^{D_n}_{l} \}\) forms a basis of \(\mathsf{C}_l\) that induces a crystal base \((\mathcal{L}(\mathsf{C}_l), \mathcal{B}(\mathsf{C}_l))\).
    Moreover, the map
    \[
      \mathsf{Col}^{D_n}_{l} \to \mathcal{B}(\mathsf{C}_l);\ \mathbf{a} \mapsto u_{\mathbf{a}} + q^{-1} \mathcal{L}(\mathsf{C}_l)
    \]
    is an isomorphism of crystals.
    \item\label{item_other_crystal_elms_col_mod_D} For each \(\mathbf{a} \in \mathcal{W}^{D_n}_l \setminus \mathsf{Col}^{D_n}_{l}\), we have
    \[
      u_{\mathbf{a}} \in q^{-1} \mathcal{L}(\mathsf{C}_l).
    \]
  \end{enumerate}
\end{prop}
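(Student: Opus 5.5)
The plan is to realize \(\mathsf{C}_l\) as the quantum exterior power and compare it with the Schur-type duality of Theorem \ref{thm_Schur_duality_D}. First I would identify \(\mathsf{C}_l\) with the image of a quasi-idempotent in \(\mathsf{BMW}^{D_n}_l\). Since \(\mathfrak{A} \cap \mathsf{T}^l = \sum_i (\check{R}_i + q^{-1})(\mathsf{T}^l)\), the quotient \(\mathsf{C}_l\) is dual to the joint \((-q^{-1})\)-eigenspace of all \(\check{R}_i\). That eigenspace equals \(\mathsf{Hom}_{\mathsf{BMW}^{D_n}_l}(S_{\mathrm{sgn}}, \mathsf{T}^l)\), where \(S_{\mathrm{sgn}}\) is the one-dimensional representation sending \(T_i \mapsto -q^{-1}\). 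This representation is compatible with the relations: \(E_i\) acts by \(1-\frac{-q^{-1}+q}{q-q^{-1}} = 0\), so the relations involving \(E_i\) hold trivially. By Theorem \ref{thm_Schur_duality_D} this space is either a single simple \(V_q(\lambda)\) with \(\ell(\mathsf{par}(\lambda))<n\), or a sum \(V_q(\lambda)\oplus V_q(\mu)\) with \(\mathsf{par}(\lambda)=\mathsf{par}(\mu)\) of length \(n\).

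To decide which \(\lambda\) occurs, I would use crystals. The basis \(\{v_{\mathbf{a}}\}\) of \(\mathsf{T}^l\) induces \(\mathcal{W}^{D_n}_l\). I would show that the image of \(\{u_{\mathbf{a}} \mid \mathbf{a}\in\mathcal{W}^{D_n}_l\}\) spans \(\mathsf{C}_l\) over \(\mathbb{Q}(q)\), and then work at \(q=\infty\) (equivalently \(q^{-1}\to0\)). The generators \((\check{R}+q^{-1})(v_a\otimes v_b)\) give straightening relations:
\begin{itemize}
  \item for \(a=b\) the relation is \(u_{aa}=0\) up to a unit;
  \item for \(b\prec a\ne\overline b\) it gives \(u_{ba}\equiv -q^{-1}u_{ab}\), and for \(a\prec b\) it gives \(u_{ba}\equiv -q u_{ab}+\dots\);
  \item the \(a=\overline b\) cases give relations among the \(u_{c\overline c}\).
\end{itemize}
From these relations the lattice \(\mathcal{L}(\mathsf{C}_l)=\mathsf{Span}_{\mathbf{A}_\infty}\{u_{\mathbf a}\}\) is the image of \(\mathcal{L}(\mathsf{T}^l)\), and every non-quasi-increasing word lies in \(q^{-1}\mathcal{L}\) or is congruent to a quasi-increasing one. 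The surjection \(\mathsf{T}^l\to\mathsf{C}_l\) is a \(\mathbf{U}\)-homomorphism, so it maps the crystal base of \(\mathsf{T}^l\) onto a crystal base of the quotient. The resulting crystal is therefore a union of connected components of \(\mathcal{W}^{D_n}_l\), consisting of the components whose \(v_{\mathbf a}\) do not die in the quotient.

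Next I would count. The highest weight vector \(v_{(l,\dots,1)}\) (the column \(1,\dots,l\)) survives because it spans a weight space of \(\mathsf{T}^l\) not meeting \(\mathfrak{A}\). Hence \(V_q(\gamma_l)\) (respectively \(V_q(\gamma_{n,\pm})\), from the columns \((\overline n, n-1,\dots,1)\) and \((n,\dots,1)\)) is a summand. Combined with the duality statement, \(\mathsf{C}_l\) is exactly \(V_q(\gamma_l)\) for \(l<n\), and \(V_q(\gamma_{n,+})\oplus V_q(\gamma_{n,-})\) for \(l=n\); this proves (1). The subcrystal obtained is the connected component of \(\mathcal{W}^{D_n}_l\) through the highest column, which is \(\mathsf{Col}^{D_n}_l\) by the remark on crystal structure of columns (\cite[Prop.\ 6.3.2, 6.3.4]{KaNa94}). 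This gives (2), once I check that \(u_{\mathbf a}\) for \(\mathbf a\in\mathsf{Col}^{D_n}_l\) are linearly independent. That follows from the dimension count \(\#\mathsf{Col}^{D_n}_l=\dim V_q(\gamma_l)\) together with spanning.

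The main obstacle is (3), together with the spanning claim: showing that \(u_{\mathbf a}\in q^{-1}\mathcal L(\mathsf C_l)\) for every non-KN word, including quasi-increasing words that violate shallowness. The straightening relations alone do not obviously give this. I would argue via crystal theory instead. The images of \(v_{\mathbf a}\) modulo \(q^{-1}\mathcal L(\mathsf C_l)\) form \(\mathcal B(\mathsf C_l)\sqcup\{0\}\), and the induced map \(\mathcal W^{D_n}_l\to \mathcal B(\mathsf C_l)\sqcup\{0\}\) is a strict crystal morphism. Such a morphism is injective on each surviving component and zero on the others. Since \(\mathcal B(\mathsf C_l)\cong\mathsf{Col}^{D_n}_l\) has exactly one component (two for \(l=n\)), every component other than that of the highest column maps to \(0\). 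The delicate point is to show that the component through the highest column is the only one that survives. Many components of \(\mathcal W^{D_n}_l\) are isomorphic to \(\mathsf{KNT}(\gamma_l)\), so I would need the \(\mathsf{Q}\)-symbol: I would show directly that the highest weight word of every other such component, for example \(1\,\overline1\,1\) patterns, is killed by the \((\check R+q^{-1})\) relations modulo \(q^{-1}\).
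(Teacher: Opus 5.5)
There is a genuine gap. Your architecture is the paper's: identify \(\mathsf{C}_l\) with \(\mathsf{Hom}_{\mathsf{BMW}^{D_n}_l}(S_{\mathrm{sgn}},\mathsf{T}^l)\) via Theorem \ref{thm_Schur_duality_D}, then pass to crystal bases. Two steps, however, do not go through.

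\textbf{Non-vanishing of \(u_{l,\dots,1}\).} Your stated reason is false. The \(\gamma_l\)-weight space of \(\mathsf{T}^l\) is not spanned by \(v_{l,\dots,1}\). It has as basis the \(l!\) vectors \(v_{\mathbf{a}}\) with \(\mathbf{a}\) a rearrangement of \(1,\dots,l\). It also meets \(\mathfrak{A}\), e.g.\ \((\check{R}+q^{-1})(v_2\otimes v_1)=v_1\otimes v_2+q^{-1}v_2\otimes v_1\). Moreover, \(v_{l,\dots,1}\) is not a highest weight vector in \(\mathsf{T}^l\); only its image in \(\mathsf{C}_l\) is.

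The non-vanishing is exactly the nontrivial claim \(\mathfrak{A}\cap\mathsf{T}^l\neq\mathsf{T}^l\), which the paper handles with the \(q\)-antisymmetrizer \(\sum_\sigma(-q)^{-\ell(\sigma)}T_\sigma\). Your weight-space idea can be repaired as follows.
\begin{itemize}
\item The \(T_i\) are \(\mathbf{U}\)-endomorphisms, so the \(\gamma_l\)-weight space of \(\mathsf{C}_l\) is \((\mathsf{T}^l)_{\gamma_l}/\sum_i(\mathsf{T}^l)_{\gamma_l}(T_i+q^{-1})\).
\item On \((\mathsf{T}^l)_{\gamma_l}\) no letter repeats and no pair \(a,\overline{a}\) occurs. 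Hence the \(\check{R}_i\) act by the type \(A\) Hecke formulas, and this space is a free rank-one Hecke module generated by \(v_{l,\dots,1}\).
\item The quotient is therefore the one-dimensional sign module, spanned by \(u_{l,\dots,1}\neq0\). The same argument works for \(\overline{n},n-1,\dots,1\) when \(l=n\).
\end{itemize}

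\textbf{Parts (2) and (3).} More seriously, you leave (3) open, and with it the spanning by column words that your dimension count for (2) relies on. The reason is that you believe many components of \(\mathcal{W}^{D_n}_l\) are isomorphic to \(\mathsf{KNT}^{D_n}(\gamma_l)\). They are not, and this is the idea you are missing. Since \(V_q(\gamma_l)\) sits in \(\mathsf{Hom}_{\mathsf{BMW}^{D_n}_l}(S_{\mathrm{sgn}},\mathsf{T}^l)\), Theorem \ref{thm_Schur_duality_D} gives it multiplicity \(\dim S_{\mathrm{sgn}}=1\) in \(\mathsf{T}^l\), and likewise for \(V_q(\gamma_{n,\pm})\). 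So \(\mathsf{Col}^{D_n}_l\) is the unique component with that highest weight. Your \(1\,\overline{1}\,1\) example lies in a component of highest weight \(\gamma_1\) and dies for weight reasons.

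The paper then argues as follows.
\begin{itemize}
\item Semisimplicity of the bimodule shows that \(\mathfrak{A}\cap\mathsf{T}^l\) is precisely the sum of the isotypic components of \(\mathsf{T}^l\) of all other types.
\item By uniqueness of crystal bases, isotypic decompositions are compatible with \(\mathcal{L}(\mathsf{T}^l)\). Hence there is a basis \(v'_{\mathbf{a}}\equiv_\infty v_{\mathbf{a}}\) such that \(\{v'_{\mathbf{a}}\}_{\mathbf{a}\notin\mathsf{Col}^{D_n}_l}\) is a basis of \(\mathfrak{A}\cap\mathsf{T}^l\) and the remaining \(v'_{\mathbf{a}}\) span the \(V_q(\gamma_l)\)-isotypic component.
\item For \(\mathbf{a}\notin\mathsf{Col}^{D_n}_l\), \(u_{\mathbf{a}}\) is then the image of \(v_{\mathbf{a}}-v'_{\mathbf{a}}\in q^{-1}\mathcal{L}(\mathsf{T}^l)\).
\end{itemize}
Parts (2) and (3) follow at once, with no straightening.

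This is also exactly what your assertion ``the quotient map carries the crystal base of \(\mathsf{T}^l\) onto a crystal base of \(\mathsf{C}_l\)'' needs. For a general surjection that assertion fails. For example, \(V_q(\lambda)^{\oplus 3}\to V_q(\lambda)^{\oplus 2}\), \((x,y,z)\mapsto(x+z,y+z)\), sends \(b\otimes e_1,b\otimes e_2,b\otimes e_3\) to \(b\otimes e_1,b\otimes e_2,b\otimes(e_1+e_2)\), which do not form part of a basis.
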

\begin{proof}
  The assertions are clear for \(l \in \{ 0,1 \}\);
  \[
    \mathsf{C}_0 = \mathsf{T}^0 \simeq V_q(\gamma_0), \quad \mathsf{C}_1 = \mathsf{T}^1 \simeq V_q(\gamma_1).
  \]
  Hence, let us assume that \(l \geq 2\).
  
  It is easily seen that
  \[
    \mathfrak{A} \cap \mathsf{T}^l = \sum_{i=1}^{l-1} \mathsf{T}^l \cdot (T_i+q^{-1}).
  \]
  For each \(j,k \in \{ 1,\dots,l-1 \}\), we have
  \[
    (T_k+q^{-1}) T_j = (T_k+q^{-1})(T_j+q^{-1}) - q^{-1}(T_k+q^{-1}) \in \sum_{i=1}^{l-1} \mathsf{BMW}^{D_n}_l \cdot (T_i+q^{-1}).
  \]
  Hence, \(\mathfrak{A} \cap \mathsf{T}^l\) is a \(\mathsf{BMW}^{D_n}_l\)-submodule of \(\mathsf{T}^l\).

  We claim that \(\mathfrak{A} \cap \mathsf{T}^l \neq \mathsf{T}^l\).
  In order to prove this claim, assume contrary.
  Then, we have \(v_{l,\dots,1} \in \mathfrak{A} \cap \mathsf{T}^l\).
  Let \(\mathfrak{S}_l\) denote the \(l\)-th symmetric group with simple reflections \(s_1,\dots,s_{l-1}\).
  For each \(\sigma \in \mathfrak{S}_l\) with a reduced expression \(\sigma = s_{i_1} \cdots s_{i_r}\), set \(\ell(\sigma) := r\) and \(T_\sigma := T_{i_1} \cdots T_{i_r}\).
  Define an element \(A \in \mathsf{BMW}^{D_n}_l\) by
  \[
    A := \sum_{\sigma \in \mathfrak{S}_l} (-q)^{-\ell(\sigma)} T_\sigma.
  \]
  Then, we have
  \[
    v_{l,\dots,1} \cdot A = \sum_{\sigma \in \mathfrak{S}_l} (-q)^{-\ell(\sigma)} v_{\sigma(l),\dots,\sigma(1)} \neq 0.
  \]
  On the other hand, by the definition of \(\check{R}\), we see that \(v_{l,\dots,1} \cdot \mathsf{BMW}^{D_n}_l\) is spanned by \(\{ v_{\sigma(l),\dots,\sigma(1)} \mid \sigma \in \mathfrak{S}_l \}\), and \((T_i-q)(T_i+q^{-1}) = 0\) on this subspace.
  Hence, noting that \(A = (T_1-q) A'\), where
  \[
    A' := \sum_{\substack{\sigma \in \mathfrak{S}_l \\ \ell(s_1 \sigma) = \ell(\sigma)+1}} (-q)^{-\ell(\sigma)-1} T_\sigma,
  \]
  we obtain
  \[
    v_{l,\dots,1} T_1 A = v_{l,\dots,1} T_1 (T_1-q) A' = -q^{-1} v_{l,\dots,1} (T_1-q) A' = -q^{-1} v_{l,\dots,1} A.
  \]
  Therefore, we have
  \begin{align*}
    0
    &\neq (-q^{-1}-q^{-2n+1}) v_{l,\dots,1} A \\
    &= v_{l,\dots,1} (T_1-q^{-2n+1}) A \\
    &\in \sum_{i=1}^{l-1} \mathsf{T}^l (T_i+q^{-1}) (T_1-q^{-2n+1}) A \\
    &=\sum_{i=1}^{l-1} \mathsf{T}^l (T_i+q^{-1}) (T_1-q^{-2n+1}) (T_1-q) A' \\
    &= 0.
  \end{align*}
  This is a contradiction.
  Hence, we obtain
  \[
    \mathfrak{A} \cap \mathsf{T}^l \neq \mathsf{T}^l.
  \]

  Since \(\mathsf{T}^l\) is a semisimple \((\mathbf{U},\mathsf{BMW}^{D_n}_l)\)-bimodule (Theorem \ref{thm_Schur_duality_D}), there exists a nonzero submodule \(M' \subseteq \mathsf{T}^l\) such that
  \[
    \mathsf{T}^l = M' \oplus (\mathfrak{A} \cap \mathsf{T}^l).
  \]
  Note that \(M' \simeq \mathsf{T}^l/(\mathfrak{A} \cap \mathsf{T}^l)\).
  Since \(\mathsf{T}^l/(\mathfrak{A} \cap \mathsf{T}^l)\) is the maximal quotient on which every \(T_i\) acts as \(-q^{-1}\), the \(\mathsf{BMW}^{D_n}_l\)-module \(M'\) is a direct sum of copies of a one-dimensional (hence simple) \(\mathsf{BMW}^{D_n}_l\)-module, say \(S\).
  Also, the submodule \(\mathfrak{A} \cap \mathsf{T}^l\) has no simple factors isomorphic to \(S\).
  Hence, \(M' \simeq \mathsf{Hom}_{\mathsf{BMW}^{D_n}_l}(S, \mathsf{T}^l)\) as \(\mathbf{U}\)-modules.

  We have seen that \(u_{l,\dots,1} \in \mathsf{C}_l \simeq M'\) is nonzero.
  Clearly, it is a highest weight vector of highest weight
  \[
    \gamma :=
    \begin{cases}
      \gamma_l & \text{ if } l < n, \\
      \gamma_{n,+} & \text{ if } l = n.
    \end{cases}
  \]
  Hence, Theorem \ref{thm_Schur_duality_D} implies that
  \[
    \mathsf{C}_l \simeq
    \begin{cases}
      V_q(\gamma_l) & \text{ if } l < n, \\
      V_q(\gamma_{n,+}) \oplus V_q(\gamma_{n,-}) & \text{ if } l = n,
    \end{cases}
  \]
  and the multiplicity of \(V_q(\gamma)\) equals \(1\).
  Therefore, the subcrystal \(\mathcal{W}^{D_n}_l \setminus \mathsf{Col}^{D_n}_{l}\) has no connected components isomorphic to \(\mathsf{Col}^{D_n}_{l}\) if \(l < n\), and \(\mathsf{Col}^{D_n}_{n,\pm}\) if \(l = n\).
  By the uniqueness of crystal bases, there exists a basis \(\{ v'_{\mathbf{a}} \mid \mathbf{a} \in \mathcal{W}^{D_n}_l \}\) of \(\mathsf{T}\) such that
  \begin{itemize}
    \item \(v'_{\mathbf{a}} \equiv_\infty v_{\mathbf{a}}\) for all \(\mathbf{a} \in \mathcal{W}^{D_n}_l\),
    \item \(\{ v'_{\mathbf{a}} \mid \mathbf{a} \in \mathsf{\mathsf{C}ol}^{D_n}_{l} \}\) is a basis of \(M'\) that induces a crystal base,
    \item \(\{ v'_{\mathbf{a}} \mid \mathbf{a} \in \mathcal{W}^{D_n}_l \setminus \mathsf{Col}^{D_n}_{l} \}\) is a basis of \(\mathfrak{A} \cap \mathsf{T}^l\) that induces a crystal base,
  \end{itemize}
  Hence, the rest of the assertions follow.
\end{proof}

In the sequel, we will often identify \(\mathsf{C}_l\) with \(V_q(\gamma_l)\) if \(l < n\), and with \(V_q(\gamma_{n,+}) \oplus V_q(\gamma_{n,-})\) if \(l = n\), under the isomorphism
\[
  \begin{cases}
    V_q(\gamma_l) \\
    V_q(\gamma_{n,+}) \oplus V_q(\gamma_{n,-})
  \end{cases}
  \xrightarrow{\mathsf{w}_{\mathsf{col}}} \mathsf{T}^l
  \xrightarrow{\text{quotient}} \mathsf{C}_l.
\]
In particular, we see that \(\mathsf{C}_l\) has a nondegenerate contragredient symmetric bilinear form for which \(\{ u_\mathbf{a} \mid \mathbf{a} \in \mathsf{Col}^{D_n}_{l} \}\) is an almost orthonormal basis.

\section{Quantum symmetric pairs of type \(D\mathrm{II}\)}\label{sect_qsp_DII}
In this section, we recall the notion of quantum symmetric pairs of type \(D\mathrm{II}\), and relate the combinatorics in \S\ref{sect_manipulation_D} to representation theory.
Then, we complete the proof of one of our main result; Theorem \ref{thm_red_KNT_D_DII}.
We refer the reader to \cite{Kol14} for basics of quantum symmetric pairs.

\subsection{Symmetric pairs}
Let \(n \in \mathbb{Z}_{\geq 2}\) and set \(\mathfrak{k}\) to be the Lie subalgebra of \(\mathfrak{so}_{2n}\) generated by the following elements:
\begin{itemize}
  \item \(e_j,f_j,h_j\) for \(j \in I_\bullet := \{ 2,\dots,n \}\),
  \item \(b_1 := Z_{2,1} + Z_{1,2n-1}\).
\end{itemize}

There exists a Lie algebra homomorphism
\[
  \psi_{2n-1} \colon \mathfrak{so}_{2n-1} \to \mathfrak{so}_{2n}
\]
such that
\begin{align*}
  &\psi_{2n-1}(e_i) =
  \begin{cases}
    e_{i+1} & \text{ if } 1 \leq i < n-1, \\
    Z_{n,1}+Z_{1,n+1} & \text{ if } i = n-1,
  \end{cases} \\
  &\psi_{2n-1}(f_i) =
  \begin{cases}
    f_{i+1} & \text{ if } 1 \leq i < n-1, \\
    Z_{n+1,1}+Z_{1,n} & \text{ if } i = n-1,
  \end{cases} \\
  &\psi_{2n-1}(h_i) =
  \begin{cases}
    h_{i+1} & \text{ if } 1 \leq i < n-1, \\
    h_n-h_{n-1} & \text{ if } i = n-1.
  \end{cases}
\end{align*}
This homomorphism restricts to an isomorphism \(\mathfrak{so}_{2n-1} \to \mathfrak{k}\).

Let \(\mathfrak{g}_\bullet\) denote the Lie subalgebra of \(\mathfrak{k}\) generated by
\[
  \begin{cases}
    \{ e_j,f_j,h_j \mid j \in I_\bullet \} & \text{ if } n \geq 3, \\
    d_2 & \text{ if } n = 2.
  \end{cases}
\]
Clearly, there exists a Lie algebra homomorphism
\[
  \psi_{2n-2}' \colon \mathfrak{so}_{2n-2} \to \mathfrak{k}
\]
such that
\[
  \begin{cases}
    \psi_{2n-2}'(x_i) = x_{i+1} \ \text{ for all } i \in \{ 1,\dots,n-1 \},\ x \in \{ e,f,h \} & \text{ if } n \geq 3, \\
    \psi_2'(d_1) = d_2 & \text{ if } n = 2.
  \end{cases}
\]
This homomorphism restricts to an isomorphism \(\mathfrak{so}_{2n-2} \to \mathfrak{g}_\bullet\).
Let \(\psi_{2n-2}\) denote the composition
\[
  \mathfrak{so}_{2n-2} \xrightarrow{\psi_{2n-2}'} \mathfrak{k} \xrightarrow{\psi_{2n-1}^{-1}} \mathfrak{so}_{2n-1}.
\]
Explicitly, we have
\begin{align*}
  &\psi_{2n-2}(e_i) =
  \begin{cases}
    e_i & \text{ if } 1 \leq i < n-1, \\
    Z_{n-1,n+2} & \text{ if } i = n-1,
  \end{cases} \\
  &\psi_{2n-2}(f_i) =
  \begin{cases}
    f_i & \text{ if } 1 \leq i < n-1, \\
    Z_{n+1,n-2} & \text{ if } i = n-1,
  \end{cases} \\
  &\psi_{2n-2}(h_i) =
  \begin{cases}
    h_i & \text{ if } 1 \leq i < n-1, \\
    h_{n-2}+h_{n-1} & \text{ if } i = n-1,
  \end{cases}
\end{align*}
if \(n \geq 3\), and
\[
  \psi_2(d_1) = d_1
\]
if \(n = 2\).

\begin{prop}[Equivalences of embeddings]\label{prop_equiv_embd_D}
  \hfill
  \begin{enumerate}
    \item\label{item_B_D_prop_equiv_embd_D} Let \(\lambda \in X^+_{2n}\).
    The \(\mathfrak{so}_{2n-1}\)-modules \(V_{2n}(\lambda)\) defined via \(\phi_{2n-1}\) and \(\psi_{2n-1}\) are isomorphic.
    \item Let \(\nu \in X^+_{2n-1}\).
    The \(\mathfrak{so}_{2n-2}\)-modules \(V_{2n-1}(\nu)\) defined via \(\phi_{2n-2}\) and \(\psi_{2n-2}\) are isomorphic.
  \end{enumerate}
\end{prop}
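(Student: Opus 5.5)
The plan is to compare the two homomorphisms \(\phi_{2n-1}, \psi_{2n-1} \colon \mathfrak{so}_{2n-1} \to \mathfrak{so}_{2n}\) (respectively \(\phi_{2n-2}, \psi_{2n-2} \colon \mathfrak{so}_{2n-2} \to \mathfrak{so}_{2n-1}\)) and show that they differ by an automorphism of the target that does not change isomorphism classes of finite-dimensional modules.

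\textbf{Step 1: conjugacy of the two embeddings.} Both \(\phi_{2n-1}(\mathfrak{so}_{2n-1})\) and \(\psi_{2n-1}(\mathfrak{so}_{2n-1}) = \mathfrak{k}\) are stabilizers in \(\mathfrak{so}_{2n}\) of a non-isotropic vector for the symmetric form defining \(\mathfrak{so}_{2n}\), i.e.\ the form with Gram matrix the anti-diagonal identity.
\begin{itemize}
  \item For \(\phi_{2n-1}\), the relevant vector is \(w = e_n - e_{n+1}\). The \(\mathbf{u}/\sqrt{2}\) pattern shows that the image kills \(w\), and \(w\) is non-isotropic.
  \item For \(\mathfrak{k}\), the relevant vector is \(w' = e_1 - c\,e_{2n}\) for a suitable scalar \(c\). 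One checks directly that \(e_j,f_j\) (\(j \geq 2\)) kill \(e_1, e_{2n}\), and that \(b_1\) kills \(w'\) for the right \(c\).
\end{itemize}
Since both subalgebras have dimension \(\dim \mathfrak{so}_{2n-1}\), each equals the full stabilizer of its vector. By Witt's theorem there is \(g \in O_{2n}(\mathbb{C})\) with \(g w = w'\) up to scalar, so \(\mathrm{Ad}(g) \circ \phi_{2n-1}(\mathfrak{so}_{2n-1}) = \mathfrak{k}\).

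\textbf{Step 2: fixing the automorphism.} The composite \(\psi_{2n-1}^{-1} \circ \mathrm{Ad}(g) \circ \phi_{2n-1}\) is an automorphism \(\theta\) of \(\mathfrak{so}_{2n-1}\). Type \(B\) has no diagram automorphisms, so \(\theta\) is inner. Composing with an inner automorphism of \(\mathfrak{so}_{2n-1}\) does not change the isomorphism class of a restricted module. Hence \(V_{2n}(\lambda)|_{\phi} \simeq V_{2n}(\lambda)^{g}|_{\psi}\), where the superscript \(g\) denotes twisting by \(\mathrm{Ad}(g)\).

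\textbf{Step 3: removing the twist by \(g\).} If \(g \in SO_{2n}\), then \(V_{2n}(\lambda)^g \simeq V_{2n}(\lambda)\). If \(\det g = -1\), multiply \(g\) by a reflection fixing \(w'\); such a reflection exists since \(w'^{\perp}\) contains non-isotropic vectors when \(n \geq 2\). This makes \(g \in SO_{2n}\) without changing \(g w\). This sign adjustment is the main subtlety: without it the twist would exchange \(\lambda_n \leftrightarrow -\lambda_n\).

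\textbf{Step 4: the case \(\mathfrak{so}_{2n-2} \to \mathfrak{so}_{2n-1}\).} The same argument applies. Both images are stabilizers of a non-isotropic vector in \(\mathbb{C}^{2n-1}\): for \(\phi_{2n-2}\) it is the middle basis vector, and for \(\psi_{2n-2}\) one transports \(\mathfrak{g}_\bullet\) through \(\psi_{2n-1}^{-1}\). Every orthogonal transformation is \(\pm\) an element of \(SO_{2n-1}\), so twisting is harmless.

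The residual automorphism \(\theta\) of \(\mathfrak{so}_{2n-2}\) might now be the diagram flip. Such a flip would swap \(V(\mu)\) with \(V(\mu^\sigma)\), where \(\mu^\sigma\) negates the last coordinate. However, by the branching rule \eqref{eq_branching_so} the restriction of \(V_{2n-1}(\nu)\) is stable under \(\mu_{n-1} \mapsto -\mu_{n-1}\), since the interlacing condition involves only \(|\mu_{n-1}|\). Therefore the isomorphism class is unaffected.

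An alternative cross-check for all cases is to compare characters: the weights of \(\mathfrak{h}\) pulled back under each embedding agree up to a Weyl group element, which I would verify directly from the displayed formulas for \(\psi(h_i)\).
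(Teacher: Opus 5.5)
Your proposal is correct in substance but takes a genuinely different route from the paper. The paper's proof is exactly what you list at the end as a ``cross-check''. From the displayed formulas, \(\phi_{2n-1}(d_i)=d_i\) but \(\psi_{2n-1}(d_i)=d_{i+1}\). So the two pulled-back characters of \(V_{2n}(\lambda)\) differ by the cyclic permutation of \(\epsilon_1,\dots,\epsilon_n\), which lies in \(W(D_n)\). It involves no sign changes, so the \(\lambda_n\mapsto-\lambda_n\) issue you handle in Step 3 never arises. In part (2), \(\phi_{2n-2}(d_i)=d_i=\psi_{2n-2}(d_i)\), so the characters agree on the nose. Since finite-dimensional modules are determined by their characters, that is the whole proof. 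It needs no Witt theorem, no determinant correction, and no automorphism bookkeeping. What your route buys in exchange is a structural statement: the two images are \(SO_{2n}\)- (resp.\ \(SO_{2n-1}\)-) conjugate stabilizers of non-isotropic vectors, and the embeddings agree up to an automorphism of the source. One small point: at \(n=2\), run your Step 1 check on \(\psi_3(e_1),\psi_3(f_1)\) rather than on the listed generators of \(\mathfrak{k}\), because there \(e_2=Z_{1,3}\) does not kill \(e_1+e_4\).

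The step that needs more than you wrote is in Step 4, where you assume the residual automorphism \(\theta\) of \(\mathfrak{so}_{2n-2}\) is at worst the diagram flip. This fails in two cases:
for \(n=5\), \(\mathrm{Aut}(\mathfrak{so}_8)/\mathrm{Int}(\mathfrak{so}_8)\cong S_3\), and a triality twist would send the summand \(V_8(\epsilon_1)\) of \(V_9(\epsilon_1)|_{\mathfrak{so}_8}\) to a half-spin module, which the symmetry \(\mu_{n-1}\mapsto-\mu_{n-1}\) of the branching rule does not repair;
for \(n=2\), \(\mathrm{Aut}(\mathfrak{so}_2)\) is all of \(\mathbb{C}^\times\).
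What excludes these is that both \(\mathrm{Ad}(g)\circ\phi_{2n-2}\) and \(\psi_{2n-2}\) are implemented by similitudes \(\mathbb{C}^{2n-2}\to u^\perp\), where \(u\in\mathbb{C}^{2n-1}\) is the vector fixed by \(\psi_{2n-2}(\mathfrak{so}_{2n-2})\). For \(\phi_{2n-2}\) this is evident. For \(\psi_{2n-2}=\psi_{2n-1}^{-1}\circ\psi'_{2n-2}\) it holds for two reasons. First, \(\psi'_{2n-2}\) is induced by the isometry \(e_i\mapsto e_{i+1}\) onto \(\mathrm{span}(e_2,\dots,e_{2n-1})\). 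Second, \(\psi_{2n-1}\colon\mathfrak{so}_{2n-1}\to\mathfrak{k}=\mathfrak{so}\bigl((e_1+e_{2n})^\perp\bigr)\) is implemented by a similitude, since every automorphism of \(\mathfrak{so}_{2n-1}\) is inner. This is also what your phrase ``one transports \(\mathfrak{g}_\bullet\) through \(\psi_{2n-1}^{-1}\)'' silently uses: inside \((e_1+e_{2n})^\perp\), \(\mathfrak{g}_\bullet\) is the stabilizer of the non-isotropic vector \(e_1-e_{2n}\). With this in hand, \(\theta=\mathrm{Ad}(h)\) for some \(h\in O_{2n-2}\). Hence \(\theta\) is inner or inner composed with the flip (just \(\pm1\) when \(n=2\)), and your argument closes.
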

\begin{proof}
  For each \(i \in \{ 1,\dots,n-1 \}\), we have
  \[
    \phi_{2n-1}(d_i) = d_i, \quad \psi_{2n-1}(d_i) = d_{i+1}, \quad \psi_{2n-2}(d_i) = d_i = \psi_{2n-2}(d_i).
  \]
  This implies that, in each case, the two modules in question have the same character.
  Hence, they are isomorphic.
\end{proof}

\begin{cor}[Branching rule for symmetric pairs of type \(D\mathrm{II}\)]\label{cor_branching_rule_classical_DII}
  \hfill
  \begin{enumerate}
    \item Let \(\lambda \in X^+_{2n}\).
    Then, we have
    \[
      V_{2n}(\lambda)|_{\mathfrak{k}} \simeq \bigoplus_{\substack{\nu \in X_{2n-1, \mathsf{s}(\lambda)} \\ \mathsf{par}(\nu) \overset{\textsf{hor}}{\subseteq} \mathsf{par}(\lambda)}} V_{2n-1}(\nu).
    \]
    Here, we regard the \(\mathfrak{so}_{2n-1}\)-module \(V_{2n-1}(\nu)\) as a \(\mathfrak{k}\)-module via the isomorphism \(\psi_{2n-1}^{-1} \colon \mathfrak{k} \to \mathfrak{so}_{2n-1}\).
    \item Let \(\nu \in X^+_{2n-1}\).
    Then, we have
    \[
      V_{2n-1}(\nu)|_{\mathfrak{g}_\bullet} \simeq \bigoplus_{\substack{\mu \in X_{2n-2, \mathsf{s}(\nu)} \\ \mathsf{par}(\mu) \overset{\textsf{hor}}{\subseteq} \mathsf{par}(\nu)}} V_{2n-2}(\mu).
    \]
    Here, we regard the \(\mathfrak{so}_{2n-2}\)-module \(V_{2n-2}(\mu)\) as a \(\mathfrak{g}_\bullet\)-module via the isomorphism \(\psi_{2n-2}^{-1} \colon \mathfrak{g}_\bullet \to \mathfrak{so}_{2n-2}\).
  \end{enumerate}
\end{cor}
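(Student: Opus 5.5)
This corollary follows directly from Proposition \ref{prop_equiv_embd_D} together with the classical branching rule \eqref{eq_branching_so_partition}.

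For part (1), regard \(V_{2n}(\lambda)\) as a \(\mathfrak{k}\)-module and pull it back along the isomorphism \(\psi_{2n-1} \colon \mathfrak{so}_{2n-1} \to \mathfrak{k}\). This gives the \(\mathfrak{so}_{2n-1}\)-module \(V_{2n}(\lambda)\) defined via \(\psi_{2n-1}\). By Proposition \ref{prop_equiv_embd_D} \eqref{item_B_D_prop_equiv_embd_D}, that module is isomorphic to \(V_{2n}(\lambda)\) defined via \(\phi_{2n-1}\). The branching rule \eqref{eq_branching_so_partition} with \(N = 2n-1\) then decomposes it as \(\bigoplus V_{2n-1}(\nu)\), where \(\nu\) runs over \(X^+_{2n-1,\mathsf{s}(\lambda)}\) with \(\mathsf{par}(\nu) \overset{\textsf{hor}}{\subseteq} \mathsf{par}(\lambda)\). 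Transporting this decomposition back to \(\mathfrak{k}\) through \(\psi_{2n-1}^{-1}\) gives exactly the stated decomposition. I read \(X_{2n-1,\mathsf{s}(\lambda)}\) in the statement as the dominant set \(X^+_{2n-1,\mathsf{s}(\lambda)}\), which is what the branching rule produces.

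For part (2), note that \(\psi_{2n-2}\) is the composite of \(\psi'_{2n-2} \colon \mathfrak{so}_{2n-2} \to \mathfrak{g}_\bullet\) with \(\psi_{2n-1}^{-1}\). So restricting the \(\mathfrak{k}\)-module \(V_{2n-1}(\nu)\), viewed through \(\psi_{2n-1}^{-1}\), to \(\mathfrak{g}_\bullet\) and pulling back along \(\psi'_{2n-2}\) gives the \(\mathfrak{so}_{2n-2}\)-module \(V_{2n-1}(\nu)\) defined via \(\psi_{2n-2}\). Proposition \ref{prop_equiv_embd_D}(2) identifies this with the module defined via \(\phi_{2n-2}\). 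The branching rule \eqref{eq_branching_so_partition} with \(N = 2n-2\) then gives the decomposition, and we transport it back along \((\psi'_{2n-2})^{-1} = \psi_{2n-2}^{-1}|_{\mathfrak{g}_\bullet}\), interpreted as in the statement.

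The one step that needs care is the character argument inside Proposition \ref{prop_equiv_embd_D}. It must be checked that the Cartan subalgebras are matched compatibly, so that the weights on both sides are read with respect to the same coordinates. In part (1) \(\psi_{2n-1}\) sends \(d_i \mapsto d_{i+1}\) while \(\phi_{2n-1}\) sends \(d_i \mapsto d_i\). The two restrictions therefore have characters related by the coordinate change \((\lambda_1,\dots,\lambda_n) \mapsto (\lambda_2,\dots,\lambda_n)\) versus \((\lambda_1,\dots,\lambda_{n-1})\). The restricted character is invariant under the Weyl group of \(D_n\), which includes permutations of the \(\epsilon_i\) and even sign changes. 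It is also invariant under the full sign change of \(\epsilon_1\) alone composed with \(\epsilon_n\), which is harmless after restriction. Hence the two characters coincide. This is all taken as given from Proposition \ref{prop_equiv_embd_D}, so the corollary itself involves no further obstacle.
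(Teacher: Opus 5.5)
Your proposal is correct and is exactly the paper's argument: the corollary is obtained by combining Proposition \ref{prop_equiv_embd_D} with the classical branching rule \eqref{eq_branching_so_partition}, and your reading of \(X_{2n-1,\mathsf{s}(\lambda)}\) as \(X^+_{2n-1,\mathsf{s}(\lambda)}\) is the intended one. Your closing remark on the character comparison is more than needed: only the cyclic permutation of \(\epsilon_1,\dots,\epsilon_n\), which lies in the Weyl group of type \(D_n\), is required to match the two restricted characters, and no sign changes enter.
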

\begin{proof}
  The assertions follow from Proposition \ref{prop_equiv_embd_D} and equation \eqref{eq_branching_so_partition}.
\end{proof}

\subsection{Quantum symmetric pairs}
Let \(n \in \mathbb{Z}_{\geq 2}\) and set \(\mathbf{U} := \mathbf{U}(\mathfrak{so}_{2n})\).

When \(n \geq 3\), the \emph{\(\imath\)quantum group of type \(D\mathrm{II}_n\)} is the subalgebra \(\mathbf{U}^\imath\) of \(\mathbf{U}\) generated by the following elements:
\begin{itemize}
  \item \(E_j,F_j,K_j^{\pm 1}\) for \(j \in I_\bullet := \{ 2,\dots,n \}\),
  \item \(B_1 := F_1 + (-q)^{n-2} T_2 \cdots T_{n-2} T_n T_{n-1} T_{n-2} \cdots T_2(E_1) K_1^{-1}\),
\end{itemize}
where \(T_i \in \mathsf{Aut}_{\textsf{\(\mathbb{Q}(q)\)-alg}}(\mathbf{U})\) denotes the Lusztig braid group action \(T''_{i,1}\) \cite[\S 37.1]{Lus93}:
\begin{align*}
  &T_i(E_j) :=
  \begin{cases}
    -F_iK_i & \text{ if } a_{i,j}=2, \\
    E_j & \text{ if } a_{i,j} = 0, \\
    E_iE_j - q^{-1}E_jE_i & \text{ if } a_{i,j} = -1,
  \end{cases} \\
  &T_i(F_j) :=
  \begin{cases}
    -K_i^{-1}E_i & \text{ if } a_{i,j} = 2,\\
    F_j & \text{ if } a_{i,j} = 0, \\
    F_jF_i - q F_iF_j & \text{ if } a_{i,j} = -1,
  \end{cases} \\
  &T_i(K_{h}) := K_{h-\langle h, \alpha_i \rangle h_i}.
\end{align*}
Also, we set \(\mathbf{U}_\bullet\) to be the subalgebra of \(\mathbf{U}^\imath\) generated by \(\{ E_j,F_j,K_j^{\pm 1} \mid j \in I_\bullet \}\).
Clearly, we have an algebra isomorphism
\begin{align}\label{eq_isom_qg_Udot}
  \mathbf{U}(\mathfrak{so}_{2n-2}) \to \mathbf{U}_\bullet;\ X_i \mapsto X_{i+1} \quad (X = E,F,K).
\end{align}

When \(n = 2\), we define \(\mathbf{U}^\imath\) to be the subalgebra of \(\mathbf{U}\) generated by the following elements:
\[
  F_1 + E_2K_1^{-1},\ F_2 + E_1K_2^{-1},\ K_{2d_2}^{\pm 1}.
\]
Also, we set \(\mathbf{U}_\bullet\) to be the subalgebra of \(\mathbf{U}^\imath\) generated by \(K_{2d_2}^{\pm 1}\).
Clearly, we have an algebra isomorphism
\begin{align}\label{eq_isom_qg_Udot_2}
  \mathbf{U}(\mathfrak{so}_2) \to \mathbf{U}_\bullet;\ K_{2d_1} \mapsto K_{2d_2}.
\end{align}

The \(\imath\)quantum group \(\mathbf{U}^\imath\) is a right coideal of \(\mathbf{U}\):
\[
  \Delta(\mathbf{U}^\imath) \subset \mathbf{U}^\imath \otimes \mathbf{U}
\]
Also, it tends to the universal enveloping algebra \(U(\mathfrak{so}_{2n-1})\) as \(q\) tends to \(1\) (see \cite[\S 10]{Kol14} for a precise meaning).

The \(\imath\)quantum group \(\mathbf{U}^\imath\) is closed under the involution \(\rho\) (\cite[Proposition 4.6]{BaWa18}).
Hence, we can consider the notions of contragredient symmetric bilinear forms and almost orthonormal bases of \(\mathbf{U}^\imath\)-modules.

\begin{thm}[Branching rules for quantum symmetric pairs of type \(D\mathrm{II}\)]\label{thm_branching_rule_DII}
  \hfill
  \begin{enumerate}
    \item For each \(\nu \in X^+_{2n-1}\), there exists a unique simple classical weight \textup{(}in the sense of \cite{Wat21b}\textup{)} \(\mathbf{U}^\imath\)-module \(V^\imath(\nu)\) which tends to the simple \(\mathfrak{so}_{2n-1}\)-module \(V_{2n-1}(\nu)\) as \(q\) tends to \(1\) \textup{(}in the sense of \cite[\S 3.4]{HoKa02} and \cite[\S 10]{Kol14}\textup{)}.
    \item\label{item_D_to_DII_thm_branching_rule_DII} Let \(\lambda \in X^+_{2n}\).
    Then, we have
    \[
      V_q(\lambda)|_{\mathbf{U}^\imath} \simeq \bigoplus_{\substack{\nu \in X^+_{2n-1, \mathsf{s}(\lambda)} \\ \mathsf{par}(\nu) \overset{\textsf{hor}}{\subseteq} \mathsf{par}(\lambda)}} V^\imath(\nu).
    \]
    \item Let \(\nu \in X^+_{2n-1}\).
    Then, we have
    \[
      V^\imath(\nu)|_{\mathbf{U}_\bullet} \simeq \bigoplus_{\substack{\mu \in X^+_{2n-2, \mathsf{s}(\nu)} \\ \mathsf{par}(\mu) \overset{\textsf{hor}}{\subseteq} \mathsf{par}(\nu)}} V_{2n-2,q}(\mu).
    \]
    Here, we regard the simple \(\mathbf{U}(\mathfrak{so}_{2n-2})\)-module \(V_{2n-2,q}(\mu)\) as a \(\mathbf{U}_\bullet\)-module via the isomorphism \eqref{eq_isom_qg_Udot} or \eqref{eq_isom_qg_Udot_2}.
  \end{enumerate}
\end{thm}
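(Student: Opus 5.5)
The plan is to deduce all three assertions from the classical branching rules (Corollary \ref{cor_branching_rule_classical_DII}) by passing to the classical limit $q \to 1$. I would rely on the general theory of classical weight modules of $\imath$quantum groups developed in \cite{Wat21b}, together with the specialization formalism of \cite[\S 10]{Kol14} and \cite[\S 3.4]{HoKa02}. The facts I would import are these. A finite-dimensional classical weight $\mathbf{U}^\imath$-module admits an $\mathbf{A}_1$-form whose specialization at $q=1$ is a $\mathfrak{k}$-module of the same dimension. Such modules are completely reducible. Specialization is compatible with restriction along $\mathbf{U}_\bullet \subset \mathbf{U}^\imath \subset \mathbf{U}$ and their classical counterparts $\mathfrak{g}_\bullet \subset \mathfrak{k} \subset \mathfrak{so}_{2n}$. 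For type $D\mathrm{II}$ these facts are established by Kolb--Stephens \cite{KoSt24}, and I would cite that work wherever the general theory needs type-specific input.

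I would prove (2) first, since (1) and (3) follow from it. Fix $\lambda \in X^+_{2n}$. The restriction $V_q(\lambda)|_{\mathbf{U}^\imath}$ is a classical weight module, because $\mathbf{U}^\imath$ acts through $\mathbf{U}$ on an integrable module. It therefore decomposes as $\bigoplus_j M_j$ into simple classical weight $\mathbf{U}^\imath$-modules. Its classical limit is $V_{2n}(\lambda)|_{\mathfrak{k}}$, and by Corollary \ref{cor_branching_rule_classical_DII}(1) this is multiplicity free with components $V_{2n-1}(\nu)$, where $\mathsf{s}(\nu)=\mathsf{s}(\lambda)$ and $\mathsf{par}(\nu) \overset{\textsf{hor}}{\subseteq} \mathsf{par}(\lambda)$. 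Each classical limit $\overline{M_j}$ is a nonzero $\mathfrak{k}$-submodule, so it is a sum of some of these components. The key step is to show that the limit of a simple classical weight module is simple. For this I would use \cite{Wat21b}: $\mathbf{U}^\imath$-submodules and $\mathfrak{k}$-submodules correspond via the $\mathbf{A}_1$-lattice and the $\mathbf{U}_\bullet$-weight structure, so a $\mathfrak{k}$-decomposition of $\overline{M_j}$ lifts to a $\mathbf{U}^\imath$-decomposition of $M_j$. With that in hand, we may define $V^\imath(\nu)$ as the unique $M_j$ whose limit is $V_{2n-1}(\nu)$, and (2) follows once $V^\imath(\nu)$ is shown to be independent of $\lambda$.

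For (1), take $\lambda := \nu$, regarded as an element of $X^+_{2n}$ with last entry $0$ when $\mathsf{s}(\nu)=0$, and suitably adjusted in the spin case. Then $\nu$ appears in the branching of $\lambda$, which gives existence. Uniqueness is the uniqueness statement of \cite{Wat21b}: two simple classical weight $\mathbf{U}^\imath$-modules with isomorphic classical limits are isomorphic. One route is to compare their $\mathbf{U}_\bullet$-characters, which are determined by the limit, together with the action of $B_1$ on highest $\mathbf{U}_\bullet$-weight vectors, following Kolb--Stephens' highest-weight-type classification for $D\mathrm{II}$. This same uniqueness also supplies the independence of $\lambda$ needed in (2).

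For (3), observe that $\mathbf{U}_\bullet \simeq \mathbf{U}(\mathfrak{so}_{2n-2})$ is an honest quantum group, so $V^\imath(\nu)|_{\mathbf{U}_\bullet}$ is integrable and completely reducible. Its isomorphism type is determined by its character, and the character is unchanged by specialization. The classical limit is $V_{2n-1}(\nu)|_{\mathfrak{g}_\bullet}$, which Corollary \ref{cor_branching_rule_classical_DII}(2) describes, and the characters of $V_{2n-2,q}(\mu)$ and $V_{2n-2}(\mu)$ agree. Hence (3) follows.

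I expect the main obstacle to be the simplicity of limits together with uniqueness in (1). These are not formal consequences of results earlier in the paper. I would handle them by invoking \cite{Wat21b} and \cite{KoSt24} directly, rather than reproving them.
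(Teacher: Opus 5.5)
For \(n \geq 3\), your reduction of everything to Corollary \ref{cor_branching_rule_classical_DII} via classical limits has the same shape as the paper's proof, which simply combines \cite[Corollary 6.8, Theorem 6.11]{KoSt24} with that corollary. However, the mechanism you give for the crucial step is not a valid argument. There is no general principle by which a \(\mathfrak{k}\)-decomposition of the limit \(\overline{M_j}\) lifts to a \(\mathbf{U}^\imath\)-decomposition of \(M_j\) ``via the \(\mathbf{A}_1\)-lattice and the \(\mathbf{U}_\bullet\)-weight structure''. A simple module over the generic algebra can have a reducible specialization. A \(\mathfrak{k}\)-submodule of \(\overline{M_j}\) need not be the reduction of any \(\mathbf{U}^\imath\)-submodule. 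Agreement of \(\mathbf{U}_\bullet\)-characters also gives no control over how \(B_1\) acts. Simplicity of the limits and the uniqueness in (1) are exactly the type-specific content of the Kolb--Stephens classification. You should take them explicitly from \cite{KoSt24} rather than present them as consequences of a general lifting principle from \cite{Wat21b}.

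More concretely, your argument does not cover \(n = 2\). In that case \(\mathbf{U}^\imath\) is not defined through the braid-group generator \(B_1\). It is generated by \(F_1 + E_2K_1^{-1}\), \(F_2 + E_1K_2^{-1}\), \(K_{2d_2}^{\pm 1}\), and the paper invokes \cite{KoSt24} only for \(n \geq 3\). The missing observation is the algebra isomorphism \(\mathbf{U}(\mathfrak{so}_3) \to \mathbf{U}^\imath\) given by \(E_1 \mapsto F_1+E_2K_1^{-1}\), \(F_1 \mapsto F_2+E_1K_2^{-1}\), \(K_{2d_1} \mapsto K_{2d_2}\). For \(\nu \in X^+_3 = \frac{1}{2}\mathbb{Z}_{\geq 0}\), transporting the \((2\nu+1)\)-dimensional simple \(\mathbf{U}(\mathfrak{so}_3)\)-module along this isomorphism gives \(V^\imath(\nu)\), with \(K_{2d_2}\)-eigenvalues \(q^{2\nu}, q^{2\nu-2}, \dots, q^{-2\nu}\). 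This settles (1). Parts (2) and (3) then follow from Corollary \ref{cor_branching_rule_classical_DII} by comparing \(K_{2d_2}\)-characters, just as in your argument for (3).
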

\begin{proof}
  When \(n \geq 3\), the assertions follow from \cite[Corollary 6.8, Theorem 6.11]{KoSt24} and Corollary \ref{cor_branching_rule_classical_DII}.

  In the rest of the proof, let us assume that \(n = 2\).
  There exists an algebra isomorphism \(\mathbf{U}(\mathfrak{so}_3) \to \mathbf{U}^\imath\) (see \S\ref{ssect_qg_B} for the definition of \(\mathbf{U}(\mathfrak{so}_3)\)) such that
  \[
    E_1 \mapsto F_1+E_2K_1^{-1},\ F_1 \mapsto F_2+E_1K_2^{-1},\ K_{2d_1} \mapsto K_{2d_2}.
  \]
  Hence, for each \(\nu \in X^+_3 = \frac{1}{2}\mathbb{Z}_{\geq 0}\), there exists a (\(2\nu+1\))-dimensional \(\mathbf{U}^\imath\)-module \(V^\imath(\nu)\) on which \(K_{2d_1}\) acts diagonally with eigenvalues \(q^{2\nu}, q^{2\nu-2},\dots,q^{-2\nu}\).
  This proves the first assertion.
  The others follow from Corollary \ref{cor_branching_rule_classical_DII}.
\end{proof}

\subsection{Connecting homomorphisms}\label{ssect_conn_hom_D}
Let \(n \in \mathbb{Z}_{\geq 2}\), and consider the quantum symmetric pair \((\mathbf{U} = \mathbf{U}(\mathfrak{so}_{2n}), \mathbf{U}^{\imath})\), the vector representation \(V_\natural\) of \(\mathbf{U}\), and column modules \(\mathsf{C}_l\), \(\mathsf{C}_{n,\pm}\) from \S\ref{sect_qg_D}.
\begin{prop}[Vector \(v^\imath_0\)]\label{prop_vi0_D}
  Set
  \[
    v^\imath_0 := v_1 + (-q)^{-n+1} v_{\overline{1}} \in V_\natural.
  \]
  Then, it spans a \(\mathbf{U}^\imath\)-submodule of \(V_\natural\) isomorphic to the trivial module.
\end{prop}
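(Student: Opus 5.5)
The plan is to check directly that every generator of \(\mathbf{U}^\imath\) acts on \(v^\imath_0\) by its counit: \(K\)'s act by \(1\), and \(E_j\), \(F_j\), \(B_1\) act by \(0\). Since these elements generate \(\mathbf{U}^\imath\), it follows that \(\mathbb{Q}(q)v^\imath_0\) is a \(\mathbf{U}^\imath\)-submodule isomorphic to the trivial module. Throughout I will use the explicit action of \(E_i,F_i,K_h\) on \(V_\natural\). It is read off from the crystal graph of \(\mathcal{A}^{D_n}\) in \S\ref{ssect_crystal}: \(F_i v_a = v_{\widetilde{F}_i a}\) (or \(0\)), \(E_i\) similarly, and \(K_h v_a = q^{\langle h, \mathsf{wt}(a)\rangle} v_a\).

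First take \(n \geq 3\) and \(j \in I_\bullet=\{2,\dots,n\}\). We have \(\langle h_j, \pm\epsilon_1\rangle = 0\), so \(K_j^{\pm1}\) fixes \(v_1\) and \(v_{\overline{1}}\). In the crystal graph, the only arrows touching \(1\) or \(\overline{1}\) are labelled \(1\). Hence \(E_j v_1=F_jv_1=E_jv_{\overline{1}}=F_jv_{\overline{1}}=0\), and \(E_j,F_j\) kill \(v^\imath_0\).

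The main step is \(B_1 v^\imath_0 = 0\). Set \(X := T_2\cdots T_{n-2}T_nT_{n-1}T_{n-2}\cdots T_2(E_1)\). It has weight
\[
s_2\cdots s_{n-2}s_ns_{n-1}s_{n-2}\cdots s_2(\alpha_1)=\alpha_1+2\alpha_2+\cdots+2\alpha_{n-2}+\alpha_{n-1}+\alpha_n=\epsilon_1+\epsilon_2.
\]
We have \(F_1 v_1 = v_2\) and \(F_1 v_{\overline{1}} = 0\). Next, \(XK_1^{-1}v_1\) has weight \(2\epsilon_1+\epsilon_2\), which is not a weight of \(V_\natural\), so \(XK_1^{-1}v_1=0\). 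Also \(K_1^{-1}v_{\overline{1}} = q\,v_{\overline{1}}\), and \(Xv_{\overline{1}} = c\,v_2\) for some scalar \(c\). Thus
\[
B_1v^\imath_0 = v_2 + (-q)^{-n+1}(-q)^{n-2}\,q\,c\,v_2 = (1-c)\,v_2,
\]
and the claim reduces to \(c=1\).

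To compute \(c\), I will build \(X\) inductively. Put \(X_1=E_1\) and apply the braid operators one at a time, using the rule \(T_i(Y)=E_iY-q^{-1}YE_i\) when \(Y\) is a root vector with \(\langle h_i,\mathrm{wt}\,Y\rangle=-1\). This holds at every stage: the weights run through \(\epsilon_1-\epsilon_{k+1}\), then \(\epsilon_1+\epsilon_{n-1}\) or \(\epsilon_1+\epsilon_n\), then \(\epsilon_1+\epsilon_k\) descending, and a check of the pairings confirms it. Then I track the action on the single path of basis vectors. At stage \(k\), \(X_k v_{\overline{1}}\) is a multiple of \(v_{\overline{k+1}}\) on the ascending part, and of \(v_k\) on the descending part. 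The step \(X_{k+1}v_{\overline{1}} = E_{k+1}X_kv_{\overline{1}} - q^{-1}X_kE_{k+1}v_{\overline{1}}\) has vanishing second term, because \(E_{k+1}v_{\overline{1}}=0\). So each step multiplies the coefficient by \(1\), and the final coefficient is \(c=1\).

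The obstacle is the bookkeeping. I need to check that the vector-representation matrix entries for \(E_i\) are all exactly \(1\) in the normalization of \(\S\ref{ssect_crystal}\) (this includes the two branches at \(n-1,n\)). I also need to check that the normalization \((-q)^{n-2}\) in \(B_1\) is exactly what cancels against \((-q)^{-n+1}\) and \(K_1^{-1}\). If signs appear, the fallback is to recompute \(T_i\) on \(V_\natural\) through Lusztig's module symmetry \(T''_{i,1}\), using \(T_i(Y)v=T_iYT_i^{-1}v\).

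For \(n=2\), the check is done directly. \(K_{2d_2}\) fixes \(v_{\pm1}\). \((F_1+E_2K_1^{-1})v_1 = v_2 + 0\). \((F_1+E_2K_1^{-1})v_{\overline{1}} = q\,E_2v_{\overline{1}}=-q^{\cdots}v_2\) up to the same scalar check. The generator \(F_2+E_1K_2^{-1}\) is treated similarly, with target \(v_{\overline{2}}\). Together with \((-q)^{-1}\), these cancel.
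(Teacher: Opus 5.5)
Your plan matches the paper's, which simply says the claim follows by direct calculation: check that each generator of $\mathbf{U}^\imath$ acts on $v^\imath_0$ by the counit. Most of your bookkeeping is right. $E_j$, $F_j$, $K_j^{\pm1}$ for $j\geq 2$ behave as you say, and $B_1v^\imath_0=(1-c)v_2$. The case $n=2$ also works, with one correction: $(F_1+E_2K_1^{-1})v_{\overline{1}}=qv_2$ carries no extra sign, and the cancellation comes entirely from the factor $(-q)^{-1}$; the same holds for $F_2+E_1K_2^{-1}$ with target $v_{\overline{2}}$. Your normalization $F_iv_a=v_{\widetilde{F}_ia}$ is harmless. $V_\natural$ is minuscule, so these $0/1$ matrices satisfy all defining relations, and this is exactly the normalization in which the coefficient $(-q)^{-n+1}$ comes out right.

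The gap is in your computation of $c$. The identity $T_i(Y)=E_iY-q^{-1}YE_i$ does not follow from $\langle h_i,\mathsf{wt}\,Y\rangle=-1$. Both sides are linear in $Y$, and the identity fails for $Z=E_2E_1E_3E_4$ in type $D_4$: $Z$ has pairing $-1$ with $h_2$, but $T_2(Z)$ contains $T_2(E_2)=-F_2K_2$ and so does not lie in the positive part $\mathbf{U}^+$. Hence the identity holds at most on a proper subspace of the weight space, and no check of pairings can detect whether $Y$ lies in it.

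On your ascending steps and at $T_{n-1},T_n$ the identity is automatic. There $Y$ is a polynomial in the $E_j$ with $j\neq i$, of degree one in the unique neighbour of $i$ that occurs, and that neighbour is the only letter $T_i$ moves. For $n\geq 4$, however, the descending steps $T_{n-2},\dots,T_2$ apply $T_k$ to a $Y$ of weight $\epsilon_1+\epsilon_{k+1}$, whose $\alpha_k$-coefficient is $1$. So $Y$ already involves $E_k$. You would need an extra property of this particular $Y$ (for instance, that it commutes with $F_k$), plus a lemma turning that property into the identity, and neither is supplied.

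The clean repair is the fallback you mention yourself. Write $Xv_{\overline{1}}=T_wE_1T_w^{-1}v_{\overline{1}}$, using Lusztig's module operators $T''_{i,1}$. Then:
\begin{itemize}
\item each $T_j^{\pm1}$ with $j\geq 2$ fixes $v_{\overline{1}}$, since it spans a trivial $j$-string;
\item $E_1v_{\overline{1}}=v_{\overline{2}}$;
\item on a two-dimensional $i$-string $\{z,F_iz\}$ one has $T''_{i,1}(F_iz)=z$.
\end{itemize}
So the successive operators $T_2,\dots,T_{n-1},T_n,T_{n-2},\dots,T_2$ send $v_{\overline{2}}\mapsto v_{\overline{3}}\mapsto\cdots\mapsto v_{\overline{n}}\mapsto v_{n-1}\mapsto\cdots\mapsto v_2$, each with coefficient $1$. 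This gives $c=1$ rigorously and completes your argument.
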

\begin{proof}
  The assertion can be verified by direct calculation (\textit{cf}.\, \cite[Lemma 4.4.2]{Wat23b}).
\end{proof}

\begin{defi}[Connecting homomorphisms]
  Let \(l \in \{ 0,\dots,n-1 \}\).
  The \emph{connecting homomorphism \(\delta_l\)} is the map
  \[
    \delta_l \colon \mathsf{C}_l \to \mathsf{C}_{l+1};\ u \mapsto (-q)^l v^\imath_0 \cdot u,
  \]
  where \(v^\imath_0\) is as in Proposition \ref{prop_vi0_D}.
  Also, the \emph{connecting homomorphism \(\delta_{n-1,\pm}\)} is the map
  \[
    \delta_{n-1,\pm} \colon \mathsf{C}_{n-1} \xrightarrow{\delta_{n-1}} \mathsf{C}_n = \mathsf{C}_{n,+} \oplus \mathsf{C}_{n,-} \to \mathsf{C}_{n,\pm},
  \]
  where the second map is the projection.
\end{defi}

\begin{prop}
  The connecting homomorphisms are \(\mathbf{U}^\imath\)-module homomorphisms.
\end{prop}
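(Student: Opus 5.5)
The proof will use the coideal property together with the invariance of \(v^\imath_0\). By Remark \ref{rem_bimod_mod_D}, the algebra \(\mathsf{C}\) is a \(\mathsf{T}\)-bimodule, and the multiplication map \(\mathsf{T} \otimes \mathsf{C} \to \mathsf{C}\), \(v \otimes u \mapsto v \cdot u\), is a \(\mathbf{U}\)-module homomorphism when \(\mathsf{T} \otimes \mathsf{C}\) carries the tensor product action through \(\Delta\). Concretely, for \(x \in \mathbf{U}\), \(v \in V_\natural = \mathsf{T}^1\) and \(u \in \mathsf{C}_l\) we have \(x \cdot (v u) = (x_{(1)} v)(x_{(2)} u)\). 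Since the left factor \(V_\natural\) sits in the first tensor slot, the fact that \(\mathbf{U}^\imath\) is a right coideal, \(\Delta(\mathbf{U}^\imath) \subset \mathbf{U}^\imath \otimes \mathbf{U}\), is exactly what we need.

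For \(x \in \mathbf{U}^\imath\), write \(\Delta(x) = \sum x_{(1)} \otimes x_{(2)}\) with \(x_{(1)} \in \mathbf{U}^\imath\). By Proposition \ref{prop_vi0_D}, \(v^\imath_0\) spans a copy of the trivial \(\mathbf{U}^\imath\)-module, so \(x_{(1)} v^\imath_0 = \varepsilon(x_{(1)}) v^\imath_0\), where \(\varepsilon\) is the counit. Here I am using that the trivial module means \(\mathbf{U}^\imath\) acts through \(\varepsilon|_{\mathbf{U}^\imath}\); I would check this on the generators (\(E_j, F_j \mapsto 0\), \(K_j \mapsto 1\), \(B_1 \mapsto 0\)). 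This gives
\[
  x \cdot (v^\imath_0 u) = \sum \varepsilon(x_{(1)}) v^\imath_0 (x_{(2)} u) = v^\imath_0 \bigl( (\varepsilon \otimes 1)\Delta(x) \cdot u \bigr) = v^\imath_0 (x u).
\]
Multiplying by the scalar \((-q)^l\) then shows that \(\delta_l\) is \(\mathbf{U}^\imath\)-linear. Left multiplication by \(v^\imath_0 \in \mathsf{T}^1\) raises degree by one and preserves \(\mathfrak{A}\), so it maps \(\mathsf{C}_l\) into \(\mathsf{C}_{l+1}\), and \(\delta_l\) is well defined.

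For \(\delta_{n-1,\pm}\), the projection \(\mathsf{C}_n = \mathsf{C}_{n,+} \oplus \mathsf{C}_{n,-} \to \mathsf{C}_{n,\pm}\) is a \(\mathbf{U}\)-module homomorphism because the decomposition in Proposition \ref{prop_col_mod_D} \eqref{item_hw_col_mod_D} is a decomposition into \(\mathbf{U}\)-submodules. It is therefore also \(\mathbf{U}^\imath\)-linear, and the composite \(\delta_{n-1,\pm}\) is \(\mathbf{U}^\imath\)-linear.

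The only potentially delicate point is the compatibility formula of Remark \ref{rem_bimod_mod_D} with the stated orientation of \(\Delta\), namely that the coideal lands in the first factor, which is the factor acting on the left multiplier. This follows from the convention that \(\mathsf{T}^d = V_\natural^{\otimes d}\) is built with the product \(v_{a_d} \cdots v_{a_1}\) as a tensor product in that order. I take this as given from the paper.
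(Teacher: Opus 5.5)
Your proof is correct and is the paper's own argument: the paper simply cites Proposition \ref{prop_vi0_D} (that \(v^\imath_0\) spans a trivial \(\mathbf{U}^\imath\)-module) together with the compatibility formula of Remark \ref{rem_bimod_mod_D}. You have written out the coideal-plus-counit computation behind that citation, and added the observation that the projection onto \(\mathsf{C}_{n,\pm}\) is \(\mathbf{U}\)-linear, which is what the paper leaves implicit.
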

\begin{proof}
  The assertion follows from Proposition \ref{prop_vi0_D}; see also Remark \ref{rem_bimod_mod_D}.
\end{proof}

\begin{lem}[Relations in \(\mathsf{C}\)]\label{lem_rel_col_mod_D}
  In the algebra \(\mathsf{C}\), the following hold\textup{:}
  \begin{enumerate}
    \item \(u_{a,a} = 0\) for all \(a \in \mathcal{A}^{D_n}\),
    \item \(u_{a,b} = -q^{-1} u_{b,a}\) for all \(a,b \in \mathcal{A}^{D_n}\) such that \(a \prec b \neq \overline{a}\),
    \item \(u_{1,\overline{1}} = \sum_{a=2}^{n-1} (-q)^{-a+1} u_{\overline{a},a} + \frac{(-q)^{-n+1}}{1-q^{-2}}(u_{\overline{n},n} + u_{n,\overline{n}})\),
    \item \(u_{\overline{1},1} = -u_{1,\overline{1}}\).
  \end{enumerate}
\end{lem}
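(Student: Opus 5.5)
The plan is to read off all four relations from the defining generators of \(\mathfrak{A}\). By definition, \(\mathfrak{A} \cap \mathsf{T}^2 = (\check{R}+q^{-1})(\mathsf{T}^2)\), so in \(\mathsf{C}\) every element \(\check{R}(v_a \otimes v_b) + q^{-1} v_a \otimes v_b\) vanishes. I will use the identification \(v_{a,b} = v_a \otimes v_b\) and evaluate these elements case by case with the explicit formula for \(\check{R}\).

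For (1), take \(a=b\). Then \((\check{R}+q^{-1})(v_{a,a}) = (q+q^{-1}) v_{a,a}\), so \(u_{a,a}=0\).

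For (2), take \(b \prec a \neq \overline{b}\). Then \((\check{R}+q^{-1})(v_a\otimes v_b) = v_b \otimes v_a + q^{-1} v_a \otimes v_b\), which gives \(u_{b,a} = -q^{-1} u_{a,b}\). After renaming the letters this is the stated relation. The other ordering \(a \prec b\) produces the same relation, since \(v_b\otimes v_a + (q-q^{-1}+q^{-1})v_a\otimes v_b\) is consistent with it. For the pair \(n,\overline{n}\), the two letters are incomparable, so I must check which case of the \(\check{R}\)-formula applies. Any relation obtained there will only be needed in (3).

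For (3) and (4), I will apply \(\check{R}+q^{-1}\) to the vectors \(v_c \otimes v_{\overline{c}}\) and \(v_{\overline{c}} \otimes v_c\) for \(c = 1,\dots,n\), using the last two cases of the formula. Each such image is a linear combination of the vectors \(v_d\otimes v_{\overline{d}}\) and \(v_{\overline{d}}\otimes v_d\), with coefficients powers of \(-q\) weighted by \(d(\cdot,\cdot)\). This gives a linear system in the \(2n\) unknowns \(u_{c,\overline{c}}, u_{\overline{c},c}\).

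The key steps, in order, are as follows.

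\begin{enumerate}
\item Take the relation coming from \(v_{\overline{c}}\otimes v_c\) and subtract the one from \(v_c\otimes v_{\overline{c}}\). The goal is the simple relation \(u_{c,\overline{c}} + u_{\overline{c},c} \in\) span of lower terms, and in particular \(u_{\overline{1},1} = -u_{1,\overline{1}}\), which is (4).
\item Use the relation for \(c = 1\) to express \(u_{1,\overline{1}}\) through the terms \(u_{\overline{a},a}\) with \(a \geq 2\), via the coefficients \((-q)^{-d(a,1)} = (-q)^{-a+1}\).
\item Telescope: the relation for \(c\) is compared with the relation for \(c+1\), so that the cross terms cancel. I expect the tail at \(a = n\) to collapse into a geometric series \(\sum (q^{-2})^k\), which produces the factor \(\frac{1}{1-q^{-2}}\) multiplying \(u_{\overline{n},n}+u_{n,\overline{n}}\).
\end{enumerate}

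The main obstacle is step 3. It requires careful bookkeeping of the exponents \(d(c,\overline{a})\) and of the special behaviour at the incomparable pair \(n,\overline{n}\). To keep this under control, I would first carry out the computation for \(n=2\) and check the resulting coefficients against Proposition \ref{prop_vi0_D}. As a consistency check, \(v^\imath_0 \cdot v^\imath_0\) should be compatible with the \(\mathbf{U}^\imath\)-invariance stated there. After that I would do the induction in general.
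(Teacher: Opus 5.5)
Your approach is the paper's own: the paper proves the lemma by reading off the degree-two part of \(\mathfrak{A}\), that is, the vectors \((\check{R}+q^{-1})(v_a\otimes v_b)\). Your arguments for (1) and (2) are correct. The outline for (3)--(4), however, does not work as written.

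\textbf{Step 1.} No subtraction is needed. The relation from \(v_{\overline{c}}\otimes v_c\) alone already expresses \(u_{c,\overline{c}}+u_{\overline{c},c}\) through the \(u_{e,\overline{e}}\) with \(e<c\). For \(c=1\) the sum over \(e\prec 1\) is empty, and this gives (4) directly. Subtracting the relation from \(v_c\otimes v_{\overline{c}}\), as you propose, eliminates \(u_{\overline{c},c}\) instead of producing \(u_{c,\overline{c}}+u_{\overline{c},c}\).

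\textbf{Step 2.} The relation from \(v_1\otimes v_{\overline{1}}\), after using (4) and dividing by \(q-q^{-1}\), reads
\[
  u_{1,\overline{1}}=\sum_{a=2}^{n}(-q)^{1-a}u_{\overline{a},a}+\sum_{c=1}^{n}(-q)^{c+1-2n}u_{c,\overline{c}}.
\]
The second sum appears because \(\sum_{c\not\succeq\overline{1}}\) also runs over the unbarred letters, with \(d(c,\overline{1})=2n-1-c\). You drop this second sum, yet removing it is the whole content of (3). Your step 3 only states an expectation about it.

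\textbf{The missing ingredient} is one further relation, the one from \(v_{\overline{n}}\otimes v_n\):
\[
  u_{n,\overline{n}}+u_{\overline{n},n}=(q^2-1)\sum_{c=1}^{n-1}(-q)^{c-n}u_{c,\overline{c}}.
\]
Hence the terms with \(c<n\) in the second sum equal \(\frac{(-q)^{1-n}}{q^2-1}(u_{n,\overline{n}}+u_{\overline{n},n})\). Two terms remain: \((-q)^{1-n}u_{n,\overline{n}}\) (the \(c=n\) term) and \((-q)^{1-n}u_{\overline{n},n}\) (the \(a=n\) term). Adding them, the total coefficient of \(u_{n,\overline{n}}+u_{\overline{n},n}\) is \((-q)^{1-n}\bigl(1+\frac{1}{q^2-1}\bigr)=\frac{(-q)^{1-n}}{1-q^{-2}}\), which is exactly (3). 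No geometric series, telescoping, or induction on \(n\) is needed.

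\textbf{A caution} if you nevertheless telescope through the relations from \(v_{\overline{c}}\otimes v_c\) with \(1<c<n\). In the case \(a=\overline{b}\) of the displayed \(\check{R}\), the exponent \(-d(c,a)\) must be read as \(-d(c,b)\), as in the type \(B\) formula. Read literally, the relations for \(n=3\) would force \(u_{1,\overline{1}}=0\). For \(b\in\{1,n\}\) the two readings agree, so the direct route above is unaffected.

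Finally, \(v^\imath_0\cdot v^\imath_0\) involves only \(u_{1,1}\), \(u_{\overline{1},\overline{1}}\) and \(u_{1,\overline{1}}+u_{\overline{1},1}\). Your proposed consistency check therefore tests (1) and (4), but not the coefficients in (3).
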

\begin{proof}
  The assertions can be straightforwardly deduced from the definition of \(\mathsf{C}\).
\end{proof}

\begin{prop}[Connecting homomorphisms]\label{prop_conn_hom_D}
  Let \(l \in \{ 0,\dots,n-1 \}\).
  For each \(\mathbf{a} \in \mathsf{Col}^{D_n}_{l}\), the following hold\textup{:}
  \begin{enumerate}
    \item If \(l < n-1\), then
    \[
      \delta_l(u_{\mathbf{a}}) \equiv_\infty
      \begin{cases}
        u_{\delta_l(\mathbf{a})} & \text{ if } \mathbf{a} \in \mathsf{Col}^{D\mathrm{II}_n}_{l}, \\
        0 & \text{ if } \mathbf{a} \notin \mathsf{Col}^{D\mathrm{II}_n}_{l}.
      \end{cases}
    \]
    \item If \(l = n-1\), then
    \begin{align*}
      &\delta_{n-1}(u_{\mathbf{a}}) \equiv_\infty
      \begin{cases}
        u_{\delta_{n-1,+}(\mathbf{a})} + u_{\delta_{n-1,-}(\mathbf{a})} & \text{ if } \mathbf{a} \in \mathsf{Col}^{D\mathrm{II}_n}_{n-1}, \\
        0 & \text{ if } \mathbf{a} \notin \mathsf{Col}^{D\mathrm{II}_n}_{n-1},
      \end{cases} \\
      &\delta_{n-1,\pm}(u_{\mathbf{a}}) \equiv_\infty
      \begin{cases}
        u_{\delta_{n-1,\pm}(\mathbf{a})} & \text{ if } \mathbf{a} \in \mathsf{Col}^{D\mathrm{II}_n}_{n-1}, \\
        0 & \text{ if } \mathbf{a} \notin \mathsf{Col}^{D\mathrm{II}_n}_{n-1}.
      \end{cases} \\
    \end{align*}
  \end{enumerate}
\end{prop}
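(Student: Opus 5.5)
The plan is a direct computation in the algebra \(\mathsf{C}\), organized by the same case split as the combinatorial connecting map (Definition \ref{def_conn_map_DII}). By definition,
\[
  \delta_l(u_{\mathbf{a}}) = (-q)^l\bigl(u_{1*\mathbf{a}}' \bigr),\qquad u_{1*\mathbf{a}}' := u_{1}\cdot u_{\mathbf{a}} + (-q)^{-n+1} u_{\overline{1}}\cdot u_{\mathbf{a}}.
\]
Here, with the paper's convention \(v_{\mathbf{a}} = v_{a_d}\cdots v_{a_1}\), the letter is placed in front of the word. I would first rewrite each of the two products \(v_1 v_{\mathbf{a}}\) and \(v_{\overline{1}} v_{\mathbf{a}}\) in terms of the basis \(\{u_{\mathbf{c}} \mid \mathbf{c} \in \mathsf{Col}^{D_n}_{l+1}\}\), modulo \(q^{-1}\mathcal{L}(\mathsf{C}_{l+1})\). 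I would use the straightening relations of Lemma \ref{lem_rel_col_mod_D} to move the new letter to its sorted position. Each transposition past a letter \(b\) with \(b \neq \overline{1},1\) costs a factor \(-q^{-1}\) or \(-q\), depending on the direction. A letter \(1\) (respectively \(\overline{1}\)) must pass all \(l\) letters to reach the bottom (respectively top) end, which is exactly compensated by the prefactor \((-q)^l\) together with the \((-q)^{-n+1}\).

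The first case is \(1,\overline{1} \notin \mathbf{a}\). Then \(v_1 v_{\mathbf{a}}\) straightens to \(\pm q^{\,l}u_{\mathbf{a}*1}\) up to sign conventions, so \((-q)^l\) times it gives \(u_{\mathbf{a}*1}\) exactly. The \(\overline{1}\)-term only involves moves in the other direction, so it acquires a factor \(q^{-n+1}\) and lies in \(q^{-1}\mathcal{L}\) when \(l<n-1\). This gives \(u_{\delta_l(\mathbf{a})}\) when \(\mathbf{a}\in\mathsf{Col}^{D\mathrm{II}_n}\). When \(\mathbf{a}\notin \mathsf{Col}^{D\mathrm{II}_n}\) (with \(1\) vacant), \(\mathbf{a}*1\) is not a KN column by Proposition \ref{prop_ins_1_DII}, so Proposition \ref{prop_col_mod_D} (\ref{item_other_crystal_elms_col_mod_D}) gives \(u_{\mathbf{a}*1}\in q^{-1}\mathcal{L}\), and the whole expression is \(\equiv_\infty 0\). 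For \(l=n-1\) the \(\overline{1}\)-term has total factor \((-q)^{n-1}(-q)^{-n+1}=1\) and yields \(u_{\overline{1}*\mathbf{a}}\). By Propositions \ref{prop_ins_1_DII} and \ref{prop_ins_1b_DII} exactly one of \(\mathbf{a}*1\), \(\overline{1}*\mathbf{a}\) lies in each of \(\mathsf{Col}^{D_n}_{n,\pm}\), according to \(\mathsf{e}(\mathbf{a})\), which matches the first two lines of \(\delta_{n-1,\pm}\). Projecting to \(\mathsf{C}_{n,\pm}\) then gives the second display.

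The second case is \(1\in\mathbf{a}\). Then \(\mathbf{a}\notin\mathsf{Col}^{D\mathrm{II}_n}\). The term \(v_1v_{\mathbf{a}}\) vanishes by \(u_{a,a}=0\) after straightening. The term \(v_{\overline{1}}v_{\mathbf{a}}\) produces \(u_{1,\overline{1}}\)-type pairs, which the relation in Lemma \ref{lem_rel_col_mod_D}(3) expands into \(\sum_a(-q)^{-a+1}u_{\overline{a},a}\). I would check that these all land in \(q^{-1}\mathcal{L}\).

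The third case, \(\overline{1}\in\mathbf{a}\), is the main obstacle. Here \(v_1v_{\mathbf{a}}\) creates the pair \(1,\overline{1}\), which is forbidden in a column by Proposition \ref{prop_non_occ_1_kn_col_d}. One must apply Lemma \ref{lem_rel_col_mod_D}(3),(4) to replace \(u_{1,\overline{1}}\) by \(\sum_{a\ge2}(-q)^{-a+1}u_{\overline{a},a}\) plus the \(n\)-term, inserted into \(\mathbf{b}=\mathbf{a}_{>1}\). For each \(a\), the resulting word is \(\mathbf{b}\Leftarrow a\), with a power of \(q\) determined by how many letters of \(\mathbf{b}\) lie between, namely by \(|\mathbf{b}_{<a}|\). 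I expect the bookkeeping to show the following. The coefficient is \(q^{(a-2)-|\mathbf{b}_{<a}|}\cdot(\text{unit})\) up to lower order, so by Lemma \ref{lem_vac_num_DII} only properly vacant \(a\) survive at \(q^0\) when \(a\) is vacant. Terms with \(a\) occupied vanish by \(u_{a,a}=0\) after straightening. Among the properly vacant \(a\), all but \(\mathsf{gpv}(\mathbf{a})\) give non-columns by Proposition \ref{prop_ins_pv_DII}(\ref{item_ins_pv_DII}), hence lie in \(q^{-1}\mathcal{L}\) by Proposition \ref{prop_col_mod_D}(\ref{item_other_crystal_elms_col_mod_D}). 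The surviving term is \(u_{\mathbf{b}\Leftarrow\mathsf{gpv}(\mathbf{a})}\), which is a column by Proposition \ref{prop_ins_pv_DII}(\ref{item_ins_gpv_DII}). For \(l=n-1\), the \(n\)-term, with its factor \(1/(1-q^{-2})\equiv_\infty 1\), contributes \(u_{\mathbf{b}\Leftarrow_+ n}+u_{\mathbf{b}\Leftarrow_- n}\) when the \(n\)-slot is properly vacant, and these split according to sign to reproduce the last three lines of \(\delta_{n-1,\pm}\). Finally, if \(\mathbf{a}\) is not of type \(D\mathrm{II}_n\) with \(1\) vacant, then either no properly vacant term survives or the surviving word violates shallowness at \(\mathsf{lwd}\), as in Proposition \ref{prop_ins_pv_DII}(\ref{item_ins_pv_DII}). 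In that situation everything lies in \(q^{-1}\mathcal{L}\), which gives \(\equiv_\infty 0\).
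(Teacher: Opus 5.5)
Your plan is the paper's (same three cases, same lemmas). However, the decisive estimate in the case \(\overline{1}\in\mathbf{a}\) is stated backwards, and the manipulation that actually produces it is missing. You predict the coefficient of \(u_{\mathbf{b}\Leftarrow a}\) to be \(q^{(a-2)-|\mathbf{b}_{<a}|}\). By Lemma \ref{lem_vac_num_DII} this exponent is \(\geq 0\), so non-properly-vacant \(a\) would carry positive powers of \(q\), could not be discarded, and the argument would collapse. The correct coefficient is \((-q)^{|\mathbf{b}_{<a}|-a+2}\). Lemma \ref{lem_vac_num_DII} makes this a nonpositive power, equal to \(1\) exactly for \(a\in\mathsf{PV}(\mathbf{a})\). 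To obtain it with only the relations of Lemma \ref{lem_rel_col_mod_D}, first commute the adjacent pair \(1,\overline{1}\) in \(1*\overline{1}*\mathbf{b}\) to the right past \(\mathbf{b}^{\succ\overline{n}}*\mathbf{b}_n\). Each letter crossed gives \(-q\) from \(\overline{1}\) and \(-q^{-1}\) from \(1\), so \(u_{1*\overline{1}*\mathbf{b}}=u_{\mathbf{b}^{\succ\overline{n}}*\mathbf{b}_n*1*\overline{1}*\mathbf{b}^{\prec n}}\). Only then expand \(u_{1,\overline{1}}\) by relation (3). For vacant \(a<n\), \(\overline{a}\) now moves left and \(a\) moves right, together crossing exactly the \(|\mathbf{b}_{>a}|=l-1-|\mathbf{b}_{<a}|\) letters \(x\) with \(|x|>a\), each at cost \(-q^{-1}\). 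This gives \((-q)^{l}(-q)^{-a+1}(-q)^{-|\mathbf{b}_{>a}|}=(-q)^{|\mathbf{b}_{<a}|-a+2}\). Whenever \(a\) or \(\overline{a}\) already occurs in \(\mathbf{b}\) (not only when \(a\) is occupied), the moving letter meets its copy and the term dies by relation (1). If you expand at the front of the word instead, \(a\) may have to cross an \(\overline{a}\) already present in \(\mathbf{b}\), and Lemma \ref{lem_rel_col_mod_D} gives no rule for that. Also, for \(l=n-1\) and \(\mathsf{gpv}(\mathbf{a})<n\), the letter \(n\) is not vacant, yet the \(n\)-term of (3) still contributes. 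One of its two words contains \(n,n\) or \(\overline{n},\overline{n}\) adjacently and vanishes; the other is \(\mathbf{b}\Leftarrow_{\mathsf{e}(\mathbf{a})}n\) with coefficient \(1/(1-q^{-2})\equiv_\infty 1\). This is the source of the fourth line of \(\delta_{n-1,\pm}\), so it is not only a properly vacant \(n\)-slot that matters.

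The case \(1\in\mathbf{a}\) is left as ``I would check'', and the check is not a formality. Here \(\delta_l(u_{\mathbf{a}})=(-q)^{l-n+1}u_{\overline{1}*\mathbf{a}}\), and bringing the final \(1\) next to \(\overline{1}\) costs \((-q)^{l-1}\), i.e.\ positive powers of \(q\) that must be shown to be beaten. The paper handles this with relation (4): \(\delta_l(u_{\mathbf{a}})=-(-q)^{l-n}\cdot(-q)^{l}u_{1*\overline{1}*\mathbf{a}_{>1}}\). Since \(\overline{1}*\mathbf{a}_{>1}\) is itself a Kashiwara--Nakashima column of length \(l\) (its counts \(|(\cdot)_{\leq b}|\) over occupied \(b\) agree with those of \(\mathbf{a}\)), the \(\overline{1}\)-case computation shows \((-q)^{l}u_{1*\overline{1}*\mathbf{a}_{>1}}\in\mathcal{L}(\mathsf{C}_{l+1})\). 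Moreover \((-q)^{l-n}\in q^{-1}\mathbf{A}_\infty\) because \(l<n\). So this case must come after, and reuse, the \(\overline{1}\)-case. There are also two smaller slips. First, in your first case \(u_{1*\mathbf{a}}=(-q)^{-l}u_{\mathbf{a}*1}\), not \(\pm q^{l}u_{\mathbf{a}*1}\); and \(\overline{1}*\mathbf{a}\) is already sorted and needs no moves, so the \(\overline{1}\)-term is exactly \((-q)^{l-n+1}u_{\overline{1}*\mathbf{a}}\). Second, your closing sentence is vacuous. If \(\overline{1}\in\mathbf{a}\) with \(l<n\), then \(\mathbf{a}\in\mathsf{Col}^{D\mathrm{II}_n}\) by Proposition \ref{prop_entry_1b_DII}, and \(\mathsf{PV}\), \(\mathsf{gpv}\) only make sense when \(\overline{1}\in\mathbf{a}\). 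The columns not of type \(D\mathrm{II}_n\) all fall under your first case or the case \(1\in\mathbf{a}\).
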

\begin{proof}
  We will compute the connecting homomorphisms by means of Lemma \ref{lem_rel_col_mod_D}.
  
  First, let us assume that \(1 \in \mathsf{V}(\mathbf{a})\).
  Then, we have
  \begin{align*}
    \delta_l(u_{\mathbf{a}})
    &= (-q)^l u_{1*\mathbf{a}} + (-q)^{-n+l+1} u_{\overline{1}*\mathbf{a}} \\
    &= u_{\mathbf{a}*1} + (-q)^{-n+l+1} u_{\overline{1}*\mathbf{a}} \\
    &\equiv_\infty
    \begin{cases}
      u_{\mathbf{a}*1} & \text{ if } l < n-1, \\
      u_{\mathbf{a}*1} + u_{\overline{1}*\mathbf{a}} & \text{ if } l = n-1,
    \end{cases} \\
    &\equiv_\infty
    \begin{cases}
      0 & \text{ if } \mathbf{a} \notin \mathsf{Col}^{D\mathrm{II}_n}_{l}, \\
      u_{\mathbf{a}*1} & \text{ if } \mathbf{a} \in \mathsf{Col}^{D\mathrm{II}_n}_{l} \text{ and } l < n-1, \\
      u_{\mathbf{a}*1} + u_{\overline{1}*\mathbf{a}} & \text{ if } \mathbf{a} \in \mathsf{Col}^{D\mathrm{II}_n}_{l} \text{ and } l = n-1,
    \end{cases} \\
    &=
    \begin{cases}
      0 & \text{ if } \mathbf{a} \notin \mathsf{Col}^{D\mathrm{II}_n}_{l}, \\
      u_{\delta_l(\mathbf{a})} & \text{ if } \mathbf{a} \in \mathsf{Col}^{D\mathrm{II}_n}_{l} \text{ and } l < n-1, \\
      u_{\delta_{n-1,+}(\mathbf{a})} + u_{\delta_{n-1,-}(\mathbf{a})} & \text{ if } \mathbf{a} \in \mathsf{Col}^{D\mathrm{II}_n}_{l} \text{ and } l = n-1.
    \end{cases}
  \end{align*}
  Here, we used Propositions \ref{prop_ins_1_DII}, \ref{prop_ins_1b_DII}, and \ref{prop_col_mod_D} \eqref{item_other_crystal_elms_col_mod_D}.

  Next, let us assume that \(\overline{1} \in \mathbf{a}\).
  Set \(\mathbf{b} := \mathbf{a}_{> 1}\).
  Then, we have
  \begin{align*}
    \delta(u_{\mathbf{a}})
    &= (-q)^l u_{1*\mathbf{a}} + (-q)^{-n+l+1} u_{\overline{1}*\mathbf{a}} \\
    &= (-q)^l u_{1*\mathbf{a}} \\
    &= (-q)^l u_{1*\overline{1}*\mathbf{b}} \\
    &= (-q)^l u_{\mathbf{b}^{\succ \overline{n}}*\mathbf{b}_n*1*\overline{1}*\mathbf{b}^{\prec n}} \\
    &= \sum_{1 < a < n} (-q)^{l-a+1} u_{\mathbf{b}^{\succ \overline{n}}*\mathbf{b}_n*\overline{a}*a*\mathbf{b}^{\prec n}} + \frac{(-q)^{l-n+1}}{1-q^{-2}}(u_{\mathbf{b} \Leftarrow_+ n} + u_{\mathbf{b} \Leftarrow_- n}) \\
    &= \sum_{\substack{1 < a < n \\ a \in \mathsf{V}(\mathbf{a})}} (-q)^{|\mathbf{b}_{< a}|-a+2} u_{\mathbf{b} \Leftarrow a} + \frac{(-q)^{l-n+1}}{1-q^{-2}}(u_{\mathbf{b} \Leftarrow_+ n} + u_{\mathbf{b} \Leftarrow_- n}) \\
    &\equiv_\infty \sum_{\substack{1 < a < n \\ a \in \mathsf{PV}(\mathbf{a})}} u_{\mathbf{b} \Leftarrow a} + \frac{(-q)^{l-n+1}}{1-q^{-2}}(u_{\mathbf{b} \Leftarrow_+ n} + u_{\mathbf{b} \Leftarrow_- n}) \\
    &\equiv_\infty
    \begin{cases}
      u_{\mathbf{b} \Leftarrow \mathsf{gpv}(\mathbf{a})} & \text{ if } l < n-1, \\
      u_{\mathbf{b} \Leftarrow \mathsf{gpv}(\mathbf{a})} + u_{\mathbf{b} \Leftarrow_{\mathsf{e}(\mathbf{b})} n} & \text{ if } l = n-1 \text{ and } \mathsf{gpv}(\mathbf{a}) < n, \\
      u_{\mathbf{b} \Leftarrow_+ n} + u_{\mathbf{b} \Leftarrow_- n} & \text{ if } l = n-1 \text{ and } \mathsf{gpv}(\mathbf{a}) = n,
    \end{cases} \\
    &=
    \begin{cases}
      u_{\delta_l(\mathbf{a})} & \text{ if } l < n-1, \\
      u_{\delta_{n-1,+}(\mathbf{a})} + u_{\delta_{n-1,-}(\mathbf{a})} & \text{ if } l= n-1.
    \end{cases}
  \end{align*}
  Here, we used Lemma \ref{lem_vac_num_DII} and Proposition \ref{prop_ins_pv_DII}.
  When \(l = n\), since \(\delta_{n-1,\pm}(\mathbf{a}) \in \mathsf{Col}^{D_n}_{n,\pm}\), we further obtain
  \[
    \delta_{n-1,\pm}(u_\mathbf{a}) \equiv_\infty u_{\delta_{n-1,\pm}(\mathbf{a})}.
  \]

  Finally, let us assume that \(1 \in \mathbf{a}\).
  Then, we have
  \begin{align*}
    \delta_l(u_{\mathbf{a}})
    &= (-q)^l u_{1*\mathbf{a}} + (-q)^{-n+l+1} u_{\overline{1}*\mathbf{a}} \\
    &= (-q)^{-n+l+1} u_{\overline{1}*\mathbf{a}} \\
    &= (-q)^{-n+2l} u_{\overline{1}*1*\mathbf{a}_{> 1}} \\
    &= -(-q)^{-n+2l} u_{1*\overline{1}*\mathbf{a}_{> 1}} \\
    &= -(-q)^{-n+l} \cdot (-q)^l u_{1*\overline{1}*\mathbf{a}_{> 1}}.
  \end{align*}
  We have already seen that \((-q)^l u_{1*\overline{1}*\mathbf{a}_{> 1}} \in \mathcal{L}(\mathsf{C}_{l+1})\).
  Since \(l < n\), we obtain \(\delta_l(u_{\mathbf{a}}) \equiv_\infty 0\).

  Thus, we complete the proof.
\end{proof}

\begin{prop}[Decomposition of \(\mathsf{C}_l|_{\mathbf{U}^\imath}\)]\label{prop_decomp_col_mod_D}
  Let \(l \in \{ 0,\dots,n-1 \}\).
  The column module \(\mathsf{C}_l\) has a basis of the form \(\{ u'_{\mathbf{a}} \mid \mathbf{a} \in \mathsf{Col}^{D_n}_{l} \}\) satisfying the following\textup{:}
  \begin{itemize}
    \item \(u'_{\mathbf{a}} \equiv_\infty u_{\mathbf{a}}\) for all \(\mathbf{a} \in \mathsf{Col}^{D_n}_{l}\),
    \item \(\{ u'_{\mathbf{a}} \mid \mathbf{a} \in \mathsf{Col}^{D\mathrm{II}_n}_{l} \}\) spans a \(\mathbf{U}^\imath\)-submodule \(\mathsf{C}'_l\) isomorphic to \(V^\imath(\gamma_l)\),
    \item \(\{ u'_{\mathbf{a}} \mid \mathbf{a} \in \mathsf{Col}^{D_n}_{l} \setminus \mathsf{Col}^{D\mathrm{II}_n}_{l} \}\) spans a \(\mathbf{U}^\imath\)-submodule \(\mathsf{C}''_l\) isomorphic to \(V^\imath(\gamma_{l-1})\).
  \end{itemize}
  Moreover, the connecting homomorphisms restricts to \(\mathbf{U}^\imath\)-module isomorphisms
  \[
    \mathsf{C}'_l \xrightarrow{\delta_l} \mathsf{C}''_{l+1}, \quad \mathsf{C}'_{n-1} \xrightarrow{\delta_{n,\pm}} \mathsf{C}_{n,\pm}.
  \]
\end{prop}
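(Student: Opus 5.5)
I will argue by induction on \(l\), building the decomposition of \(\mathsf{C}_l|_{\mathbf{U}^\imath}\) and the statement on the connecting homomorphisms at the same time. For \(l = 0\), \(\mathsf{C}_0\) is trivial, \(\mathsf{Col}^{D_n}_0 = \mathsf{Col}^{D\mathrm{II}_n}_0\) consists of the empty word, and there is nothing more to show. For the inductive step I would first identify the \(\mathbf{U}^\imath\)-structure abstractly. By Proposition \ref{prop_col_mod_D}, \(\mathsf{C}_l \simeq V_q(\gamma_l)\). By Theorem \ref{thm_branching_rule_DII} \eqref{item_D_to_DII_thm_branching_rule_DII}, \(V_q(\gamma_l)|_{\mathbf{U}^\imath} \simeq V^\imath(\gamma_l) \oplus V^\imath(\gamma_{l-1})\) for \(1 \leq l \leq n-1\), since the partitions \(\mu\) with \(\mu \overset{\textsf{hor}}{\subseteq} (1^l)\) and \(\ell(\mu) \leq n-1\) are \((1^l)\) and \((1^{l-1})\). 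The same theorem gives \(V_q(\gamma_{n,\pm})|_{\mathbf{U}^\imath} \simeq V^\imath(\gamma_{n-1})\).

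Next I would study \(\delta_{l-1} \colon \mathsf{C}_{l-1} \to \mathsf{C}_l\). By the induction hypothesis, \(\mathsf{C}_{l-1} = \mathsf{C}'_{l-1} \oplus \mathsf{C}''_{l-1}\) with \(\mathsf{C}'_{l-1} \simeq V^\imath(\gamma_{l-1})\) and \(\mathsf{C}''_{l-1} \simeq V^\imath(\gamma_{l-2})\). Proposition \ref{prop_conn_hom_D} gives \(\delta_{l-1}(u'_{\mathbf{a}}) \equiv_\infty u_{\delta_{l-1}(\mathbf{a})}\) for \(\mathbf{a} \in \mathsf{Col}^{D\mathrm{II}_n}_{l-1}\), and \(\equiv_\infty 0\) otherwise. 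This step requires checking that \(\delta_{l-1}\) preserves the lattices, which I expect to get from the explicit computation in that proof. By Theorem \ref{thm_bij_conn_map_DII}, the vectors \(u_{\delta_{l-1}(\mathbf{a})}\) are exactly the \(u_{\mathbf{b}}\) with \(\mathbf{b} \in \mathsf{Col}^{D_n}_l \setminus \mathsf{Col}^{D\mathrm{II}_n}_l\). These are linearly independent modulo \(q^{-1}\mathcal{L}(\mathsf{C}_l)\), so the restriction of \(\delta_{l-1}\) to \(\mathsf{C}'_{l-1}\) is injective. Its image \(\mathsf{C}''_l := \delta_{l-1}(\mathsf{C}'_{l-1})\) is therefore a \(\mathbf{U}^\imath\)-submodule isomorphic to \(V^\imath(\gamma_{l-1})\), and I set \(u'_{\mathbf{b}} := \delta_{l-1}(u'_{\mathbf{a}})\) for \(\mathbf{b} = \delta_{l-1}(\mathbf{a})\), which satisfies \(u'_{\mathbf{b}} \equiv_\infty u_{\mathbf{b}}\). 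The same argument applied to \(\delta_{n-1,\pm}\) shows that it maps \(\mathsf{C}'_{n-1}\) onto \(\mathsf{C}_{n,\pm}\). Injectivity holds because the images \(u_{\delta_{n-1,\pm}(\mathbf{a})}\) run bijectively over a crystal basis of \(\mathsf{C}_{n,\pm}\), again by Theorem \ref{thm_bij_conn_map_DII}. Surjectivity then follows from equality of dimensions.

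It remains to construct \(\mathsf{C}'_l\) and its almost-crystal basis \(u'_{\mathbf{a}}\) for \(\mathbf{a} \in \mathsf{Col}^{D\mathrm{II}_n}_l\); this is the main obstacle. I would take \(\mathsf{C}'_l\) to be the orthogonal complement of \(\mathsf{C}''_l\) with respect to the contragredient form. Because \(\mathbf{U}^\imath\) is \(\rho\)-stable, this complement is a \(\mathbf{U}^\imath\)-submodule. It is isomorphic to \(V^\imath(\gamma_l)\) by the decomposition obtained above, provided the form is nondegenerate on \(\mathsf{C}''_l\). Nondegeneracy follows because \(\{u'_{\mathbf{b}}\}\) is almost orthonormal: the matrix \(((u'_{\mathbf{b}},u'_{\mathbf{b}'}))\) lies in \(1+q^{-1}\mathrm{Mat}(\mathbf{A}_\infty)\), hence is invertible over \(\mathbf{A}_\infty\).

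Define \(u'_{\mathbf{a}}\) for \(\mathbf{a} \in \mathsf{Col}^{D\mathrm{II}_n}_l\) as the orthogonal projection of \(u_{\mathbf{a}}\) onto \(\mathsf{C}'_l\). Because this Gram matrix is invertible over \(\mathbf{A}_\infty\) and \((u_{\mathbf{a}}, u'_{\mathbf{b}}) \in q^{-1}\mathbf{A}_\infty\) for \(\mathbf{a} \neq \mathbf{b}\), the projection differs from \(u_{\mathbf{a}}\) by an element of \(q^{-1}\mathcal{L}(\mathsf{C}_l)\). Hence \(u'_{\mathbf{a}} \equiv_\infty u_{\mathbf{a}}\). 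The full family \(\{u'_{\mathbf{a}}\}\) is congruent to a basis modulo \(q^{-1}\mathcal{L}\), so it is a basis, and the two spans are \(\mathsf{C}'_l\) and \(\mathsf{C}''_l\) as required. This also gives the isomorphism \(\mathsf{C}'_l \xrightarrow{\delta_l} \mathsf{C}''_{l+1}\), which feeds into the next induction step.
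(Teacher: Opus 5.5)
Your proposal is correct and follows the route the paper indicates in its one-line proof. That route is induction on \(l\): set \(\mathsf{C}''_l := \delta_{l-1}(\mathsf{C}'_{l-1})\) using Proposition \ref{prop_conn_hom_D} and Theorem \ref{thm_bij_conn_map_DII}, then take \(\mathsf{C}'_l\) to be its orthogonal complement for the contragredient form, as the paper also does in Lemma \ref{lem_compatible_basis_DII}. The lattice preservation you flag needs no further computation: by Proposition \ref{prop_conn_hom_D}, each \(\delta_{l-1}(u_{\mathbf{a}})\) is \(\equiv_\infty\) to an element of \(\mathcal{L}(\mathsf{C}_l)\), hence lies in \(\mathcal{L}(\mathsf{C}_l)\), and the \(u_{\mathbf{a}}\) span \(\mathcal{L}(\mathsf{C}_{l-1})\) over \(\mathbf{A}_\infty\).
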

\begin{proof}
  The assertion can be proved by induction on \(l\) by means of Proposition \ref{prop_conn_hom_D} and the contragredient symmetric bilinear form on \(\mathsf{C}_l\); see after Proposition \ref{prop_col_mod_D}.
\end{proof}

In the sequel, we often identify the simple \(\mathbf{U}^\imath\)-module \(V^\imath(\gamma_l)\) with the submodule \(\mathsf{C}'_l \subseteq \mathsf{C}_l\) in Proposition \ref{prop_decomp_col_mod_D}.
In particular, we see that \(V^\imath(\gamma_l)\) has a nondegenerate contragredient symmetric bilinear form and an almost orthonormal basis \(\{ b^\imath_{\mathbf{a}} \mid \mathbf{a} \in \mathsf{Col}^{D\mathrm{II}_n}_{l} \}\) corresponding to the basis \(\{ u'_{\mathbf{a}} \mid \mathbf{a} \in \mathsf{Col}^{D\mathrm{II}_n}_{l} \}\) of \(\mathsf{C}_l'\).

\begin{thm}[Reduction homomorphisms from \(D_n\) to \(D\mathrm{II}_n\)]\label{thm_red_col_D_DII}
  \hfill
  \begin{enumerate}
    \item Let \(l \in \{ 0,\dots,n-1 \}\). Then, there exists a \(\mathbf{U}^\imath\)-module isomorphism
    \[
      \cdot \downarrow^{D_n}_{D\mathrm{II}_n} \colon V_q(\gamma_l) \to V^\imath(\gamma_l) \oplus V^\imath(\gamma_{l-1})
    \]
    such that
    \[
      b_\mathbf{a} \downarrow^{D_n}_{D\mathrm{II}_n} \equiv_\infty b^\imath_{\mathbf{a} \downarrow^{D_n}_{D\mathrm{II}_n}} \quad \text{ for all } \mathbf{a} \in \mathsf{Col}^{D_n}_{l}.
    \]
    \item There exists a \(\mathbf{U}^\imath\)-module isomorphism
    \[
      \cdot \downarrow^{D_n}_{D\mathrm{II}_n} \colon V_q(\gamma_{n,\pm}) \to V^\imath(\gamma_{n-1})
    \]
    such that
    \[
      b_\mathbf{a} \downarrow^{D_n}_{D\mathrm{II}_n} \equiv_\infty b^\imath_{\mathbf{a} \downarrow^{D_n}_{D\mathrm{II}_n}} \quad \text{ for all } \mathbf{a} \in \mathsf{Col}^{D_n}_{n,\pm}.
    \]
  \end{enumerate}
  
\end{thm}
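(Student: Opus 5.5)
We will realize \(V_q(\gamma_l)\) as the column module \(\mathsf{C}_l\), via the identification fixed after Proposition \ref{prop_col_mod_D}, so that \(b_{\mathbf{a}}\) corresponds to \(u_{\mathbf{a}}\) up to \(\equiv_\infty\). Then we read the desired isomorphism off from the decomposition of Proposition \ref{prop_decomp_col_mod_D}.

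For \(l \leq n-1\), that proposition gives \(\mathsf{C}_l = \mathsf{C}'_l \oplus \mathsf{C}''_l\) as \(\mathbf{U}^\imath\)-modules. The basis \(u'_{\mathbf{a}} \equiv_\infty u_{\mathbf{a}}\) is adapted to this splitting, and \(\mathsf{C}'_l \simeq V^\imath(\gamma_l)\) with \(b^\imath_{\mathbf{a}} \leftrightarrow u'_{\mathbf{a}}\) for \(\mathbf{a} \in \mathsf{Col}^{D\mathrm{II}_n}_l\). For the second summand we use \(\delta_{l-1}\colon \mathsf{C}'_{l-1} \to \mathsf{C}''_l\), which is a \(\mathbf{U}^\imath\)-isomorphism by the same proposition, with \(\mathsf{C}'_{l-1} \simeq V^\imath(\gamma_{l-1})\).

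We define \(\cdot\downarrow^{D_n}_{D\mathrm{II}_n}\) on \(\mathsf{C}_l\) to be the identity on \(\mathsf{C}'_l\) and \(\delta_{l-1}^{-1}\) on \(\mathsf{C}''_l\). This is a \(\mathbf{U}^\imath\)-isomorphism onto \(V^\imath(\gamma_l)\oplus V^\imath(\gamma_{l-1})\). It remains to check the crystal condition.

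\begin{itemize}
\item If \(\mathbf{a}\in\mathsf{Col}^{D\mathrm{II}_n}_l\), then \(u_{\mathbf{a}} \equiv_\infty u'_{\mathbf{a}}\), which maps to \(b^\imath_{\mathbf{a}}\). This matches Definition \ref{def_red_col_D_DII}.
\item If \(\mathbf{a}\notin\mathsf{Col}^{D\mathrm{II}_n}_l\), write \(\mathbf{c}:=\delta_{l-1}^{-1}(\mathbf{a})\) for the combinatorial inverse (Theorem \ref{thm_bij_conn_map_DII}). By the proof of Theorem \ref{thm_red_D_DII_col}, \(\mathbf{c}=\mathbf{a}\downarrow^{D_n}_{D\mathrm{II}_n}\). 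Proposition \ref{prop_conn_hom_D} gives \(\delta_{l-1}(u'_{\mathbf{c}}) \equiv_\infty u_{\delta_{l-1}(\mathbf{c})} = u_{\mathbf{a}} \equiv_\infty u'_{\mathbf{a}}\). Here we use that \(\delta_{l-1}\) preserves the lattice \(\mathsf{Span}_{\mathbf{A}_\infty}\{u'\}\), which also follows from Proposition \ref{prop_conn_hom_D}.
\end{itemize}

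The remaining issue in the second case is to upgrade \(\delta_{l-1}(u'_{\mathbf{c}})\equiv_\infty u'_{\mathbf{a}}\) to \(\delta_{l-1}^{-1}(u'_{\mathbf{a}})\equiv_\infty u'_{\mathbf{c}}\). The matrix of \(\delta_{l-1}\colon\mathsf{C}'_{l-1}\to\mathsf{C}''_l\) in the bases \(\{u'\}\) has entries in \(\mathbf{A}_\infty\). Modulo \(q^{-1}\) it is the permutation matrix of the bijection \(\delta_{l-1}\) of Theorem \ref{thm_bij_conn_map_DII}. Hence it is invertible over \(\mathbf{A}_\infty\), and its inverse is again a permutation matrix modulo \(q^{-1}\). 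This gives \(b_{\mathbf{a}}\downarrow \equiv_\infty b^\imath_{\mathbf{c}}\).

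Case \(l=n\), with sign \(\pm\), is handled similarly. By Proposition \ref{prop_decomp_col_mod_D}, \(\delta_{n-1,\pm}\colon \mathsf{C}'_{n-1}\to\mathsf{C}_{n,\pm}\simeq V_q(\gamma_{n,\pm})\) is a \(\mathbf{U}^\imath\)-isomorphism. We define the reduction map as its inverse. Proposition \ref{prop_conn_hom_D}(2) gives \(\delta_{n-1,\pm}(u'_{\mathbf{c}})\equiv_\infty u_{\delta_{n-1,\pm}(\mathbf{c})}\). By Theorem \ref{thm_bij_conn_map_DII} this is again unitriangular modulo \(q^{-1}\) up to a permutation. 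Together with Theorem \ref{thm_red_D_DII_col}(2), which identifies \(\delta_{n-1,\mathsf{e}(\mathbf{a})}^{-1}(\mathbf{a})\) with \(\mathbf{a}\downarrow^{D_n}_{D\mathrm{II}_n}\), this yields the claim.

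The main obstacle is bookkeeping of lattices. We must ensure that the bases \(u'\) and the images of \(b_{\mathbf{a}}\) in \(\mathsf{C}_{n,\pm}\) span the same \(\mathbf{A}_\infty\)-lattices. For \(l=n\) this means the projection onto \(\mathsf{C}_{n,\pm}\) must be compatible with the crystal lattice. This compatibility follows from Proposition \ref{prop_col_mod_D}, because the lattice splits along \(\mathsf{Col}^{D_n}_{n,+}\sqcup\mathsf{Col}^{D_n}_{n,-}\).
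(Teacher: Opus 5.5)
Your proposal is correct and follows the paper's own argument: the reduction map is the identity on $\mathsf{C}'_l$ and $\delta_{l-1}^{-1}$ on $\mathsf{C}''_l$ (respectively $\delta_{n-1,\pm}^{-1}$ on $\mathsf{C}_{n,\pm}$), and the combinatorial inverse of the connecting map is matched with $\mathbf{a}\downarrow^{D_n}_{D\mathrm{II}_n}$. The paper leaves the $q=\infty$ compatibility implicit; your argument fills it in, since the matrix of $\delta_{l-1}$ in the $u'$-bases is a permutation matrix modulo $q^{-1}\mathbf{A}_\infty$ and is therefore invertible over $\mathbf{A}_\infty$.
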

\begin{proof}
  With notations in Proposition \ref{prop_decomp_col_mod_D}, let us consider the \(\mathbf{U}^\imath\)-module homomorphism
  \[
    \mathsf{C}_l = \mathsf{C}'_l \oplus \mathsf{C}''_l \to \mathsf{C}'_l \oplus \mathsf{C}'_{l-1},
  \]
  which is the identity map on \(\mathsf{C}'_l\), and the inverse of the isomorphism \(\delta_{l-1} \colon \mathsf{C}'_{l-1} \to \mathsf{C}''_l\) on \(\mathsf{C}''_l\).
  Then, the first assertion follows.

  The second assertion can be proved similarly.
  Hence, we complete the proof.
\end{proof}

\subsection{Almost orthonormal bases}
In this section, we construct an almost orthonormal basis of each \(V^\imath(\nu)\) parametrized by \(\mathsf{KNT}^{D\mathrm{II}_n}(\nu)\).
\begin{lem}[Decomposition of \(V_{2n,q}(\nu)\)]\label{lem_compatible_basis_DII}
  Let \(\nu \in X^+_{2n-1,0}\).
  Then, there exists a basis \(B'(\nu)\) of \(V_{2n,q}(\nu)\) of the form
  \[
    B'(\nu) = \{ b'_T \mid T \in \mathsf{KNT}^{D_n}(\nu) \}
  \]
  satisfying the following\textup{:}
  \begin{itemize}
    \item \(b'_T \equiv_\infty b_T\) for all \(T \in \mathsf{KNT}^{D_n}(\nu)\).
    \item \(M' := \operatorname{\mathsf{Span}}_{\mathbb{Q}(q)} \{ b'_T \mid T \in \mathsf{KNT}^{D\mathrm{II}_n}(\nu) \}\) is a \(\mathbf{U}^\imath\)-submodule of \(V_q(\nu)|_{\mathbf{U}^\imath}\) isomorphic to \(V^\imath(\nu)\).
    \item \(V_{2n,q}(\nu)|_{\mathbf{U}^\imath} = M' \oplus M''\), where
    \[
      M'' := \operatorname{\mathsf{Span}}_{\mathbb{Q}(q)} \{ b'_T \mid T \in \mathsf{KNT}^{D_n}(\nu) \setminus \mathsf{KNT}^{D\mathrm{II}_n}(\nu) \}.
    \]
  \end{itemize}
\end{lem}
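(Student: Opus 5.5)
Write \(\nu = \gamma_{l_1} + \cdots + \gamma_{l_m}\), where \((l_1,\dots,l_m) = \mathsf{par}(\nu)'\) and all \(l_r < n\). The proof is by induction on \(m\) (equivalently on \(|\mathsf{par}(\nu)|\)). For \(m=1\), \(V_q(\nu) = V_q(\gamma_{l_1}) \simeq \mathsf{C}_{l_1}\). Proposition \ref{prop_decomp_col_mod_D} then gives exactly the required basis: \(b'_{\mathbf{a}} := u'_{\mathbf{a}}\), with \(M' = \mathsf{C}'_{l_1}\) and \(M'' = \mathsf{C}''_{l_1}\).

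For the inductive step, put \(\mu := \nu - \gamma_{l_1}\). We realize \(V_q(\nu)\) as a direct summand of \(V_q(\gamma_{l_1}) \otimes V_q(\mu)\) through the maps \({}_{\gamma_{l_1}}\bigwedge{}_{\mu}\) and \({}^{\gamma_{l_1}}\bigvee{}^{\mu}\). Since \(\mathbf{U}^\imath\) is a right coideal, \(V^\imath(\gamma_{l_1}) \otimes V_q(\mu)\), with \(V^\imath(\gamma_{l_1}) = \mathsf{C}'_{l_1} \subseteq \mathsf{C}_{l_1}\), is a \(\mathbf{U}^\imath\)-submodule of \(V_q(\gamma_{l_1}) \otimes V_q(\mu)\). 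The same holds for \(\mathsf{C}''_{l_1} \otimes V_q(\mu)\). It is spanned, up to \(\equiv_\infty\), by the vectors \(u'_{\mathbf{a}} \otimes b_S\) with \(\mathbf{a} \in \mathsf{Col}^{D\mathrm{II}_n}_{l_1}\). Apply \(\bigwedge\) to it; the image is a \(\mathbf{U}^\imath\)-submodule \(N'\) of \(V_q(\nu)\). If \(T = (\mathbf{a}^1,\dots,\mathbf{a}^m) \in \mathsf{KNT}^{D\mathrm{II}_n}(\nu)\), then \(\bigwedge(u'_{\mathbf{a}^1} \otimes b_{(\mathbf{a}^2,\dots)}) \equiv_\infty b_T\). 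So \(N'\) contains vectors congruent to \(b_T\) for all such \(T\). Likewise \(N''\), the image of \(\mathsf{C}''_{l_1} \otimes V_q(\mu)\), contains vectors congruent to \(b_T\) for all \(T\) whose first column is not of type \(D\mathrm{II}_n\). Since \(N' + N'' = V_q(\nu)\), the crystal-lattice comparison (a Nakayama-type argument at \(q=\infty\)) gives a basis of \(V_q(\nu)\) adapted to these vectors.

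It remains to identify \(M'\) with \(V^\imath(\nu)\) and to split off a complement. Decompose \(V_q(\nu)|_{\mathbf{U}^\imath}\) by Theorem \ref{thm_branching_rule_DII} \eqref{item_D_to_DII_thm_branching_rule_DII}; \(V^\imath(\nu)\) occurs exactly once. Next use the induction hypothesis, applied to \(\mu\) (and to \(V_q(\mu)\) restricted to \(\mathbf{U}^\imath\)), together with the branching rule. These show that the components of \(N''\), which is a quotient of \(V^\imath(\gamma_{l_1-1}) \otimes V_q(\mu)\), all have \(\mathsf{par}\) of size at most \(|\mathsf{par}(\nu)|-1\). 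Hence \(V^\imath(\nu)\) does not occur in \(N''\). The plan is then to take \(M''\) to be the sum of all isotypic components of \(V_q(\nu)|_{\mathbf{U}^\imath}\) other than \(V^\imath(\nu)\), and \(M'\) to be the \(V^\imath(\nu)\)-isotypic component. These components are mutually orthogonal for the contragredient form, because \(\mathbf{U}^\imath\) is \(\rho\)-stable and the simple modules are non-isomorphic. Orthogonal projection of the \(b_T\) then gives the candidate \(b'_T\).

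The main obstacle is to show that the projection of \(b_T\) onto \(M'\) is \(\equiv_\infty b_T\) exactly when \(T\) is of type \(D\mathrm{II}_n\), and is \(\equiv_\infty 0\) otherwise. Dimensions are consistent: by Theorem \ref{thm_red_map_DII_D} and Corollary \ref{cor_branching_rule_classical_DII}, \(\dim V^\imath(\nu) = \#\mathsf{KNT}^{D\mathrm{II}_n}(\nu)\). The first attempt will combine this count with the almost orthonormality of \(\{b_T\}\) and the inclusion \(N'' \subseteq M''\). One has \(M' \cap \mathcal{L}\) modulo \(q^{-1}\mathcal{L}\) orthogonal to the images of the non-\(D\mathrm{II}\) \(b_T\), which lie in \(N''\). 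By the dimension count, \(M' \cap \mathcal{L}\) modulo \(q^{-1}\mathcal{L}\) therefore equals the span of the images of the \(D\mathrm{II}\) \(b_T\). Gram--Schmidt over \(\mathbf{A}_\infty\) then produces the \(b'_T\).
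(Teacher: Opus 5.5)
Your strategy is essentially the paper's and can be completed. However, the step everything hinges on, that \(V^\imath(\nu)\) does not occur in \(N''\), is justified by the wrong tool. The induction hypothesis about \(V_q(\mu)|_{\mathbf{U}^\imath}\) gives no control here. Since \(\Delta(\mathbf{U}^\imath)\subset\mathbf{U}^\imath\otimes\mathbf{U}\), the second factor of \(V^\imath(\gamma_{l_1-1})\otimes V_q(\mu)\) carries the full \(\mathbf{U}\)-action, so the \(\mathbf{U}^\imath\)-structure of this tensor product is not determined by that of \(V_q(\mu)|_{\mathbf{U}^\imath}\).

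The correct argument is the paper's. \(V^\imath(\gamma_{l_1-1})\otimes V_q(\mu)\) is a \(\mathbf{U}^\imath\)-submodule of \((V_q(\gamma_{l_1-1})\otimes V_q(\mu))|_{\mathbf{U}^\imath}\). Its \(\mathbf{U}\)-constituents \(V_q(\xi)\) satisfy \(\xi\le\nu-\epsilon_{l_1}\) in the dominance order. Hence \(|\mathsf{par}(\xi)|\le|\mathsf{par}(\nu)|-1\), because both \(\sum_{i<n}x_i+x_n\) and \(\sum_{i<n}x_i-x_n\) are monotone for this order. Theorem \ref{thm_branching_rule_DII} applied to each \(V_q(\xi)\) then excludes \(V^\imath(\nu)\). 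With this, the induction on \(m\), the submodule \(N'\), and the ``adapted basis'' step are superfluous.

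A smaller point concerns orthogonality. For a \(\rho\)-contragredient form, non-isomorphic simple \(\mathbf{U}^\imath\)-submodules are orthogonal only if each simple module is isomorphic to its \(\rho\)-twisted dual, and you have not verified this. The paper sidesteps it as follows.

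\begin{itemize}
\item Its endomorphism \(g\) of \(V_{2n,q}(\nu)\) composes \({}^{\gamma_{l_1}}\bigvee{}^{\mu}\), the projection of the first factor \(\mathsf{C}_{l_1}=\mathsf{C}'_{l_1}\oplus\mathsf{C}''_{l_1}\) onto \(\mathsf{C}''_{l_1}\), and \({}_{\gamma_{l_1}}\bigwedge{}_{\mu}\). So its image lies in your \(N''\).
\item \(g\) kills \(M'\), and \(g(b_T)\equiv_\infty b_T\) for \(T\notin\mathsf{KNT}^{D\mathrm{II}_n}(\nu)\). The count \(\dim M'=\#\mathsf{KNT}^{D\mathrm{II}_n}(\nu)\) then forces \(\operatorname{\mathsf{Im}}g=M''\). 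You get \(N''=M''\) the same way.
\item The vectors \(g(b_T)\), \(T\notin\mathsf{KNT}^{D\mathrm{II}_n}(\nu)\), form a basis of \(M''\) whose Gram matrix is congruent to the identity modulo \(q^{-1}\mathbf{A}_\infty\). So the form is nondegenerate on \(M''\).
\item Hence \((M'')^\perp\) is a \(\mathbf{U}^\imath\)-stable complement isomorphic to \(V_{2n,q}(\nu)/M''\simeq V^\imath(\nu)\), and it equals \(M'\) by multiplicity one.
\item Orthogonally projecting \(b_T\), for \(T\in\mathsf{KNT}^{D\mathrm{II}_n}(\nu)\), onto \((M'')^\perp\) gives \(b'_T\equiv_\infty b_T\).
\end{itemize}

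Once \(M'\perp M''\) is secured this way, your residue-counting argument at \(q=\infty\) is an equivalent way to finish.
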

\begin{proof}
  The assertion is clear when \(\nu = 0\).
  Hence, let us assume that \(\nu \neq 0\).
  Let \(l\) denote the length of \(\mathsf{par}(\nu)\).
  Then, we have \(\nu-\gamma_l \in X^+_{2n,0}\).
  Consider the composite
  \begin{align*}
    g \colon V_{2n,q}(\nu)
    &\xrightarrow{{}^{\gamma_l}\bigvee{}^{\xi}} V_{q}(\gamma_l) \otimes V_q(\nu-\gamma_l) \\
    &\xrightarrow{\cdot \downarrow^{D_n}_{D\mathrm{II}_n} \otimes 1} (V^\imath(\gamma_l) \oplus V^\imath(\gamma_{l-1})) \otimes V_q(\nu-\gamma_l) \\
    &\xrightarrow{\text{projection}} V^\imath(\gamma_{l-1}) \otimes V_q(\nu-\gamma_l) \\
    &\xrightarrow{\delta_{l-1} \otimes 1} V_{q}(\gamma_l) \otimes V_q(\nu-\gamma_l) \\
    &\xrightarrow{{}_{\gamma_l}\bigwedge{}_{\xi}} V_{2n,q}(\nu).
  \end{align*}
  By Theorem \ref{thm_red_col_D_DII} and Proposition \ref{prop_conn_hom_D}, for each \(T \in \mathsf{KNT}^{D_n}(\nu)\), we have
  \begin{align}\label{eq_g_at_infty}
    g(b_T) \equiv_\infty
    \begin{cases}
      0 & \text{ if } T \in \mathsf{KNT}^{D\mathrm{II}_n}(\nu), \\
      b_T & \text{ if } T \notin \mathsf{KNT}^{D\mathrm{II}_n}(\nu).
    \end{cases}
  \end{align}

  Recall from Theorem \ref{thm_branching_rule_DII} \eqref{item_D_to_DII_thm_branching_rule_DII} that
  \[
    V_{2n,q}(\nu)|_{\mathbf{U}^\imath} \simeq V^\imath(\nu) \oplus \bigoplus_{\substack{\xi \in X^+_{2n-1,0} \\ \xi \overset{\textsf{hor}}{\subset} \nu}} V^\imath(\xi).
  \]
  Let \(V_{2n,q}(\nu)|_{\mathbf{U}^\imath} = M' \oplus M''\) denote the corresponding decomposition.
  Since the \(\mathbf{U}^\imath\)-module \((V_{2n,q}(\gamma_{l-1}) \otimes V_{2n,q}(\nu-\gamma_l))|_{\mathbf{U}^\imath}\) does not have \(V^\imath(\nu)\) as an irreducible component, we have
  \begin{align}\label{eq_lower_bound_ker}
    \dim \operatorname{\mathsf{Ker}} g \geq \dim M' = |\mathsf{KNT}^{D\mathrm{II}_n}(\nu)|,
  \end{align}
  where the equality follows from Theorem \ref{thm_red_map_DII_D}.
  On the other hand, by equation \eqref{eq_g_at_infty}, we have
  \begin{align}\label{eq_lower_bound_im}
    \dim \operatorname{\mathsf{Im}} g \geq |\mathsf{KNT}^{D_n}(\nu) \setminus \mathsf{KNT}^{D\mathrm{II}_n}(\nu)| = \dim M''.
  \end{align}
  Therefore, the inequalities in \eqref{eq_lower_bound_ker} and \eqref{eq_lower_bound_im} must be equalities.
  In particular, the image of \(g\), which coincides with \(M''\), has a basis
  \[
    \{ b'_T := g(b_T) \mid T \in \mathsf{KNT}^{D_n}(\nu) \setminus \mathsf{KNT}^{D\mathrm{II}_n}(\nu) \}.
  \]
  Hence, the orthogonal complement of \(M''\), which coincides with \(M'\), has a basis of the form
  \[
    \{ b'_T \mid T \in \mathsf{KNT}^{D\mathrm{II}_n}(\nu) \}
  \]
  such that
  \[
    b'_T \equiv_\infty b_T \quad \text{ for all } T \in \mathsf{KNT}^{D\mathrm{II}_n}(\nu).
  \]
  Thus, we complete the proof.
\end{proof}

\begin{thm}[Almost orthonormal basis of \(V^\imath(\nu)\)]\label{thm_alm_orthn_basis_Vinu_DII}
  Let \(\nu \in X^+_{2n-1,0}\).
  Then, the \(\mathbf{U}^\imath\)-module \(V^\imath(\nu)\) has a basis \(B^\imath(\nu)\) of the form
  \[
    B^\imath(\nu) = \{ b^\imath_T \mid T \in \mathsf{KNT}^{D\mathrm{II}_n}(\nu) \}
  \]
  satisfying the following:
  \begin{enumerate}
    \item There exists a \(\mathbf{U}^\imath\)-module homomorphism \(\iota_\nu \colon V^\imath(\nu) \to V_{2n,q}(\nu)|_{\mathbf{U}^\imath}\) such that
    \[
      \iota_\nu(b^\imath_T) \equiv_\infty b_T \quad \text{ for all } T \in \mathsf{KNT}^{D\mathrm{II}_n}(\nu).
    \]
    \item There exists a contragredient symmetric bilinear form \((,)\) on \(V^\imath(\nu)\) for which the basis \(B^\imath(\nu)\) is almost orthonormal.
    \item There exists a \(\mathbf{U}^\imath\)-module homomorphism \(p_\nu \colon V_{2n,q}(\nu) \to V^\imath(\nu)\) such that
    \[
      p_\nu(b_T) \equiv_\infty
      \begin{cases}
        b^\imath_T & \text{ if } T \in \mathsf{KNT}^{D\mathrm{II}_n}(\nu), \\
        0 & \text{ if } T \notin \mathsf{KNT}^{D\mathrm{II}_n}(\nu),
      \end{cases}
    \]
    for all \(T \in \mathsf{KNT}^{D_n}(\nu)\).
  \end{enumerate}
\end{thm}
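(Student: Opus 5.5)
The plan is to deduce everything from Lemma \ref{lem_compatible_basis_DII}, which already supplies the basis $B'(\nu)=\{b'_T\}$ of $V_{2n,q}(\nu)$ and the $\mathbf{U}^\imath$-stable decomposition $V_{2n,q}(\nu)|_{\mathbf{U}^\imath}=M'\oplus M''$ with $M'\simeq V^\imath(\nu)$. Fix an isomorphism $V^\imath(\nu)\xrightarrow{\sim}M'$ and let $\iota_\nu$ be its composite with the inclusion $M'\hookrightarrow V_{2n,q}(\nu)$. Define $b^\imath_T$ to be the preimage of $b'_T$ for $T\in\mathsf{KNT}^{D\mathrm{II}_n}(\nu)$. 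Then $B^\imath(\nu)$ is a basis of $V^\imath(\nu)$, and (1) follows at once from $b'_T\equiv_\infty b_T$.

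For (2), pull back the nondegenerate contragredient symmetric form on $V_{2n,q}(\nu)$ along $\iota_\nu$. It is contragredient for $\mathbf{U}^\imath$ because $\mathbf{U}^\imath$ is $\rho$-stable. Almost orthonormality reduces to checking $(b'_T,b'_S)\in\delta_{T,S}+q^{-1}\mathbf{A}_\infty$. This holds because $b'_T-b_T\in q^{-1}\mathcal{L}$, where $\mathcal{L}=\mathsf{Span}_{\mathbf{A}_\infty}\mathbf{B}(\nu)$; the form maps $\mathcal{L}\times\mathcal{L}$ into $\mathbf{A}_\infty$, and the canonical basis is almost orthonormal.

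For (3), take $p_\nu$ to be the projection onto $M'$ along $M''$, followed by the inverse of $M'\simeq V^\imath(\nu)$. Since $\{b'_T\}$ is adapted to the decomposition, we get $p_\nu(b'_T)=b^\imath_T$ for $T$ of type $D\mathrm{II}_n$ and $p_\nu(b'_T)=0$ otherwise. To pass from $b'_T$ to $b_T$, note that $b_T-b'_T\in q^{-1}\mathsf{Span}_{\mathbf{A}_\infty}B'(\nu)$. Indeed, the transition matrix between $B'(\nu)$ and $\mathbf{B}(\nu)$ is congruent to the identity modulo $q^{-1}\mathbf{A}_\infty$, hence invertible over $\mathbf{A}_\infty$ with inverse of the same form. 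Applying $p_\nu$ then gives $p_\nu(b_T)\equiv_\infty p_\nu(b'_T)$ with respect to $B^\imath(\nu)$.

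The only delicate point is this lattice bookkeeping, namely making sure $\equiv_\infty$ in $V^\imath(\nu)$ is taken with respect to $B^\imath(\nu)$ and that $p_\nu$ carries $\mathsf{Span}_{\mathbf{A}_\infty}B'(\nu)$ into $\mathsf{Span}_{\mathbf{A}_\infty}B^\imath(\nu)$. This is immediate from the adaptedness of $B'(\nu)$, so I expect no real obstacle. All the substantive work is already contained in Lemma \ref{lem_compatible_basis_DII}.
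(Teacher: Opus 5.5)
Your proposal is correct and follows the paper's proof: the paper identifies \(V^\imath(\nu)\) with the submodule \(M'\) of Lemma \ref{lem_compatible_basis_DII}, takes \(B^\imath(\nu)\) to correspond to \(\{ b'_T \mid T \in \mathsf{KNT}^{D\mathrm{II}_n}(\nu) \}\), and declares all three assertions immediate. Your write-up supplies the details the paper leaves implicit. These are pulling back the form using \(\rho\)-stability of \(\mathbf{U}^\imath\), defining \(p_\nu\) as the projection along \(M''\), and observing that the transition matrix between \(B'(\nu)\) and \(\mathbf{B}(\nu)\) is congruent to the identity modulo \(q^{-1}\) and hence invertible over \(\mathbf{A}_\infty\), which lets \(\equiv_\infty\) pass from \(B'(\nu)\) to the canonical basis.
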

\begin{proof}
  As for \(V^\imath(\gamma_l)\)'s, let us identify \(V^\imath(\nu)\) with the submodule \(M' \subseteq V_{2n,q}(\nu)\) in Lemma \ref{lem_compatible_basis_DII}.
  Let \(B^\imath(\nu)\) denote the basis of \(V^\imath(\nu)\) corresponding to the basis \(\{ b'_T \mid T \in \mathsf{KNT}^{D\mathrm{II}_n}(\nu) \}\) of \(M'\).
  Then, the assertions are immediate from Lemma \ref{lem_compatible_basis_DII}.
\end{proof}

\begin{cor}[Reduction from \(D\mathrm{II}_n\) to \(D_{n-1}\)]
  Let \(\nu \in X^+_{2n-1,0}\).
  Then, there exists a \(\mathbf{U}_\bullet\)-module isomorphism
  \[
    \cdot \downarrow^{D\mathrm{II}_n}_{D_{n-1}} \colon V^\imath(\nu)|_{\mathbf{U}_\bullet} \to \bigoplus_{\substack{\mu \in X^+_{2n-2,0} \\ \mathsf{par}(\mu) \overset{\textsf{hor}}{\subseteq} \mathsf{par}(\nu)}} V_{2n-2,q}(\mu)
  \]
  such that
  \[
    b^\imath_T \downarrow^{D\mathrm{II}_n}_{D_{n-1}} \equiv_\infty b_{T \downarrow^{D\mathrm{II}_n}_{D_{n-1}}} \quad \text{ for all } T \in \mathsf{KNT}^{D\mathrm{II}_n}(\nu).
  \]
\end{cor}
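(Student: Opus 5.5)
We build the isomorphism from the realization of \(V^\imath(\nu)\) as the submodule \(M' \subseteq V_{2n,q}(\nu)\) of Lemma \ref{lem_compatible_basis_DII}, with \(b^\imath_T = b'_T\). The idea is to use the \(D_{n-1}\)-crystal structure of \(\mathsf{KNT}^{D\mathrm{II}_n}(\nu)\) coming from the subalgebra \(\mathbf{U}_\bullet \simeq \mathbf{U}(\mathfrak{so}_{2n-2})\). Since \(\mathbf{U}_\bullet \subseteq \mathbf{U}\) is generated by \(E_j,F_j,K_j^{\pm1}\) for \(j \in I_\bullet\), the canonical basis \(\mathbf{B}(\nu)\) of \(V_{2n,q}(\nu)\), restricted to \(\mathbf{U}_\bullet\), induces a crystal base of the \(\mathbf{U}_\bullet\)-module \(V_{2n,q}(\nu)|_{\mathbf{U}_\bullet}\). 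This is the standard fact that a crystal base of \(\mathbf{U}\) restricts to one for a Levi-type subalgebra, with crystal \(\mathsf{KNT}^{D_n}(\nu)\) viewed as a \(D_{n-1}\)-crystal as in \S\ref{ssect_crystal}.

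\textbf{Step 1.} Show that \(\{ b'_T \mid T \in \mathsf{KNT}^{D\mathrm{II}_n}(\nu) \}\) induces a crystal base of \(M'|_{\mathbf{U}_\bullet}\), whose crystal is the subcrystal \(\mathsf{KNT}^{D\mathrm{II}_n}(\nu)\).
\begin{itemize}
  \item \(M'\) and \(M''\) are \(\mathbf{U}^\imath\)-submodules, hence \(\mathbf{U}_\bullet\)-submodules, and \(V_{2n,q}(\nu) = M' \oplus M''\).
  \item Set \(\mathcal{L} := \mathsf{Span}_{\mathbf{A}_\infty}\{ b'_T \}\). Since \(b'_T \equiv_\infty b_T\), we have \(\mathcal{L} = \mathsf{Span}_{\mathbf{A}_\infty}\mathbf{B}(\nu)\), and \(\mathcal{L} = (\mathcal{L} \cap M') \oplus (\mathcal{L} \cap M'')\), both summands spanned by the corresponding \(b'_T\).
  \item Kashiwara operators \(\widetilde{E}_j, \widetilde{F}_j\) (\(j \in I_\bullet\)) are defined functorially from the \(\mathbf{U}_\bullet\)-action, so they preserve each summand. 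Hence they preserve \(\mathcal{L} \cap M'\) and act on its image mod \(q^{-1}\) by the restricted crystal operators.
  \item Consequently \(\mathsf{KNT}^{D\mathrm{II}_n}(\nu)\) is closed under \(\widetilde{E}_j,\widetilde{F}_j\) for \(j \geq 2\), matching the preceding Corollary.
\end{itemize}

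\textbf{Step 2.} Pass to the target module.
\begin{itemize}
  \item By Theorem \ref{thm_branching_rule_DII}(3), \(V^\imath(\nu)|_{\mathbf{U}_\bullet} \simeq \bigoplus_\mu V_{2n-2,q}(\mu)\) over \(\mu\) with \(\mathsf{par}(\mu) \overset{\textsf{hor}}{\subseteq} \mathsf{par}(\nu)\).
  \item The \(D_{n-1}\)-crystal from Step 1 is isomorphic, via \(\cdot \downarrow^{D\mathrm{II}_n}_{D_{n-1}}\) (the preceding Corollary and Theorem \ref{thm_red_map_DII_D}), to \(\bigsqcup_\mu \mathsf{KNT}^{D_{n-1}}(\mu)\). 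This is exactly the crystal of the right-hand side, with components matching multiplicity-freely.
  \item Apply Kashiwara's uniqueness of crystal bases up to isomorphism. A crystal base of a finite-dimensional integrable module is isomorphic to the direct sum of the standard ones, by an isomorphism sending \(\mathcal{L}\) to \(\mathcal{L}\) and inducing any prescribed crystal isomorphism on highest weight elements.
  \item We thus obtain a \(\mathbf{U}_\bullet\)-isomorphism \(\Phi\) with \(\Phi(b'_T) \equiv_\infty b_{T\downarrow}\) for every \(T\). On highest weight vectors we may rescale freely, and elsewhere \(\Phi\) commutes with the Kashiwara operators, which gives the congruence. Transporting \(\Phi\) along the identification \(V^\imath(\nu) \cong M'\) yields the claimed map.
\end{itemize}

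The main obstacle is Step 1, specifically justifying that the restriction of a \(\mathbf{U}\)-crystal base to \(\mathbf{U}_\bullet\) is a \(\mathbf{U}_\bullet\)-crystal base. For \(\mathbf{U}_\bullet\) a Levi-type subalgebra this is standard: the operators \(\widetilde{E}_j,\widetilde{F}_j\) depend only on the \(\mathbf{U}_{q}(\mathfrak{sl}_2)_j\)-action.

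A second, subtler point is that the decomposition \(M' \oplus M''\) is compatible with \(\mathcal{L}\). This follows from \(b'_T \equiv_\infty b_T\): the transition matrix to \(\mathbf{B}(\nu)\) is in \(1 + q^{-1}\mathrm{Mat}(\mathbf{A}_\infty)\), hence invertible over \(\mathbf{A}_\infty\). For \(n=2\), \(\mathbf{U}_\bullet\) is a torus, and the statement reduces to the weight decomposition with one-dimensional weight spaces on each side.
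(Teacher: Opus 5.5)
Your proposal is correct and is essentially the paper's argument. The paper notes that the basis \(B^\imath(\nu)\) of Theorem \ref{thm_alm_orthn_basis_Vinu_DII} induces a \(D_{n-1}\)-crystal base for \(\mathbf{U}_\bullet\) isomorphic to \(\mathsf{KNT}^{D\mathrm{II}_n}(\nu)\), then concludes via Theorem \ref{thm_red_map_DII_D}; your Step 1 (the splitting \(\mathcal{L} = (\mathcal{L}\cap M') \oplus (\mathcal{L}\cap M'')\) and functoriality of the Kashiwara operators) and Step 2 (uniqueness of crystal bases for the multiplicity-free target) supply exactly the details that this two-line proof leaves implicit.
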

\begin{proof}
  The basis of \(V^\imath(\nu)\) in Theorem \ref{thm_alm_orthn_basis_Vinu_DII} induces a \(D_{n-1}\)-crystal isomorphic to \(\mathsf{KNT}^{D\mathrm{II}_n}(\nu)\).
  Hence, the assertion follows from Theorem \ref{thm_red_map_DII_D}.
\end{proof}

\subsection{Reduction from \(D_n\) to \(D\mathrm{II}_n\)}\label{ssect_red_D_DII}
In this section, we complete the proof of Theorem \ref{thm_red_KNT_D_DII} by lifting the map to a \(\mathbf{U}^{\imath}\)-module isomorphism.

\begin{defi}[Successor homomorphisms]
  Let \(\lambda \in X^+_{2n,0}\).
  Set \(l := \ell(\mathsf{par}(\lambda))\).
  When \(l < n\), the \emph{successor homomorphism} is the \(\mathbf{U}^\imath\)-module homomorphism defined to be the composite
  \begin{align*}
    \mathsf{suc} \colon V_q(\lambda)
    &\xrightarrow{{}^{\gamma_l}\bigvee{}^{\lambda-\gamma_l}} V_q(\gamma_l) \otimes V_q(\lambda-\gamma_l) \\
    &\xrightarrow{\cdot \downarrow^{D_n}_{D\mathrm{II}_n} \otimes 1} (V^\imath(\gamma_l) \oplus V^\imath(\gamma_{l-1})) \otimes V_q(\lambda-\gamma_l) \\
    &\hookrightarrow (V_q(\gamma_l) \oplus V_q(\gamma_{l-1})) \otimes V_q(\lambda-\gamma_l) \\
    &\xrightarrow{*} \bigoplus_{\mu \in X^+_{2n,0}} V_q(\mu).
  \end{align*}
  When \(l = n\), the \emph{successor homomorphism} is defined similarly by replacing \(\gamma_l\) with \(\gamma_{n,\mathsf{e}(\lambda)}\).
\end{defi}

\begin{prop}[Successor homomorphism at \(q = \infty\)]\label{prop_suc_hom_infty_DII}
  For each \(T \in \mathsf{KNT}^{D_n}_{0}\), we have
  \[
    \operatorname{\mathsf{suc}}(b_T) \equiv_\infty b_{\operatorname{\mathsf{suc}}(T)}.
  \]
\end{prop}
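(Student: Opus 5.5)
The plan is to trace the basis vector \(b_T\) through each arrow of the composite defining \(\mathsf{suc}\). At each step we record its image modulo \(q^{-1}\mathcal{L}\), using the \(\equiv_\infty\) properties of the individual maps, and we check that every map involved preserves the relevant \(\mathbf{A}_\infty\)-lattices.

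Write \(T = (\mathbf{a}^1,\dots,\mathbf{a}^m)\) and \(\gamma := \gamma_l\) (resp.\ \(\gamma_{n,\mathsf{e}(\lambda)}\) when \(l = n\)). Since \(|\mathbf{a}^1| = l\), the column \(\mathbf{a}^1\) lies in \(\mathsf{Col}^{D_n}_l\) (resp.\ \(\mathsf{Col}^{D_n}_{n,\mathsf{e}(\lambda)}\)).

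The steps are as follows.

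\begin{enumerate}
\item By the stated property of \({}^{\gamma}\bigvee{}^{\lambda-\gamma}\), we have \({}^{\gamma}\bigvee{}^{\lambda-\gamma}(b_T) \equiv_\infty b_{\mathbf{a}^1} \otimes b_{(\mathbf{a}^2,\dots,\mathbf{a}^m)}\). This uses that \(\mathbf{a}^1 * (\mathbf{a}^2,\dots,\mathbf{a}^m) = T\), i.e.\ that \(\mathsf{P}\) of a column word of a tableau is the tableau itself.
\item Apply \(\cdot\downarrow^{D_n}_{D\mathrm{II}_n} \otimes 1\). Theorem \ref{thm_red_col_D_DII} gives \(b_{\mathbf{a}^1}\downarrow^{D_n}_{D\mathrm{II}_n} \equiv_\infty b^\imath_{\mathbf{a}^1\downarrow^{D_n}_{D\mathrm{II}_n}}\). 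The tensor product of almost orthonormal bases is almost orthonormal, and the map sends the lattice spanned by \(\{b_{\mathbf{a}}\}\) into the lattice spanned by \(\{b^\imath\}\). Hence the congruence survives tensoring with \(b_{(\mathbf{a}^2,\dots,\mathbf{a}^m)}\).
\item Apply the inclusion \(V^\imath(\gamma_l)\oplus V^\imath(\gamma_{l-1}) \hookrightarrow V_q(\gamma_l)\oplus V_q(\gamma_{l-1})\). By the identification of \(V^\imath(\gamma_k)\) with \(\mathsf{C}'_k\) (Proposition \ref{prop_decomp_col_mod_D}), the basis vector \(b^\imath_{\mathbf{c}}\) corresponds to \(u'_{\mathbf{c}} \equiv_\infty u_{\mathbf{c}} = b_{\mathbf{c}}\). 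So the image is \(\equiv_\infty b_{\mathbf{c}}\) with \(\mathbf{c} = \mathbf{a}^1\downarrow^{D_n}_{D\mathrm{II}_n}\). This is where the definition of \(\cdot\downarrow\) (identity on \(\mathsf{C}'_l\) and \(\delta_{l-1}^{-1}\) on \(\mathsf{C}''_l\)) matters: the inclusion of \(\mathsf{C}'_{l-1}\) into \(\mathsf{C}_{l-1}\) is the identification itself, not \(\delta_{l-1}\).
\item Finally, \(*\) satisfies \(b_{\mathbf{c}} * b_S \equiv_\infty b_{\mathbf{c}*S}\). Here \(\mathbf{c}*S = \mathsf{P}(\mathbf{c}*\mathbf{a}^2*\cdots*\mathbf{a}^m) = \mathsf{suc}(T)\) by Definition \ref{def_suc_map_DII}, which gives the claim.
\end{enumerate}

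The main obstacle is justifying that each homomorphism maps the crystal lattice \(\mathcal{L}\) into \(\mathcal{L}\), so that congruences modulo \(q^{-1}\mathcal{L}\) compose. Without this, an error term in \(q^{-1}\mathcal{L}\) could be sent outside \(q^{-1}\mathcal{L}\).

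For \({}^{\gamma}\bigvee\) and \(*\) this is automatic, since they send basis vectors to elements of \(\mathcal{L}\) up to \(q^{-1}\mathcal{L}\). A linear map sending an \(\mathbf{A}_\infty\)-basis of one lattice into the other lattice preserves lattices. The same argument handles \(\cdot\downarrow\) via Theorem \ref{thm_red_col_D_DII}, and handles the inclusion via \(u'_{\mathbf{a}}\equiv_\infty u_{\mathbf{a}}\).

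I would state this lattice-preservation principle once as a remark and then run the four steps. The case \(l=n\) is identical, using part (2) of Theorem \ref{thm_red_col_D_DII}; there the reduced column has length \(n-1\), and we use \(V^\imath(\gamma_{n-1}) = \mathsf{C}'_{n-1}\).
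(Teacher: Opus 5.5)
Your proposal is correct and follows the same route as the paper, whose proof only says the claim is clear from the definitions. You trace \(b_T\) through the four maps in the composite defining \(\mathsf{suc}\), using the \(\equiv_\infty\) properties of \({}^{\gamma}\bigvee{}^{\lambda-\gamma}\), of Theorem \ref{thm_red_col_D_DII}, of the identification \(V^\imath(\gamma_k)=\mathsf{C}'_k\), and of \(*\), and you justify composing these congruences by noting that each map sends the relevant \(\mathbf{A}_\infty\)-lattice into the next one, which is exactly the justification left implicit behind ``clear from definitions''.
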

\begin{proof}
  The assertion is clear from definitions.
\end{proof}

\begin{defi}[Reduction homomorphisms from \(D_n\) to \(D\mathrm{II}_n\)]
  Let \(\lambda \in X^+_{2n,0}\), and take \(k \in \mathbb{Z}_{\geq 0}\) such that \(\mathsf{suc}^k(T) \in \mathsf{KNT}^{D\mathrm{II}_n}_{0}\) for all \(T \in \mathsf{KNT}^{D_n}(\lambda)\); see Proposition \ref{prop_suc_map_DII}.
  The \emph{reduction homomorphism} is the \(\mathbf{U}^\imath\)-module homomorphism \(\cdot \downarrow^{D_n}_{D\mathrm{II}_n}\) defined to be the composite
  \begin{align*}
    \cdot \downarrow^{D_n}_{D\mathrm{II}_n} \colon V_q(\lambda)
    &\xrightarrow{\mathsf{suc}^k} \bigoplus_{\nu \in X^+_{2n-1,0}} V_{2n,q}(\nu) \\
    &\xrightarrow{\bigoplus_{\nu} p_\nu} \bigoplus_{\nu \in X^+_{2n-1,0}} V^\imath(\nu) \\
    &\xrightarrow{\text{projection}} \bigoplus_{\substack{\nu \in X^+_{2n-1,0} \\ \mathsf{par}(\nu) \overset{\textsf{hor}}{\subseteq} \mathsf{par}(\lambda)}} V^\imath(\nu).
  \end{align*}
\end{defi}

\begin{prop}[Reduction homomorphisms at \(q = \infty\)]\label{prop_red_hom_D_DII_infty}
  Let \(\lambda \in X^+_{2n,0}\).
  Then, we have
  \[
    b_T \downarrow^{D_n}_{D\mathrm{II}_n} \equiv_\infty b_{T \downarrow^{D_n}_{D\mathrm{II}_n}} \quad \text{ for all } T \in \mathsf{KNT}^{D_n}(\lambda).
  \]
\end{prop}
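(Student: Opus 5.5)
We compose the \(q=\infty\) behaviour of the three maps in the definition of the reduction homomorphism. Fix \(T \in \mathsf{KNT}^{D_n}(\lambda)\) and the integer \(k\) chosen in that definition, so that \(\mathsf{suc}^k(T) = T \downarrow^{D_n}_{D\mathrm{II}_n}\). By Proposition \ref{prop_suc_map_DII}, once \(\mathsf{suc}^j(T)\) is of type \(D\mathrm{II}_n\) the successor map fixes it, so this holds no matter how large \(k\) is.

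\textbf{Step 1: iterated successors.} Apply Proposition \ref{prop_suc_hom_infty_DII} \(k\) times to get \(\mathsf{suc}^k(b_T) \equiv_\infty b_{\mathsf{suc}^k(T)}\). The induction needs that \(\mathsf{suc}\) maps the \(\mathbf{A}_\infty\)-lattice spanned by canonical basis elements into the corresponding lattice of the target, and hence maps \(q^{-1}\)-multiples of it into \(q^{-1}\)-multiples. This follows because every map in the composite defining \(\mathsf{suc}\) preserves the lattices: \({}^{\gamma}\bigvee{}\), \({}_{\gamma}\bigwedge{}\) and \(*\) have \(\equiv_\infty\) images of basis vectors lying in the lattice. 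The reduction of columns (Theorem \ref{thm_red_col_D_DII}) and the inclusion of \(V^\imath(\gamma_l)\) as \(\mathsf{C}'_l\) preserve lattices by Proposition \ref{prop_decomp_col_mod_D}. I therefore record the lattice-preservation statement alongside Proposition \ref{prop_suc_hom_infty_DII} and proceed by induction on \(j \leq k\).

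\textbf{Step 2: projection to \(V^\imath\).} Let \(\nu := \mathsf{sh}(\mathsf{suc}^k(T))\), so \(\mathsf{suc}^k(T) \in \mathsf{KNT}^{D\mathrm{II}_n}(\nu)\). Write \(\mathsf{suc}^k(b_T) = b_{\mathsf{suc}^k(T)} + x\), with \(x\) in \(q^{-1}\) times the lattice of \(\bigoplus_\nu V_{2n,q}(\nu)\). Apply \(\bigoplus_\nu p_\nu\) and use Theorem \ref{thm_alm_orthn_basis_Vinu_DII}(3). The main term gives \(b^\imath_{\mathsf{suc}^k(T)}\), and \(p_\nu(x)\) lies in \(q^{-1}\mathsf{Span}_{\mathbf{A}_\infty} B^\imath(\nu)\), again because \(p_\nu\) preserves lattices by (3). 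Thus
\[
(\textstyle\bigoplus_\nu p_\nu)(\mathsf{suc}^k(b_T)) \equiv_\infty b^\imath_{T \downarrow^{D_n}_{D\mathrm{II}_n}}.
\]

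\textbf{Step 3: the final projection.} The final projection onto the summands with \(\mathsf{par}(\nu) \overset{\textsf{hor}}{\subseteq} \mathsf{par}(\lambda)\) must not kill the main term. So we must check that the shape of \(T \downarrow^{D_n}_{D\mathrm{II}_n}\) satisfies this horizontal-strip condition. This is the step I expect to be the main obstacle, because Theorem \ref{thm_red_KNT_D_DII} is exactly what we are proving and cannot be invoked.

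My first approach is representation-theoretic. By Theorem \ref{thm_branching_rule_DII}\eqref{item_D_to_DII_thm_branching_rule_DII}, \(\mathrm{Hom}_{\mathbf{U}^\imath}(V_q(\lambda), V^\imath(\nu)) = 0\) unless \(\mathsf{par}(\nu) \overset{\textsf{hor}}{\subseteq} \mathsf{par}(\lambda)\). Hence the composite \((\bigoplus_\nu p_\nu) \circ \mathsf{suc}^k\), a \(\mathbf{U}^\imath\)-homomorphism out of \(V_q(\lambda)\), has zero component in every \(V^\imath(\nu)\) violating the condition. But Step 2 shows that the component in \(V^\imath(\nu)\), for \(\nu = \mathsf{sh}(\mathsf{suc}^k(T))\), is \(\equiv_\infty b^\imath_{\mathsf{suc}^k(T)} \neq 0\). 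Therefore this \(\nu\) satisfies the interlacing condition. The projection is then the identity on this component, and the claimed congruence follows.
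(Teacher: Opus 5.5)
Your proposal is correct and follows the paper's route: the paper's one-line proof is exactly your Steps 1--2, namely iterating Proposition \ref{prop_suc_hom_infty_DII} (which by itself already gives the lattice preservation you need) and then applying \(p_\nu\) via Theorem \ref{thm_alm_orthn_basis_Vinu_DII}(3). Your Step 3 is a valid, non-circular supplement to a point the paper leaves implicit, and which the well-definedness of the map in Theorem \ref{thm_red_KNT_D_DII} silently relies on: using the Hom-vanishing forced by Theorem \ref{thm_branching_rule_DII}\eqref{item_D_to_DII_thm_branching_rule_DII}, it shows that \(\nu = \mathsf{sh}(T \downarrow^{D_n}_{D\mathrm{II}_n})\) satisfies \(\mathsf{par}(\nu) \overset{\textsf{hor}}{\subseteq} \mathsf{par}(\lambda)\), so the final projection does not kill the leading term.
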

\begin{proof}
  The assertion follows from Proposition \ref{prop_suc_hom_infty_DII} and Theorem \ref{thm_alm_orthn_basis_Vinu_DII}.
\end{proof}

\begin{thm}[Reduction homomorphisms from \(D_n\) to \(D\mathrm{II}_n\)]\label{thm_red_hom_D_DII}
  Let \(\lambda \in X^+_{2n,0}\).
  Then, the reduction homomorphism \(\cdot \downarrow^{D_n}_{D\mathrm{II}_n}\) is an isomorphism of \(\mathbf{U}^\imath\)-modules.
\end{thm}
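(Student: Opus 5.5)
The plan is a dimension count combined with a crystal-limit injectivity argument. By Theorem \ref{thm_branching_rule_DII} \eqref{item_D_to_DII_thm_branching_rule_DII}, the source \(V_q(\lambda)|_{\mathbf{U}^\imath}\) and the target \(\bigoplus_{\nu} V^\imath(\nu)\) (sum over \(\nu \in X^+_{2n-1,0}\) with \(\mathsf{par}(\nu) \overset{\textsf{hor}}{\subseteq} \mathsf{par}(\lambda)\)) are isomorphic as \(\mathbf{U}^\imath\)-modules. In particular they have equal dimension, namely \(|\mathsf{KNT}^{D_n}(\lambda)|\). So it suffices to show that the homomorphism \(\cdot \downarrow^{D_n}_{D\mathrm{II}_n}\) is surjective, or equivalently injective.

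To get this, I would first invoke Proposition \ref{prop_red_hom_D_DII_infty}: each \(b_T\) is sent to an element congruent modulo \(q^{-1}\mathcal{L}\) to \(b^\imath_{T\downarrow^{D_n}_{D\mathrm{II}_n}}\). Here \(\mathcal{L}\) is the \(\mathbf{A}_\infty\)-span of the almost orthonormal basis \(\bigsqcup_\nu B^\imath(\nu)\) from Theorem \ref{thm_alm_orthn_basis_Vinu_DII}. One point needs a check: the projection in the definition kills only components \(\nu\) with \(\mathsf{par}(\nu) \not\overset{\textsf{hor}}{\subseteq} \mathsf{par}(\lambda)\), whereas \(T\downarrow^{D_n}_{D\mathrm{II}_n}\) could a priori have any shape \(\nu\). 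Since \(\mathsf{suc}^k\) is a \(\mathbf{U}^\imath\)-homomorphism out of \(V_q(\lambda)\), which contains only the \(V^\imath(\nu)\) with \(\mathsf{par}(\nu) \overset{\textsf{hor}}{\subseteq} \mathsf{par}(\lambda)\), the components for other \(\nu\) receive zero. Comparing with the congruence then forces every shape occurring in \(\mathsf{sh}(T\downarrow^{D_n}_{D\mathrm{II}_n})\) to satisfy the interlacing condition. This is where the argument needs care, because a nonzero basis-limit cannot lie in a zero component.

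Assuming the combinatorial map \(T \mapsto T\downarrow^{D_n}_{D\mathrm{II}_n}\) is injective, the images of the \(b_T\) are congruent to distinct basis elements \(b^\imath_S\). By the standard Nakayama-type argument, this lets us conclude the result. The images lie in the \(\mathbf{A}_\infty\)-lattice, and their reduction to \(\mathcal{L}/q^{-1}\mathcal{L}\) is linearly independent over \(\mathbb{Q}\). Hence the images are linearly independent over \(\mathbb{Q}(q)\): clear denominators so that the coefficients lie in \(\mathbf{A}_\infty\) with at least one unit, then reduce. This gives injectivity, hence the isomorphism by the dimension equality.

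The main obstacle is the injectivity of the combinatorial map, which is exactly part of Theorem \ref{thm_red_KNT_D_DII}, still unproved at this point. To avoid circularity, I would argue injectivity representation-theoretically, by reversing the roles of the two sides and constructing a "lifting" \(\mathbf{U}^\imath\)-homomorphism in the opposite direction. The inverses of the connecting isomorphisms \(\delta_l\), together with the maps \({}^{\gamma}\bigvee{}\), \({}_{\gamma}\bigwedge{}\) and the embeddings \(\iota_\nu\), each act at \(q=\infty\) as bijections on basis labels; composing them with \(\cdot\downarrow^{D_n}_{D\mathrm{II}_n}\) should give a map congruent to the identity on \(\{b_T\}\).

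A simpler alternative is a direct count. Surjectivity at \(q=\infty\) holds if every \(S \in \mathsf{KNT}^{D\mathrm{II}_n}(\nu)\) in the target is attained. The images of the \(b_T\) span a \(\mathbf{U}^\imath\)-submodule (the image of the homomorphism) whose lattice reduction contains every attained \(b^\imath_S\). On a simple summand \(V^\imath(\nu)\), which is irreducible, any nonzero image component is all of \(V^\imath(\nu)\). So I would show that each summand receives a nonzero image, by exhibiting a \(T\) with \(T\downarrow^{D_n}_{D\mathrm{II}_n}\) of shape \(\nu\), for instance by lifting \(S\) via the inverse column connecting maps of Theorem \ref{thm_bij_conn_map_DII} applied to its first column. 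Then surjectivity follows, and the homomorphism is an isomorphism by the dimension count. I would try this second route first.
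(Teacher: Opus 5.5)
Several of your steps are correct. The dimension count from Theorem \ref{thm_branching_rule_DII} is fine. So is the Nakayama-type argument: injectivity of \(T\mapsto T\downarrow^{D_n}_{D\mathrm{II}_n}\), together with Proposition \ref{prop_red_hom_D_DII_infty}, gives injectivity of the homomorphism, which is exactly how the paper reduces the statement to Theorem \ref{thm_red_KNT_D_DII}. Your Schur-lemma check that \(\mathsf{sh}(T\downarrow^{D_n}_{D\mathrm{II}_n})\) interlaces \(\lambda\) is also fine, and fills in a point the paper leaves implicit. Finally, a submodule of a multiplicity-free sum of pairwise non-isomorphic simples is indeed a sum of some of the summands.

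The gap is the step that carries all the content. You never prove injectivity of the combinatorial map, nor, in your second route, the existence for every interlacing \(\nu\) of some \(T\) with \(\mathsf{sh}(T\downarrow^{D_n}_{D\mathrm{II}_n})=\nu\). Neither mechanism you suggest works.

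In the first route, \({}_{\gamma}\bigwedge{}\) and \(*\) are not bijections on labels at \(q=\infty\). The map \(*\) sends \(T\otimes S\) to \(\mathsf{P}(\mathsf{w}_{\mathsf{col}}(T)*\mathsf{w}_{\mathsf{col}}(S))\) and forgets the recording data. So no composite passing through \(\mathsf{suc}\) can be congruent to the identity on \(\{b_T\}\) unless that data is restored.

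In the second route, lifting the first column \(\mathbf{s}^1\) of \(S\) through the connecting bijection \(\delta_l\) can at best undo one successor step. It does so only if \((\delta_l(\mathbf{s}^1),\mathbf{s}^2,\dots,\mathbf{s}^m)\) is a Kashiwara--Nakashima tableau, and it only adds a box at the bottom of the first column. In general \(\mathsf{par}(\lambda)/\mathsf{par}(\nu)\) is a horizontal strip with boxes in several columns. Already for \(\mathsf{par}(\lambda)=(2)\) and \(\nu=0\), the only \(T\) reducing to the empty tableau is \(((1),(1))\), which two steps produce. Moreover, the lifted column is not of type \(D\mathrm{II}_n\), so it is never of the form \(\mathbf{c}\downarrow^{D_n}_{D\mathrm{II}_n}\). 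Going back any further therefore means inverting Lecouvey's insertion \(\mathsf{P}\), which is many-to-one.

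The missing idea is how the paper obtains injectivity: keep the recording data.

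1. Set \(\widetilde{\mathsf{suc}}(T):=\mathsf{LRS}(\mathbf{a}^1\downarrow^{D_n}_{D\mathrm{II}_n}*\mathbf{a}^2*\cdots*\mathbf{a}^m)=(\mathsf{suc}(T),\mathsf{Q}^{\mathsf{suc}}(T))\). This map is injective.
2. Iterate it, and lift the iterate to a \(\mathbf{U}^\imath\)-homomorphism \(V_q(\lambda)\to\bigoplus_\nu V^\imath(\nu)\otimes\mathsf{Span}_{\mathbb{Q}(q)}\mathsf{OT}^k\). Under this map each \(b_T\) goes to an element \(\equiv_\infty b^\imath_{T\downarrow^{D_n}_{D\mathrm{II}_n}}\otimes\mathbf{Q}^{\mathsf{suc}}(T)\).
3. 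Suppose \(T\neq S\) had the same reduction \(U\). Then \(\mathbf{Q}^{\mathsf{suc}}(T)\neq\mathbf{Q}^{\mathsf{suc}}(S)\), so the image, which is a quotient of \(V_q(\lambda)|_{\mathbf{U}^\imath}\), would contain \(V^\imath(\mathsf{sh}(U))\) at least twice.
4. This contradicts the multiplicity-free branching rule of Theorem \ref{thm_branching_rule_DII}.

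Your proposal never uses multiplicity-freeness in this way, and that is what closes the argument. Also note that a completed second route would give only the module isomorphism, not the bijection of Theorem \ref{thm_red_KNT_D_DII}. An isomorphism over \(\mathbb{Q}(q)\) need not carry the crystal lattice onto the crystal lattice.
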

\begin{proof}
  By Proposition \ref{prop_red_hom_D_DII_infty}, it suffices to prove Theorem \ref{thm_red_KNT_D_DII}, i.e., the map
  \[
    \cdot \downarrow^{D_n}_{D\mathrm{II}_n} \colon \mathsf{KNT}^{D_n}(\lambda) \to \bigsqcup_{\substack{\nu \in X^+_{2n-1,0} \\ \mathsf{par}(\nu) \overset{\textsf{hor}}{\subseteq} \mathsf{par}(\lambda)}} \mathsf{KNT}^{D\mathrm{II}_n}(\nu)
  \]
  is bijective.
  By the branching rule in Theorem \ref{thm_branching_rule_DII} \eqref{item_D_to_DII_thm_branching_rule_DII} and the bijection in Theorem \ref{thm_red_map_DII_D}, the domain and codomain of this map are finite sets of the same cardinality.
  Hence, we only need to show that it is injective.

  To this end, let us introduce some auxiliary maps.
  First, define a map
  \[
    \widetilde{\mathsf{suc}} \colon \mathsf{KNT}^{D_n}(\lambda) \to \bigsqcup_{\mu \in X^+_{2n,0}} \mathsf{KNT}^{D_n}(\mu) \times \mathsf{OT}(\mu)
  \]
  by
  \[
    \widetilde{\mathsf{suc}}(T) := \mathsf{LRS}(\mathbf{a}^1 \downarrow^{D_n}_{D\mathrm{II}_n} * \mathbf{a}^2 * \cdots * \mathbf{a}^m) \quad \text{ for all } T = (\mathbf{a}^1,\dots,\mathbf{a}^m) \in \mathsf{KNT}^{D_n}(\lambda).
  \]
  Clearly, it is injective.
  Also, for each \(T \in \mathsf{KNT}^{D_n}(\lambda)\), we have
  \[
    \widetilde{\mathsf{suc}}(T) = (\mathsf{suc}(T), \mathsf{Q}^{\mathsf{suc}}(T)) \quad \text{ for some } \mathsf{Q}^{\mathsf{suc}}(T) \in \mathsf{OT},
  \]
  where
  \[
    \mathsf{OT} := \bigsqcup_{\mu \in X^+_{2n,0}} \mathsf{OT}(\mu).
  \]

  Next, for each \(k \in \mathbb{Z}_{\geq 2}\), set
  \[
    \widetilde{\mathsf{suc}}^k := (\widetilde{\mathsf{suc}} \otimes 1^{k-1}) \circ \widetilde{\mathsf{suc}} \colon \mathsf{KNT}^{D_n}(\lambda) \to \bigsqcup_{\mu \in X^+_{2n,0}} \mathsf{KNT}^{D_n}(\mu) \times \mathsf{OT}^k.
  \]
  Namely, for each \(T \in \mathsf{KNT}^{D_n}(\lambda)\), we have
  \[
    \widetilde{\mathsf{suc}}^k(T) = (\mathsf{suc}^k(T), \mathbf{Q}^{\mathsf{suc}}(T)),
  \]
  where
  \[
    \mathbf{Q}^{\mathsf{suc}}(T) := (\mathsf{Q}^{\mathsf{suc}}(\mathsf{suc}^{k-1}(T)), \dots, \mathsf{Q}^{\mathsf{suc}}(\mathsf{suc}(T)), \mathsf{Q}^{\mathsf{suc}}(T)).
  \]
  Since \(\widetilde{\mathsf{suc}}\) is injective, so is \(\widetilde{\mathsf{suc}}^k\).

  The maps \(\widetilde{\mathsf{suc}}^k\) can be lifted to \(\mathbf{U}^\imath\)-module homomorphisms
  \[
    \widetilde{\mathsf{suc}}^k \colon V_q(\lambda) \to \bigoplus_{\mu \in X^+_{2n,0}} V_q(\mu) \otimes \mathsf{Span}_{\mathbb{Q}(q)} \mathsf{OT}^k
  \]
  such that
  \[
    \widetilde{\mathsf{suc}}^k(b_T) \equiv_\infty b_{\mathsf{suc}^k(T)} \otimes \mathbf{Q}^{\mathsf{suc}}(T) \quad \text{ for all } T \in \mathsf{KNT}^{D_n}(\lambda).
  \]

  Finally, define a \(\mathbf{U}^\imath\)-module homomorphism
  \[
    \cdot \widetilde{\downarrow^{D_n}_{D\mathrm{II}_n}} \colon V_q(\lambda) \to \bigoplus_{\substack{\nu \in X^+_{2n-1,0} \\ \mathsf{par}(\nu) \overset{\textsf{hor}}{\subseteq} \mathsf{par}(\lambda)}} V^\imath(\nu) \otimes \mathsf{Span}_{\mathbb{Q}(q)} \mathsf{OT}^k
  \]
  in a similar way to \(\cdot \downarrow^{D_n}_{D\mathrm{II}_n}\); use \(\widetilde{\mathsf{suc}}^k\) instead of \(\mathsf{suc}^k\).
  For each \(T \in \mathsf{KNT}^{D_n}(\lambda)\), we have
  \[
    b_T \widetilde{\downarrow^{D_n}_{D\mathrm{II}_n}} \equiv_\infty b_{T \downarrow^{D_n}_{D\mathrm{II}_n}} \otimes \mathbf{Q}^{\mathsf{suc}}(T).
  \]

  Now, we are ready to prove the injectivity of \(\cdot \downarrow^{D_n}_{D\mathrm{II}_n}\).
  Assume contrary, and take two distinct tableaux \(T,S \in \mathsf{KNT}^{D_n}(\lambda)\) such that
  \[
    T \downarrow^{D_n}_{D\mathrm{II}_n} = S \downarrow^{D_n}_{D\mathrm{II}_n}.
  \]
  Set \(\xi := \mathsf{sh}(T \downarrow^{D_n}_{D\mathrm{II}_n})\).
  Since the map \(\widetilde{\mathsf{suc}}^k\) is injective and we have
  \[
    \widetilde{\mathsf{suc}}^k(T) = (T \downarrow^{D_n}_{D\mathrm{II}_n}, \mathbf{Q}^{\mathsf{suc}}(T)), \quad \widetilde{\mathsf{suc}}^k(S) = (S \downarrow^{D_n}_{D\mathrm{II}_n}, \mathbf{Q}^{\mathsf{suc}}(S)),
  \]
  we obtain
  \[
    \mathbf{Q}^{\mathsf{suc}}(T) \neq \mathbf{Q}^{\mathsf{suc}}(S).
  \]
  This implies that the multiplicity of \(V^\imath(\xi)\) in the image of \(\cdot \widetilde{\downarrow^{D_n}_{D\mathrm{II}_n}}\) is more than one.
  However, this contradicts the branching rule in Theorem \ref{thm_branching_rule_DII} \eqref{item_D_to_DII_thm_branching_rule_DII}.
  Thus, we complete the proof.
\end{proof}

\section{Brief discussion on type \(B\)}\label{sect_B}
In this section, we briefly describe type \(B\) analogues of our results in \S\S\ref{sect_manipulation_D}--\ref{sect_qsp_DII}.

\subsection{Kashiwara--Nakashima tableaux of type \(B\)}\label{ssect_knt_B}
Let \(n \in \mathbb{Z}_{\geq 1}\).
For each \(a \in \mathcal{A}^{B_n}\), set
\[
  |a| := \begin{cases}
    a & \text{ if } a \preceq 0, \\
    b & \text{ if } a = \overline{b} \text{ for some } b \preceq n.
  \end{cases}
\]

The notions of \emph{vacant numbers} and \emph{occupied numbers} are defined in the same way as type \(D\) (Definition \ref{def_vac_occ}).
Note that the number \(0\) is always neither occupied nor vacant.

\begin{defi}[Quasi-increasing words]
  A word \(\mathbf{a} = (a_l,\dots,a_1) \in \mathcal{W}^{B_n}\) is said to be \emph{quasi-increasing} (from right to left) if either \(a_r \prec a_{r+1}\) or \((a_r,a_{r+1}) = (0,0)\) for all \(r \in \{ 1,\dots,l-1 \}\).
\end{defi}

Given a word \(\mathbf{a} = (a_l,\dots,a_1) \in \mathcal{W}^{B_n}\), let \(\mathbf{a}_0\) denote the subword of \(\mathbf{a}\) consisting of \(0\).

The notions of \emph{deep numbers} and \emph{shallow numbers} are defined in the same way as type \(D\) (Definition \ref{def_deep_shallow}).
Note that the number \(0\) is always shallow.

\begin{defi}[Kashiwara--Nakashima columns {\cite[(5.3.2)]{KaNa94}}]
  A word \(\mathbf{a} \in \mathcal{W}^{B_n}\) is said to be a \emph{Kashiwara--Nakashima column} if the following conditions are satisfied:
  \begin{description}
    \item[Length] \(|\mathbf{a}| \leq n\),
    \item[Quasi-increasing] \(\mathbf{a}\) is quasi-increasing,
    \item[Shallowness] every number is shallow in \(\mathbf{a}\), i.e.,
    \[
      |\mathbf{a}_{\leq b}| \leq b \quad \text{ for all } b \in \mathsf{O}(\mathbf{a}).
    \]
  \end{description}
  Let
  \[
    \mathsf{Col}^{B_n}_{l}
  \]
  denote the set of Kashiwara--Nakashima columns of type \(B_n\) and length \(l\), and set
  \[
    \mathsf{Col}^{B_n} := \bigsqcup_{l=0}^n \mathsf{Col}^{B_n}_{l}.
  \]
\end{defi}

\begin{defi}[Spin columns {\cite[Equation (5.4.6)]{KaNa94}}]
  A column \(\mathbf{a} = (a_n,\dots,a_1) \in \mathsf{Col}^{B_n}_{n}\) of length \(n\) is said to be a \emph{spin column} if
  \[
    \{ |\mathbf{a}_n|,\dots,|\mathbf{a}_1| \} = \{ 1,\dots,n \}.
  \]
  Let
  \[
    \mathsf{Col}^{B_n}_{\mathsf{spin}}
  \]
  denote the set of spin columns.
\end{defi}

\begin{defi}[Adjacent condition {\cite[\S\S 5.5--5.6]{KaNa94}}]
  A pair of two Kashiwara--Nakashima columns \(\mathbf{a} = (a_k,\dots,a_1)\) and \(\mathbf{b} = (b_l,\dots,b_1)\) are said to satisfy the \emph{adjacent condition} if the following hold:
  \begin{description}
    \item[Length] \(|\mathbf{a}| \geq |\mathbf{b}| > 0\),
    \item[Weakly Increasing] \(a_r \preceq b_r\) for all \(1 \leq r \leq |\mathbf{b}|\),
    \item[\((a,b)\)-Configurations] \((s-r) + (u-t) < b-a\) for all \(1 \preceq a \preceq b \prec n\) and \(1 \leq r \leq s < t \leq u \leq |\mathbf{b}|\) satisfying one of the following:
    \begin{itemize}
      \item \(a_r = a\), \(b_s = b\), \(b_t = \overline{b}\), and \(b_u = \overline{a}\),
      \item \(a_r = a\), \(a_s = b\), \(a_t = \overline{b}\), and \(b_u = \overline{a}\),
    \end{itemize}
    \item[\((a,n)\)-Configurations] \(u-r-1 < n-a\) for all \(1 \preceq a \prec n\) and \(1 \leq r < s < u \leq |\mathbf{b}|\) satisfying one of the following:
    \begin{itemize}
      \item \(a_r = a\), \(b_s = n\), \(b_{s+1} = 0\), and \(b_u = \overline{a}\),
      \item \(a_r = a\), \(b_s = n\), \(b_{s+1} = \overline{n}\), and \(b_u = \overline{a}\),
      \item \(a_r = a\), \(b_s = 0\), \(b_{s+1} = 0\), and \(b_u = \overline{a}\),
      \item \(a_r = a\), \(b_s = 0\), \(b_{s+1} = \overline{n}\), and \(b_u = \overline{a}\),
      \item \(a_r = a\), \(a_s = n\), \(a_{s+1} = 0\), and \(b_u = \overline{a}\),
      \item \(a_r = a\), \(a_s = n\), \(a_{s+1} = \overline{n}\), and \(b_u = \overline{a}\),
      \item \(a_r = a\), \(a_s = 0\), \(a_{s+1} = 0\), and \(b_u = \overline{a}\),
      \item \(a_r = a\), \(a_s = 0\), \(a_{s+1} = \overline{n}\), and \(b_u = \overline{a}\),
    \end{itemize}
    \item[\((n,n)\)-Configurations] there are no \(1 \leq s < t \leq |\mathbf{b}|\) satisfying the one of the following:
    \begin{itemize}
      \item \(a_s = n\) and \(b_t = 0\),
      \item \(a_s = n\) and \(b_t = \overline{n}\),
      \item \(a_s = 0\) and \(b_t = 0\),
      \item \(a_s = 0\) and \(b_t = \overline{n}\),
    \end{itemize}
  \end{description}
  We write \(\mathbf{a} \triangleleft \mathbf{b}\) to mean that \((\mathbf{a}, \mathbf{b})\) satisfy the adjacent condition.
\end{defi}

\begin{defi}[Kashiwara--Nakashima tableaux {\cite[\S 5.7]{KaNa94}}]
  Let \(\lambda \in X^+_{2n+1}\).
  A \emph{Kashiwara--Nakashima tableau} of type \(B_n\) and shape \(\lambda\) is a finite sequence \(T = (\mathbf{a}^1,\dots,\mathbf{a}^m)\) of Kashiwara--Nakashima columns satisfying the following:
  \begin{itemize}
    \item \(\mathbf{a}^2,\dots,\mathbf{a}^m \in \mathsf{Col}^{B_n}\),
    \item \(\mathbf{a}^1 \in \mathsf{Col}^{B_n}_{\mathsf{spin}}\) if \(\mathsf{s}(\lambda) = \frac{1}{2}\),
    \item \(\mathbf{a}^1 \triangleleft \cdots \triangleleft \mathbf{a}^m\).
    \item \((|\mathbf{a}^1|,\dots,|\mathbf{a}^m|) = \mathsf{par}(\lambda)'\),
  \end{itemize}
  see \S \ref{ssect_part} for notations concerning partitions.
  Let
  \[
    \mathsf{KNT}^{B_n}(\lambda)
  \]
  denote the set of Kashiwara--Nakashima tableaux of type \(B_n\) and shape \(\lambda\), and set
  \[
    \mathsf{KNT}^{B_n}_0 := \bigsqcup_{\lambda \in X^+_{2n+1,0}} \mathsf{KNT}^{B_n}(\lambda).
  \]
\end{defi}

\subsection{Kashiwara--Nakashima columns of type \(B\mathrm{II}\)}
\begin{defi}[Kashiwara--Nakashima columns of type \(B\mathrm{II}\)]
  A column \(\mathbf{a} \in \mathsf{Col}^{B_n}\) is said to be of \emph{type \(B\mathrm{II}_n\)} if it satisfies the following:
  \begin{description}
    \item[Exclusion of \(1\)] \(1 \notin \mathbf{a}\),
    \item[Strict Shallowness] \(|(\mathbf{a}_{> 1})_{\leq b}| < b\) for all \(b \in \mathsf{O}(\mathbf{a})\).
  \end{description}
  The set of Kashiwara--Nakashima columns of type \(B\mathrm{II}_n\) is denoted by
  \[
    \mathsf{Col}^{B\mathrm{II}_n}.
  \]
  For each \(l \in \mathbb{Z}_{\geq 0}\), set
  \[
    \mathsf{Col}^{B\mathrm{II}_n}_{l} := \mathsf{Col}^{B\mathrm{II}_n} \cap \mathsf{Col}^{B_n}_{l}.
  \]
\end{defi}

\begin{defi}[Signs of columns of type \(B\mathrm{II}\)]
  Let \(\mathbf{a} \in \mathsf{Col}^{B\mathrm{II}_n}_n\).
  The \emph{sign} of \(\mathbf{a}\) is \(\mathsf{e}(\mathbf{a}) \in \{ +,- \}\) defined by
  \[
    \mathsf{e}(\mathbf{a}) :=
    \begin{cases}
      + & \text{ if } \overline{1} \notin \mathbf{a}, \\
      - & \text{ if } \overline{1} \in \mathbf{a}.
    \end{cases}
  \]
  Let
  \[
    \mathsf{Col}^{B\mathrm{II}_n}_{n,\pm}
  \]
  denote the set of Kashiwara--Nakashima columns of type \(B\mathrm{II}_n\), length \(n\), and sign \(\pm\).
\end{defi}

\begin{rem}[Signs of columns of type \(B\mathrm{II}\)]
  The definition of sign for columns of type \(B\mathrm{II}\) is artificial.
  From the perspective of quantum symmetric pairs, at least to the author, it seems to be impossible to distinguish two columns of the form \(\overline{1}*\mathbf{a}*\mathbf{b}\) and \(\mathbf{a}*0*\mathbf{b}\) such that \(\mathbf{a} = \mathbf{a}^{\succ 0}\), \(\mathbf{b} = \mathbf{b}^{\preceq 0}\).
\end{rem}

\begin{defi}[Connecting maps]
  For each \(l \in \{ 0,\dots,n-1 \}\), the connecting map \(\delta_l\) is the map
  \[
    \delta_l \colon \mathsf{Col}^{B\mathrm{II}_n}_{l} \to \mathsf{Col}^{B_n}_{l+1} \setminus \mathsf{Col}^{B\mathrm{II}_n}_{l+1}
  \]
  defined by
  \[
    \delta_l(\mathbf{a}) :=
    \begin{cases}
      \mathbf{a}*1 & \text{ if } \overline{1} \notin \mathbf{a}, \\
      \mathbf{a}_{> 1} \Leftarrow \mathsf{gpv}(\mathbf{a}) & \text{ if } \overline{1} \in \mathbf{a}.
    \end{cases}
  \]
\end{defi}

\begin{thm}[Bijectivity of connecting maps]
  The connecting maps are bijective.
\end{thm}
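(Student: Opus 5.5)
We follow the type \(D\) proof of Theorem \ref{thm_bij_conn_map_DII} and construct explicit inverses. For each \(l \in \{ 0,\dots,n-1 \}\) define
\[
  \delta'_l \colon \mathsf{Col}^{B_n}_{l+1} \setminus \mathsf{Col}^{B\mathrm{II}_n}_{l+1} \to \mathsf{Col}^{B\mathrm{II}_n}_{l}, \quad
  \delta'_l(\mathbf{a}) :=
  \begin{cases}
    \mathbf{a}_{> 1} & \text{ if } 1 \in \mathbf{a}, \\
    \overline{1}*(\mathbf{a} \Rightarrow \mathsf{lwd}(\mathbf{a})) & \text{ if } 1 \notin \mathbf{a}.
  \end{cases}
\]
We then check that \(\delta_l\) and \(\delta'_l\) are well defined and mutually inverse. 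This requires the type \(B\) analogues of Propositions \ref{prop_non_occ_1_kn_col_d}, \ref{prop_entry_1b_DII}, \ref{prop_ins_1_DII}, \ref{prop_ins_pv_DII}, \ref{prop_rm_1_DII} and \ref{prop_rm_lwd_DII}, together with Lemmas \ref{lem_exist_wd_D}, \ref{lem_vac_num_DII} and \ref{lem_exsit_pv_D}.

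Most of these analogues carry over almost verbatim, since their proofs only count beads among the occupied and vacant numbers \(\{1,\dots,n\}\) and use the shallowness inequalities \(|\mathbf{a}_{\leq b}| \leq b\). The letter \(0\) is neither occupied nor vacant, and it has \(|0| = 0\) as a position label. Letters equal to \(0\) therefore lie outside every \(\mathbf{a}_{\leq b}\) with \(1 \leq b \leq n\), and they are untouched by \(\Leftarrow a\), \(\Rightarrow b\), \(*1\) and \(\overline{1}*\). All the counting arguments, including the bead pictures, go through unchanged. The one thing to re-examine is the conventions for \(|\mathbf{a}_{<a}|\) with \(0\)'s present, and we take them to exclude \(0\) throughout.

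The main simplification compared with type \(D\) concerns the length condition. In type \(B\mathrm{II}_n\) there is no \emph{Length} requirement \(|\mathbf{a}| < n\), so the range \(l = n-1\) is handled uniformly. There is no sign splitting \(\delta_{n-1,\pm}\), and the analogue of Proposition \ref{prop_ins_pv_DII} \eqref{item_ins_gpv_max_len_DII}, where \(n\) is inserted with a sign, does not occur. When \(\mathsf{gpv}(\mathbf{a}) = n\), the insertion \(\mathbf{b} \Leftarrow n\) puts in \(n\) and \(\overline{n}\) at unique positions, since \(n\) and \(\overline{n}\) are comparable in \(\mathcal{A}^{B_n}\). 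The proof of Proposition \ref{prop_ins_pv_DII} \eqref{item_ins_gpv_DII} then applies with \(a = n\) allowed.

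The main obstacle is showing that \(\delta_l\) lands outside \(\mathsf{Col}^{B\mathrm{II}_n}_{l+1}\) and that its image is all of the complement. Concretely, we must check that every \(\mathbf{a} \in \mathsf{Col}^{B_n}_{l+1} \setminus \mathsf{Col}^{B\mathrm{II}_n}_{l+1}\) with \(1 \notin \mathbf{a}\) has \(1 \in \mathsf{V}(\mathbf{a})\), so that Lemma \ref{lem_exist_wd_D} gives \(\mathsf{WD}(\mathbf{a}) \neq \emptyset\). Indeed, if \(\overline{1} \in \mathbf{a}\), the type \(B\) analogue of Proposition \ref{prop_entry_1b_DII} shows that \(\mathbf{a}\) is of type \(B\mathrm{II}_n\); this analogue now holds for all lengths, since the length hypothesis disappears. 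The remaining step is that \(\delta'_l(\mathbf{a})\) has length \(l < n\) and again contains \(\overline{1}\), so that Lemma \ref{lem_exsit_pv_D} applies and \(\mathsf{gpv}(\delta'_l(\mathbf{a})) = \mathsf{lwd}(\mathbf{a})\) by the analogue of Proposition \ref{prop_rm_lwd_DII}. This step has to be verified carefully in the presence of \(0\)'s. Once these are in place, the two composites are identities by the ``inverse'' items of those propositions.
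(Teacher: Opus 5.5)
Your proposal is correct and takes essentially the paper's approach. The paper proves the type \(D\) version by writing down the inverses \(\delta'_l\), with exactly your formulas, and citing the insertion/removal propositions; it asserts the type \(B\) case only by analogy, and your transplantation supplies that analogy, including the two genuine differences: there is no sign splitting at \(l=n-1\), and length-\(n\) columns containing \(\overline{1}\) are of type \(B\mathrm{II}_n\). One small correction: with the paper's literal convention \(|0|=0\), zeros fall inside \(\mathbf{a}_{\leq b}\), so excluding them is a convention you must impose, not a consequence of \(|0|=0\) as your ``therefore'' suggests (it is the convention matching Kashiwara--Nakashima's column condition, and it is needed, e.g.\ for the word \((\overline{2},0,2)\) to be a column when \(n=3\)).
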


Given a word \(\mathbf{a} \in \mathcal{W}^{B_n}\) with \(0 \in \mathbf{a}\), let \(\mathbf{a} - 0\) denote the subword obtained by removing \emph{exactly one} \(0\).
For a column \(\mathbf{a} \in \mathsf{Col}^{B\mathrm{II}_n}\), the following are equivalent:
\begin{itemize}
  \item \(|\mathbf{a}_{> 1}| = n\),
  \item \(|\mathbf{a}| = n\) and \(\overline{1} \notin \mathbf{a}\).
\end{itemize}
If one of the conditions above is satisfied, then we have \(0 \in \mathbf{a}\).

\begin{defi}[Reduction of columns from \(B\mathrm{II}_n\) to \(B_{n-1}\)]
  Let \(\mathbf{a} \in \mathsf{Col}^{B\mathrm{II}_n}\).
  The \emph{reduction of \(\mathbf{a}\) from \(B\mathrm{II}_n\) to \(B_{n-1}\)} is the word \(\mathbf{a} \downarrow^{B\mathrm{II}_n}_{B_{n-1}} \in \mathcal{W}^{B_{n-1}}\) defined by
  \[
    \mathbf{a} \downarrow^{B\mathrm{II}_n}_{B_{n-1}} :=
    \begin{cases}
      \mathbf{a}_{> 1}[-1] & \text{ if } |\mathbf{a}_{> 1}| < n, \\
      \mathbf{a}_{> 1}[-1] - 0 & \text{ if } |\mathbf{a}_{> 1}| = n.
    \end{cases}
  \]
\end{defi}

\begin{thm}[Reduction of columns from \(B\mathrm{II}_n\) to \(B_{n-1}\)]
  \hfill
  \begin{enumerate}
    \item Let \(l \in \{ 0,\dots,n-1 \}\).
    Then, the assignment \(\mathbf{a} \mapsto \mathbf{a} \downarrow^{B\mathrm{II}_n}_{B_{n-1}}\) gives rise to a bijection
    \[
      \cdot \downarrow^{B\mathrm{II}_n}_{B_{n-1}} \colon \mathsf{Col}^{B\mathrm{II}_n}_{l} \to \mathsf{Col}^{B_{n-1}}_{l} \sqcup \mathsf{Col}^{B_{n-1}}_{l-1}.
    \]
    \item The assignment \(\mathbf{a} \mapsto \mathbf{a} \downarrow^{B\mathrm{II}_n}_{B_{n-1}}\) gives rise to a bijection
    \[
      \cdot \downarrow^{B\mathrm{II}_n}_{B_{n-1}} \colon \mathsf{Col}^{B\mathrm{II}_n}_{n,\pm} \to \mathsf{Col}^{B_{n-1}}_{n-1}.
    \]
  \end{enumerate}
\end{thm}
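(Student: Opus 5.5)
The plan is to imitate the proof of Theorem \ref{thm_red_DII_D_col} and write down an explicit inverse. Let \(\mathbf{a} \in \mathsf{Col}^{B\mathrm{II}_n}_l\). Since \(1 \notin \mathbf{a}\), the subword \(\mathbf{a}_{>1}\) is \(\mathbf{a}\) with at most one letter \(\overline{1}\) removed. It contains no \(1,\overline{1}\), so the shift \(\mathbf{a}_{>1}[-1]\) makes sense; here \(0\) is kept fixed and \(n,\overline{n}\) become \(n-1,\overline{n-1}\).

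First I check that the image is a column of type \(B_{n-1}\). Quasi-increasingness is preserved by the shift. For \(b \in \mathsf{O}(\mathbf{a}) = \mathsf{O}(\mathbf{a}_{>1})\), strict shallowness gives
\[
|(\mathbf{a}_{>1}[-1])_{\leq b-1}| = |(\mathbf{a}_{>1})_{\leq b}| \leq b-1,
\]
which is exactly shallowness in type \(B_{n-1}\). For length, \(|\mathbf{a}_{>1}| = l-1\) if \(\overline{1} \in \mathbf{a}\) and \(= l\) otherwise.

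In case (1), \(l \leq n-1\), so the length condition holds automatically. The result lies in \(\mathsf{Col}^{B_{n-1}}_l\) or \(\mathsf{Col}^{B_{n-1}}_{l-1}\) according to whether \(\overline{1} \notin \mathbf{a}\) or \(\overline{1} \in \mathbf{a}\).

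In case (2), \(l = n\) and the two signs are handled separately.
\begin{itemize}
  \item Sign \(-\), i.e. \(\overline{1} \in \mathbf{a}\): then \(\mathbf{a}_{>1}[-1]\) has length \(n-1\) and lies in \(\mathsf{Col}^{B_{n-1}}_{n-1}\).
  \item Sign \(+\), i.e. \(\overline{1} \notin \mathbf{a}\): then \(|\mathbf{a}_{>1}| = n\). I use the remark before the definition that \(0 \in \mathbf{a}\), and remove one \(0\). Removing a \(0\) preserves quasi-increasingness and does not increase any \(|\cdot_{\leq b}|\), so shallowness survives. The result lies in \(\mathsf{Col}^{B_{n-1}}_{n-1}\).
\end{itemize}

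For the inverse, take \(\mathbf{b} \in \mathsf{Col}^{B_{n-1}}\) and apply the shift \(\mathbf{b}[+1]\). For the length-\(l\) preimage I send \(\mathbf{b}\) to \(\mathbf{b}[+1]\) if \(|\mathbf{b}| = l\), and to \(\overline{1}*\mathbf{b}[+1]\) if \(|\mathbf{b}| = l-1\). For the sign-\(+\) preimage in case (2) I instead reinsert a \(0\) into \(\mathbf{b}[+1]\) at the unique position that keeps the word quasi-increasing (beside the existing \(0\)'s, or between the unbarred and barred letters).

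I then verify that these land in \(\mathsf{Col}^{B\mathrm{II}_n}\). Strict shallowness for \(b \in \mathsf{O}\) is \(|(\mathbf{b}[+1])_{\leq b}| = |\mathbf{b}_{\leq b-1}| \leq b-1 < b\), and \(1\) is absent by construction. The remaining check is plain shallowness in type \(B_n\) after prepending \(\overline{1}\): this adds only \(1\) to each count, so \(|\mathbf{a}_{\leq b}| \leq b\) still holds. It is also still \(\leq n\), because the adjoined letters are forced to be counted among \(\{1,\dots,n\}\). Finally, that the maps are mutually inverse is immediate once the correct position of the reinserted \(0\) is fixed.

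The main obstacle is the sign-\(+\) case in the length-\(n\) situation, where the \(0\) is removed and reinserted. The inverse must produce a genuine column of length \(n\), and the difficulty is that in type \(B_n\) a column of length \(n\) with \(0\)'s but no occupied-number violation must still satisfy the length constraint. Together with the definition of \(\mathsf{e}\), this forces the two sign classes to be disjoint and exhaustive. My first step there is to check carefully that the length-\(n-1\) column \(\mathbf{b}[+1]\) with a \(0\) inserted is never of the form \(\overline{1}*\cdots\). This holds automatically because \(\overline{1}\) is not in the image of the shift. I also need that the operation \(\cdot - 0\) is well defined independently of which \(0\) is removed, which is clear since all \(0\)'s are adjacent.
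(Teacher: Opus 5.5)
Your proposal is correct. It follows the same route the paper uses for the type \(D\) analogue (Theorem \ref{thm_red_DII_D_col}): check that the shifted word is a column of the right length, then exhibit the inverse \(\mathbf{b} \mapsto \mathbf{b}[+1]\) or \(\overline{1}*(\mathbf{b}[+1])\). Your only new ingredient is removing and reinserting a single \(0\) in the sign-\(+\) length-\(n\) case, and that is exactly what the definition of \(\cdot\downarrow^{B\mathrm{II}_n}_{B_{n-1}}\) requires.

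One point is worth stating explicitly. Letters \(0\) are never counted in \(|\mathbf{a}_{\leq b}|\), as in Kashiwara--Nakashima's admissibility condition. So inserting a \(0\) in the inverse, and not just removing one, leaves every shallowness and strict-shallowness count unchanged.
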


\begin{cor}[\(B_{n-1}\)-crytsal structure of \(\mathsf{Col}^{B\mathrm{II}_n}\)]
  Let \(l \in \{ 0,\dots,n-1 \}\).
  The subsets \(\mathsf{Col}^{B\mathrm{II}_n}_{l} \subseteq \mathsf{Col}^{B_n}_{l}\) and \(\mathsf{Col}^{B\mathrm{II}_n}_{n,\pm} \subseteq \mathsf{Col}^{B_n}_{n}\) forms a subcrystal of type \(B_{n-1}\).
  Moreover, the reduction maps \(\cdot \downarrow^{B\mathrm{II}_n}_{B_{n-1}}\) are isomorphisms of crystals.
\end{cor}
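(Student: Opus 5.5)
The plan is to argue exactly as for Corollary \ref{cor_crystal_col_DII}. Both assertions follow from two facts. First, the bijections in the preceding theorem (reduction of columns from \(B\mathrm{II}_n\) to \(B_{n-1}\)) have explicit inverses. Second, these inverses are morphisms of \(B_{n-1}\)-crystals into the \(B_{n-1}\)-crystal obtained from \(\mathcal{W}^{B_n}\) by shifting indices, \(\widetilde{E}'_j=\widetilde{E}_{j+1}\), \(\widetilde{F}'_j=\widetilde{F}_{j+1}\) (\S\ref{ssect_crystal}). Since the inverses are injective and their images are \(\mathsf{Col}^{B\mathrm{II}_n}_l\), resp.\ \(\mathsf{Col}^{B\mathrm{II}_n}_{n,\pm}\), these images are then closed under \(\widetilde{E}'_j,\widetilde{F}'_j\). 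Hence they are \(B_{n-1}\)-subcrystals, and the reduction maps, being inverse to crystal morphisms, are isomorphisms.

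Concretely, I take the inverses as follows.
\begin{itemize}
\item For \(l<n\): \(\mathbf{b}\mapsto \mathbf{b}[+1]\) if \(|\mathbf{b}|=l\), and \(\mathbf{b}\mapsto \overline{1}*(\mathbf{b}[+1])\) if \(|\mathbf{b}|=l-1\).
\item For \(\mathsf{Col}^{B\mathrm{II}_n}_{n,-}\): \(\mathbf{b}\mapsto\overline{1}*(\mathbf{b}[+1])\).
\item For \(\mathsf{Col}^{B\mathrm{II}_n}_{n,+}\): \(\mathbf{b}\mapsto\) the word obtained from \(\mathbf{b}[+1]\) by inserting one extra \(0\) in the unique position keeping it quasi-increasing, i.e.\ between the letters \(\preceq n\) and those \(\succeq\overline{n}\).
\end{itemize}
That these are inverse bijections is the content of the preceding theorem, so only the morphism property needs checking.

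For the first two kinds of inverse, the check is routine. The shift \(a\mapsto a[+1]\) is an isomorphism from the \(B_{n-1}\)-crystal \(\mathcal{A}^{B_{n-1}}\) onto the subset \(\mathcal{A}^{B_n}\setminus\{1,\overline{1}\}\) with its shifted \(B_{n-1}\)-structure; this can be read off from the crystal graph of \(\mathcal{A}^{B_n}\). By the tensor product rule, \(\mathbf{b}\mapsto\mathbf{b}[+1]\) is therefore a morphism of word crystals. The letters \(1,\overline{1}\) satisfy \(\varepsilon_i=\varphi_i=0\) for all \(i\ge 2\). Hence prepending \(\overline{1}\) does not change the signature rule for any index \(i\ge2\), and \(\mathbf{w}\mapsto\overline{1}*\mathbf{w}\) commutes with \(\widetilde{E}_i,\widetilde{F}_i\) for \(i\ge 2\). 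Weights match because \(\langle d'_j,\mathsf{wt}'\rangle=\langle d_{j+1},\mathsf{wt}\rangle\) and the letters \(1,\overline{1}\) only contribute to \(d_1\).

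The main obstacle is the third inverse, the insertion of an extra \(0\). Unlike \(\overline{1}\), the letter \(0\) is not trivial for index \(n\): it satisfies \(\varepsilon_n=\varphi_n=1\). The indices \(i\in\{2,\dots,n-1\}\) are harmless, since \(0\) is \(i\)-trivial for them. For \(i=n\), I would compare the \(n\)-signatures of \(\mathbf{w}:=\mathbf{b}[+1]\) and of \(\mathbf{w}'\) (\(\mathbf{w}\) with one \(0\) inserted). Because the column is quasi-increasing, the \(n\)-relevant letters \(n,0,\dots,0,\overline{n}\) form a single consecutive block in the column. The aim is to show that in this block the extra \(0\) contributes a matched pair \(+-\) in the signature, which cancels and leaves the reduced signature unchanged.

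I expect this to hold because of the length constraints. Since \(|\mathbf{w}'|=n\) with \(\overline{1}\notin\mathbf{w}'\), each of the \(n-1\) numbers \(2,\dots,n\) must appear, and a letter \(0\) is forced. If this cancellation argument gets messy, I would fall back on a short case analysis on the block \((\mathbf{w}')_n\cup(\mathbf{w}')_0\), which is one of \(0^k\), \(n0^k\), \(0^k\overline{n}\), \(n0^k\overline{n}\). In each case I would verify directly that \(\widetilde{F}_n,\widetilde{E}_n\) act on \(\mathbf{w}'\) by acting on \(\mathbf{w}\) and then re-inserting the \(0\). An example of this, in the smallest case \(n=2\), is the string \(2\to0\to\overline{2}\) on length-one words versus \((0,2)\to(0,0)\to(\overline{2},0)\).
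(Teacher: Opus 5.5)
Your overall strategy is the paper's. In type $D$ (Corollary~\ref{cor_crystal_col_DII}) the paper just observes that the explicit inverse of the reduction map is a crystal morphism, and it gives no separate argument in type $B$. Your treatment of $\mathsf{Col}^{B\mathrm{II}_n}_{l}$ for $l<n$ and of $\mathsf{Col}^{B\mathrm{II}_n}_{n,-}$ is fine. The gap is in $\mathsf{Col}^{B\mathrm{II}_n}_{n,+}$, at exactly the step you flagged.

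\textbf{The proposed mechanism is wrong.} Since $\varepsilon_n(0)=\varphi_n(0)=1$, the two signs of a single $0$ never cancel each other, in any convention. Inserting a $0$ is $n$-neutral only through its interaction with the other $n$-relevant letters of $\mathbf{w}=\mathbf{b}[+1]$:
\begin{itemize}
\item If $\mathbf{w}$ already has $k\geq 1$ zeros, then $0^{\otimes k}$ and $0^{\otimes(k+1)}$ both reduce to one $+$ and one $-$, carried by the outermost zeros.
\item If $k=0$, the new $0$'s signs cancel against those of $n$ and/or $\overline{n}$, and the $0$ may take over a surviving sign. This is what happens in your $n=2$ example.
\end{itemize}

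\textbf{The missing case.} If $\mathbf{w}$ contains none of $n,0,\overline{n}$, then $\varepsilon_n(\mathbf{w})=\varphi_n(\mathbf{w})=0$ but $\varepsilon_n(\mathbf{w}')=\varphi_n(\mathbf{w}')=1$. In that case the insertion map is \emph{not} a morphism. This is your case $0^k$ with $k=1$, and your proposed direct check fails there.

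So you must show this case never occurs, and your justification does not do so. The claim that each of $2,\dots,n$ must appear is false: $(0,0,0)\in\mathsf{Col}^{B\mathrm{II}_3}_{3,+}$. Also, the presence of a $0$ in $\mathbf{w}'$ holds by construction and is beside the point.

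\textbf{The needed fact} is about $\mathbf{b}$: every $\mathbf{b}\in\mathsf{Col}^{B_{n-1}}_{n-1}$ contains $n-1$, $0$ or $\overline{n-1}$. Suppose not. Then its $n-1$ letters use only the $n-2$ numbers $1,\dots,n-2$, so $\mathsf{O}(\mathbf{b})\neq\emptyset$. Take $b:=\max\mathsf{O}(\mathbf{b})$. Then $|\mathbf{b}_{>b}|\leq n-2-b$, hence $|\mathbf{b}_{\leq b}|\geq b+1$, which contradicts shallowness.

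With the $n$-block of $\mathbf{w}$ thus nonempty, your case check goes through and the argument is complete.
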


\begin{defi}[Reduction of columns from \(B_n\) to \(B\mathrm{II}_n\)]
  Let \(\mathbf{a} \in \mathsf{Col}^{B_n}_{l}\).
  The \emph{reduction of \(\mathbf{a}\) from \(B_n\) to \(B\mathrm{II}_n\)} is the word \(\mathbf{a} \downarrow^{B_n}_{B\mathrm{II}_n} \in \mathcal{W}^{B_n}\) defined by
  \[
    \mathbf{a} \downarrow^{B_n}_{B\mathrm{II}_n} :=
    \begin{cases}
      \mathbf{a} & \text{ if } \mathbf{a} \in \mathsf{Col}^{B\mathrm{II}_n}_{l}, \\
      \mathbf{a}_{> 1} & \text{ if } \mathbf{a} \notin \mathsf{Col}^{B\mathrm{II}_n}_{l} \text{ and } \overline{1} \in \mathbf{a}, \\
      \overline{1}*(\mathbf{a} \Rightarrow \mathsf{lwd}(\mathbf{a})) & \text{ if } \mathbf{a} \notin \mathsf{Col}^{B\mathrm{II}_n}_{l} \text{ and } \overline{1} \notin \mathbf{a}.
    \end{cases}
  \]
\end{defi}

\begin{thm}[Reduction of columns from \(B_n\) to \(B\mathrm{II}_n\)]
  Let \(l \in \{ 0,\dots,n \}\).
  Then, the assignment \(\mathbf{a} \mapsto \mathbf{a} \downarrow^{B_n}_{B\mathrm{II}_n}\) gives rise to a bijection
  \[
    \mathsf{Col}^{B_n}_{l} \to \mathsf{Col}^{B\mathrm{II}_n}_{l} \sqcup \mathsf{Col}^{B\mathrm{II}_n}_{l-1}.
  \]
\end{thm}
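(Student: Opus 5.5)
The plan is to imitate the proof of Theorem \ref{thm_red_D_DII_col}: identify \(\cdot \downarrow^{B_n}_{B\mathrm{II}_n}\) with the identity on \(\mathsf{Col}^{B\mathrm{II}_n}_l\) and with the inverse of a connecting map on the complement, and then invoke the bijectivity of the connecting maps of type \(B\mathrm{II}_n\). The domain splits as
\[
  \mathsf{Col}^{B_n}_l = \mathsf{Col}^{B\mathrm{II}_n}_l \sqcup (\mathsf{Col}^{B_n}_l \setminus \mathsf{Col}^{B\mathrm{II}_n}_l).
\]
On the first piece the reduction is the identity, so it suffices to show the following: for \(l \geq 1\), the restriction to the second piece coincides with \(\delta_{l-1}^{-1}\), where \(\delta_{l-1} \colon \mathsf{Col}^{B\mathrm{II}_n}_{l-1} \to \mathsf{Col}^{B_n}_{l} \setminus \mathsf{Col}^{B\mathrm{II}_n}_{l}\) is the type \(B\) connecting map. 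For \(l=0\) the second piece is empty, since the empty column is of type \(B\mathrm{II}_n\). Note that, unlike type \(D\), there is no length restriction in type \(B\mathrm{II}_n\) and no sign splitting for \(l=n\), so the cases \(l<n\) and \(l=n\) can be treated uniformly.

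First, I will sort out the case distinction for \(\mathbf{a} \in \mathsf{Col}^{B_n}_l \setminus \mathsf{Col}^{B\mathrm{II}_n}_l\). The type \(B\) analogue of Proposition \ref{prop_non_occ_1_kn_col_d} (same proof) shows that \(1\) is not occupied in \(\mathbf{a}\). The analogue of Proposition \ref{prop_entry_1b_DII} says that a column with \(\overline{1} \in \mathbf{a}\) is of type \(B\mathrm{II}_n\); here no length hypothesis is needed, and \(0\) is harmless because it is never occupied. Hence a non-\(B\mathrm{II}_n\) column satisfies exactly one of \(1 \in \mathbf{a}\) or \(1 \in \mathsf{V}(\mathbf{a})\). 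Accordingly, I read the second case of the definition as the case \(1 \in \mathbf{a}\) (removing \(1\)), and the third as the case \(1 \in \mathsf{V}(\mathbf{a})\). In the latter case the analogue of Lemma \ref{lem_exist_wd_D} guarantees that \(\mathsf{lwd}(\mathbf{a})\) exists.

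Second, I will establish the type \(B\) analogues of Propositions \ref{prop_ins_1_DII}, \ref{prop_ins_pv_DII}, \ref{prop_rm_1_DII} and \ref{prop_rm_lwd_DII}, together with Lemmas \ref{lem_vac_num_DII} and \ref{lem_exsit_pv_D}. The bead-counting arguments go through verbatim, since all of them only count letters \(b\) with \(|b| \leq\) some bound and only use occupied or vacant numbers \(\leq n\). Letters \(0\) contribute to \(|\mathbf{a}|\) but not to \(|\mathbf{a}_{\leq b}|\) for \(b \leq n\). These analogues give:
\begin{itemize}
  \item \(\delta_{l-1}(\mathbf{a}_{>1}) = \mathbf{a}\) when \(1 \in \mathbf{a}\);
  \item \(\overline{1}*(\mathbf{a} \Rightarrow \mathsf{lwd}(\mathbf{a})) \in \mathsf{Col}^{B\mathrm{II}_n}_{l-1}\), with greatest properly vacant number \(\mathsf{lwd}(\mathbf{a})\), and \(\delta_{l-1}\) sends it back to \(\mathbf{a}\).
\end{itemize}
Thus the reduction map on the complement is a right inverse of \(\delta_{l-1}\) restricted to the complement. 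Since \(\delta_{l-1}\) is bijective by the type \(B\) bijectivity theorem, the reduction equals \(\delta_{l-1}^{-1}\) there. Combining both pieces yields the bijection onto \(\mathsf{Col}^{B\mathrm{II}_n}_l \sqcup \mathsf{Col}^{B\mathrm{II}_n}_{l-1}\).

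The main obstacle is the top length \(l=n\) in the analogue of Proposition \ref{prop_ins_pv_DII}. In type \(D\) the case \(\mathsf{gpv}=n\) required the special insertion \(\Leftarrow_\pm n\), whereas in type \(B\) one must check that \(\mathbf{a}_{>1} \Leftarrow n\) (inserting \(n\) and \(\overline{n}\) around any \(0\)'s) is still a Kashiwara--Nakashima column. One must also check that its least weakly deep number is \(n\); here the possible presence of \(0\)'s affects the length count \(|\mathbf{b}_{>b}| = l-b\) used there. I would first redo that counting with \(|\mathbf{b}|\) replaced by \(|\mathbf{b}| - |\mathbf{b}_0|\), and verify that the contradiction with maximality of \(\mathsf{gpv}\) still arises.
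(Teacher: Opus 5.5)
Your proposal is correct and follows the same route the paper intends; the paper gives no separate type \(B\) proof, so the model is the proof of Theorem \ref{thm_red_D_DII_col}. That route identifies \(\cdot\downarrow^{B_n}_{B\mathrm{II}_n}\) with the identity on \(\mathsf{Col}^{B\mathrm{II}_n}_l\) and with \(\delta_{l-1}^{-1}\) on the complement, then invokes bijectivity of the connecting maps. Your reading of the case split as \(1\in\mathbf{a}\) versus \(1\in\mathsf{V}(\mathbf{a})\) is the right repair of the printed definition: as printed, the condition \(\overline{1}\in\mathbf{a}\) makes the second case vacuous, and the third case then contains columns with \(1\in\mathbf{a}\), for which \(\mathsf{lwd}\) need not exist.
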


\subsection{Kashiwara--Nakashima tableaux of type \(B\mathrm{II}\)}
\begin{defi}[Kashiwara--Nakashima tableaux of type \(B\mathrm{II}\)]
  A Kashiwara--Nakashima tableau \(T = (\mathbf{a}^1,\dots,\mathbf{a}^m)\) of type \(B_n\) and shape \(\lambda \in X^+_{2n,0}\) is said to be of \emph{type \(B\mathrm{II}_n\)} if its first column \(\mathbf{a}^1\) is of type \(B\mathrm{II}_n\) and sign \(\mathsf{e}(\lambda)\).
  The set of Kashiwara--Nakashima tableaux of type \(B\mathrm{II}_n\) and shape \(\lambda\) is denoted by
  \[
    \mathsf{KNT}^{B\mathrm{II}_n}(\lambda).
  \]
  Also, set
  \[
    \mathsf{KNT}^{B\mathrm{II}_n}_0 := \bigsqcup_{\lambda \in X^+_{2n,0}} \mathsf{KNT}^{B_n}(\lambda).
  \]
\end{defi}

\begin{thm}[Reduction of tableaux from \(B\mathrm{II}_n\) to \(B_{n-1}\)]
  Let \(\lambda \in X^+_{2n, 0}\).
  The assignment \(T \mapsto T \downarrow^{B\mathrm{II}_n}_{B_{n-1}}\) gives rise to a bijection
  \[
    \cdot \downarrow^{B\mathrm{II}_n}_{B_{n-1}} \colon \mathsf{KNT}^{B\mathrm{II}_n}(\lambda) \to \bigsqcup_{\substack{\xi \in X^+_{2n-1,0} \\ \mathsf{par}(\xi) \overset{\textsf{hor}}{\subseteq} \mathsf{par}(\lambda)}} \mathsf{KNT}^{B_{n-1}}(\xi).
  \]
\end{thm}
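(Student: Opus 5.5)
This is the type \(B\) analogue of Theorem \ref{thm_red_map_DII_D}, and I will prove it along the same lines: a column-by-column argument in which the only genuinely new point is the adjacency of the first column. Let \(T = (\mathbf{a}^1,\dots,\mathbf{a}^m) \in \mathsf{KNT}^{B\mathrm{II}_n}(\lambda)\), with \(T \downarrow^{B\mathrm{II}_n}_{B_{n-1}}\) defined columnwise as in type \(D\).

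The first step is the \(B\)-analogue of Proposition \ref{prop_adj_col_DII} \eqref{item_propagate_adj_col_DII}. If \(\mathbf{a} \triangleleft \mathbf{b}\) and \(\mathbf{a} \in \mathsf{Col}^{B\mathrm{II}_n}\), then \(\mathbf{b} \in \mathsf{Col}^{B\mathrm{II}_n}\). The proof is verbatim: \(1 \notin \mathbf{b}\) because \(1 \prec a_1 \preceq b_1\), and a violation of strict shallowness yields an \((a,b)\)-configuration contradicting the adjacent condition. The letter \(0\) never enters, since it is neither occupied nor vacant.

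For the sign, there are two cases.
\begin{itemize}
  \item If \(\overline{1} \in \mathbf{a}^r\) for some \(r \geq 2\) with \(|\mathbf{a}^r| = n\), weak increase forces the entry \(\overline{1}\) in \(\mathbf{a}^1\) as well, because \(|\mathbf{a}^1| \geq |\mathbf{a}^r|\).
  \item Conversely, a column of length \(n\) containing \(\overline{1}\) must be followed only by columns containing \(\overline{1}\) in the last row.
\end{itemize}
So every full-length column in \(T\) carries the same artificial sign. Hence each column of length \(n\) lies in \(\mathsf{Col}^{B\mathrm{II}_n}_{n,\mathsf{e}(\lambda)}\) and reduces to \(\mathsf{Col}^{B_{n-1}}_{n-1}\) by the column theorem. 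Columns of length \(l<n\) reduce to \(\mathsf{Col}^{B_{n-1}}_{l} \sqcup \mathsf{Col}^{B_{n-1}}_{l-1}\). The column lengths drop by \(0\) or \(1\), and a column of length \(n\) always drops by exactly \(1\): either \(\overline{1}\) is removed, or one \(0\) is removed after the shift. Since column lengths are weakly decreasing, the resulting shape \(\xi\) satisfies \(\mathsf{par}(\xi) \overset{\textsf{hor}}{\subseteq} \mathsf{par}(\lambda)\) with \(\ell(\mathsf{par}(\xi)) \leq n-1\), so \(\xi \in X^+_{2n-1,0}\).

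The second step, which I expect to be the main obstacle, is the analogue of Proposition \ref{prop_adj_col_DII} \eqref{item_equivalence_adj_col_DII}. For \(\mathbf{a},\mathbf{b} \in \mathsf{Col}^{B\mathrm{II}_n}\), with signs agreeing whenever both have length \(n\), I need \(\mathbf{a} \triangleleft \mathbf{b}\) if and only if \(\mathbf{a}\downarrow \triangleleft \mathbf{b}\downarrow\). I would check the conditions one at a time.
\begin{itemize}
  \item \textbf{Length.} This is handled exactly as in type \(D\), the sign condition excluding the bad case \(k=l=n\) with \(\overline{1}\) only in \(\mathbf{a}\).
  \item \textbf{Weak increase.} This requires aligning rows after deleting the bottom \(\overline{1}\) or a \(0\). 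Deleting a \(0\) from a run of zeros of length \(\geq 1\) shifts only the rows above the run. Here the \((n,n)\)-configuration rules (no \(0\) in \(\mathbf{a}\) strictly left-below a \(0\) or \(\overline{n}\) in \(\mathbf{b}\)) should guarantee that the deletions in \(\mathbf{a}\) and \(\mathbf{b}\) occur compatibly.
  \item \textbf{Configurations.} For the \((a,b)\)- and \((a,n)\)-configurations, the relevant indices all involve letters \(a \geq 2\). The inequalities are invariant under the shift \([-1]\), which lowers \(a,b,n\) uniformly, and under removal of a common letter. I would verify this by a row-index bookkeeping identical to the type \(D\) case, treating separately the case where one \(0\) is removed.
\end{itemize}

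Granting this, the third step assembles the bijection. Given \(S = (\mathbf{b}^1,\dots,\mathbf{b}^m) \in \mathsf{KNT}^{B_{n-1}}(\xi)\) with \(\mathsf{par}(\xi) \overset{\textsf{hor}}{\subseteq} \mathsf{par}(\lambda)\), the column bijections produce unique columns \(\mathbf{a}^r\) of type \(B\mathrm{II}_n\) of the prescribed lengths \(\mathsf{par}(\lambda)'_r\), choosing sign \(\mathsf{e}(\lambda)\) for those of length \(n\). By the equivalence in the second step they form an element of \(\mathsf{KNT}^{B\mathrm{II}_n}(\lambda)\). These two constructions are mutually inverse, which proves the theorem.
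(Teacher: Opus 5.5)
Your route (propagate type \(B\mathrm{II}_n\) along rows, prove a columnwise adjacency equivalence, then assemble via the column bijections) is the intended analogue of the type \(D\) argument. However, your handling of the signs of length-\(n\) columns is wrong, and this breaks Steps 2 and 3.

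Weak increase only pushes \(\overline{1}\) to the right: \(a^1_n=\overline{1}\) forces \(a^r_n=\overline{1}\), whereas \(a^1_n\preceq a^r_n=\overline{1}\) says nothing. So your first bullet is false. Take \(n=2\), \(\mathbf{a}=(0,2)\) (i.e.\ \(a_1=2\), \(a_2=0\)) and \(\mathbf{b}=(\overline{1},2)\). The \((a,b)\)- and \((a,n)\)-configurations need a letter \(1\) in \(\mathbf{a}\). The only \((n,n)\)-test is \((s,t)=(1,2)\), and \(b_2=\overline{1}\notin\{0,\overline{2}\}\). Hence \(\mathbf{a}\triangleleft\mathbf{b}\), and \((\mathbf{a},\mathbf{b})\in\mathsf{KNT}^{B\mathrm{II}_2}((2,2))\) has column signs \(+,-\).

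In fact every column in \(\mathsf{Col}^{B\mathrm{II}_2}_{2,+}=\{(0,2),(0,0),(\overline{2},0)\}\) has \(a_1\in\{2,0\}\). The \((n,n)\)-configurations and weak increase then force \(b_2=\overline{1}\), so in \(\mathsf{KNT}^{B\mathrm{II}_2}((2,2))\) the second column \emph{never} has sign \(\mathsf{e}(\lambda)=+\). Two consequences follow:
\begin{enumerate}
\item Your Step 2 equivalence ``with signs agreeing'' is false. The columns \(\mathbf{a}=\mathbf{b}=(0,2)\) reduce to \((1)\triangleleft(1)\) in type \(B_1\), yet \((0,2)\not\triangleleft(0,2)\) because \(a_1=n\) and \(b_2=0\).
\item Your Step 3 inverse, which lifts every length-\(n\) column with sign \(\mathsf{e}(\lambda)\), produces no tableau at all for \(\lambda=(2,2)\); for example \(((1),(1))\mapsto((0,2),(0,2))\).
\end{enumerate}

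What you actually need is that only the first column's sign is free. Precisely: if \(\mathbf{a}\in\mathsf{Col}^{B\mathrm{II}_n}_n\), \(\mathbf{a}\triangleleft\mathbf{b}\) and \(|\mathbf{b}|=n\), then \(\overline{1}\in\mathbf{b}\).
\begin{itemize}
\item If \(\overline{1}\in\mathbf{a}\), this is weak increase.
\item If \(\overline{1}\notin\mathbf{a}\), strict shallowness forces \(\mathbf{a}\) to contain a vertical pair \((a_s,a_{s+1})\in\{(n,0),(0,0),(0,\overline{n})\}\) next to its topmost zero (take \((n,0)\) when it occurs). The \((n,n)\)-configurations and weak increase then make \(b_t\) barred and \(\neq\overline{n}\) for all \(t>s\). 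If also \(\overline{1}\notin\mathbf{b}\), then \(a_1,\dots,a_{s-1}\) and \(|b_{s+1}|,\dots,|b_n|\) lie in \(\{2,\dots,n-1\}\) and number \(n-1\) in total, so they share a value. Their least shared value \(a\) gives an \((a,n)\)-configuration with \(u-r-1\geq n-a\), a contradiction.
\end{itemize}
Accordingly, Step 2 must be stated for \(\mathbf{b}\) either of length \(<n\) or containing \(\overline{1}\), with \(\mathbf{a}\) of either sign. Step 3 must lift the first column with sign \(\mathsf{e}(\lambda)\) but every later length-\(n\) column with sign \(-\).

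Two smaller points:
\begin{itemize}
\item Your propagation step is not verbatim. The type \(D\) proof uses \(|\mathbf{b}|<n\) to get \(b<n\), but type \(B\mathrm{II}_n\) columns may have length \(n\). The case \(b=n\) therefore needs its own argument, for instance via the zero pattern of \(\mathbf{a}\) as above.
\item Your row bookkeeping in Step 2 needs Kashiwara--Nakashima's row condition \((a_r,b_r)\neq(0,0)\), which the printed type \(B\) adjacency list omits. Read literally, that list gives \((\overline{2},0)\triangleleft(\overline{1},0)\) for \(n=2\), while the reductions satisfy \((\overline{1})\not\triangleleft(0)\).
\end{itemize}
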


\begin{cor}[\(B_{n-1}\)-crystal structure of \(\mathsf{KNT}^{B\mathrm{II}_n}(\lambda)\)]
  The subset \(\mathsf{KNT}^{B\mathrm{II}_n}(\lambda) \subseteq \mathsf{KNT}^{B_n}(\lambda)\) is a subcrystal of type \(B_{n-1}\).
  Moreover, the reduction map \(\cdot \downarrow^{B\mathrm{II}_n}_{B_{n-1}}\) is an isomorphism of crystals.
\end{cor}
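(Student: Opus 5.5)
The plan is to argue as for Corollary~\ref{cor_crystal_col_DII} and Theorem~\ref{thm_red_map_DII_D} in type \(D\), using the type \(B\) column-level results of this section. The statement to prove is the final corollary: \(\mathsf{KNT}^{B\mathrm{II}_n}(\lambda)\) is a \(B_{n-1}\)-subcrystal of \(\mathsf{KNT}^{B_n}(\lambda)\), and \(\cdot \downarrow^{B\mathrm{II}_n}_{B_{n-1}}\) is a crystal isomorphism onto \(\bigsqcup_\xi \mathsf{KNT}^{B_{n-1}}(\xi)\).

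\emph{Step 1: the crystal structure.} By Remark~\ref{rem_crystal_knt_D} (type \(B\) analogue), \(\mathsf{KNT}^{B_n}(\lambda)\) sits inside a tensor product of column crystals. It carries the restricted structure given by \(\widetilde{E}_{j+1},\widetilde{F}_{j+1}\) for \(j \in \{1,\dots,n-1\}\), as in \S\ref{ssect_crystal}, with the type \(B\) labels shifted by one. By the type \(B\) analogue of Corollary~\ref{cor_col_tab_DII}, every column of \(T \in \mathsf{KNT}^{B\mathrm{II}_n}(\lambda)\) is of type \(B\mathrm{II}_n\). We also need the first column to keep its sign \(\mathsf{e}(\lambda)\).

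\emph{Step 2: tensor product rule.} For \(i \ge 2\), the operators \(\widetilde{E}_i,\widetilde{F}_i\) on \(T = \mathbf{a}^1\otimes\cdots\otimes\mathbf{a}^m\) act on a single column, chosen by the signature rule. The \(B_{n-1}\)-subcrystal corollary for columns says \(\mathsf{Col}^{B\mathrm{II}_n}_l\) and \(\mathsf{Col}^{B\mathrm{II}_n}_{n,\pm}\) are closed under these operators. So the column acted on stays of type \(B\mathrm{II}_n\), and the sign \(\mathsf{e}\) of a length \(n\) first column is preserved, because \(\overline{1}\) is untouched by \(\widetilde{E}_i,\widetilde{F}_i\) for \(i\ge 2\). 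The image is still in \(\mathsf{KNT}^{B_n}(\lambda)\), since that set is itself a subcrystal. Hence the image is again in \(\mathsf{KNT}^{B\mathrm{II}_n}(\lambda)\), or is \(0\), which gives the subcrystal claim.

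\emph{Step 3: compatibility with \(\cdot \downarrow^{B\mathrm{II}_n}_{B_{n-1}}\).} The tableau map is columnwise: \(T\downarrow = \mathbf{a}^1\downarrow\otimes\cdots\otimes\mathbf{a}^m\downarrow\). Each column map is a \(B_{n-1}\)-crystal isomorphism by the column corollary, so it preserves \(\varepsilon_i\), \(\varphi_i\) and weights after the index shift. By the tensor product rule, the columnwise map therefore commutes with \(\widetilde{E}_i,\widetilde{F}_i\) and preserves \(\mathsf{wt}\). Combined with the bijectivity in the preceding theorem (reduction of tableaux from \(B\mathrm{II}_n\) to \(B_{n-1}\)), this gives a bijective crystal morphism, i.e.\ an isomorphism.

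\emph{Main obstacle.} The delicate point is the length \(n\) first column, where \(\downarrow\) may delete a \(0\) (the case \(|\mathbf{a}_{>1}| = n\)). There we must check that deleting one \(0\) from a \(B_n\) word is compatible with the \(B_{n-1}\) structure on \(\mathsf{Col}^{B\mathrm{II}_n}_{n,+}\). The column-level corollary supplies exactly this compatibility. The remaining routine check is that the \(\varepsilon_i,\varphi_i\) of each column, computed in the ambient \(B_n\) crystal restricted to \(i\ge2\), agree with those of its image. This holds because both are determined by the column crystal structure, which is isomorphic under the column map.
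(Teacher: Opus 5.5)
Your proposal is correct and follows essentially the same route as the paper. As in the type \(D\) analogue, the corollary comes directly from the column-level \(B_{n-1}\)-crystal corollary for \(\mathsf{Col}^{B\mathrm{II}_n}_l\) and \(\mathsf{Col}^{B\mathrm{II}_n}_{n,\pm}\), together with the bijectivity of the tableau reduction \(\cdot \downarrow^{B\mathrm{II}_n}_{B_{n-1}}\); you simply make explicit the signature-rule argument and the preservation of the sign \(\mathsf{e}\) of the first column, which the paper leaves implicit (though for later columns of length \(n\) the column map is only a strict morphism on each sign component, not an isomorphism on all of \(\mathsf{Col}^{B\mathrm{II}_n}_n\), and that is all your tensor-product argument uses).
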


The map \(\cdot \downarrow^{B_n}_{B\mathrm{II}_n}\) is defined in a similar way to type \(D\) (Definition \ref{def_red_tab_D_DII}).

\begin{thm}[Reduction of tableaux from \(B_n\) to \(B\mathrm{II}_n\)]
  Let \(\nu \in X^+_{2n+1, 0}\).
  Under Assumption \ref{assum}, the assignment \(T \mapsto T \downarrow^{B_n}_{B\mathrm{II}_n}\) gives rise to a bijection
  \[
    \cdot \downarrow^{B_n}_{B\mathrm{II}_n} \colon \mathsf{KNT}^{B_n}(\nu) \to \bigsqcup_{\substack{\lambda \in X^+_{2n,0} \\ \mathsf{par}(\lambda) \overset{\textsf{hor}}{\subseteq} \mathsf{par}(\nu)}} \mathsf{KNT}^{B\mathrm{II}_n}(\lambda).
  \]
\end{thm}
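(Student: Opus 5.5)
The proof follows the type \(D\) argument of Theorem \ref{thm_red_hom_D_DII}. The combinatorial map is lifted to a \(\mathbf{U}^\imath_{B\mathrm{II}_n}\)-module homomorphism. Counting shows the two sides have the same cardinality. Injectivity then follows from multiplicity-freeness of the branching rule, which is exactly where Assumption \ref{assum} enters.

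First, the type \(B\) column-level theory is set up in parallel with \S\ref{sect_qg_D}--\S\ref{ssect_conn_hom_D}. Column modules \(\mathsf{C}_l\) are realized as quotients of the tensor algebra by the \(-q^{-1}\)-eigenspace of \(\check R\) on \(V_\natural^{\otimes 2}\), using the \(\mathsf{BMW}\) duality for \(\mathbf{U}(\mathfrak{so}_{2n+1})\). We find a \(\mathbf{U}^\imath\)-fixed vector \(v^\imath_0\in V_\natural\), presumably of the form \(v_1+c\,v_{\overline 1}\) with \(c\in q^{-1}\mathbf{A}_\infty\). The connecting homomorphisms are then \(u\mapsto(-q)^l v^\imath_0 u\). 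At \(q=\infty\) these reduce to the combinatorial connecting maps \(\delta_l\); this is the analogue of Proposition \ref{prop_conn_hom_D}, and the length-\(n\) case (with a zero inserted) has to be handled via the artificial sign. The bijectivity of the connecting maps (already stated) then gives the analogue of Proposition \ref{prop_decomp_col_mod_D}: \(\mathsf{C}_l|_{\mathbf{U}^\imath}\simeq V^\imath(\gamma_l)\oplus V^\imath(\gamma_{l-1})\), with compatible almost orthonormal bases indexed by \(\mathsf{Col}^{B\mathrm{II}_n}\). This step uses Assumption \ref{assum} to identify the summands with the unique deformations. It yields a column reduction isomorphism whose leading terms are \(b^\imath_{\mathbf{a}\downarrow^{B_n}_{B\mathrm{II}_n}}\).

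Next, the analogue of Lemma \ref{lem_compatible_basis_DII} and Theorem \ref{thm_alm_orthn_basis_Vinu_DII} is proved. We use the composite \(g\) built from \({}^{\gamma_l}\bigvee\), the column reduction, the projection, \(\delta_{l-1}\) and \({}_{\gamma_l}\bigwedge\), together with the dimension count coming from the \(B\mathrm{II}_n\to B_{n-1}\) tableau bijection (already stated). This gives an almost orthonormal basis \(\{b^\imath_T\}\) of \(V^\imath(\lambda)\) indexed by \(\mathsf{KNT}^{B\mathrm{II}_n}(\lambda)\), along with a projection \(p_\lambda\). We then define the successor homomorphism and the reduction homomorphism \(\cdot\downarrow^{B_n}_{B\mathrm{II}_n}\) exactly as in \S\ref{ssect_red_D_DII}. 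The analogue of Proposition \ref{prop_suc_map_DII} shows that iterating \(\mathsf{suc}\) stabilizes, since each non-trivial step shortens the first column. It follows that \(b_T\downarrow\equiv_\infty b_{T\downarrow}\).

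For bijectivity, the cardinalities agree on both sides. By the classical branching rule \eqref{eq_branching_so_partition} and Assumption \ref{assum}, \(V_q(\nu)|_{\mathbf{U}^\imath}\) decomposes multiplicity-freely into \(V^\imath(\lambda)\) over \(\lambda\) with \(\mathsf{par}(\lambda)\overset{\textsf{hor}}{\subseteq}\mathsf{par}(\nu)\). Moreover, \(\dim V^\imath(\lambda)=|\mathsf{KNT}^{B\mathrm{II}_n}(\lambda)|\) by the \(B_{n-1}\)-crystal isomorphism. So it suffices to prove injectivity. Suppose \(T\neq S\) but \(T\downarrow=S\downarrow\). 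We record the \(Q\)-symbols through the enhanced map \(\widetilde{\mathsf{suc}}^k\), built from Lecouvey's type \(B\) correspondence; it is injective. Hence \(\mathbf{Q}^{\mathsf{suc}}(T)\neq\mathbf{Q}^{\mathsf{suc}}(S)\). Because the lifted homomorphism is \(\mathbf{U}^\imath\)-linear and its leading terms are linearly independent, \(V^\imath(\xi)\) would occur in \(V_q(\nu)\) with multiplicity at least two. This contradicts the multiplicity-free branching rule.

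The main obstacle is the first step. We must compute \(v^\imath_0\) and the \(q=\infty\) behaviour of \(\delta_{n-1}\) into \(\mathsf{C}_n\) in type \(B\), where the zero letter and the relation \(u_{0,0}\neq0\) complicate Lemma \ref{lem_rel_col_mod_D}. We must also confirm that the artificial sign convention matches how \(\mathsf{C}_n\) splits as a \(\mathbf{U}^\imath\)-module. I would first do the explicit computation for \(n=1,2\) to fix the normalization \(c\), and then mimic the proof of Proposition \ref{prop_conn_hom_D}.
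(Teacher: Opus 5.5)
Your plan is the route the paper intends: in type \(B\) the paper gives no separate proof and only asserts that the type \(D\) arguments carry over. However, the step you single out as the main obstacle does not just need checking. It is false, so the argument has a genuine gap. You need the analogue of Proposition \ref{prop_decomp_col_mod_D} for \(l=n\), with \(V^\imath(\gamma_{n,\pm})\) spanned by bases congruent to \(\{u_{\mathbf{a}} \mid \mathbf{a} \in \mathsf{Col}^{B\mathrm{II}_n}_{n,\pm}\}\). From it you then need bases \(\{b^\imath_T \mid T \in \mathsf{KNT}^{B\mathrm{II}_n}(\lambda)\}\) of \(V^\imath(\lambda)\) and projections \(p_\lambda\) whenever \(\ell(\mathsf{par}(\lambda))=n\). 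No such sign-compatible bases exist, for the following reason.

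Let \(\theta\) be the automorphism of \(\mathbf{U}(\mathfrak{so}_{2n+1})\) negating \(E_1,F_1\) and fixing the other generators. On modules with weights in \(\mathbb{Z}^n\) it is implemented by \(\tau := (-1)^{\langle d_1, \mathsf{wt}\rangle}\). Since \(T_2\cdots T_n\cdots T_2(E_1)\) has weight \(\epsilon_1+\epsilon_2\), we get \(\theta(B_1)=-B_1\), so \(\tau\) carries \(\mathbf{U}^\imath\)-submodules to \(\mathbf{U}^\imath\)-submodules. At \(q=1\), \(\tau=\mathrm{diag}(-1,1,\dots,1,-1)\) negates the \(\mathfrak{k}\)-fixed vector (the specialization of \(v^\imath_0\)) and has determinant \(-1\) on its orthogonal complement. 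Hence it induces the outer automorphism of \(\mathfrak{k}\simeq\mathfrak{so}_{2n}\), and by Assumption \ref{assum} it interchanges the summands \(V^\imath(\lambda_+)\) and \(V^\imath(\lambda_-)\) of \(V_q(\nu)\); here \(\lambda_\pm\) differ only in the sign of the last entry. But \(\tau\) preserves \(\mathcal{L}\) and acts on the crystal basis by signs. So if \(V^\imath(\lambda_+)\) has a basis congruent to \(\{b_T \mid T\in X\}\), then \(V^\imath(\lambda_-)\) has one congruent to \(\{\pm b_T\mid T\in X\}\). Comparing images in \(\mathcal{L}/q^{-1}\mathcal{L}\), any basis of \(V^\imath(\lambda_-)\) congruent to \(\{b_T\mid T\in Y\}\) then forces \(Y=X\), whereas you need the disjoint sets \(\mathsf{KNT}^{B\mathrm{II}_n}(\lambda_\pm)\). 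Already for \(n=1\), \(\tau\) fixes \(v_0\) and negates \(v_{\overline{1}}\), so the two one-dimensional summands of the complement of \(v^\imath_0\) in \(V_\natural\) cannot be congruent to \(v_0\) and \(v_{\overline{1}}\) respectively. This is what the paper's remark on the artificial sign is pointing at. Consequently, \(b_T\downarrow \equiv_\infty b^\imath_{T\downarrow}\) with \(b^\imath_{T\downarrow}\) in a single \(V^\imath(\xi_\pm)\) is unavailable when \(T\downarrow\) has \(n\) rows, and so is your final contradiction.

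The argument can be repaired, but it needs an ingredient your proposal lacks. The sign only splits the set of tableaux whose first column is of type \(B\mathrm{II}_n\), so it suffices to prove the sign-free bijection. For that, work with \(W(\lambda):=V^\imath(\lambda_+)\oplus V^\imath(\lambda_-)\) and establish the analogues of Proposition \ref{prop_decomp_col_mod_D}, Theorem \ref{thm_red_col_D_DII} and Lemma \ref{lem_compatible_basis_DII} for \(W(\lambda)\). For the kernel estimate, note that neither \(V^\imath(\lambda_\pm)\) occurs in \(V^\imath(\gamma_{n-1})\otimes V_q(\mathsf{par}(\lambda)-\gamma_n)\). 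This gives an almost orthonormal basis of \(W(\lambda)\) indexed by all such tableaux of shape \(\mathsf{par}(\lambda)\), of either sign. Because \(W(\xi)\) is not simple, the multiplicity argument must then be redone. The \(W(\xi)\)-part of the image of your lifted map has the form \(V^\imath(\xi_+)\otimes w_+\oplus V^\imath(\xi_-)\otimes w_-\), with possibly non-proportional \(w_\pm\). What saves it is that \(V^\imath(\xi_+)\perp V^\imath(\xi_-)\) for the contragredient form. Almost orthonormality and Nakayama then give \(W(\xi)\cap\mathcal{L}=(V^\imath(\xi_+)\cap\mathcal{L})\oplus(V^\imath(\xi_-)\cap\mathcal{L})\). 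Write the basis vector of \(W(\xi)\) indexed by \(U:=T\downarrow=S\downarrow\) as \(\beta_++\beta_-\); some \(\beta_\pm\notin q^{-1}\mathcal{L}\). Applying the type \(D\) valuation argument inside \(V^\imath(\xi_\pm)\otimes w_\pm\) to \(\beta_\pm\otimes\mathbf{Q}^{\mathsf{suc}}(T)\) and \(\beta_\pm\otimes\mathbf{Q}^{\mathsf{suc}}(S)\) yields the contradiction.

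Two smaller corrections:
\begin{enumerate}
  \item \(\mathsf{C}_n|_{\mathbf{U}^\imath}\) has three summands, \(V^\imath(\gamma_{n,+})\oplus V^\imath(\gamma_{n,-})\oplus V^\imath(\gamma_{n-1})\), not two.
  \item Since \(u_{a,b}=-q^{-2}u_{b,a}\) in type \(B\), the connecting maps should be normalized by \((-q^2)^l\) rather than \((-q)^l\).
\end{enumerate}
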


\begin{cor}[Bijections between tableaux and patterns]
  Under Assumption \textup{\ref{assum}}, the following hold\textup{:}
  \begin{enumerate}
    \item Let \(\nu \in X^+_{2n+1,0}\).
    The assignment
    \[
      T \mapsto (\mathsf{sh}(T), \mathsf{sh}(T \downarrow^{B_n}_{B\mathrm{II}_n}), \mathsf{sh}(T \downarrow^{B_n}_{B_{n-1}}),\dots,\mathsf{sh}(T \downarrow^{B_n}_{B\mathrm{II}_1}))
    \]
    gives rise to a bijection
    \[
      \mathsf{KNT}^{B_n}(\nu) \to \mathsf{GTP}_{2n+1}(\nu).
    \]
    \item Let \(\lambda \in X^+_{2n,0}\).
    The assignment
    \[
      T \mapsto (\mathsf{sh}(T), \mathsf{sh}(T \downarrow^{B\mathrm{II}_n}_{B_{n-1}}), \mathsf{sh}(T \downarrow^{B\mathrm{II}_n}_{B\mathrm{II}_{n-1}}),\dots,\mathsf{sh}(T \downarrow^{B\mathrm{II}_n}_{B\mathrm{II}_1}))
    \]
    gives rise to a bijection
    \[
      \mathsf{KNT}^{B\mathrm{II}_n}(\lambda) \to \mathsf{GTP}_{2n}(\lambda).
    \]
  \end{enumerate}
\end{cor}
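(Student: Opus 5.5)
The statement to prove is the type \(B\) corollary: bijections \(\mathsf{KNT}^{B_n}(\nu) \to \mathsf{GTP}_{2n+1}(\nu)\) and \(\mathsf{KNT}^{B\mathrm{II}_n}(\lambda) \to \mathsf{GTP}_{2n}(\lambda)\), under Assumption \ref{assum}. I would prove it exactly as Corollary \ref{cor_bij_D} was proved. The idea is to concatenate the two one-step bijections already stated in this section. These are the reduction of tableaux from \(B\mathrm{II}_n\) to \(B_{n-1}\), valid for \(\lambda \in X^+_{2n,0}\), and the reduction of tableaux from \(B_n\) to \(B\mathrm{II}_n\), valid for \(\nu \in X^+_{2n+1,0}\) under Assumption \ref{assum}. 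Both are shape-compatible: the target shapes are precisely those whose partitions form a horizontal strip inside the source partition.

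The argument is an induction on \(n\), proving (1) and (2) together. For \(n\) fixed, assume (1) holds for \(B_{n-1}\) and argue as follows.

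For (2), take \(\lambda \in X^+_{2n,0}\). The map \(T \mapsto T\downarrow^{B\mathrm{II}_n}_{B_{n-1}}\) is a bijection from \(\mathsf{KNT}^{B\mathrm{II}_n}(\lambda)\) onto \(\bigsqcup_\xi \mathsf{KNT}^{B_{n-1}}(\xi)\). Composing with the inductive bijection \(\mathsf{KNT}^{B_{n-1}}(\xi)\to\mathsf{GTP}_{2n-1}(\xi)\) gives a bijection onto \(\bigsqcup_\xi \mathsf{GTP}_{2n-1}(\xi)\). Prepending \(\lambda\) then turns this set bijectively into \(\mathsf{GTP}_{2n}(\lambda)\). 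The reason is Definition \ref{def_GTP} together with the reformulation of the interlacing condition through \(\mathsf{par}\) and \(\overset{\textsf{hor}}{\subseteq}\): a pattern of shape \(\lambda\) is the same as a choice of second row \(\xi\) interlacing \(\lambda\) plus a pattern of shape \(\xi\). The spin condition \(\mathsf{s}=0\) is automatic on both sides.

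I would check that the resulting sequence of shapes is exactly the one in the statement. This holds because the iterated reductions are defined recursively: \(T\downarrow^{B\mathrm{II}_n}_{B\mathrm{II}_{k}}\) is obtained from \(T\downarrow^{B\mathrm{II}_n}_{B_{k}}\), and so on, so the tail of the sequence is the pattern produced by the inductive step.

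For (1), take \(\nu \in X^+_{2n+1,0}\). The map \(T\mapsto T\downarrow^{B_n}_{B\mathrm{II}_n}\) is a bijection onto \(\bigsqcup_\lambda \mathsf{KNT}^{B\mathrm{II}_n}(\lambda)\), with \(\mathsf{par}(\lambda)\overset{\textsf{hor}}{\subseteq}\mathsf{par}(\nu)\). Composing with (2) at level \(n\), and then prepending \(\nu\), gives (1) by the same interlacing argument.

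The base cases are \(n=1\), or even \(n=0\), where \(\mathsf{GTP}\) reduces to a single row or is trivially determined.

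I expect the only delicate point to be the bookkeeping of signs in (2). Elements \(\lambda\in X^+_{2n,0}\) with \(\lambda_n\neq 0\) have sign \(\pm\), and the interlacing condition between \(X^+_{2n+1}\) and \(X^+_{2n}\) involves \(|\lambda_n|\). So I would verify that the disjoint union in the \(B_n\to B\mathrm{II}_n\) bijection, indexed by \(\lambda\) including its sign, matches the two-valued choice of \(\lambda_n=\pm\mathsf{par}(\lambda)_n\) permitted in Gelfand--Tsetlin patterns. This is built into the definition of \(\mathsf{KNT}^{B\mathrm{II}_n}(\lambda)\), which requires the first column to have sign \(\mathsf{e}(\lambda)\), so the check should be immediate.

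All substantive content, including the use of Assumption \ref{assum} (which enters through the branching rule and the cardinality and injectivity argument, as in Theorem \ref{thm_red_hom_D_DII}), is contained in the cited theorems.
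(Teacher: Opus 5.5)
Your proposal is correct and is essentially the paper's argument. The type $B$ corollary is meant to follow exactly as Corollary~\ref{cor_bij_D} does, i.e.\ immediately from Definition~\ref{def_GTP} together with the two one-step reduction theorems (from $B_n$ to $B\mathrm{II}_n$ under Assumption~\ref{assum}, and from $B\mathrm{II}_n$ to $B_{n-1}$), and your simultaneous induction on $n$ just spells out that ``immediate'' step. Two small remarks. First, the sign bookkeeping you flag really belongs to the step $\nu \to \lambda$ in (1), where it is automatic because the interlacing condition only involves $\mathsf{par}(\lambda)$. Second, the induction uses Assumption~\ref{assum} at every rank $\leq n$, not only at rank $n$.
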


\subsection{Quantum groups of type \(B\)}\label{ssect_qg_B}
Let \(n \in \mathbb{Z}_{\geq 1}\), and consider the special orthogonal Lie algebra \(\mathfrak{so}_{2n+1}\).
Recall the notations \(d_i\), \(\epsilon_i\), \(I\), \(h_i\), and \(\alpha_i\) from \S\S\ref{ssect_so} and \ref{ssect_crystal}.
Set
\begin{itemize}
  \item \(Y := \mathsf{Span}_{\mathbb{Z}} \{ h_i \mid i \in I \}\),
  \item \(a_{i,j} := \langle h_i, \alpha_j \rangle\) for each \(i,j \in I\),
  \item \(l_i :=
  \begin{cases}
    2 & \text{ if } 1 \leq i < n, \\
    1 & \text{ if } i = n.
  \end{cases}
  \)
\end{itemize}
The \emph{quantum group} \(\mathbf{U} = \mathbf{U}(\mathfrak{so}_{2n+1})\) of \(\mathfrak{so}_{2n+1}\) is the unital associative \(\mathbb{Q}(q)\)-algebra with generators
\[
  \{ E_i,F_i,K_h \mid i \in I,\ h \in Y \}
\]
subject to the following relations:
\begin{align*}
  &K_0 = 1, \\
  &K_h K_{h'} = K_{h+h'}, \\
  &K_h E_i = q^{\langle h, \alpha_i \rangle} E_i K_h, \\
  &K_h F_i = q^{\langle h, -\alpha_i \rangle} F_i K_h, \\
  &E_i F_j - F_j E_i = \delta_{i,j} \frac{K_i-K_i^{-1}}{q_i-q_i^{-1}}, \\
  &\sum_{r+s = 1-a_{i,j}} E_i^{(r)} E_j E_i^{(s)} = 0 \quad \text{ if } i \neq j, \\
  &\sum_{r+s = 1-a_{i,j}} F_i^{(r)} F_j F_i^{(s)} = 0 \quad \text{ if } i \neq j,
\end{align*}
where
\[
  q_i := q^{l_i},\ K_i := K_{l_i h_i},\ E_i^{(r)} := \frac{1}{[r]_i!}E_i^r,\ F_i^{(r)} := \frac{1}{[r]_i!}F_i^r, \ [r]_i! := \prod_{a=1}^r [a]_i,\ [r]_i := \frac{q_i^r-q_i^{-r}}{q_i-q_i^{-1}}.
\]

\subsection{Column modules}
Let \(V_\natural := \mathsf{Span}_{\mathbb{Q}(q)} \{ v_a \mid a \in \mathcal{A}^{B_n} \}\) denote the vector representation of \(\mathbf{U}\).
Let \(\check{R} \in \mathsf{Aut}_{\mathbf{U}}(V_\natural^{\otimes 2})\) denote the R-matrix:
\[
  \check{R}(v_a \otimes v_b) :=
  \begin{cases}
    q^2 v_a \otimes v_a & \text{ if } a \neq 0, \\
    v_b \otimes v_a & \text{ if } b \prec a \neq \overline{b}, \\
    v_b \otimes v_a + (q^2-q^{-2})v_a \otimes v_b & \text{ if } a \prec b \neq \overline{a}, \\
    q^{-2} v_b \otimes v_{\overline{b}} - (q^2-q^{-2}) \sum_{c \prec b} (-q^2)^{-d(c,b)} v_c \otimes v_{\overline{c}} & \text{ if } a = \overline{b} \succ 0, \\
    v_0 \otimes v_0 - (q+q^{-1})(q^2-q^{-2}) \sum_{c=1}^n (-q^2)^{-n+c-1} v_c \otimes v_{\overline{c}} & \text{ if } a = b = 0, \\
    q^{-2} v_{\overline{a}} \otimes v_a + (q^2-q^{-2}) v_a \otimes v_{\overline{a}} \\
    -(q^2-q^{-2}) \sum_{0 \neq c \prec \overline{a}} (-q^2)^{-d(c,\overline{a})} v_c \otimes v_{\overline{c}} \\
    + (q-q^{-1})(-q^2)^{-n+a} v_0 \otimes v_0 & \text{ if } b = \overline{a} \succ 0.
  \end{cases}
\]

\begin{defi}[Column modules]
  Let \(\mathfrak{A}\) denote the two-sided ideal of \(\mathsf{T}\) generated by \((\check{R}+q^{-2})(\mathsf{T}^2)\).
  Set
  \[
    \mathsf{C} := \mathsf{T}/\mathfrak{A}.
  \]
  Also, for each \(l \in \mathbb{Z}_{\geq 0}\), set
  \[
    \mathsf{C}_l := \mathsf{T}^l/(\mathfrak{A} \cap \mathsf{T}^l).
  \]
  We call it the \emph{column module of length \(l\)}.
\end{defi}

\begin{prop}[Column modules]
  Let \(l \in \{ 0,\dots,n \}\).
  \begin{enumerate}
    \item We have
    \[
      \mathsf{C}_l \simeq V_q(\gamma_l).
    \]
    \item The set \(\{ u_{\mathbf{a}} \mid \mathbf{a} \in \mathsf{Col}^{B_n}_{l} \}\) forms a basis of \(\mathsf{C}_l\) that induces a crystal base \((\mathcal{L}(\mathsf{C}_l), \mathcal{B}(\mathsf{C}_l))\).
    Moreover, the map
    \[
      \mathsf{Col}^{B_n}_{l} \to \mathcal{B}(\mathsf{C}_l);\ \mathbf{a} \mapsto u_{\mathbf{a}} + q^{-1} \mathcal{L}(\mathsf{C}_l)
    \]
    is an isomorphism of crystals.
    \item For each \(\mathbf{a} = (a_l,\dots,a_1) \in \mathcal{W}^{B_n}_l \setminus \mathsf{Col}^{B_n}_{l}\), we have
    \[
      u_{\mathbf{a}} \in q^{-1} \mathcal{L}(\mathsf{C}_l).
    \]
  \end{enumerate}
\end{prop}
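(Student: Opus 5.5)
We follow the proof of Proposition \ref{prop_col_mod_D} (type \(D\)), replacing the BMW algebra of type \(D_n\) by the Schur--Weyl dual of \(\mathbf{U}(\mathfrak{so}_{2n+1})\) on \(V_\natural^{\otimes l}\). This is the BMW algebra with parameters \((q^2,q^{2n})\), obtained from the \(R\)-matrix \(\check{R}\) above, whose eigenvalues are \(q^2\), \(-q^{-2}\) and \(q^{-4n}\). The cases \(l\in\{0,1\}\) are immediate, since \(\mathsf{C}_0=\mathsf{T}^0\) and \(\mathsf{C}_1=\mathsf{T}^1=V_q(\gamma_1)\), so we assume \(l\ge2\).

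First, we show that \(\mathfrak{A}\cap\mathsf{T}^l=\sum_i \mathsf{T}^l(T_i+q^{-2})\). The same manipulation as in type \(D\) shows that this space is a right submodule for the BMW algebra.

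Next, we show that it is proper, using the \(q\)-antisymmetrizer \(A=\sum_{\sigma}(-q^{2})^{-\ell(\sigma)}T_\sigma\) applied to \(v_{l,\dots,1}\).
\begin{itemize}
  \item Since \(l\le n\), the letters \(1,\dots,l\) are pairwise distinct and no two of them are bars of each other. Hence \(v_{l,\dots,1}\cdot\mathsf{BMW}\) is spanned by permuted words, on which \(T_i\) satisfies the Hecke relation \((T_i-q^2)(T_i+q^{-2})=0\).
  \item Consequently \(v_{l,\dots,1}A\neq0\).
  \item Writing \(A=(T_1-q^2)A'\) and using the cubic relation, we get that \(v_{l,\dots,1}A\) is annihilated by \(\mathfrak{A}\). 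This is a contradiction if \(v_{l,\dots,1}\in\mathfrak{A}\), since the scalar \(-q^{-2}-q^{-4n}\) is nonzero.
\end{itemize}

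By semisimplicity, \(\mathsf{T}^l=M'\oplus(\mathfrak{A}\cap\mathsf{T}^l)\), where \(M'\) is the isotypic component of the one-dimensional BMW-module on which every \(T_i\) acts by \(-q^{-2}\). By the type \(B\) analogue of Theorem \ref{thm_Schur_duality_D} (in odd rank there is no splitting into two summands), \(M'\) is a simple \(\mathbf{U}\)-module. It contains the nonzero highest weight vector \(u_{l,\dots,1}\) of weight \(\gamma_l\), so \(\mathsf{C}_l\simeq V_q(\gamma_l)\) with multiplicity one in \(\mathsf{T}^l\). This holds for every \(l\le n\), including \(l=n\), since \(\gamma_n=(1,\dots,1)\) is an ordinary dominant weight for \(B_n\).

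For the crystal statements we use that \(\{v_{\mathbf{a}}\}\) is an almost orthonormal basis of \(\mathsf{T}^l\) inducing the crystal \(\mathcal{W}^{B_n}_l\), and that \(\mathsf{Col}^{B_n}_l\) is a subcrystal isomorphic to \(\mathcal{B}(\gamma_l)\) (\cite[\S5.3]{KaNa94}). By multiplicity one, the complement \(\mathcal{W}^{B_n}_l\setminus\mathsf{Col}^{B_n}_l\) contains no component isomorphic to \(\mathcal{B}(\gamma_l)\).

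By uniqueness of crystal bases compatible with a direct sum decomposition, we then choose a basis \(v'_{\mathbf{a}}\equiv_\infty v_{\mathbf{a}}\) of \(\mathsf{T}^l\) such that \(\{v'_{\mathbf{a}}\mid\mathbf{a}\in\mathsf{Col}^{B_n}_l\}\) spans \(M'\) and the remaining vectors span \(\mathfrak{A}\cap\mathsf{T}^l\). Passing to the quotient \(\mathsf{C}_l\), the elements \(u_{\mathbf{a}}\) with \(\mathbf{a}\) a column agree modulo \(q^{-1}\mathcal{L}\) with the images of the \(v'_{\mathbf{a}}\). This gives (2). Every other \(u_{\mathbf{a}}\) is congruent to the image of an element of the ideal, hence lies in \(q^{-1}\mathcal{L}(\mathsf{C}_l)\), which gives (3).

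The main obstacle is the non-vanishing of \(v_{l,\dots,1}A\) together with the cubic-relation step. We must verify from the explicit \(\check{R}\) that on words with distinct, non-barred-paired letters \(\le n\) the action is exactly the Hecke action. This needs care because of the extra \(0\)-terms in type \(B\), which appear only when \(b=\overline{a}\) and so do not arise here. We must also confirm that the third eigenvalue \(q^{-4n}\) differs from \(-q^{-2}\), which holds because their ratio \(-q^{-4n+2}\) is never \(1\).
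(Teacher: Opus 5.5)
Your outline is the paper's type $D$ argument (Proposition~\ref{prop_col_mod_D}) carried over to type $B$. The Schur-duality/multiplicity-one part and the crystal-base part go through as you describe. (Minor slip: in the paper's normalization the BMW parameters are $(q^2,q^{4n})$, which matches the eigenvalue $q^{-4n}$ you state.)

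\textbf{The gap.} It lies in the step showing that $\mathfrak{A}\cap\mathsf{T}^l$ is proper. Suppose $v:=v_{l,\dots,1}=\sum_i w_i(T_i+q^{-2})$. Then $v(T_1-q^{-4n})A=\sum_i w_i(T_i+q^{-2})(T_1-q^{-4n})(T_1-q^2)A'$, and the cubic relation only disposes of the summand $i=1$. For $i\geq 2$ nothing forces $(T_i+q^{-2})(T_1-q^{-4n})A$ to vanish, and in fact it does not.

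The reason is that in $\mathsf{BMW}_l$, unlike in the Hecke algebra, $A$ is not a sign element. Writing $A=(T_i-q^2)A'_i$, the defect $(T_i+q^{-2})A=(T_i+q^{-2})(T_i-q^2)A'_i$ is a multiple of $E_iA'_i$. The extra factor $T_1-q^{-4n}$ cancels this only for $i=1$.

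A concrete case already occurs at $l=3$. Take the three-dimensional simple $\mathsf{BMW}_3$-module, the one on which $E_1\neq 0$. One checks that $(T_1-q^{-4n})A$ is nonzero there, so its image is the $(-q^{-2})$-eigenline of $T_1$. The operator $T_2+q^{-2}$ cannot kill that line, since otherwise it would be a one-dimensional submodule.

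This is the same step as in the paper's type $D$ proof, and it has the same defect there. So it cannot simply be copied.

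\textbf{A repair.} The conclusion is true, and you can reach it without the cubic relation.
Put $N:=\mathfrak{A}\cap\mathsf{T}^l$; this is a right $\mathsf{BMW}$-submodule of the semisimple module $\mathsf{T}^l$. Let $\chi$ be the character $T_i\mapsto -q^{-2}$, and let $\pi$ be the $\mathsf{BMW}$-linear projection of $\mathsf{T}^l$ onto its $\chi$-isotypic component.
\begin{enumerate}
  \item Since $\pi$ is $\mathsf{BMW}$-linear, $\pi(w(T_i+q^{-2}))=\pi(w)(T_i+q^{-2})=0$ for every $w$, so $\pi(N)=0$.
  \item $\mathsf{BMW}$ acts on $v\cdot\mathsf{BMW}$, the span of the permuted words, through the Hecke algebra.
  \item Writing $A=A''_i(T_i-q^2)$ for each $i$ and using the quadratic relation gives $vA\,T_i=-q^{-2}\,vA$. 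So the nonzero vector $vA$ lies in the $\chi$-isotypic component.
  \item If $v\in N$, then $vA\in N$ because $N$ is a submodule. Hence $vA=\pi(vA)=0$, a contradiction.
\end{enumerate}
This argument also shows directly that $M'\neq 0$. With this replacement, the rest of your proof stands.
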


\subsection{Symmetric pairs}
Let \(n \in \mathbb{Z}_{\geq 2}\) and set \(\mathfrak{k}\) to be the Lie subalgebra of \(\mathfrak{so}_{2n+1}\) generated by the following elements:
\begin{itemize}
  \item \(e_j,f_j,h_j\) for \(j \in I_\bullet := \{ 2,\dots,n \}\),
  \item \(b_1 := Z_{2,1} + Z_{1,2n}\).
\end{itemize}

There exists a Lie algebra homomorphism
\[
  \psi_{2n} \colon \mathfrak{so}_{2n} \to \mathfrak{so}_{2n+1}
\]
such that
\begin{align*}
  &\psi_{2n}(e_i) =
  \begin{cases}
    e_{i+1} & \text{ if } 1 \leq i < n-1, \\
    \frac{1}{2}(\sqrt{2}Z_{n,n+1} + Z_{n,1} + Z_{1,n+2}) & \text{ if } i = n-1, \\
    \frac{1}{2}(\sqrt{2}Z_{n,n+1} - Z_{n,1} - Z_{1,n+2}) & \text{ if } i = n,
  \end{cases} \\
  &\psi_{2n}(f_i) =
  \begin{cases}
    f_{i+1} & \text{ if } 1 \leq i < n-1, \\
    \frac{1}{2}(\sqrt{2}Z_{n+1,n} + Z_{1,n} + Z_{n+2,1}) & \text{ if } i = n-1, \\
    \frac{1}{2}(\sqrt{2}Z_{n+1,n} - Z_{1,n} - Z_{n+2,1}) & \text{ if } i = n,
  \end{cases} \\
  &\psi_{2n}(h_i) =
  \begin{cases}
    h_{i+1} & \text{ if } 1 \leq i < n-1, \\
    2Z_{n,n} - \sqrt{2}(Z_{1,n+1} + Z_{n+1,1}) & \text{ if } i = n-1, \\
    2Z_{n,n} + \sqrt{2}(Z_{1,n+1} + Z_{n+1,1}) & \text{ if } i = n-.
  \end{cases}
\end{align*}
This homomorphism restricts to an isomorphism \(\mathfrak{so}_{2n} \to \mathfrak{k}\).

When \(n \geq 2\), let \(\mathfrak{g}_\bullet\) denote the Lie subalgebra of \(\mathfrak{k}\) generated by \(\{ e_j,f_j,h_j \mid j \in I_\bullet \}\).
Clearly, there exists a Lie algebra homomorphism
\[
  \psi_{2n-1}' \colon \mathfrak{so}_{2n-1} \to \mathfrak{k}
\]
such that
\[
  \psi_{2n-1}'(x_i) = x_{i+1} \ \text{ for all } i \in \{ 1,\dots,n-1 \},\ x \in \{ e,f,h \}.
\]
Let \(\psi_{2n-1}\) denote the composition
\[
  \mathfrak{so}_{2n-1} \xrightarrow{\psi_{2n-1}'} \mathfrak{k} \xrightarrow{\psi_{2n}^{-1}} \mathfrak{so}_{2n}.
\]
In fact, it coincides with the homomorphism \(\phi_{2n-1}\) \eqref{eq_so_embd}. 

\begin{prop}[Equivalences of embeddings]
  \hfill
  \begin{enumerate}
    \item Let \(\nu \in X^+_{2n+1}\).
    The \(\mathfrak{so}_{2n}\)-modules \(V_{2n+1}(\nu)\) defined via \(\phi_{2n}\) and \(\psi_{2n}\) are isomorphic.
    \item Let \(\lambda \in X^+_{2n}\).
    The \(\mathfrak{so}_{2n-1}\)-modules \(V_{2n}(\lambda)\) defined via \(\phi_{2n-1}\) and \(\psi_{2n-1}\) are isomorphic.
  \end{enumerate}
\end{prop}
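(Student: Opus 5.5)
Both modules in each item are restricted from simple modules of the same Lie algebra, so I would follow the proof of Proposition \ref{prop_equiv_embd_D}. Finite-dimensional representations of a semisimple Lie algebra are determined up to isomorphism by their characters. It therefore suffices to show that the two pullbacks have the same character with respect to a fixed Cartan subalgebra of the smaller algebra. For this I will compute where each embedding sends the Cartan elements \(d_i\).

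For the second item, the text states that \(\psi_{2n-1}\) coincides with \(\phi_{2n-1}\). Using \(\psi_{2n}(h_i)=h_{i+1}\) for \(i<n-1\), I would check on Chevalley generators that \(\psi_{2n}^{-1}\circ\psi'_{2n-1}\) agrees with \(\phi_{2n-1}\). This is done by comparing the images \(x_{i+1}\) in \(\mathfrak{k}\) with \(\psi_{2n}(\phi_{2n-1}(x_i))\) for \(x\in\{e,f,h\}\). Once the two homomorphisms are equal, the two pullbacks are literally the same module.

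For the first item, the Cartan \(\mathfrak{h}_{2n}\) maps under \(\phi_{2n}\) to \(\mathsf{Span}\{d_1,\dots,d_n\}\subset\mathfrak{h}_{2n+1}\) with \(d_i\mapsto d_i\). Under \(\psi_{2n}\), the elements \(h_1,\dots,h_{n-2}\) go to \(h_2,\dots,h_{n-1}\), but \(\psi_{2n}(h_{n-1})\) and \(\psi_{2n}(h_n)\) involve \(Z_{1,n+1}+Z_{n+1,1}\), which is not diagonal. I would therefore show that \(\psi_{2n}(\mathfrak{h}_{2n})\) is a toral subalgebra conjugate, under an inner automorphism \(g\) of \(\mathfrak{so}_{2n+1}\), to \(\mathsf{Span}\{d_1,\dots,d_n\}\). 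A natural choice for \(g\) is a rotation in the \((1,n+1,2n+1)\)-coordinate plane, i.e.\ \(\exp\) of a multiple of \(Z_{1,n+1}-Z_{n+1,1}\) or its orthogonal-form analogue. Taking suitable combinations, I would aim for \(\psi_{2n}(d_i)=d_{i+1}\) for \(i\le n-1\) and \(\psi_{2n}(d_n)\) equal to \(\pm\frac{1}{\sqrt2}(Z_{1,n+1}+Z_{n+1,1})\) up to normalisation. The element \(\frac{1}{\sqrt2}(Z_{1,n+1}+Z_{n+1,1})\) is semisimple with eigenvalues \(\pm1,0\) on the vector representation, and \(g\) conjugates it to \(d_1\). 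The result is that \(\operatorname{Ad}(g)\circ\psi_{2n}\) sends \((d_1,\dots,d_n)\) to a permutation with signs of \((d_1,\dots,d_n)\).

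Twisting by \(\operatorname{Ad}(g)\) does not change the isomorphism class of the pullback, because \(g\) acts on \(V_{2n+1}(\nu)\). The character of the \(\phi_{2n}\)-pullback is the \(\mathfrak{so}_{2n+1}\)-character restricted to \(\mathfrak{h}_{2n}\), and that character is invariant under the Weyl group of \(B_n\), which consists of all signed permutations. Hence both pullbacks have the same character, and they are isomorphic.

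The main obstacle is the explicit conjugation step. I would first verify it on the vector representation, computing the eigenvalues of \(\psi_{2n}(d_i)\) directly, and then choose \(g\) accordingly. If that becomes messy, a fallback is to compare characters abstractly: two Cartan subalgebras of \(\mathfrak{so}_{2n+1}\) are conjugate, and the induced identification of weights is determined up to the Weyl group by the eigenvalues on \(V_\natural\).
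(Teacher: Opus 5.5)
Your proposal is correct. Part (2) is handled exactly as in the paper, which calls it trivial because $\psi_{2n-1}=\phi_{2n-1}$. Part (1), however, follows a different route.

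The paper invokes \cite[Theorem 1.3]{Dyn52}: for $\mathfrak{so}_{2n+1}$, embeddings that agree on the natural representation are conjugate. This reduces everything to $\nu=\gamma_1$, so the paper only needs that $V_\natural$ restricts to $V_{2n}(\gamma_1)\oplus\mathbb{C}$ under both $\phi_{2n}$ and $\psi_{2n}$. That route gives the stronger fact that the two homomorphisms are conjugate, at the cost of a heavy external theorem.

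You instead compare characters directly. You conjugate the Cartan subalgebra $\psi_{2n}(\mathfrak{h}_{2n})$ onto $\mathfrak{h}_{2n+1}$ and use that $W(B_n)$ is the full group of signed permutations. This is self-contained and shows exactly where type $B$ is used; in type $D$ only even sign changes are available, so the same argument would need more care. Your fallback is really the paper's reduction in elementary form. If $V_\natural$ restricts to $V_{2n}(\gamma_1)\oplus\mathbb{C}$, then the transpose of the induced isomorphism $\mathfrak{h}_{2n}\to\mathfrak{h}_{2n+1}$ carries the nonzero weights $\{\pm\epsilon_j\}$ of $V_\natural$ onto the weights $\{\pm\epsilon_i\}$ of $V_{2n}(\gamma_1)$. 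Hence it is a signed permutation.

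For the explicit version, compute $\psi_{2n}(h_{n-1})$ as $[\psi_{2n}(e_{n-1}),\psi_{2n}(f_{n-1})]$ rather than reading it off the display. The displayed values for $h_{n-1}$ and $h_n$ appear to be twice the correct ones: taken literally they give $\psi_{2n}(d_{n-1})=2d_n$, which contradicts $\psi_{2n-1}=\phi_{2n-1}$.

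The bracket equals $Z_{n,n}-\frac{1}{\sqrt2}(Z_{1,n+1}+Z_{n+1,1})$. So $\psi_{2n}(d_i)=d_{i+1}$ for $i\le n-1$ and $\psi_{2n}(d_n)=\frac{1}{\sqrt2}(Z_{1,n+1}+Z_{n+1,1})$, exactly as you guessed.

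Now set $E=Z_{1,n+1}$ and $F=Z_{n+1,1}$. Then $[E,F]=d_1$, $[d_1,E]=E$, and $[d_1,F]=-F$. The automorphism $\exp\bigl(\frac{\pi}{2\sqrt2}\operatorname{ad}(E-F)\bigr)$ maps $\frac{1}{\sqrt2}(E+F)$ to $d_1$ and fixes $d_2,\dots,d_n$. Consequently $\operatorname{Ad}(g)\circ\psi_{2n}$ acts on the Cartan by the cyclic permutation $d_i\mapsto d_{i+1}$, $d_n\mapsto d_1$, and no sign change is needed.
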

\begin{proof}
  \hfill
  \begin{enumerate}
    \item By \cite[Theorem 1.3]{Dyn52}, we only need to prove the assertion for \(\nu = \gamma_1\).
    This can be straightforwardly verified.
    \item The assertion is trivial.
  \end{enumerate}
  
\end{proof}

\begin{cor}[Branching rule for symmetric pairs of type \(B\mathrm{II}\)]\label{cor_branching_symm_pair_BII}
  \hfill
  \begin{enumerate}
    \item\label{item_B_BII_cor_branching_symm_pair_BII} Let \(\nu \in X^+_{2n+1}\).
    Then, we have
    \[
      V_{2n+1}(\nu)|_{\mathfrak{k}} \simeq \bigoplus_{\substack{\lambda \in X^+_{2n, \mathsf{s}(\nu)} \\ \mathsf{par}(\lambda) \overset{\textsf{hor}}{\subseteq} \mathsf{par}(\nu)}} V_{2n}(\lambda).
    \]
    Here, we regard the \(\mathfrak{so}_{2n}\)-module \(V_{2n}(\lambda)\) as a \(\mathfrak{k}\)-module via the isomorphism \(\psi_{2n}\).
    \item\label{item_BII_B_cor_branching_symm_pair_BII} Let \(\lambda \in X^+_{2n}\).
    Then, we have
    \[
      V_{2n}(\lambda)|_{\mathfrak{g}_\bullet} \simeq \bigoplus_{\substack{\xi \in X^+_{2n-1, \mathsf{s}(\nu)} \\ \mathsf{par}(\xi) \overset{\textsf{hor}}{\subseteq} \mathsf{par}(\lambda)}} V_{2n-1}(\xi).
    \]
    Here, we regard the \(\mathfrak{so}_{2n-1}\)-module \(V_{2n-1}(\xi)\) as a \(\mathfrak{g}_\bullet\)-module via the isomorphism \(\psi_{2n-1}\).
  \end{enumerate}
\end{cor}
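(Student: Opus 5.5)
Both parts follow from the preceding Proposition on equivalences of embeddings together with the classical branching rule \eqref{eq_branching_so_partition}, exactly as Corollary \ref{cor_branching_rule_classical_DII} was deduced in type \(D\). The only real work is keeping track of which homomorphism defines each module structure.

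For assertion \eqref{item_B_BII_cor_branching_symm_pair_BII}, I would proceed as follows.
\begin{enumerate}
  \item Regard \(V_{2n+1}(\nu)|_{\mathfrak{k}}\) as an \(\mathfrak{so}_{2n}\)-module by pulling back along the isomorphism \(\psi_{2n} \colon \mathfrak{so}_{2n} \to \mathfrak{k}\). Decomposing it into \(\mathfrak{so}_{2n}\)-simples and transporting back via \(\psi_{2n}\) gives exactly the \(\mathfrak{k}\)-modules \(V_{2n}(\lambda)\) appearing in the statement.
  \item By part (1) of the preceding Proposition, this pulled-back \(\mathfrak{so}_{2n}\)-module is isomorphic to the restriction of \(V_{2n+1}(\nu)\) along \(\phi_{2n}\).
  \item Apply \eqref{eq_branching_so_partition} with \(N = 2n\). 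It gives \(V_{2n+1}(\nu)|_{\mathfrak{so}_{2n}} \simeq \bigoplus V_{2n}(\lambda)\), the sum running over \(\lambda \in X^+_{2n,\mathsf{s}(\nu)}\) with \(\mathsf{par}(\lambda) \overset{\textsf{hor}}{\subseteq} \mathsf{par}(\nu)\). This is the claim.
\end{enumerate}

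For assertion \eqref{item_BII_B_cor_branching_symm_pair_BII}, the spin condition in the displayed sum should read \(\mathsf{s}(\lambda)\) rather than \(\mathsf{s}(\nu)\).
\begin{enumerate}
  \item Here \(V_{2n}(\lambda)\) is a \(\mathfrak{k}\)-module via \(\psi_{2n}\), and we restrict it to \(\mathfrak{g}_\bullet = \psi'_{2n-1}(\mathfrak{so}_{2n-1})\).
  \item Pulling back along \(\psi'_{2n-1}\) yields the \(\mathfrak{so}_{2n-1}\)-module \(V_{2n}(\lambda)\) defined via the composite \(\psi_{2n}^{-1} \circ \psi'_{2n-1} = \psi_{2n-1}\).
  \item By part (2) of the Proposition (indeed \(\psi_{2n-1} = \phi_{2n-1}\)), this coincides with the restriction along \(\phi_{2n-1}\).
  \item Now \eqref{eq_branching_so_partition} with \(N = 2n-1\) gives \(\bigoplus V_{2n-1}(\xi)\), over \(\xi \in X^+_{2n-1,\mathsf{s}(\lambda)}\) with \(\mathsf{par}(\xi) \overset{\textsf{hor}}{\subseteq} \mathsf{par}(\lambda)\).
  \item Transporting back along \(\psi_{2n-1}\) identifies each summand with the \(\mathfrak{g}_\bullet\)-module in the statement.
\end{enumerate}

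The expected main obstacle is not in this deduction but in its input: part (1) of the Proposition. That part rests on the reduction, via \cite[Theorem 1.3]{Dyn52}, to the vector representation \(\nu = \gamma_1\). There one must check by a direct computation with the explicit formulas for \(\psi_{2n}\) that \(\mathbb{C}^{2n+1}\), pulled back along \(\psi_{2n}\), is isomorphic to \(V_{2n}(\gamma_1) \oplus V_{2n}(0)\).

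I would verify this as follows. The vector \(v := \sqrt{2}\,\mathbf{e}_{n+1} - (\mathbf{e}_1 + \mathbf{e}_{2n+1})\), or a similar combination, should span a trivial \(\psi_{2n}(\mathfrak{so}_{2n})\)-submodule. Its complement should carry weights \(\pm\epsilon_i\), computed with respect to the Cartan elements \(\psi_{2n}(h_i)\).

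Granting the Proposition as stated earlier, the corollary itself is a formal consequence. The only thing to watch is that the identifications through \(\psi_{2n}\) and \(\psi'_{2n-1}\) are applied consistently.
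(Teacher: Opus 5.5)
Your deduction is correct and is the argument the paper intends: the paper gives no separate proof of this corollary, but its type \(D\mathrm{II}\) counterpart is proved by exactly this combination of the equivalence-of-embeddings proposition with the classical branching rule, and you are right that \(\mathsf{s}(\nu)\) in part (2) should read \(\mathsf{s}(\lambda)\). One correction to your aside on the upstream proposition: the trivial \(\mathfrak{k}\)-summand of \(\mathbb{C}^{2n+1}\) is spanned by \(\mathbf{e}_1+\mathbf{e}_{2n+1}\) (it is killed by \(b_1\) and by all \(e_j,f_j,h_j\) with \(j\geq 2\)), not by your candidate \(v\), because \(e_n=\sqrt{2}Z_{n,n+1}\in\mathfrak{k}\) sends \(v\) to \(2\mathbf{e}_n\neq 0\).
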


\subsection{Quantum symmetric pairs}
Let \(n \in \mathbb{Z}_{\geq 1}\) and set \(\mathbf{U} := \mathbf{U}(\mathfrak{so}_{2n+1})\).

The \emph{\(\imath\)quantum group of type \(B\mathrm{II}_n\)} is the subalgebra \(\mathbf{U}^\imath\) of \(\mathbf{U}\) generated by the following elements:
\begin{itemize}
  \item \(E_j,F_j,K_j^{\pm 1}\) for \(j \in I_\bullet := \{ 2,\dots,n \}\),
  \item \(B_1 := F_1 + (-1)^{n-1} q^{2n-3} T_2 \cdots T_{n-1} T_n T_{n-1} \cdots T_2(E_1) K_1^{-1}\).
\end{itemize}
where \(T_i \in \mathsf{Aut}_{\textsf{\(\mathbb{Q}(q)\)-alg}}(\mathbf{U})\) denotes the Lusztig braid group action:
\begin{align*}
  &T_i(E_j) :=
  \begin{cases}
    -F_iK_i & \text{ if } a_{i,j}=2, \\
    E_j & \text{ if } a_{i,j} = 0, \\
    E_iE_j - q_i^{-1}E_jE_i & \text{ if } a_{i,j} = -1, \\
    E_i^{(2)} E_j - q_i^{-1} E_iE_jE_i + q_i^{-2} E_j E_i^{(2)} & \text{ if } a_{i,j} = -2,
  \end{cases} \\
  &T_i(F_j) :=
  \begin{cases}
    -K_i^{-1}E_i & \text{ if } a_{i,j} = 2,\\
    F_j & \text{ if } a_{i,j} = 0, \\
    F_jF_i - q_i F_iF_j & \text{ if } a_{i,j} = -1, \\
    F_j F_i^{(2)} - q_i F_i F_j F_i + q_i^2 F_i^{(2)} F_j & \text{ if } a_{i,j} = -2,
  \end{cases} \\
  &T_i(K_h) := K_{h-\langle h, \alpha_i \rangle h_i}.
\end{align*}

When \(n \geq 2\), we set \(\mathbf{U}_\bullet\) to be the subalgebra of \(\mathbf{U}^\imath\) generated by \(\{ E_j,F_j,K_j^{\pm 1} \mid j \in I_\bullet \}\).

\begin{prop}[Vector \(v^\imath_0\)]
  Set
  \[
    v^\imath_0 := v_1 + (-1)^n q^{-2n+1} v_{\overline{1}} \in V_\natural.
  \]
  Then, it spans a \(\mathbf{U}^\imath\)-submodule of \(V_\natural\) isomorphic to the trivial module.
\end{prop}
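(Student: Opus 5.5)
The proof is a direct verification on the basis \(\{ v_a \mid a \in \mathcal{A}^{B_n} \}\) of \(V_\natural\). I check each generator of \(\mathbf{U}^\imath\) separately: \(K_j^{\pm 1}\), \(E_j\), \(F_j\) for \(j \in I_\bullet\), and then \(B_1\). I use that \(E_i,F_i\) act on \(V_\natural\) along the arrows of the crystal graph of \(\mathcal{A}^{B_n}\), with scalar coefficients, including the \([2]\)-type normalisation on the \(n\)-string \(n \to 0 \to \overline{n}\).

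\emph{The generators indexed by \(I_\bullet\).} The vectors \(v_1\) and \(v_{\overline{1}}\) have weights \(\pm\epsilon_1\). For \(j \geq 2\), the coroot \(h_j\) only involves \(d_2,\dots,d_n\), so \(\langle h_j, \pm\epsilon_1 \rangle = 0\) and \(K_j^{\pm 1}\) acts by \(1\) on both vectors. In the crystal graph of \(\mathcal{A}^{B_n}\), the letters \(1\) and \(\overline{1}\) carry only arrows labelled \(1\). Hence \(E_j v_1 = F_j v_1 = E_j v_{\overline{1}} = F_j v_{\overline{1}} = 0\) for all \(j \in I_\bullet\), and so \(v^\imath_0\) is annihilated by \(E_j, F_j\) and fixed by \(K_j^{\pm 1}\).

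\emph{The generator \(B_1\).} Write \(X := T_2 \cdots T_{n-1} T_n T_{n-1} \cdots T_2(E_1)\). The weight of \(X\) is
\[
  \alpha_1 + 2(\alpha_2 + \cdots + \alpha_n) = \epsilon_1 + \epsilon_2.
\]
\begin{itemize}
  \item On \(v_1\): we have \(F_1 v_1 = v_2\). The vector \(X K_1^{-1} v_1\) would have weight \(2\epsilon_1 + \epsilon_2\), which is not a weight of \(V_\natural\), so it vanishes. Thus \(B_1 v_1 = v_2\).
  \item On \(v_{\overline{1}}\): we have \(F_1 v_{\overline{1}} = 0\), and \(K_1^{-1} v_{\overline{1}} = q^{c_1} v_{\overline{1}}\) for an explicit exponent \(c_1\). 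Also \(X v_{\overline{1}}\) has weight \(\epsilon_2\), so \(X v_{\overline{1}} = \kappa v_2\) for some scalar \(\kappa\).
\end{itemize}
The claim \(B_1 v^\imath_0 = 0\) is therefore equivalent to the single identity
\[
  1 + (-1)^n q^{-2n+1} \cdot (-1)^{n-1} q^{2n-3} \cdot q^{c_1} \kappa = 0,
\]
where \((-1)^{n-1} q^{2n-3}\) is the coefficient in the definition of \(B_1\).

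\emph{Computing \(\kappa\).} I use Lusztig's compatibility on integrable modules, \(T_i(x)\, m = T_i\, x\, T_i^{-1} m\), where \(T_i\) also denotes the corresponding operator on \(V_\natural\). This gives
\[
  X v_{\overline{1}} = T_2 \cdots T_n \cdots T_2 \, E_1 \, T_2^{-1} \cdots T_n^{-1} \cdots T_2^{-1} v_{\overline{1}}.
\]
Since \(v_{\overline{1}}\) is killed by \(E_i,F_i\) for \(i \geq 2\), every \(T_i^{-1}\) with \(i \geq 2\) fixes it. Next, \(E_1 v_{\overline{1}} = v_{\overline{2}}\). The Weyl group element \(s_2 \cdots s_n \cdots s_2\) is the reflection sending \(\epsilon_2 \mapsto -\epsilon_2\). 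So the string of operators \(T_2,\dots,T_n,\dots,T_2\) carries \(v_{\overline{2}}\), through successive extremal vectors \(v_{\overline{3}},\dots,v_{\overline{n}},v_n,\dots,v_3\), to a scalar multiple of \(v_2\).

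Each \(T_i\) with \(i < n\) acts on a two-dimensional \(i\)-string and contributes only a sign. The step \(T_n\) acts on the three-dimensional string \(v_n, v_0, v_{\overline{n}}\); using Lusztig's formula for \(T''_{i,1}\) on extremal vectors, it contributes a sign times a power of \(q_n = q\).

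\emph{Main obstacle.} The hardest part is precisely this bookkeeping of signs and \(q\)-powers: the normalisation of \(E_n,F_n\) on the \(0\)-string, which involves \([2]\), and the exponent \(c_1\) coming from \(K_1 = K_{2h_1}\). To guard against errors, I would cross-check the result in two ways. First, against the coefficient \(-(-q^2)^{-n+c-1}\)-type terms in \(\check{R}\) on \(v_0 \otimes v_0\), which encode the same invariant form on \(V_\natural\). Second, against the type \(D\) computation behind Proposition \ref{prop_vi0_D}, cf. \cite[Lemma 4.4.2]{Wat23b}, of which this is the analogue with \(q\) replaced by \(q^2\) away from the short root. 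These checks should confirm \(q^{c_1}\kappa = q^{2}\), up to the sign that makes the identity above hold.
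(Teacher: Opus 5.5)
Your reduction is correct, and direct verification on generators is the route the paper intends; it gives no proof for type \(B\), and for type \(D\) it just says ``direct calculation''. Specifically, the generators indexed by \(I_\bullet\) act trivially, \(X\) has weight \(\epsilon_1+\epsilon_2\), and \(B_1 v^\imath_0=0\) is equivalent to \(q^{c_1}\kappa=q^2\).

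\textbf{The gap.} That identity is the whole content of the proposition, and you never establish it. You compute neither \(c_1\) nor \(\kappa\). Your closing sentence (the checks ``should confirm \(q^{c_1}\kappa=q^2\), up to the sign that makes the identity above hold'') assumes the conclusion, and the identity leaves no sign freedom anyway. The appeals to \(\check{R}\) and to the type \(D\) case are heuristics, not a computation.

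Worse, \(\kappa\) depends on a choice you leave open: the normalisation of \(V_\natural\) on the \(n\)-string. Since \(v^\imath_0\) compares \(v_1\) with \(v_{\overline{1}}\), rescaling \(v_{\overline{n}},\dots,v_{\overline{1}}\) changes the correct coefficient. Take \(F_n v_0=v_{\overline{n}}\) and \(E_n v_{\overline{n}}=[2]v_0\), with coefficient \(1\) on all other arrows. Then \(\kappa=[2]\), and the stated coefficient \((-1)^n q^{-2n+1}\) is wrong. A proof has to fix the normalisation first.

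\textbf{How to close it.} Fix \(v_{\overline{n}}=F_n^{(2)}v_n\), i.e.\ \(F_n v_0=[2]v_{\overline{n}}\) and \(E_n v_{\overline{n}}=v_0\), and put coefficient \(1\) on every other arrow in both directions.
\begin{enumerate}
\item Then \(c_1=2\), because \(K_1=K_{2h_1}\) and \(\langle -2h_1,-\epsilon_1\rangle=2\).
\item For \(\kappa\), use the operators on integrable modules attached to \(T''_{i,1}\) (cf.\ \cite{Lus93}). On a string with highest weight vector \(\eta\) of weight \(N\), they satisfy \(T_i\eta=(-1)^N q_i^N F_i^{(N)}\eta\). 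Combining this with \(T_i(F_i)=-K_i^{-1}E_i\) gives \(T_i(F_i^{(N)}\eta)=\eta\). In particular \(T_i(F_i\eta)=\eta\) on a two-dimensional string, and \(T_i(F_i^{(2)}\eta)=\eta\) on the three-dimensional \(n\)-string.
\item Every step of your chain \(v_{\overline{2}}\mapsto v_{\overline{3}}\mapsto\cdots\mapsto v_{\overline{n}}\mapsto v_n\mapsto v_{n-1}\mapsto\cdots\mapsto v_2\) sends the lowest vector of a string to its highest vector. So every factor is exactly \(1\), not ``a sign'' and ``a sign times a power of \(q\)'' as you anticipate. On highest vectors you would instead pick up \(-q_i\) or \(q_i^2\).
\end{enumerate}
Together with \(E_1 v_{\overline{1}}=v_{\overline{2}}\), this gives \(\kappa=1\), hence \(q^{c_1}\kappa=q^2\) and \(B_1 v^\imath_0=0\).

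\textbf{The case \(n=1\).} Your weight argument needs \(n\ge 2\). For \(n=1\), read the product of braid operators as empty, so \(B_1=F_1+q^{-1}E_1K_1^{-1}\). The check is then one line with the same normalisation: \(B_1 v_1=v_0\) and \(B_1 v_{\overline{1}}=q\,v_0\).
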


\begin{assum}[Simple \(\mathbf{U}^\imath\)-modules]\label{assum}
  For each \(\lambda \in X^+_{2n,0}\), there exists a unique \textup{(}up to isomorphism\textup{)} simple classical weight \(\mathbf{U}^\imath\)-module \(V^\imath(\lambda)\) which appears as a submodule of \(V_q(\nu)\) for some \(\nu\), and is isomorphic to \(V_{2n}(\lambda)\) at \(q = 1\).
\end{assum}

\begin{thm}[Branching rules for quantum symmetric pairs of type \(B\mathrm{II}\)]
  Under Assumption \textup{\ref{assum}}, the following hold:
  \begin{enumerate}
    \item Let \(\nu \in X^+_{2n+1, 0}\).
    Then, we have
    \[
      V_q(\nu)|_{\mathbf{U}^\imath} \simeq \bigoplus_{\substack{\lambda \in X^+_{2n, 0} \\ \mathsf{par}(\lambda) \overset{\textsf{hor}}{\subseteq} \mathsf{par}(\nu)}} V^\imath(\lambda).
    \]
    \item Assume further that \(n \geq 2\).
    Let \(\lambda \in X^+_{2n,0}\).
    Then, we have
    \[
      V^\imath(\lambda)|_{\mathbf{U}_\bullet} \simeq \bigoplus_{\substack{\xi \in X^+_{2n-1,0} \\ \mathsf{par}(\xi) \overset{\textsf{hor}}{\subseteq} \mathsf{par}(\lambda)}} V_{2n-1,q}(\xi).
    \]
  \end{enumerate}
\end{thm}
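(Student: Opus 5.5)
The plan is to deduce both branching rules from the classical ones (Corollary \ref{cor_branching_symm_pair_BII}) by a specialization argument, with Assumption \ref{assum} supplying the quantum simple modules. This mirrors the proof of Theorem \ref{thm_branching_rule_DII}, where the analogous input came from \cite{KoSt24}.

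For (1), let \(\nu \in X^+_{2n+1,0}\). The module \(V_q(\nu)\) is finite-dimensional. Its restriction to \(\mathbf{U}^\imath\) is completely reducible, because \(\mathbf{U}^\imath\) is \(\rho\)-stable and \(V_q(\nu)\) carries a nondegenerate contragredient symmetric bilinear form; one can check this form is anisotropic on an \(\mathbf{A}_\infty\)-lattice, so orthogonal complements of submodules are again submodules. Each simple summand is a classical weight module in the sense of \cite{Wat21b}, since \(V_q(\nu)\) is a weight module for the Cartan part of \(\mathbf{U}_\bullet\). Take an \(\mathbf{A}\)-form of \(V_q(\nu)\) stable under the \(\mathbf{A}\)-form of \(\mathbf{U}^\imath\) (cf.\ \cite[\S 10]{Kol14}). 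Specializing at \(q=1\) turns each simple summand into a \(\mathfrak{k}\)-module of the same dimension. Their direct sum is \(V_{2n+1}(\nu)|_{\mathfrak{k}}\), which by Corollary \ref{cor_branching_symm_pair_BII} \eqref{item_B_BII_cor_branching_symm_pair_BII} is multiplicity-free with constituents \(V_{2n}(\lambda)\) for \(\mathsf{par}(\lambda) \overset{\textsf{hor}}{\subseteq} \mathsf{par}(\nu)\), \(\mathsf{s}(\lambda)=0\).

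The key step, and the main obstacle, is to show that the specialization of a \emph{simple} \(\mathbf{U}^\imath\)-summand is a \emph{simple} \(\mathfrak{k}\)-module. A priori it could split into several \(V_{2n}(\lambda)\)'s. I would try to rule this out with the subalgebra \(\mathbf{U}_\bullet \simeq \mathbf{U}(\mathfrak{so}_{2n-1})\). Its finite-dimensional type 1 modules deform their classical limits rigidly, so the \(\mathbf{U}_\bullet\)-character of a summand equals the \(\mathfrak{g}_\bullet\)-character of its specialization. Next I would use that \(B_1\) acts nontrivially between the \(\mathbf{U}_\bullet\)-isotypic components that are linked by the \(\mathfrak{k}\)-action at \(q=1\). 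Concretely, the idempotents in the centre of the \(\mathbf{A}\)-form do not lift, so a decomposition at \(q=1\) that is not visible over \(\mathbb{Q}(q)\) would contradict simplicity over \(\mathbb{Q}(q)\) only if the lattice is chosen compatibly. Here I would instead argue directly: each \(V_{2n}(\lambda)\) occurring at \(q=1\) contains a \(\mathfrak{g}_\bullet\)-highest weight vector of weight \(\mathsf{par}(\lambda)\) restricted. That vector lifts to \(V_q(\nu)\) and generates a \(\mathbf{U}^\imath\)-submodule. This submodule is simple by Assumption \ref{assum} (uniqueness of the deformation), so it is \(V^\imath(\lambda)\). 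A dimension count then forces the summands to be exactly these, one for each \(\lambda\).

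For (2), with \(n \ge 2\), I would restrict the decomposition of (1) further to \(\mathbf{U}_\bullet\). The module \(V^\imath(\lambda)|_{\mathbf{U}_\bullet}\) is a finite-dimensional weight module for a genuine quantum group, hence semisimple. It is determined by its character, which agrees with the character at \(q=1\), namely \(V_{2n}(\lambda)|_{\mathfrak{g}_\bullet}\). This follows because specialization preserves weight-space dimensions. Corollary \ref{cor_branching_symm_pair_BII} \eqref{item_BII_B_cor_branching_symm_pair_BII} then gives exactly the stated sum of \(V_{2n-1,q}(\xi)\) with \(\mathsf{par}(\xi) \overset{\textsf{hor}}{\subseteq} \mathsf{par}(\lambda)\).
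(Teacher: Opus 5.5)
Your argument for (2) matches the paper's: $V^\imath(\lambda)$ lies inside some $V_q(\nu)$, so $V^\imath(\lambda)|_{\mathbf{U}_\bullet}$ is a finite-dimensional type~1 module. Such a module is determined by its character, which you read off from $V_{2n}(\lambda)|_{\mathfrak{g}_\bullet}$ via Corollary~\ref{cor_branching_symm_pair_BII}~\eqref{item_BII_B_cor_branching_symm_pair_BII}. This part does not actually need (1).

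Part (1), however, has a genuine gap, and it sits exactly at the step you call the main obstacle. The paper does not prove that step. It imports it from \cite[Theorem 4.15]{Mee24}, which gives that the decomposition of $V_q(\nu)|_{\mathbf{U}^\imath}$ deforms the classical decomposition of Corollary~\ref{cor_branching_symm_pair_BII}~\eqref{item_B_BII_cor_branching_symm_pair_BII} summand by summand. Assumption~\ref{assum} is then used only to name each summand $V^\imath(\lambda)$.

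Your replacement argument fails at the sentence ``This submodule is simple by Assumption~\ref{assum}''. The Assumption says there is exactly one simple classical weight module deforming $V_{2n}(\lambda)$. It says nothing about the submodule $\mathbf{U}^\imath\tilde v$ generated by a lift $\tilde v$ of your $\mathfrak{g}_\bullet$-highest weight vector $\bar v$. Lifts are far from unique. A generic lift has nonzero components in many simple summands of $V_q(\nu)|_{\mathbf{U}^\imath}$, because the decomposition over $\mathbb{Q}(q)$ need not be compatible with the $\mathbf{A}$-lattice. So $\mathbf{U}^\imath\tilde v$ is typically not simple, and its specialization only \emph{contains} $U(\mathfrak{k})\bar v$. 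To apply the uniqueness in the Assumption, you would first need a simple summand whose $q=1$ limit is exactly $V_{2n}(\lambda)$, which is the statement being proved. The step is therefore circular.

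There is also a concrete obstruction to any argument driven only by $\mathfrak{g}_\bullet$-data. For $\lambda_n\neq 0$, both $V_{2n}(\lambda)$ and $V_{2n}(\lambda_1,\dots,\lambda_{n-1},-\lambda_n)$ occur in $V_{2n+1}(\nu)$, and they have isomorphic restrictions to $\mathfrak{g}_\bullet$, since $\mathsf{par}$ is the same. So the $\mathfrak{g}_\bullet$-highest weight of $\bar v$ does not determine $\lambda$, and $\mathbf{U}_\bullet$-characters cannot separate the two corresponding $\mathbf{U}^\imath$-summands. This is the same phenomenon behind the paper's remark on signs of $B\mathrm{II}$ columns.

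The missing ingredient is a result that the simple $\mathbf{U}^\imath$-summands of $V_q(\nu)$ have simple classical limits. Once you have it, the Assumption and the classical branching rule give (1) directly, and your complete-reducibility and dimension-count steps become unnecessary.
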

\begin{proof}
  \hfill
  \begin{enumerate}
    \item The assertion follows from \cite[Theorem 4.15]{Mee24} and Theorem \ref{cor_branching_symm_pair_BII} \eqref{item_B_BII_cor_branching_symm_pair_BII}.
    \item The assertion follows from the general theory of quantum groups and Theorem \ref{cor_branching_symm_pair_BII} \eqref{item_BII_B_cor_branching_symm_pair_BII}.
  \end{enumerate}
\end{proof}


\begin{thebibliography}{GeTs50b}
  \bibitem[BaWa18]{BaWa18} H. Bao and W. Wang, Canonical bases arising from quantum symmetric pairs, Invent. Math. 213 (2018), no. 3, 1099--1177.
  \bibitem[BuSc17]{BuSc17} D. W. Bump and A. Schilling, Crystal Bases. Representations and Combinatorics, World Scientific Publishing Co. Pte. Ltd., Hackensack, NJ, 2017. xii+279 pp.
  \bibitem[Dyn52]{Dyn52} E. B. Dynkin, Semisimple subalgebras of semisimple Lie algebras, (Russian) Mat. Sbornik N.S. 30/72 (1952), 349--462 (3 plates).
  \bibitem[Ful97]{Ful97} W. Fulton, Young Tableaux. With Applications to Representation Theory and Geometry, London Math. Soc. Stud. Texts, 35, Cambridge University Press, Cambridge, 1997. x+260 pp.
  \bibitem[GeTs50a]{GeTs50} I. M. Gel'fand and M. L. Tsetlin, Finite-dimensional representations of the group of unimodular matrices, (Russian) Doklady Akad. Nauk SSSR (N.S.) 71 (1950), 825--828.
  \bibitem[GeTs50b]{GeTs50b} I. M. Gel'fand and M. L. Tsetlin, Finite-dimensional representations of groups of orthogonal matrices, (Russian) Doklady Akad. Nauk SSSR (N.S.) 71 (1950), 1017--1020.
  \bibitem[Hay92]{Hay92} T. Hayashi, Quantum deformation of classical groups, Publ. Res. Inst. Math. Sci. 28 (1992), no. 1, 57--81.
  \bibitem[HoKa02]{HoKa02} J. Hong and S. Kang, Introduction to Quantum Groups and Crystal Bases, Grad. Stud. Math., 42, American Mathematical Society, Providence, RI, 2002. xviii+307 pp.
  \bibitem[KaNa94]{KaNa94} M. Kashiwara and T. Nakashima, Crystal graphs for representations of the $q$-analogue of classical Lie algebras, J. Algebra 165 (1994), no. 2, 295--345.
  \bibitem[Kol14]{Kol14} S. Kolb, Quantum symmetric Kac-Moody pairs, Adv. Math. 267 (2014), 395--469.
  \bibitem[KoSt25]{KoSt24} S. Kolb and J. Stephens, Representation theory of very non-standard quantum \(\mathfrak{so}(2N-1)\), J. Pure Appl. Algebra 229 (2025), no. 7, Paper No. 107990, 38 pp.
  \bibitem[Lec03]{Lec03} C. Lecouvey, Schensted-type correspondences and plactic monoids for types \(B_n\) and \(D_n\), J. Algebraic Combin. 18 (2003), no. 2, 99--133.
  \bibitem[Lus93]{Lus93} G. Lusztig, Introduction to Quantum Groups, Reprint of the 1994 edition, Modern Birkh\"{a}user Classics. Birkh\"{a}user/Springer, New York, 2010. xiv+346 pp.
  \bibitem[Mee26]{Mee24} S. Meereboer, Symmetries for spherical functions of type \(\chi\) for quantum symmetric pairs, Transform. Groups 31 (2026), no. 2, 1851--1894.
  \bibitem[Mol06]{Mol06b} A. I. Molev, Gelfand--Tsetlin bases for classical Lie algebras, Handbook of algebra. Vol. 4, 109--170., Handb. Algebr., 4, Elsevier/North-Holland, Amsterdam, 2006.
  \bibitem[Wan23]{Wan22} W. Wang, Quantum symmetric pairs, ICM---International Congress of Mathematicians. Vol. IV. Sections 5--8, 3080--3102, EMS Press, Berlin, [2023], \copyright2023.
  \bibitem[Wat21]{Wat21b} H. Watanabe, Classical weight modules over \(\imath\)quantum groups, J. Algebra 578 (2021), 241--302.
  \bibitem[Wat23]{Wat23b} H. Watanabe, Stability of \(\imath\)canonical bases of irreducible finite type of real rank one, Represent. Theory 27 (2023), 1--29.
  \bibitem[Wat25]{Wat23f} H. Watanabe, Symplectic tableaux and quantum symmetric pairs, J. Comb. Algebra (2025), published online first, DOI 10.4171/JCA/113.
\end{thebibliography}
\end{document}